\newtheorem{theorem}{Theorem}[section]
\newtheorem{lemma}[theorem]{Lemma}
\newtheorem{proposition}[theorem]{Proposition}
\newtheorem{corollary}[theorem]{Corollary}
\theoremstyle{definition}
\newtheorem{definition}[theorem]{Definition}
\newtheorem{remark}[theorem]{Remark}
\newtheorem{example}[theorem]{Example}
\definecolor{A}{rgb}{.75,1,.75}
\numberwithin{equation}{section}
\begin{document}
\title[Cellularity]{Cellularity of cyclotomic Yokonuma-Hecke algebras}
\author[Weideng Cui]{Weideng Cui}
\address{School of Mathematics, Shandong University, Jinan, Shandong 250100, P.R. China.}
\email{cwdeng@amss.ac.cn}

\begin{abstract}
We first give a direct proof of a basis theorem for the cyclotomic Yokonuma-Hecke algebra $Y_{r,n}^{d}(q).$ Our approach follows Kleshchev's, which does not use the representation theory of $Y_{r,n}^{d}(q),$ and so it is very different from that of [ChP2]. We also present two applications. Then we prove that the cyclotomic Yokonuma-Hecke algebra $Y_{r,n}^{d}(q)$ is cellular by constructing an explicit cellular basis, and show that the Jucys-Murphy elements for $Y_{r,n}^{d}(q)$ are JM-elements in the abstract sense. In the appendix, we shall develop the fusion procedure for $Y_{r,n}^{d}(q).$
\end{abstract}



\maketitle
\medskip

\section{Introduction}
\subsection{}
The Yokonuma-Hecke algebra was first introduced by Yokonuma [Yo] as a centralizer algebra associated to the permutation representation of a Chevalley group $G$ with respect to a maximal unipotent subgroup of $G.$ In the 1990s, a new presentation of the Yokonuma-Hecke algebra has been given by Juyumaya [Ju1], and since then it is commonly used for studying this algebra.

The Yokonuma-Hecke algebra $Y_{r,n}(q)$ is a quotient of the group algebra of the modular framed braid group $(\mathbb{Z}/r\mathbb{Z})\wr B_{n},$ where $B_{n}$ is the braid group of type $A$ on $n$ strands. It can also be regraded as a deformation of the group algebra of the complex reflection group $G(r,1,n),$ which is isomorphic to the wreath product $(\mathbb{Z}/r\mathbb{Z})\wr \mathfrak{S}_{n},$ where $\mathfrak{S}_{n}$ is the symmetric group on $n$ letters. It is well-known that there exists another deformation of the group algebra of $G(r,1,n),$ namely the Ariki-Koike algebra $H_{r,n}$ [AK]. The Yokonuma-Hecke algebra $Y_{r,n}(q)$ is quite different from $H_{r,n}.$ For example, the Iwahori-Hecke algebra of type $A$ is canonically a subalgebra of $H_{r,n},$ whereas it is an obvious quotient of $Y_{r,n}(q),$ but not an obvious subalgebra of it.

In the past few years, many people are dedicated to studying the algebra $Y_{r,n}(q)$ from different perspectives. Some impetus comes from knot theory; see the papers [Ju2], [JuL] and [ChL]. In particular, Juyumaya and Kannan [Ju2, JuK] found a basis of $Y_{r,n}(q),$ and then defined a Markov trace on it.

Some other people are particularly interested in the representation theory of $Y_{r,n}(q),$ and also its application to knot theory. Chlouveraki and Poulain d'Andecy [ChPA1] gave explicit formulas for all irreducible representations of $Y_{r,n}(q)$ over $\mathbb{C}(q)$, and obtained a semisimplicity criterion for it. In their subsequent paper [ChPA2], they defined and studied the affine Yokonuma-Hecke algebra $\widehat{Y}_{r,n}(q)$ and the cyclotomic Yokonuma-Hecke algebra $Y_{r,n}^{d}(q),$ and constructed several bases for them, and then showed how to define Markov traces on these algebras. In addition, they gave the classification of irreducible representations of $Y_{r,n}^{d}(q)$ in the generic semisimple case, defined the canonical symmetrizing form on it and computed the associated Schur elements directly.

\subsection{}
Recently, Jacon and Poulain d'Andecy [JaPA] constructed an explicit algebraic isomorphism between the Yokonuma-Hecke algebra $Y_{r,n}(q)$ and a direct sum of matrix algebras over tensor products of Iwahori-Hecke algebras of type $A,$ which is in fact a special case of the results by G. Lusztig [Lu, Section 34].
This allows them to give a description of the modular representation theory of $Y_{r,n}(q)$ and a complete classification of all Markov traces for it. Chlouveraki and S\'{e}cherre [ChS, Theorem 4.3] proved that the affine Yokonuma-Hecke algebra is a particular case of the pro-$p$-Iwahori-Hecke algebra defined by Vign\'eras in [Vi]. Espinoza and Ryom-Hansen [ER] gave a new proof of Jacon and Poulain d'Andecy's isomorphism theorem by giving a concrete isomorphism between $Y_{r,n}(q)$ and Shoji's modified Ariki-Koike algebra $\mathcal{H}_{r,n}.$ Moreover, they showed that $Y_{r,n}(q)$ is a cellular algebra by constructing an explicit cellular basis.

In [CW], we have established an equivalence between a module category of the affine (resp. cyclotomic) Yokonuma-Hecke algebra $\widehat{Y}_{r,n}(q)$ (resp. $Y_{r,n}^{d}(q)$) and its suitable counterpart for a direct sum of tensor products of affine Hecke algebras of type $A$ (resp. cyclotomic Hecke algebras), which allows us to give the classification of simple modules of affine Yokonuma-Hecke algebras and of the associated cyclotomic Yokonuma-Hecke algebras over an algebraically closed field of characteristic $p$ when $p$ does not divide $r,$ and also describe the classification of blocks for these algebras. In addition, the modular branching rules for cyclotomic (resp. affine) Yokonuma-Hecke algebras are obtained, and they are further identified with crystal graphs of integrable modules for affine lie algebras of type $A.$

\subsection{}
Since the cyclotomic Yokonuma-Hecke algebra $Y_{r,n}^{d}(q)$ is a natural generalization of the Yokonuma-Hecke algebra $Y_{r,n}(q),$ it is natural to try to generalize the results of [ER] to the cyclotomic case.

In this paper, We will first follow Kleshchev's approach in [Kle] to give a direct proof of a basis theorem for the cyclotomic Yokonuma-Hecke algebra $Y_{r,n}^{d}(q).$ Our method does not use the representation theory of $Y_{r,n}^{d}(q),$ and so it is very different from that of [ChPA2]. We also present two applications. Then we are largely inspired by the work of [DJM] and [ER] to prove that the cyclotomic Yokonuma-Hecke algebra $Y_{r,n}^{d}(q)$ is a cellular algebra by constructing an explicit cellular basis, and to show that the Jucys-Murphy elements for $Y_{r,n}^{d}(q)$ are JM-elements in Mathas' sense [Ma2].

This paper is organized as follows. In Section 2, we follow Kleshchev's approach in [Kle] to give a direct proof of a basis theorem for the cyclotomic Yokonuma-Hecke algebra $Y_{r,n}^{d}(q).$ In Section 3, we will consider a special case of a Mackey theorem for $Y_{r,n}^{d}(q).$ In Section 4, we will give another proof of the fact that the cyclotomic Yokonuma-Hecke algebra $Y_{r,n}^{d}(q)$ is a Frobenius algebra, and also prove that the induction functor $\mathrm{Ind}_{Y_{r,n}^{d}}^{Y_{r,n+1}^{d}}$ commutes with the $\tau$-duality. In Section 5, we will recall some notations and combinatorial tools that we will need in the sequel. In Section 6, combining the results of [DJM] with those of [ER], we show that the cyclotomic Yokonuma-Hecke algebra $Y_{r,n}^{d}(q)$ is cellular by constructing an explicit cellular basis. In Section 7, we show that the Jucys-Murphy elements for $Y_{r,n}^{d}(q)$ are JM-elements in the abstract sense introduced by Mathas. For the split semisimple $Y_{r,n}^{d}(q)$, we define the idempotents $E_{\mathfrak{t}}$ of $Y_{r,n}^{d}(q)$ and deduce some properties of them by applying the general theory developed in [Ma2, Section 3].

\section{Basis theorem for cyclotomic Yokonuma-Hecke algebras}
\subsection{The definition of $Y_{r, n}^{d}$}
Let $r, n\in \mathbb{N},$ $r\geq1,$ and let $\zeta=e^{2\pi i/r}.$ Let $q$ be an indeterminate. Let $\mathfrak{S}_{n}$ be the symmetric group on $n$ letters, which acts on the set $\{1,2,\ldots,n\}$ on the right by convention.

Let $\mathcal{R}=\mathbb{Z}[\frac{1}{r}][q,q^{-1},\zeta].$ The affine Yokonuma-Hecke algebra $\widehat{Y}_{r,n}=\widehat{Y}_{r,n}(q)$ is an $\mathcal{R}$-associative algebra generated by the elements $t_{1},\ldots,t_{n},g_{1},\ldots,g_{n-1},X_{1}^{\pm 1},$ in which the generators $t_{1},\ldots,t_{n},g_{1},$ $\ldots,g_{n-1}$ satisfy the following relations:
\begin{equation}\label{rel-def-Y1}\begin{array}{rclcl}
g_ig_j\hspace*{-7pt}&=&\hspace*{-7pt}g_jg_i && \mbox{for all $i,j=1,\ldots,n-1$ such that $\vert i-j\vert \geq 2$;}\\[0.1em]
g_ig_{i+1}g_i\hspace*{-7pt}&=&\hspace*{-7pt}g_{i+1}g_ig_{i+1} && \mbox{for all $i=1,\ldots,n-2$;}\\[0.1em]
t_it_j\hspace*{-7pt}&=&\hspace*{-7pt}t_jt_i &&  \mbox{for all $i,j=1,\ldots,n$;}\\[0.1em]
g_it_j\hspace*{-7pt}&=&\hspace*{-7pt}t_{j s_i}g_i && \mbox{for all $i=1,\ldots,n-1$ and $j=1,\ldots,n$;}\\[0.1em]
t_i^r\hspace*{-7pt}&=&\hspace*{-7pt}1 && \mbox{for all $i=1,\ldots,n$;}\\[0.2em]
g_{i}^{2}\hspace*{-7pt}&=&\hspace*{-7pt}1+(q-q^{-1})e_{i}g_{i} && \mbox{for all $i=1,\ldots,n-1$,}
\end{array}
\end{equation}
where $s_{i}$ is the transposition $(i,i+1)$, and for each $1\leq i\leq n-1$,
$$e_{i} :=\frac{1}{r}\sum\limits_{s=0}^{r-1}t_{i}^{s}t_{i+1}^{-s},$$
together with the following relations concerning the generators $X_{1}^{\pm1}$:
\begin{equation}\label{rel-def-Y2}\begin{array}{rclcl}
X_{1}X_{1}^{-1}\hspace*{-7pt}&=&\hspace*{-7pt}X_{1}^{-1}X_{1}=1;&&\\[0.1em]
g_{1}X_{1}g_{1}X_{1}\hspace*{-7pt}&=&\hspace*{-7pt}X_{1}g_{1}X_{1}g_{1};\\[0.1em]
g_{i}X_{1}\hspace*{-7pt}&=&\hspace*{-7pt}X_{1}g_{i} && \mbox{for all $i=2,\ldots,n-1$;}\\[0.1em]
t_{j}X_{1}\hspace*{-7pt}&=&\hspace*{-7pt}X_{1}t_{j} && \mbox{for all $j=1,\ldots,n$.}
\end{array}
\end{equation}

Note that the elements $e_{i}$ are idempotents in $Y_{r, n}^{d}.$ The elements $g_{i}$ are invertible, with the inverse given by
\begin{equation}\label{inverse}
g_{i}^{-1}=g_{i}-(q-q^{-1})e_{i}\quad\mbox{for~all}~i=1,\ldots,n-1.
\end{equation}

Let $w\in \mathfrak{S}_{n},$ and let $w=s_{i_1}\cdots s_{i_{r}}$ be a reduced expression of $w.$ By Matsumoto's lemma, the element $g_{w} :=g_{i_1}g_{i_2}\cdots g_{i_{r}}$ does not depend on the choice of the reduced expression of $w,$ that is, it is well-defined. Let $l$ denote the length function on $\mathfrak{S}_{n}.$ Then we have
\begin{align}\label{multiplication-formula}
g_{i}g_{w}=\begin{cases}g_{s_{i}w}& \hbox {if } l(s_{i}w)>l(w); \\
g_{s_{i}w}+(q-q^{-1})e_{i}g_{w}& \hbox {if } l(s_{i}w)<l(w).
\end{cases}
\end{align}

Let $i, k\in \{1,2,\ldots,n\}$ and set \begin{equation}\label{idempotents}e_{i,k} :=\frac{1}{r}\sum\limits_{s=0}^{r-1}t_{i}^{s}t_{k}^{-s}.\end{equation} Note that $e_{i,i}=1,$ $e_{i,k}=e_{k,i},$ and that $e_{i,i+1}=e_{i}.$ It can be easily checked that
\begin{equation}\label{relations}\begin{array}{rclcl}
e_{i,k}^{2}\hspace*{-7pt}&=&\hspace*{-7pt}e_{i,k} && \mbox{for all $i,k=1,\ldots,n$,}\\[0.1em]
t_{i}e_{j,k}\hspace*{-7pt}&=&\hspace*{-7pt}e_{j,k}t_{i} && \mbox{for all $i,j,k=1,\ldots,n$,}\\[0.1em]
e_{i,j}e_{k,l}\hspace*{-7pt}&=&\hspace*{-7pt}e_{k,l}e_{i,j} &&  \mbox{for all $i,j,k,l=1,\ldots,n$,}\\[0.1em]
e_ie_{k,l}\hspace*{-7pt}&=&\hspace*{-7pt}e_{s_i(k), s_i(l)}e_i && \mbox{for all $i=1,\ldots,n-1$ and $k,l=1,\ldots,n$,}\\[0.1em]
e_{j,k}g_{i}\hspace*{-7pt}&=&\hspace*{-7pt}g_{i}e_{js_{i},ks_{i}} && \mbox{for all $i=1,\ldots,n-1$ and $j,k=1,\ldots,n$.}
\end{array}
\end{equation}
In particular, we have $e_{i}g_{i}=g_{i}e_{i}$ for all $i=1,2,\ldots,n-1.$

We define inductively the following elements in $\widehat{Y}_{r,n}$:
\begin{equation}\label{JM-elements}
X_{i+1} :=g_{i}X_ig_i\quad\mbox{for}~i=1,\ldots,n-1.
\end{equation}
By [ChPA1, Lemma 1] we have, for any $1\leq i\leq n-1$,
\begin{equation}\label{giXj}
g_{i}X_{j}=X_{j}g_{i}\quad\mathrm{for}~j=1,2,\ldots,n~\mathrm{such~that}~j\neq i, i+1.
\end{equation}
Moreover, by [ChPA1, Proposition 1], we have that the elements $t_{1},\ldots, t_{n}, X_{1},\ldots, X_{n}$ form a commutative family, that is,
\begin{equation}\label{commutation-formulae}
xy=yx~~~\mathrm{for~any}~x,y\in \{t_{1},\ldots, t_{n}, X_{1},\ldots, X_{n}\}.
\end{equation}

Let $d\geq 1$ and $v_1,\ldots,v_d$ be some invertible indeterminates. Set $f_{1} :=(X_{1}-v_{1})\cdots (X_{1}-v_{d}).$ Let $\mathcal{J}_{d}$ denote the two-sided ideal of $\widehat{Y}_{r,n}$ generated by $f_{1},$ and define the cyclotomic Yokonuma-Hecke algebra $Y_{r,n}^{d}=Y_{r,n}^{d}(q)$ to be the quotient $$Y_{r,n}^{d}=\widehat{Y}_{r,n}/\mathcal{J}_{d}.$$

\subsection{A basis theorem for $Y_{r, n}^{d}$}
We will often use the following formulae in the rest of this paper.
\begin{lemma}\label{gxxg-xggx}{\rm (See [ChPA2, Lemma 2.15].)} For $a\in \mathbb{Z}_{\geq 0},$ we have
\begin{align}
\label{relations-1}
g_{i}X_{i}^{a}&=X_{i+1}^{a}g_{i}-(q-q^{-1})e_{i}\sum\limits_{k=1}^{a}X_{i}^{a-k}X_{i+1}^{k}  \quad \hspace{1.5mm}\mbox{for all $i=1,\ldots,n-1,$}\\
\label{relations-2}
g_{i}X_{i+1}^{a}&=X_{i}^{a}g_{i}+(q-q^{-1})e_{i}\sum\limits_{k=0}^{a-1}X_{i}^{k}X_{i+1}^{a-k} \qquad \hspace{3mm}\mbox{for all $i=1,\ldots,n-1.$}
\end{align}
\end{lemma}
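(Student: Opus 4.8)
The plan is to prove both identities by induction on $a$, establishing \eqref{relations-1} first and then \eqref{relations-2} in an entirely parallel fashion. The case $a=0$ is vacuous, since both sums are empty and the identities reduce to $g_i=g_i$. For the base case $a=1$ of \eqref{relations-1}, I would start from the definition $X_{i+1}=g_iX_ig_i$ in \eqref{JM-elements}, rewrite it as $g_iX_i=X_{i+1}g_i^{-1}$, and substitute the inverse formula $g_i^{-1}=g_i-(q-q^{-1})e_i$ from \eqref{inverse}. This gives $g_iX_i=X_{i+1}g_i-(q-q^{-1})X_{i+1}e_i$. Since $e_i=\frac{1}{r}\sum_{s}t_i^st_{i+1}^{-s}$ is a polynomial in the $t_j$, it commutes with every $X_k$ by \eqref{commutation-formulae}; hence $X_{i+1}e_i=e_iX_{i+1}$ and we recover exactly the $a=1$ instance.

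For the inductive step of \eqref{relations-1}, I would write $g_iX_i^{a+1}=(g_iX_i^a)X_i$, apply the inductive hypothesis to $g_iX_i^a$, and push the trailing $X_i$ through to the right. The crucial simplifications are that $X_i$ and $X_{i+1}$ commute (again \eqref{commutation-formulae}) and that $e_i$ commutes with all powers of $X_i$ and $X_{i+1}$, so the factor $X_i$ can be absorbed into each summand, raising the exponent of $X_i$ by one while the exponent of $X_{i+1}$ is unchanged. Applying the $a=1$ base case to the single surviving term $X_{i+1}^a(g_iX_i)$ produces both the leading term $X_{i+1}^{a+1}g_i$ and a new correction summand $-(q-q^{-1})e_iX_{i+1}^{a+1}$; this last summand is precisely the missing $k=a+1$ term needed to extend $\sum_{k=1}^{a}$ to $\sum_{k=1}^{a+1}$, which completes the step.

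The argument for \eqref{relations-2} is the mirror image. Its base case $a=1$ comes from $g_iX_{i+1}=g_i^2X_ig_i$, where the quadratic relation $g_i^2=1+(q-q^{-1})e_ig_i$ in \eqref{rel-def-Y1} together with $g_iX_ig_i=X_{i+1}$ yields $g_iX_{i+1}=X_ig_i+(q-q^{-1})e_iX_{i+1}$; the induction on $a$ then proceeds exactly as before, multiplying on the right by $X_{i+1}$ and using the same commutativity facts. I do not expect any genuine obstacle here, as everything reduces to bookkeeping on exponents. The only point requiring care is to check at each stage that $e_i$ really does commute with the relevant monomials in $X_i$ and $X_{i+1}$; this is what keeps the idempotent $e_i$ factored cleanly on the left of every correction term, and it is guaranteed by \eqref{commutation-formulae}.
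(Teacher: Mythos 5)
Your proof is correct. Note, however, that the paper itself contains no proof of this lemma: it is quoted verbatim from [ChPA2, Lemma 2.15], so there is no internal argument to compare yours against; what you have done is supply the missing (standard) proof. Your double induction is sound at every step: the base case $a=1$ of \eqref{relations-1} follows from $X_{i+1}=g_iX_ig_i$ in \eqref{JM-elements}, the inverse formula \eqref{inverse}, and the fact that $e_i$ (a polynomial in $t_i,t_{i+1}$) commutes with $X_{i+1}$ by \eqref{commutation-formulae}; the base case of \eqref{relations-2} follows from the quadratic relation in \eqref{rel-def-Y1}; and the inductive steps use only the commutativity of the family $\{t_1,\ldots,t_n,X_1,\ldots,X_n\}$. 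Since \eqref{commutation-formulae} is established in the paper (via [ChPA1, Proposition 1]) before the lemma is stated, and is proved there independently of the present identities, there is no circularity in invoking it. The bookkeeping on exponents and on the summation range (the surviving term $X_{i+1}^{a}(g_iX_i)$, resp.\ $X_i^{a}(g_iX_{i+1})$, producing exactly the $k=a+1$, resp.\ $k=a$, correction term) checks out, so your argument is complete and is, in substance, the same induction one finds in [ChPA2].
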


From the definition of $f_{1},$ we see that $f_{1}$ is a monic polynomial of degree $d.$ Write $$f_{1}=X_{1}^{d}+a_{1}X_{1}^{d-1}+\cdots+a_{d-1}X_{1}+a_{d}.$$
Note that $a_{d}=(-1)^{d}v_{1}\cdots v_{d}$ is invertible. Set $h_{1} :=f_{1},$ and for $i=2,3,\ldots, n,$ define inductively
\begin{equation}\label{fi-gifigi}
f_{i} :=g_{i-1}f_{i-1}g_{i-1}\quad\mathrm{and}\quad h_{i} :=g_{i-1}h_{i-1}g_{i-1}^{-1}.
\end{equation}

The next lemma easily follows from Lemma \ref{gxxg-xggx} by induction.
\begin{lemma}\label{fixid-hiad}
Let $\mathcal{P}_{n}=\mathcal{R}[X_{1}^{\pm1},\ldots,X_{n}^{\pm1}]$ be the algebra of Laurent polynomials in $X_1,\ldots, X_{n},$ which is regarded as a subalgebra of $\widehat{Y}_{r,n}.$ For $i=1,2,\ldots,n,$ we have
\begin{align}
\label{relations-3}
f_{i}&=X_{i}^{d}+(terms~lying~in~\mathcal{P}_{i-1}X_{i}^{e}Y_{r,i}~for~0\leq e< d),\\
\label{relations-4}
h_{i}&=a_{d}+(terms~lying~in~\mathcal{P}_{i-1}X_{i}^{f}Y_{r,i}~for~0< f\leq d).
\end{align}
\end{lemma}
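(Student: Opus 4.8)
The plan is to prove \eqref{relations-3} and \eqref{relations-4} simultaneously by induction on $i$, feeding the recursions $f_i=g_{i-1}f_{i-1}g_{i-1}$ and $h_i=g_{i-1}h_{i-1}g_{i-1}^{-1}$ into Lemma \ref{gxxg-xggx}. The base case $i=1$ is immediate from $f_1=h_1=X_1^d+a_1X_1^{d-1}+\cdots+a_{d-1}X_1+a_d$: for $f_1$ the term $X_1^d$ is the asserted leading term and the summands $a_jX_1^{d-j}$ ($1\le j\le d$) lie in $\mathcal{R}\,X_1^e\,Y_{r,1}$ with $0\le e<d$; for $h_1$ one instead singles out the constant $a_d$ and collects $X_1^d+a_1X_1^{d-1}+\cdots+a_{d-1}X_1$ into $\mathcal{R}\,X_1^f\,Y_{r,1}$ with $0<f\le d$.

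Two commutation facts drive the inductive step. First, by \eqref{giXj} the generator $g_{i-1}$ commutes with every element of $\mathcal{P}_{i-2}=\mathcal{R}[X_1^{\pm1},\ldots,X_{i-2}^{\pm1}]$; second, by \eqref{commutation-formulae} the element $X_i$ commutes with $e_{i-1}$ and with all of $Y_{r,i-1}$, since the latter is generated by $t_1,\ldots,t_{i-1}$ and $g_1,\ldots,g_{i-2}$ and \eqref{giXj} gives $g_jX_i=X_ig_j$ for $j\le i-2$. For the leading term of $f_{i-1}$ I would apply \eqref{relations-1} to $g_{i-1}X_{i-1}^d$ and then use $g_{i-1}^2=1+(q-q^{-1})e_{i-1}g_{i-1}$; a short computation collapses the $k=d$ contribution and yields
\[
g_{i-1}X_{i-1}^d g_{i-1}=X_i^d-(q-q^{-1})e_{i-1}\sum_{k=1}^{d-1}X_{i-1}^{d-k}X_i^k g_{i-1},
\]
so the leading term produces exactly $X_i^d$ plus terms of the allowed shape, namely a power $X_i^k$ with $0<k<d$, a factor $X_{i-1}^{d-k}\in\mathcal{P}_{i-1}$, and $e_{i-1}g_{i-1}\in Y_{r,i}$. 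For a general lower summand $p\,X_{i-1}^e\,y$ with $p\in\mathcal{P}_{i-2}$, $0\le e<d$, $y\in Y_{r,i-1}$, I would slide $g_{i-1}$ past $p$, rewrite $g_{i-1}X_{i-1}^e$ by \eqref{relations-1}, and then move each resulting $X_i^k$ to the left across $e_{i-1}$ and $y$; this exhibits $g_{i-1}(p\,X_{i-1}^e\,y)g_{i-1}$ as a sum lying in $\mathcal{P}_{i-1}X_i^k Y_{r,i}$ with $0\le k\le e<d$. Assembling the leading and lower contributions gives \eqref{relations-3}.

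The step for $h_i$ is entirely parallel, with $g_{i-1}^{-1}$ in place of the rightmost $g_{i-1}$. Here the leading scalar $a_d\in\mathcal{R}$ is central, so $g_{i-1}a_d g_{i-1}^{-1}=a_d$ reproduces the constant term with no quadratic relation needed. Each remaining summand $p\,X_{i-1}^f\,y$ of $h_{i-1}$ carries $0<f\le d$, and running the same manipulation—now with $g_{i-1}^{-1}=g_{i-1}-(q-q^{-1})e_{i-1}\in Y_{r,i}$ by \eqref{inverse}—expresses $g_{i-1}(p\,X_{i-1}^f\,y)g_{i-1}^{-1}$ as a sum in $\mathcal{P}_{i-1}X_i^k Y_{r,i}$ with $0<k\le f\le d$, which is precisely the range in \eqref{relations-4}.

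No step is genuinely deep; the whole content is the bookkeeping that keeps each factor in its assigned slot, and the one point deserving care is that after applying \eqref{relations-1} every $X_i$-power must be commuted cleanly to the left of $e_{i-1}$ and of $y\in Y_{r,i-1}$, so that the Laurent-polynomial part, the bare $X_i$-power, and the $Y_{r,i}$-part separate—exactly what \eqref{giXj} and \eqref{commutation-formulae} guarantee. The other mild subtlety is the precise exponent range: for $f_i$ one checks that $X_i^d$ occurs with multiplicity one and every other power is strictly smaller, whereas for $h_i$ one checks that every surviving $X_i$-power is strictly positive, which is what forces $a_d$ to be the distinguished term.
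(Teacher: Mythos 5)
Your proposal is correct and follows exactly the route the paper intends: the paper disposes of this lemma with the single remark that it ``easily follows from Lemma \ref{gxxg-xggx} by induction,'' and your argument is precisely that induction, carried out with the recursions \eqref{fi-gifigi}, the commutation facts \eqref{giXj} and \eqref{commutation-formulae}, and the quadratic relation. The details you supply (the cancellation of the $k=d$ term against $g_{i-1}^2=1+(q-q^{-1})e_{i-1}g_{i-1}$, and the bookkeeping of exponent ranges $0\le e<d$ versus $0<f\le d$) are exactly what the paper leaves to the reader, and they check out.
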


We define $$\Pi_{n} :=\{(\alpha,Z)\:|\:Z\in \{1,2,\ldots,n\},\alpha=(\alpha_{1},\ldots,\alpha_{n})\in \mathbb{Z}^{n}~\mathrm{with}~0\leq \alpha_{i}< d~\mathrm{whenever}~i\notin Z\},$$
$$\Pi_{n}^{+}=\{(\alpha,Z)\:|\: Z\neq \emptyset\}.$$
Given $(\alpha,Z)\in \Pi_{n}$ with $Z=\{z_{1},z_{2},\ldots,z_{k}\},$ we also define $P_{Z} :=P_{z_{1}}P_{z_{2}}\cdots P_{z_{k}},$ where
$$P_{z_{i}}=\begin{cases}f_{z_{i}}& \hbox {if } \alpha_{z_{i}}\geq 0; \\h_{z_{i}}& \hbox {if } \alpha_{z_{i}}< 0.\end{cases}$$
\begin{lemma}\label{free-Yrn-module}
$\widehat{Y}_{r,n}$ is a free right $Y_{r,n}$-module with an $\mathcal{R}$-basis $\{X^{\alpha}P_{Z}\:|\:(\alpha,Z)\in \Pi_{n}\}.$
\end{lemma}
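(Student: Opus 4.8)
The plan is to compare the family $\{X^{\alpha}P_{Z}\}$ with the standard monomial (PBW-type) basis of $\widehat{Y}_{r,n}$, namely the fact that $\widehat{Y}_{r,n}$ is free as a right $Y_{r,n}$-module on the monomials $\{X^{\beta}=X_{1}^{\beta_{1}}\cdots X_{n}^{\beta_{n}}:\beta\in\mathbb{Z}^{n}\}$, and to exhibit the transition between the two families as a (generalized) unitriangular change of basis. First I would set up a bijection $\Theta\colon\Pi_{n}\to\mathbb{Z}^{n}$ between the two index sets: given $(\alpha,Z)$, put $\Theta(\alpha,Z)_{i}=\alpha_{i}+d$ when $i\in Z$ and $\alpha_{i}\geq 0$, and $\Theta(\alpha,Z)_{i}=\alpha_{i}$ otherwise. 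One checks at once that this is a bijection, its inverse being dictated coordinatewise by which of the three regions $\{0,\dots,d-1\}$, $\{d,d+1,\dots\}$, $\{-1,-2,\dots\}$ the integer $\beta_{i}$ lies in.

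Next I would compute the leading term of $X^{\alpha}P_{Z}$ in the standard basis. Writing $Z=\{z_{1}<\cdots<z_{k}\}$ and expanding each factor by Lemma~\ref{fixid-hiad} as $f_{z_{j}}=X_{z_{j}}^{d}+R_{z_{j}}$ with $R_{z_{j}}\in\sum_{0\leq e<d}\mathcal{P}_{z_{j}-1}X_{z_{j}}^{e}Y_{r,z_{j}}$, and $h_{z_{j}}=a_{d}+S_{z_{j}}$ with $S_{z_{j}}\in\sum_{0<f\leq d}\mathcal{P}_{z_{j}-1}X_{z_{j}}^{f}Y_{r,z_{j}}$, I would multiply out. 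The crucial point is that $X_{z_{j}}^{d}$ (resp. $a_{d}$) commutes with all of $X^{\alpha}$, with every factor $P_{z_{l}}$ for $l<j$, and with each $X_{m}^{\alpha_{m}}$ for $m>z_{j}$; this follows from \eqref{commutation-formulae}, \eqref{giXj} and the relations in \eqref{relations}, since those leading symbols involve only $X_{z_{j}}$ and scalars while the remaining data lives on strands of index $<z_{j}$ or, for the trailing variables, $>z_{j}$. Pulling all the leading symbols to the front therefore produces the single top monomial $a_{d}^{\,m}X^{\Theta(\alpha,Z)}$, where $m=\#\{i\in Z:\alpha_{i}<0\}$ and $a_{d}\in\mathcal{R}^{\times}$, so that its coefficient is a unit of $Y_{r,n}$.

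It then remains to prove that every other term, obtained by selecting at least one correction $R_{z_{j}}$ or $S_{z_{j}}$, is strictly smaller than $X^{\Theta(\alpha,Z)}$ for a suitable order. Here lies the one genuinely delicate point: an $f$-correction lowers the $z_{j}$-exponent (from $\alpha_{z_{j}}+d$ down into $[\alpha_{z_{j}},\alpha_{z_{j}}+d-1]$), whereas an $h$-correction raises it (from $\alpha_{z_{j}}$ up into $[\alpha_{z_{j}}+1,\alpha_{z_{j}}+d]$), so the two families of corrections move the distinguished exponent in opposite directions. I would reconcile this by ordering the integers as $0\prec 1\prec\cdots\prec d-1\prec d\prec\cdots\prec -1\prec -2\prec\cdots$ (nonnegatives increasing, followed by negatives in order of increasing absolute value) and then comparing exponent vectors reverse-lexicographically, giving priority to the highest strand. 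Processing the factors from the top index $z_{k}$ downward, and using that each correction leaves all exponents on strands $>z_{j}$ equal to those of $\Theta(\alpha,Z)$ while changing the $z_{j}$-exponent to one that is $\prec$-smaller, one gets that every non-leading monomial is strictly below $X^{\Theta(\alpha,Z)}$ in this order. Since the displayed order on $\mathbb{Z}$ is a well-order (it is $\omega+\omega$) and reverse-lexicographic products of well-orders are well-orders, the resulting unitriangular relation between $\{X^{\alpha}P_{Z}\}$ and $\{X^{\beta}\}$ can be inverted by well-founded induction: spanning follows by solving for $X^{\beta}$ from the top down, and linear independence follows by extracting the maximal monomial (in this order) from any relation and using that its coefficient $a_{d}^{\,m}$ is invertible. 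The main obstacle is thus not the algebra but the combinatorics of choosing this bijection together with the opposite-direction order so that both the $f$- and the $h$-corrections simultaneously become genuinely lower order.
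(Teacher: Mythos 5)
Your proposal is correct and follows the paper's overall strategy: the same bijection $\Theta$ (the paper's $\theta$), the same leading-term computation from Lemma \ref{fixid-hiad} with unit coefficient a power of $a_{d}$, and the same reduction to the known right-$Y_{r,n}$-module basis $\{X^{\beta}\:|\:\beta\in\mathbb{Z}^{n}\}$ of $\widehat{Y}_{r,n}$. Where you genuinely diverge is in how the triangularity is organized. The paper keeps the usual order on $\mathbb{Z}$, compared reverse-lexicographically, and records two \emph{separately oriented} one-sided statements, \eqref{XX-PPX} and \eqref{XX-PPX2}: for the $f$-block the identified monomial $X^{\theta_{r}}$ is the largest term, with corrections confined to the window between $\alpha$ and $\theta_{r}$, while for the $h$-block the identified monomial $a_{d}^{s}X^{\theta_{s}}$ is the \emph{smallest} term, with corrections confined to the window between $\theta_{s}$ and $\alpha+d$; how these two oppositely oriented triangularities combine to invert the full transition matrix is asserted rather than spelled out. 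Your device --- the well-order $0\prec 1\prec\cdots\prec d-1\prec d\prec\cdots\prec -1\prec -2\prec\cdots$ on $\mathbb{Z}$, extended reverse-lexicographically with priority to the highest strand --- makes both kinds of corrections strictly smaller than the leading monomial at the highest affected strand, so you obtain a single one-sided unitriangular system invertible by well-founded induction; this is a clean way of supplying exactly the step the paper leaves implicit, at the cost of an exotic order (and your verification that $f$-corrections stay nonnegative while $h$-corrections either stay negative with smaller absolute value or drop into $[0,d)$ is precisely what makes it work). One detail you should still make explicit, though the paper is equally silent about it: when a correction $R_{z_{j}}$ or $S_{z_{j}}$ is selected, its right-hand factor $Y_{r,z_{j}}$ must be pushed through the remaining factors $P_{z_{l}}$, $l>j$, without disturbing the exponents on strands $>z_{j}$; this requires either an argument of the kind the paper only gives afterwards in Lemma \ref{YfY-fY} (namely $Y_{r,z_{l}-1}f_{z_{l}}Y_{r,z_{l}}=f_{z_{l}}Y_{r,z_{l}}$), or the decomposition $\widehat{Y}_{r,m}=\mathcal{P}_{m}Y_{r,m}$ combined with the fact that $Y_{r,m}$ commutes with $X_{z_{l}}$ for $z_{l}>m$. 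With that supplied, your argument is complete and, if anything, more rigorous than the paper's own sketch.
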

\begin{proof}
Define a lexicographic ordering on $\mathbb{Z}^{n}$: $\alpha\prec\alpha'$ if and only if $$\alpha_{n}=\alpha_{n}',\ldots,\alpha_{k+1}=\alpha_{k+1}', \alpha_{k}<\alpha_{k}'$$ for some $k=1,2,\ldots,n.$ Define a function $\theta: \Pi_{n}\rightarrow \mathbb{Z}^{n}$ by $\theta(\alpha, Z)=(\theta_{1},\ldots,\theta_{n}),$ where

$$\theta_{i}=\begin{cases}\alpha_{i}& ~~\hbox {if } i\notin Z~\mathrm{or}~i\in Z~\mathrm{with}~\alpha_{i}< 0; \\\alpha_{i}+d& ~~\hbox {if } i\in Z~\mathrm{with}~\alpha_{i}\geq 0.\end{cases}$$

Given $(\alpha,Z)\in \Pi_{n}$ with $Z=\{i_{1},\ldots,i_{r},j_{1},\ldots,j_{s}\}$ such that $\alpha_{i_{k}}\geq 0$ ($1\leq k\leq r$) and $\alpha_{j_{l}}< 0$ ($1\leq l\leq s$), we can prove, using induction on $r, s$ and Lemma \ref{fixid-hiad}, that
\begin{equation}\label{XX-PPX}
(X_{i_{1}}^{\alpha_{i_{1}}}\cdots X_{i_{r}}^{\alpha_{i_{r}}})(P_{i_{1}}\cdots P_{i_{r}})=X^{\theta_{r}}+(\mathrm{terms~lying~in~}X^{\beta_{r}}Y_{r,n})
\end{equation}
for $(\alpha_{i_{1}},\ldots,\alpha_{i_{r}})\preceq \beta_{r}=(\beta_{i_{1}},\ldots,\beta_{i_{r}})\prec \theta_{r},$ where $\theta_{r}=(\theta_{i_{1}},\ldots,\theta_{i_{r}})=(\alpha_{i_{1}}+d,\ldots,\alpha_{i_{r}}+d),$ $X^{\theta_{r}}=X_{i_{1}}^{\theta_{i_{1}}}\cdots X_{i_{r}}^{\theta_{i_{r}}},$ and $X^{\beta_{r}}=X_{i_{1}}^{\beta_{i_{1}}}\cdots X_{i_{r}}^{\beta_{i_{r}}};$

Similarly we can prove that
\begin{equation}\label{XX-PPX2}
(X_{j_{1}}^{\alpha_{j_{1}}}\cdots X_{j_{s}}^{\alpha_{j_{r}}})(P_{j_{1}}\cdots P_{j_{s}})=a_{d}^{s}X^{\theta_{s}}+(\mathrm{terms~lying~in~}X^{\gamma_{s}}Y_{r,n})
\end{equation}
for $(\alpha_{j_{1}},\ldots,\alpha_{j_{s}})\prec \gamma_{s}=(\gamma_{j_{1}},\ldots,\gamma_{j_{s}})\preceq (\alpha_{j_{1}}+d,\ldots,\alpha_{j_{s}}+d),$ where $X^{\theta_{s}}=X_{j_{1}}^{\theta_{j_{1}}}\cdots X_{j_{s}}^{\theta_{j_{s}}}=X_{j_{1}}^{\theta_{j_{1}}}\cdots X_{j_{s}}^{\theta_{j_{s}}}$ and $X^{\gamma_{s}}=X_{j_{1}}^{\gamma_{j_{1}}}\cdots X_{j_{s}}^{\gamma_{j_{s}}}.$

Since $\theta: \Pi_{n}\rightarrow \mathbb{Z}^{n}$ is a bijection and we already know that the next set $$\{X^{\alpha}=X_{1}^{\alpha_{1}}\cdots X_{n}^{\alpha_{n}}\:|\:\alpha=(\alpha_{1},\ldots,\alpha_{n})\in \mathbb{Z}^{n}\}$$
is an $\mathcal{R}$-basis for $\widehat{Y}_{r,n}$ viewed as a right $Y_{r,n}$-module by [CW, Theorem 2.3], where the proof can be easily adapted to be true over $\mathcal{R},$ \eqref{XX-PPX} and \eqref{XX-PPX2} imply this lemma.
\end{proof}
\begin{lemma}\label{YfY-fY}
For $n\geq 1,$ we have $Y_{r,n-1}f_{n}Y_{r,n}=f_{n}Y_{r,n}$ and $Y_{r,n-1}h_{n}Y_{r,n}=h_{n}Y_{r,n}.$
\end{lemma}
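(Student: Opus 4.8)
The plan is to establish the single inclusion $Y_{r,n-1}f_n\subseteq f_nY_{r,n}$ (and, in parallel, $Y_{r,n-1}h_n\subseteq h_nY_{r,n}$); the reverse inclusions hold trivially because $1\in Y_{r,n-1}$, and the two facts together give both equalities. Since $f_nY_{r,n}$ is a right $Y_{r,n}$-submodule, it suffices to prove that each algebra generator of $Y_{r,n-1}$ — namely each $t_j$ with $1\le j\le n-1$ and each $g_i$ with $1\le i\le n-2$ — carries $f_n$ into $f_nY_{r,n}$; closure under products of generators then follows by an easy induction on word length. I will dispose of the $g_i$ and the $t_j$ separately, treating $h_n$ by exactly the same scheme.

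For the $g_i$ I claim the sharper statement $g_if_n=f_ng_i$ and $g_ih_n=h_ng_i$ for all $1\le i\le n-2$. The key supporting fact is that for $m\ge k+1$ the generator $g_m$ commutes with $f_k$ and with $h_k$: iterating \eqref{fi-gifigi} shows these are words in $g_1,\dots,g_{k-1}$ (and, for $h_k$, their inverses) together with the polynomial $f_1$ in $X_1$, each of which commutes with $g_m$ by the first relation in \eqref{rel-def-Y1} (as $|m-j|\ge 2$ for $j\le k-1$) and by \eqref{giXj}. Granting this, I induct on $n$. The routine case $i\le n-3$ is immediate, since $g_i$ commutes with $g_{n-1}$ by \eqref{rel-def-Y1} and with $f_{n-1}$ by the inductive hypothesis, hence with $f_n=g_{n-1}f_{n-1}g_{n-1}$. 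The \emph{crux} is $i=n-2$, where $g_{n-2}$ and $g_{n-1}$ no longer commute; unfolding $f_n$ and $f_{n-1}$ via \eqref{fi-gifigi} and applying the braid relation twice gives
\begin{align*}
g_{n-2}f_n&=g_{n-2}g_{n-1}g_{n-2}\,f_{n-2}\,g_{n-2}g_{n-1}\\
&=g_{n-1}g_{n-2}g_{n-1}\,f_{n-2}\,g_{n-2}g_{n-1}\\
&=g_{n-1}g_{n-2}\,f_{n-2}\,g_{n-1}g_{n-2}g_{n-1}\\
&=g_{n-1}g_{n-2}\,f_{n-2}\,g_{n-2}g_{n-1}g_{n-2}\\
&=f_ng_{n-2},
\end{align*}
where the third step uses that $g_{n-1}$ commutes with $f_{n-2}$ (the fact above, with $m=n-1$, $k=n-2$). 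The computation for $h_n$ has the identical shape, except that the inverse braid identity $g_{n-1}g_{n-2}^{-1}g_{n-1}^{-1}=g_{n-2}^{-1}g_{n-1}^{-1}g_{n-2}$ — a formal consequence of the braid relation — is used in place of the plain one to collapse $g_{n-2}h_n$ to $h_ng_{n-2}$.

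For $t_j$ with $1\le j\le n-1$ I would use the unfolded expressions $f_n=(g_{n-1}\cdots g_1)\,f_1\,(g_1\cdots g_{n-1})$ and $h_n=(g_{n-1}\cdots g_1)\,f_1\,(g_1^{-1}\cdots g_{n-1}^{-1})$, again obtained from \eqref{fi-gifigi}. Pushing $t_j$ to the right through the leading string $g_{n-1}\cdots g_1$ by means of $g_it_j=t_{js_i}g_i$ from \eqref{rel-def-Y1} (and its inverse form) replaces $t_j$ by some $t_{j'}$ with $j'\in\{1,\dots,n\}$; this $t_{j'}$ crosses the central factor $f_1$ unchanged because the $t$'s commute with $X_1$ by \eqref{commutation-formulae}, and pushing it through the trailing $g$-string turns it into some $t_{j''}$ with $j''\in\{1,\dots,n\}$. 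Hence $t_jf_n=f_nt_{j''}\in f_nY_{r,n}$ and likewise $t_jh_n=h_nt_{j''}\in h_nY_{r,n}$.

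Together with the $g_i$ case, and the trivial base cases $n=1,2$ (where there are no relevant $g_i$), this shows every generator of $Y_{r,n-1}$ maps $f_n$ and $h_n$ into $f_nY_{r,n}$ and $h_nY_{r,n}$ respectively, which yields the lemma. I expect the only genuine difficulty to be the $i=n-2$ commutation: essentially all the weight of the argument sits in getting that braid bookkeeping exactly right, whereas the remaining steps are purely formal.
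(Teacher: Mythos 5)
Your proposal is correct and takes essentially the same approach as the paper: reduce to left multiplication by the algebra generators of $Y_{r,n-1}$, and settle the crux case $g_{n-2}$ by the identical braid-relation computation (your exact identity $g_{n-2}f_n=f_ng_{n-2}$ is precisely the paper's chain of equalities with $Y_{r,n}$ stripped off), with the routine $t_j$ and $g_i$ ($i\le n-3$) cases fleshed out where the paper just says they follow from the definitions. The only divergence is minor: for $h_n$ you re-run the braid argument using the inverse braid identity, whereas the paper simply observes that $f_nY_{r,n}=h_nY_{r,n}$ (the $g_i$ being invertible in $Y_{r,n}$), so its second equality follows immediately from the first.
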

\begin{proof}
It suffices to prove that the left multiplication by the elements $t_{1},\ldots,t_{n-1},$ $g_{1},\ldots,g_{n-2}$ leaves the space $f_{n}Y_{r,n}$ invariant. Considering $t_{1},\ldots,t_{n-1},$ $g_{1},\ldots,g_{n-3},$ it easily follows from the definition of $f_{n}$ and $h_{n}.$ Considering $g_{n-2},$ we have
\begin{align*}
g_{n-2}f_{n}Y_{r,n}&=g_{n-2}g_{n-1}g_{n-2}f_{n-2}g_{n-2}g_{n-1}Y_{r,n}\\&=g_{n-1}g_{n-2}g_{n-1}f_{n-2}g_{n-2}g_{n-1}Y_{r,n}\\
&=g_{n-1}g_{n-2}f_{n-2}g_{n-1}g_{n-2}g_{n-1}Y_{r,n}\\&=g_{n-1}g_{n-2}f_{n-2}g_{n-2}g_{n-1}g_{n-2}Y_{r,n}\\
&=f_{n}Y_{r,n}.
\end{align*}
It follows from the definition that we have $f_{n}Y_{r,n}=h_{n}Y_{r,n},$ so we have obtained the second equality.
\end{proof}

\begin{lemma}\label{Jd-PNfY}
We have $\mathcal{J}_{d}=\sum\limits_{i=1}^{n}\mathcal{P}_{n}f_{i}Y_{r,n}.$
\end{lemma}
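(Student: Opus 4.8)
The plan is to prove the two inclusions separately, writing $I:=\sum_{i=1}^{n}\mathcal{P}_{n}f_{i}Y_{r,n}$ for the right-hand side. For $I\subseteq\mathcal{J}_{d}$ I would observe that, by the recursion \eqref{fi-gifigi}, $f_{i}=g_{i-1}\cdots g_{1}f_{1}g_{1}\cdots g_{i-1}$ is $f_{1}$ multiplied left and right by invertible elements of $\widehat{Y}_{r,n}$, so $f_{i}\in\mathcal{J}_{d}$; since $\mathcal{J}_{d}$ is a two-sided ideal and $\mathcal{P}_{n},Y_{r,n}\subseteq\widehat{Y}_{r,n}$, it absorbs $\mathcal{P}_{n}f_{i}Y_{r,n}$, giving $I\subseteq\mathcal{J}_{d}$.

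For the reverse inclusion the key structural input is that $\{X^{\alpha}\}$ is a right $Y_{r,n}$-basis of $\widehat{Y}_{r,n}$ (recorded in the proof of Lemma \ref{free-Yrn-module}), so $\widehat{Y}_{r,n}=\mathcal{P}_{n}Y_{r,n}$; in particular $Y_{r,n}\mathcal{P}_{n}\subseteq\widehat{Y}_{r,n}=\mathcal{P}_{n}Y_{r,n}$. Granting this, I would first show that $I$ is a \emph{left} ideal, for which, using $\mathcal{P}_{n}\mathcal{P}_{n}=\mathcal{P}_{n}$, $Y_{r,n}Y_{r,n}=Y_{r,n}$ and the containment just noted, it suffices to prove $Y_{r,n}f_{i}\subseteq I$ for every $i$, i.e.\ that $t_{k}f_{i}$ and $g_{k}f_{i}$ lie in $I$. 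The elements $t_{k}f_{i}=f_{i}t_{k\sigma}$ cause no trouble, since $t_{k}$ commutes with $X_{1}$ and passes through each $g_{j}$ in $f_{i}$ via $g_{j}t_{m}=t_{ms_{j}}g_{j}$. For $g_{k}f_{i}$ I would split into cases: when $k\geq i+1$, $g_{k}$ commutes with every factor $g_{j}$ ($j\leq i-1$) and with $f_{1}=f_{1}(X_{1})$ by \eqref{rel-def-Y2}, so $g_{k}f_{i}=f_{i}g_{k}$; when $k=i$ I use $f_{i+1}=g_{i}f_{i}g_{i}$ and $g_{i}^{-1}=g_{i}-(q-q^{-1})e_{i}$ from \eqref{inverse} to get $g_{i}f_{i}=f_{i+1}g_{i}-(q-q^{-1})f_{i+1}e_{i}\in\mathcal{P}_{n}f_{i+1}Y_{r,n}$; when $k=i-1$ I expand $g_{i-1}^{2}$ via \eqref{rel-def-Y1} to obtain $g_{i-1}f_{i}=f_{i-1}g_{i-1}+(q-q^{-1})e_{i-1}f_{i}$.

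The delicate points, which I expect to be the main obstacle, are the interior case $k\leq i-2$ and the stray term $e_{i-1}f_{i}$. For $k\leq i-2$ I would prove $g_{k}f_{i}=f_{i}g_{k}$ by induction on $i$, the step for $k=i-2$ using the braid relation $g_{i-2}g_{i-1}g_{i-2}=g_{i-1}g_{i-2}g_{i-1}$ together with the already-established commutation of $g_{i-1}$ with $f_{i-2}$; this is precisely the manipulation performed for $g_{n-2}$ in Lemma \ref{YfY-fY}, which I can cite. For the idempotent I would show $e_{i-1}f_{i}=f_{i}e_{i-1}$ by pushing $e_{i-1}=e_{i-1,i}$ rightward through $f_{i}=g_{i-1}\cdots g_{1}f_{1}g_{1}\cdots g_{i-1}$ using $e_{j,k}g_{l}=g_{l}e_{js_{l},ks_{l}}$ from \eqref{relations}: the index pair migrates from $\{i-1,i\}$ down to $\{1,i\}$, commutes past $f_{1}(X_{1})$, and returns to $\{i-1,i\}$, so $e_{i-1}f_{i}=f_{i}e_{i-1}\in f_{i}Y_{r,n}$. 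Hence $g_{i-1}f_{i}=f_{i-1}g_{i-1}+(q-q^{-1})f_{i}e_{i-1}\in I$, finishing the proof that $I$ is a left ideal.

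Finally I would deduce $\mathcal{J}_{d}\subseteq I$ without ever checking that $I$ is a right ideal, by exploiting that $f_{1}\in\mathcal{P}_{n}$ and $\mathcal{P}_{n}$ is commutative by \eqref{commutation-formulae}: from $\widehat{Y}_{r,n}=\mathcal{P}_{n}Y_{r,n}$ one gets $f_{1}\widehat{Y}_{r,n}=f_{1}\mathcal{P}_{n}Y_{r,n}=\mathcal{P}_{n}f_{1}Y_{r,n}\subseteq I$, whence $\mathcal{J}_{d}=\widehat{Y}_{r,n}f_{1}\widehat{Y}_{r,n}=\widehat{Y}_{r,n}(f_{1}\widehat{Y}_{r,n})\subseteq\widehat{Y}_{r,n}I\subseteq I$, the last step because $I$ is a left ideal. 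Combining the two inclusions gives $\mathcal{J}_{d}=\sum_{i=1}^{n}\mathcal{P}_{n}f_{i}Y_{r,n}$.
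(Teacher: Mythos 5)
Your proof is correct, but it takes a genuinely different route from the paper's. The paper establishes the equality as a single chain of identities: starting from $\mathcal{J}_{d}=\widehat{Y}_{r,n}f_{1}\widehat{Y}_{r,n}$, it substitutes $\widehat{Y}_{r,n}=\mathcal{P}_{n}Y_{r,n}$ on both sides (the same structural fact you invoke), and then decomposes the left-hand factor $Y_{r,n}$ as $\sum_{i,j}t_{i}^{j}g_{i-1}\cdots g_{1}Y_{r,2\ldots n}$, where $Y_{r,2\ldots n}$ is the subalgebra generated by $t_{2},\ldots,t_{n},g_{2},\ldots,g_{n-1}$. Since $Y_{r,2\ldots n}$ and $t_{1}$ centralize $f_{1}$, every element hitting $f_{1}$ from the left collapses onto the prefixes $g_{i-1}\cdots g_{1}$, and $g_{i-1}\cdots g_{1}f_{1}Y_{r,n}=f_{i}\,g_{i-1}^{-1}\cdots g_{1}^{-1}Y_{r,n}=f_{i}Y_{r,n}$; the awkward commutations you must face never arise because the coset decomposition prevents $g_{i-1}$ and $g_{i}$ from ever meeting $f_{i}$. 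Your argument instead verifies left-ideal stability generator by generator, handling the extra cases $k=i-1$ and $k=i$ via the quadratic relation, the identity $e_{i-1}f_{i}=f_{i}e_{i-1}$ (your migration argument for $e_{i-1,i}$ through $f_{i}$ is correct), and the braid manipulation of Lemma \ref{YfY-fY}; this is longer but more self-contained, since it uses only the defining relations together with $\widehat{Y}_{r,n}=\mathcal{P}_{n}Y_{r,n}$, and in effect it extends Lemma \ref{YfY-fY} from $Y_{r,n-1}f_{n}$ to all of $Y_{r,n}f_{i}$, whereas the paper additionally relies on the (uncited) coset decomposition of $Y_{r,n}$ relative to $Y_{r,2\ldots n}$. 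One small point of rigor: the phrase ``it suffices to prove that $t_{k}f_{i}$ and $g_{k}f_{i}$ lie in $I$'' needs the slightly stronger statement --- which your case analysis does in fact establish --- that each generator maps $\sum_{j}f_{j}Y_{r,n}$ into itself; only then can the claim be iterated over words in the generators to conclude $Y_{r,n}f_{i}\subseteq I$.
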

\begin{proof}
\begin{align*}
\mathcal{J}_{d}&=\widehat{Y}_{r,n}f_{1}\widehat{Y}_{r,n}=\widehat{Y}_{r,n}f_{1}\mathcal{P}_{n}Y_{r,n}=\widehat{Y}_{r,n}f_{1}Y_{r,n}\\
&=\mathcal{P}_{n}Y_{r,n}f_{1}Y_{r,n}=\sum\limits_{i=1}^{n}\sum\limits_{j=0}^{r-1}\sum\limits_{u\in Y_{r,2\ldots n}}\mathcal{P}_{n}t_{i}^{j}g_{i-1}\cdots g_{1}uf_{1}Y_{r,n}\\
&=\sum\limits_{i=1}^{n}\mathcal{P}_{n}g_{i-1}\cdots g_{1}f_{1}Y_{r,n}=\sum\limits_{i=1}^{n}\mathcal{P}_{n}f_{i}Y_{r,n}.
\end{align*}
as required.
\end{proof}

\begin{lemma}\label{Jdsum-ZPiZPz}  For $d\geq 1,$ we have $\mathcal{J}_{d}=\sum\limits_{(\alpha, Z)\in \Pi_{n}^{+}}X^{\alpha}P_{Z}Y_{r,n}.$\end{lemma}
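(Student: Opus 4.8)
The plan is to prove the two inclusions separately, writing $M:=\sum_{(\alpha,Z)\in\Pi_n^{+}}X^{\alpha}P_ZY_{r,n}$ for the right-hand side.

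\emph{The inclusion $M\subseteq\mathcal J_d$.} First note that $f_i,h_i\in\mathcal J_d$ for every $i$: we have $f_1=h_1\in\mathcal J_d$ by definition, and since $\mathcal J_d$ is a two-sided ideal the recursion \eqref{fi-gifigi} propagates membership upward. For $(\alpha,Z)\in\Pi_n^{+}$ we have $Z\neq\emptyset$, so the product $P_Z$ has at least one factor $f_z$ or $h_z$ with $z\in Z$; hence $P_Z\in\mathcal J_d$ and therefore $X^{\alpha}P_ZY_{r,n}\subseteq\mathcal J_d$. Summing over $\Pi_n^{+}$ gives $M\subseteq\mathcal J_d$.

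\emph{Reduction of the reverse inclusion.} By Lemma \ref{Jd-PNfY} it suffices to show $\mathcal P_nf_iY_{r,n}\subseteq M$, and since $M$ is by construction a right $Y_{r,n}$-module this reduces to proving $X^{\beta}f_i\in M$ for all $\beta\in\mathbb Z^{n}$ and all $i$. Two preliminary facts make the bookkeeping manageable. First, an easy induction using \eqref{fi-gifigi} and \eqref{inverse} gives $h_i=f_iu_i$ for a unit $u_i\in Y_{r,i}$, so one may trade $f_i$ for $h_i$ (and back) at the cost of an invertible right factor; this means the choice between $f$ and $h$ dictated by the sign of an exponent in the definition of $P_Z$ is never an obstruction. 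Second, the computation in Lemma \ref{YfY-fY} generalizes to $Y_{r,m}f_{m'}\subseteq f_{m'}Y_{r,m'}$ for $m<m'$, which lets me push any $Y$-factor to the right through a product of $f$'s, and hence reorder and normalize a word into increasing-index form modulo a right $Y_{r,n}$-factor.

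\emph{The straightening induction.} Call a coordinate $l$ of $\beta$ \emph{bad} if $\beta_l\notin\{0,1,\dots,d-1\}$. If $i$ is the only possibly-bad coordinate, then $X^{\beta}f_i\in X^{\beta}P_{\{i\}}Y_{r,n}\subseteq M$, using the first preliminary fact to match the sign of $\beta_i$ and noting $\{i\}\neq\emptyset$. Otherwise choose a bad $l\neq i$. If $\beta_l\geq d$, rewrite $X_l^{d}=f_l-R_l$ with $R_l\in\sum_{0\leq e<d}\mathcal P_{l-1}X_l^{e}Y_{r,l}$ using \eqref{relations-3}; substituting and moving the monomials through the other factors produces one term in which $l$ has been absorbed into the index set (after applying the reordering of the previous paragraph to return the word to increasing-index form), in which the new $l$-exponent $\beta_l-d\geq 0$ lies in a strictly lower class, together with correction terms of strictly smaller $X_l$-degree coming from $R_l$. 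If $\beta_l<0$, argue symmetrically from \eqref{relations-4}, writing $a_d^{-1}h_l=1+(\text{higher powers of }X_l)$ to absorb $l$ via $h_l$ while raising $\beta_l$ toward the range $\{0,\dots,d-1\}$. In every case the resulting terms have strictly fewer bad coordinates, or are strictly smaller in the lexicographic order $\prec$ of Lemma \ref{free-Yrn-module}, so the induction terminates and lands in $M$.

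\emph{The main obstacle.} The difficulty is entirely in controlling the correction terms. After a substitution the leftover pieces carry Laurent monomials sitting to the right of the accumulated $f$/$h$-factors, and transporting them back past those factors invokes \eqref{relations-1} and \eqref{relations-2}, for instance $g_lX_l^{a}=X_{l+1}^{a}g_l-(q-q^{-1})e_l\sum_kX_l^{a-k}X_{l+1}^{k}$, which shuffle exponents between neighbouring variables and can a priori create new bad coordinates. What must be verified is that each such term is strictly smaller in $\prec$, so that these relations only ever generate terms already covered by the inductive hypothesis; this is exactly the content of the triangular estimates \eqref{XX-PPX} and \eqref{XX-PPX2} from Lemma \ref{free-Yrn-module}, which I expect to reuse here. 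Equivalently, since $\widehat{Y}_{r,n}=M\oplus\big(\sum_{\alpha\in\{0,\dots,d-1\}^{n}}X^{\alpha}Y_{r,n}\big)$ by Lemma \ref{free-Yrn-module} and $M\subseteq\mathcal J_d$ is already known, the reverse inclusion is the assertion that $\mathcal J_d$ meets the complementary free summand only in $0$; this is the one step carrying genuine combinatorial content, and it is the crux on which the subsequent basis theorem rests.
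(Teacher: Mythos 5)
Your setup is sound: the inclusion $M\subseteq\mathcal{J}_d$, the reduction via Lemma \ref{Jd-PNfY} to showing $X^{\beta}f_i\in M$, and both of your preliminary facts (that $h_i=f_iu_i$ for a unit $u_i\in Y_{r,i}$, and that $Y_{r,m}f_{m'}\subseteq f_{m'}Y_{r,m'}$ for $m<m'$, which indeed follows from Lemma \ref{YfY-fY} since $Y_{r,m}\subseteq Y_{r,m'-1}$) are correct and consistent with the paper. The gap is in the straightening induction, and it sits exactly where your route departs from the paper's: the paper proves the lemma by induction on $n$, and that outer induction is not cosmetic --- it is the device that tames the correction terms. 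Your termination measure does not work. By \eqref{relations-3} the remainder $R_l$ lies in $\sum_{0\le e<d}\mathcal{P}_{l-1}X_l^{e}Y_{r,l}$, so each correction term acquires an arbitrary Laurent monomial in $X_1,\ldots,X_{l-1}$: the coordinates below $l$ get scrambled, and the number of bad coordinates can go up, not down. The fallback ``strictly smaller in $\prec$'' also fails: $\prec$ is a lexicographic order on $\mathbb{Z}^{n}$, hence not well-founded, so $\prec$-descent alone gives no termination; worse, in the negative case the substitution via \eqref{relations-4} produces exponents $\beta_l+f$ with $0<f\le d$, which are strictly $\prec$-\emph{larger}. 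Finally, when the bad coordinate satisfies $l>i$, the $\mathcal{P}_{l-1}$-factor of $R_l$ ends up trapped on the wrong side of $f_i$, and transporting it back across $f_i$ invokes \eqref{relations-1}--\eqref{relations-2}; the estimates \eqref{XX-PPX} and \eqref{XX-PPX2} that you propose to reuse do not address this situation --- they expand products $X^{\alpha}P_Z$ whose monomial part already stands on the left, which is precisely the configuration you cannot yet reach.

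What is missing is the paper's organizing idea: induct on $n$, set $\mathcal{J}_d'=\widehat{Y}_{r,n-1}f_1\widehat{Y}_{r,n-1}$, and use the inductive hypothesis \eqref{JdPi-XPY} to rewrite everything supported on coordinates $1,\ldots,n-1$ wholesale, rather than coordinate by coordinate. Then for $i=n$ one only needs the basis of Lemma \ref{free-Yrn-module} for $\widehat{Y}_{r,n-1}$ together with Lemma \ref{YfY-fY}; and for $i<n$ one first applies \eqref{JdPi-XPY} to $X^{\beta}f_iY_{r,n}$ (with $\beta\in\mathbb{Z}^{n-1}$) and is left with straightening the single exponent $\alpha_n$, by a bounded two-sided induction (downward when $\alpha_n\ge 0$, upward when $\alpha_n<0$) whose correction terms land in $X_n^{\bullet}\mathcal{J}_d'Y_{r,n}$ --- the scrambled lower coordinates are absorbed by invoking \eqref{JdPi-XPY} again, not by any descent argument. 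Your closing ``equivalently'' paragraph is a correct reformulation (given Lemma \ref{free-Yrn-module} and $M\subseteq\mathcal{J}_d$), but it merely restates the lemma to be proved; it does not supply the missing argument.
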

\begin{proof}
Proceed by induction on $n,$ the case $n=1$ being obvious. Let $n>1.$ Let $\mathcal{J}_{d}'=\widehat{Y}_{r,n-1}f_{1}\widehat{Y}_{r,n-1}.$ We have
\begin{equation}\label{JdPi-XPY}
\mathcal{J}_{d}'=\sum\limits_{(\alpha, Z)\in \Pi_{n-1}^{+}}X^{\alpha}P_{Z}Y_{r,n-1}
\end{equation}
by the induction hypothesis. Let $$\mathcal{J}=\sum\limits_{(\alpha, Z)\in \Pi_{n}^{+}}X^{\alpha}P_{Z}Y_{r,n}.$$ Obviously $\mathcal{J}\subseteq \mathcal{J}_{d}.$ So in view of Lemma \ref{Jd-PNfY}, it suffices to prove that $X^{\alpha}f_{i}Y_{r,n}\subseteq \mathcal{J}$ for each $\alpha\in \mathbb{Z}^{n}$ and each $i=1,2,\ldots,n.$

Consider first $X^{\alpha}f_{n}Y_{r,n}.$ Write $X^{\alpha}=X_{n}^{\alpha_{n}}X^{\beta}$ for $\beta\in \mathbb{Z}^{n-1}.$ Expanding $X^{\beta}$ in terms of the basis of $\widehat{Y}_{r,n-1}$ from Lemma \ref{free-Yrn-module}, we see that, when $\alpha_{n}\geq 0,$
$$X^{\alpha}f_{n}Y_{r,n}\subseteq \sum_{(\alpha', Z')\in \Pi_{n-1}}X_{n}^{\alpha_{n}}X^{\alpha'}P_{Z'}Y_{r,n-1}f_{n}Y_{r,n};$$
when $\alpha_{n}< 0,$
$$X^{\alpha}f_{n}Y_{r,n}=X^{\alpha}h_{n}Y_{r,n}\subseteq \sum_{(\alpha'', Z'')\in \Pi_{n-1}}X_{n}^{\alpha_{n}}X^{\alpha''}P_{Z''}Y_{r,n-1}h_{n}Y_{r,n}.$$
It is always contained in $\mathcal{J}$ thanks to Lemma \ref{YfY-fY}.

The next, we will consider $X^{\alpha}f_{i}Y_{r,n}$ with $i< n.$ Write $X^{\alpha}=X_{n}^{\alpha_{n}}X^{\beta}$ for $\beta\in \mathbb{Z}^{n-1}.$ By the induction hypothesis, we have
$$X^{\alpha}f_{i}Y_{r,n}=X_{n}^{\alpha_{n}}X^{\beta}f_{i}Y_{r,n}\subseteq \sum_{(\alpha', Z')\in \Pi_{n-1}^{+}}X_{n}^{\alpha_{n}}X^{\alpha'}P_{Z'}Y_{r,n}.$$

When $\alpha_{n}\geq 0,$ we will show by induction on $\alpha_{n}$ that $X_{n}^{\alpha_{n}}X^{\alpha'}P_{Z'}Y_{r,n}\in \mathcal{J}$ for each $(\alpha', Z')\in \Pi_{n-1}^{+}.$ This is immediate if $0\leq \alpha_{n}< d.$ Assume that $\alpha_{n}\geq d.$ Expanding $f_{n}$ using Lemma \ref{fixid-hiad}, the set $$X_{n}^{\alpha_{n}-d}X^{\alpha'}P_{Z'}f_{n}Y_{r,n}\subseteq \mathcal{J}$$ looks like the desired $X_{n}^{\alpha_{n}}X^{\alpha'}P_{Z'}Y_{r,n}$ plus a sum of terms belonging to $X_{n}^{\alpha_{n}-d+e}\mathcal{J}_{d}'Y_{r,n}$ with $0\leq e< d.$ It now suffices to show that each such term $X_{n}^{\alpha_{n}-d+e}\mathcal{J}_{d}'Y_{r,n}\subseteq \mathcal{J}.$ But by \eqref{JdPi-XPY},
$$X_{n}^{\alpha_{n}-d+e}\mathcal{J}_{d}'Y_{r,n}\subseteq \sum_{(\alpha', Z')\in \Pi_{n-1}^{+}}X_{n}^{\alpha_{n}-d+e}X^{\alpha'}P_{Z'}Y_{r,n}$$
and each term in the summation lies in $\mathcal{J}$ by induction, since $0\leq \alpha_{n}-d+e<\alpha_{n}.$

When $\alpha_{n}< 0,$ we will similarly show by induction on $\alpha_{n}$ that $X_{n}^{\alpha_{n}}X^{\alpha'}P_{Z'}Y_{r,n}\in \mathcal{J}$ for each $(\alpha', Z')\in \Pi_{n-1}^{+}.$ This is obvious if $0\leq \alpha_{n}< d.$ Assume that $\alpha_{n}< 0.$ Expanding $h_{n}$ using Lemma \ref{fixid-hiad}, the set
$$a_{d}^{-1}X_{n}^{\alpha_{n}}X^{\alpha'}P_{Z'}h_{n}Y_{r,n}\subseteq \mathcal{J}$$ looks like the desired $X_{n}^{\alpha_{n}}X^{\alpha'}P_{Z'}Y_{r,n}$ plus a sum of terms belonging to $X_{n}^{\alpha_{n}+f}\mathcal{J}_{d}'Y_{r,n}$ with $0< f\leq d.$ It now suffices to show that each such term $X_{n}^{\alpha_{n}+f}\mathcal{J}_{d}'Y_{r,n}\subseteq \mathcal{J}.$ But by \eqref{JdPi-XPY},
$$X_{n}^{\alpha_{n}+f}\mathcal{J}_{d}'Y_{r,n}\subseteq \sum_{(\alpha', Z')\in \Pi_{n-1}^{+}}X_{n}^{\alpha_{n}+f}X^{\alpha'}P_{Z'}Y_{r,n}$$
and each term in the summation lies in $\mathcal{J}$ by induction, since $\alpha_{n}< \alpha_{n}+f\leq d.$
\end{proof}
\begin{theorem}\label{basis-theorem-cyclotomic}
The canonical images of the elements $$\{X^{\alpha}t^{\beta}g_{w}\:|\:\alpha\in \mathbb{Z}_{+}^{n}~with~\alpha_{1},\ldots,\alpha_{n}< d, ~\beta\in \mathbb{Z}_{+}^{n}~with~\beta_{1},\ldots,\beta_{n}< r, ~w\in \mathfrak{S}_{n}\}$$
form an $\mathcal{R}$-basis for $Y_{r,n}^{d}.$
\end{theorem}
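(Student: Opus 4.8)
The plan is to read off the theorem from the two structural results already in hand, namely Lemma~\ref{free-Yrn-module} and Lemma~\ref{Jdsum-ZPiZPz}, by passing to the quotient $Y_{r,n}^{d}=\widehat{Y}_{r,n}/\mathcal{J}_{d}$ as right $Y_{r,n}$-modules. First I would record the decomposition $\Pi_{n}=\Pi_{n}^{+}\sqcup\Pi_{n}^{0}$, where $\Pi_{n}^{0}=\{(\alpha,Z)\in\Pi_{n}\mid Z=\emptyset\}$; by the definition of $\Pi_{n}$, an element of $\Pi_{n}^{0}$ is exactly a pair $(\alpha,\emptyset)$ with $\alpha\in\mathbb{Z}^{n}$ satisfying $0\leq\alpha_{i}<d$ for all $i$, and for such a pair the associated element $P_{Z}=P_{\emptyset}$ is the empty product $1$, so the corresponding free-basis element is simply $X^{\alpha}$.

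Next I would combine the two lemmas. By Lemma~\ref{free-Yrn-module}, $\widehat{Y}_{r,n}$ is a free right $Y_{r,n}$-module on the basis $\{X^{\alpha}P_{Z}\mid(\alpha,Z)\in\Pi_{n}\}$; since this is a genuine free basis, the right $Y_{r,n}$-submodule spanned by any subset of it is a free direct summand, and the complementary subset descends to a basis of the quotient. By Lemma~\ref{Jdsum-ZPiZPz}, the ideal $\mathcal{J}_{d}$ is precisely the right $Y_{r,n}$-submodule $\sum_{(\alpha,Z)\in\Pi_{n}^{+}}X^{\alpha}P_{Z}Y_{r,n}$ spanned by the sub-basis indexed by $\Pi_{n}^{+}$. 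Hence
$$
Y_{r,n}^{d}=\widehat{Y}_{r,n}/\mathcal{J}_{d}\cong\bigoplus_{(\alpha,\emptyset)\in\Pi_{n}^{0}}X^{\alpha}Y_{r,n}
$$
is a free right $Y_{r,n}$-module whose basis is the set of canonical images of $\{X^{\alpha}\mid\alpha\in\mathbb{Z}_{+}^{n},\ \alpha_{1},\ldots,\alpha_{n}<d\}$.

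Finally I would combine this free-module statement with a fixed $\mathcal{R}$-basis of the coefficient algebra $Y_{r,n}$. The Yokonuma--Hecke algebra $Y_{r,n}$ has the well-known $\mathcal{R}$-basis $\{t^{\beta}g_{w}\mid\beta\in\mathbb{Z}_{+}^{n},\ \beta_{1},\ldots,\beta_{n}<r,\ w\in\mathfrak{S}_{n}\}$. Since $Y_{r,n}^{d}$ is free as a right $Y_{r,n}$-module with basis $\{X^{\alpha}\}$, and $Y_{r,n}$ is in turn free over $\mathcal{R}$ with basis $\{t^{\beta}g_{w}\}$, it follows by the standard transitivity of bases that the products $\{X^{\alpha}(t^{\beta}g_{w})\}$, with $\alpha,\beta,w$ ranging over the stated index sets, form an $\mathcal{R}$-basis of $Y_{r,n}^{d}$; this is exactly the asserted set $\{X^{\alpha}t^{\beta}g_{w}\}$.

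The genuinely substantive work has already been done in the supporting lemmas, so here the only point needing care is the module-theoretic step: one must check that $\mathcal{J}_{d}$ is the span of a \emph{sub-basis} (not merely a submodule contained in, or generating more than, the span of a subset of the free basis), which is precisely the content of Lemma~\ref{Jdsum-ZPiZPz} matched against the free basis of Lemma~\ref{free-Yrn-module}, and that the complementary index set is correctly identified as $\Pi_{n}^{0}$ with $P_{\emptyset}=1$. Once this bookkeeping is pinned down, the count of basis elements, namely $d^{n}\cdot r^{n}\cdot n!$, provides a useful sanity check against the expected $\mathcal{R}$-rank of $Y_{r,n}^{d}$.
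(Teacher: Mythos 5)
Your proposal is correct and follows essentially the same route as the paper: the paper likewise combines Lemma~\ref{free-Yrn-module} and Lemma~\ref{Jdsum-ZPiZPz} to conclude that $\{X^{\alpha}P_{Z}\:|\:(\alpha,Z)\in\Pi_{n}^{+}\}$ is a free right $Y_{r,n}$-basis of $\mathcal{J}_{d}$, so that the complementary sub-basis $\{X^{\alpha}\:|\:0\leq\alpha_{i}<d\}$ descends to a free right $Y_{r,n}$-basis of the quotient, and then the theorem follows from the known $\mathcal{R}$-basis $\{t^{\beta}g_{w}\}$ of $Y_{r,n}$. Your write-up merely makes explicit the bookkeeping (the splitting $\Pi_{n}=\Pi_{n}^{+}\sqcup\Pi_{n}^{0}$, $P_{\emptyset}=1$, and the transitivity-of-bases step) that the paper leaves implicit.
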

\begin{proof}
By Lemma \ref{free-Yrn-module} and \ref{Jdsum-ZPiZPz}, the elements $\{X^{\alpha}P_{Z}\:|\:(\alpha, Z)\in \Pi_{n}^{+}\}$ form an $\mathcal{R}$-basis for $\mathcal{J}_{d}$ viewed as a right $Y_{r,n}$-module. Hence Lemma \ref{free-Yrn-module} implies that the elements
$$\{X^{\alpha}\:|\:\alpha\in \mathbb{Z}_{+}^{n}~\mathrm{with}~\alpha_{1},\ldots,\alpha_{n}< d\}$$ form an $\mathcal{R}$-basis for a complement to $\mathcal{J}_{d}$ in $\widehat{Y}_{r,n}$ viewed as a right $Y_{r,n}$-module. We immediately get the theorem.
\end{proof}

\section{Mackey theorem for cyclotomic Yokonuma-Hecke algebras}

In this section, we shall consider a special case of a Mackey theorem for $Y_{r,n}^{d}.$ Given any $y\in \widehat{Y}_{r,n},$ we will denote its canonical image in $Y_{r,n}^{d}$ by the same symbol. Thus, Theorem \ref{basis-theorem-cyclotomic} says that $$\{X^{\alpha}t^{\beta}g_{w}\:|\:\alpha\in \mathbb{Z}_{+}^{n}~\mathrm{with}~\alpha_{1},\ldots,\alpha_{n}< d, ~\beta\in \mathbb{Z}_{+}^{n}~\mathrm{with}~\beta_{1},\ldots,\beta_{n}< r, ~w\in \mathfrak{S}_{n}\}$$ is an $\mathcal{R}$-basis for $Y_{r,n}^{d}.$ Also, Theorem \ref{basis-theorem-cyclotomic} implies that $Y_{r,n}^{d}$ is a subalgebra of $Y_{r,n+1}^{d}.$ So we can define the induction functor $\mathrm{Ind}_{Y_{r,n}^{d}}^{Y_{r,n+1}^{d}}$ and the restriction functor $\mathrm{Res}_{Y_{r,n}^{d}}^{Y_{r,n+1}^{d}}.$ We define, for each $Y_{r,n}^{d}$-module $M,$ $$\mathrm{Ind}_{Y_{r,n}^{d}}^{Y_{r,n+1}^{d}}M :=Y_{r,n+1}^{d}\otimes_{Y_{r,n}^{d}}M.$$
\begin{lemma}\label{3-a-b-c}
$(a)$ $Y_{r,n+1}^{d}$ is a free right $Y_{r,n}^{d}$-module with basis $$\{X_{j}^{a}t_{j}^{b}g_{j}\cdots g_{n}\:|\:0\leq a<d,~0\leq b< r,~1\leq j\leq n+1\}.$$

$(b)$ As $(Y_{r,n}^{d}, Y_{r,n}^{d})$-bimodules, we have $$Y_{r,n+1}^{d}=Y_{r,n}^{d}g_{n}Y_{r,n}^{d}\oplus \bigoplus_{0\leq a<d}\bigoplus_{0\leq b< r}X_{n+1}^{a}t_{n+1}^{b}Y_{r,n}^{d}.$$

$(c)$ For $0\leq a<d,$ $0\leq b< r,$ there are isomorphisms $$Y_{r,n}^{d}g_{n}Y_{r,n}^{d}\cong Y_{r,n}^{d}\otimes_{Y_{r,n-1}^{d}} Y_{r,n}^{d}\quad and \quad X_{n+1}^{a}Y_{r,n}^{d}\cong Y_{r,n}^{d},~~t_{n+1}^{b}Y_{r,n}^{d}\cong Y_{r,n}^{d}$$
of $(Y_{r,n}^{d}, Y_{r,n}^{d})$-bimodules.
\end{lemma}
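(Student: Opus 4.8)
The plan is to deduce all three parts from the basis theorem (Theorem~\ref{basis-theorem-cyclotomic}) and the commutation relations of Section~2, repeatedly exploiting that a surjection between finite free $\mathcal{R}$-modules of equal rank is an isomorphism. Recall from Theorem~\ref{basis-theorem-cyclotomic} that $\operatorname{rank}_{\mathcal{R}}Y_{r,m}^{d}=m!\,r^{m}d^{m}$.

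For part $(a)$, the proposed set has exactly $(n+1)rd$ elements, so right-multiplying it by the standard basis of $Y_{r,n}^{d}$ yields a set of cardinality $(n+1)rd\cdot n!\,r^{n}d^{n}=(n+1)!\,r^{n+1}d^{n+1}=\operatorname{rank}_{\mathcal{R}}Y_{r,n+1}^{d}$. Hence it is enough to show these elements generate $Y_{r,n+1}^{d}$ as a right $Y_{r,n}^{d}$-module; the resulting surjection of free $\mathcal{R}$-modules of equal finite rank is then automatically bijective, giving freeness together with the asserted basis. To prove generation I would decompose $\mathfrak{S}_{n+1}=\bigsqcup_{j=1}^{n+1}d_{j}\mathfrak{S}_{n}$ into left cosets with minimal-length representatives $d_{j}=s_{j}s_{j+1}\cdots s_{n}$ (and $d_{n+1}=1$), characterised by $(j)d_{j}=n+1$. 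Since $\ell(d_{j}v)=\ell(d_{j})+\ell(v)$ for every $v\in\mathfrak{S}_{n}$, formula~\eqref{multiplication-formula} gives $g_{w}=(g_{j}g_{j+1}\cdots g_{n})\,g_{v}$ with $g_{v}\in Y_{r,n}^{d}$, so each standard basis element takes the form $X^{\alpha}t^{\beta}(g_{j}\cdots g_{n})\,g_{v}$. It then remains to straighten $X^{\alpha}t^{\beta}(g_{j}\cdots g_{n})$ into the right $Y_{r,n}^{d}$-span of the proposed generators, pushing the $X$'s and $t$'s through the braid $g_{j}\cdots g_{n}$ via \eqref{giXj}, \eqref{commutation-formulae}, \eqref{relations-1} and \eqref{relations-2}; an induction on $\alpha_{n+1}$ together with the coset index $j$ controls the lower-order corrections produced by the $e_{i}$-terms. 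This straightening is the main obstacle, and I expect it to be the most laborious step.

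For part $(b)$, I would split the right-module basis of $(a)$ into the block with $j=n+1$, which spans $N'=\bigoplus_{0\le a<d}\bigoplus_{0\le b<r}X_{n+1}^{a}t_{n+1}^{b}Y_{r,n}^{d}$, and the block with $j\le n$, which spans a right submodule $N$. The key observation is that $X_{n+1}$ and $t_{n+1}$ are central relative to $Y_{r,n}^{d}$: $X_{n+1}$ commutes with $t_{1},\dots,t_{n},X_{1},\dots,X_{n}$ by \eqref{commutation-formulae} and with $g_{1},\dots,g_{n-1}$ by \eqref{giXj}, while $t_{n+1}$ commutes with the $t$'s and $X$'s by \eqref{commutation-formulae} and with $g_{i}$ ($i\le n-1$) because $s_{i}$ fixes $n+1$. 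Consequently $y\,X_{n+1}^{a}t_{n+1}^{b}=X_{n+1}^{a}t_{n+1}^{b}\,y$ for $y\in Y_{r,n}^{d}$, so $N'$ is a sub-bimodule. For $N$, the inclusion $N\subseteq Y_{r,n}^{d}g_{n}Y_{r,n}^{d}$ is immediate since $X_{j}^{a}t_{j}^{b}g_{j}\cdots g_{n}=X_{j}^{a}t_{j}^{b}(g_{j}\cdots g_{n-1})g_{n}$ for $j\le n$; for the reverse inclusion I would note that $g_{n}=X_{n}^{0}t_{n}^{0}g_{n}\in N$ and check that $N$ is stable under left multiplication by the generators $t_{i},g_{i}\,(i\le n-1),X_{i}$ of $Y_{r,n}^{d}$, a straightening of the same type as in $(a)$ (here the quadratic relation $g_{i}^{2}=1+(q-q^{-1})e_{i}g_{i}$ and the $e$-relations of \eqref{relations} keep every resulting term inside $N$). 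This yields $N=Y_{r,n}^{d}g_{n}Y_{r,n}^{d}$ and hence the bimodule decomposition.

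For part $(c)$, the isomorphisms $X_{n+1}^{a}Y_{r,n}^{d}\cong Y_{r,n}^{d}$ and $t_{n+1}^{b}Y_{r,n}^{d}\cong Y_{r,n}^{d}$ follow formally: the map $y\mapsto X_{n+1}^{a}y$ is right $Y_{r,n}^{d}$-linear by associativity and left $Y_{r,n}^{d}$-linear by the centrality noted above, and it is bijective because $X_{n+1}$ is invertible (being a product of the invertible $g_{i}$ with $X_{1}$, and $X_{1}$ is invertible in the quotient as $a_{d}$ is invertible); likewise for $t_{n+1}$, using $t_{n+1}^{r}=1$. For the last isomorphism I would take the multiplication map $\mu\colon Y_{r,n}^{d}\otimes_{Y_{r,n-1}^{d}}Y_{r,n}^{d}\to Y_{r,n}^{d}g_{n}Y_{r,n}^{d}$, $y\otimes y'\mapsto yg_{n}y'$. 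It is well defined over $Y_{r,n-1}^{d}$ because $g_{n}$ centralises $Y_{r,n-1}^{d}$ (indeed $g_{n}t_{i}=t_{i}g_{n}$ and $g_{n}X_{i}=X_{i}g_{n}$ for $i\le n-1$ by \eqref{giXj}, and $g_{n}g_{i}=g_{i}g_{n}$ for $i\le n-2$), it is a $(Y_{r,n}^{d},Y_{r,n}^{d})$-bimodule homomorphism, and it is surjective by the definition of the target. Applying part $(a)$ at levels $n$ and $n-1$ shows that both sides are free $\mathcal{R}$-modules of rank $n\cdot n!\,r^{n+1}d^{n+1}$, so $\mu$ is a surjection of finite free $\mathcal{R}$-modules of equal rank, hence an isomorphism. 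In summary, once the straightening in $(a)$ is established, parts $(b)$ and $(c)$ are essentially formal consequences obtained through centrality and rank counting.
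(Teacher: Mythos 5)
Your proposal is correct and takes essentially the same approach as the paper: part $(a)$ by a rank count over $\mathcal{R}$ plus generation via the commutation formulae of Lemma \ref{gxxg-xggx}, part $(b)$ by splitting the basis of $(a)$ into the $j\leq n$ and $j=n+1$ blocks, and part $(c)$ via the multiplication map induced by the fact that $g_{n}$ centralizes $Y_{r,n-1}^{d}$. The only cosmetic difference is that in $(c)$ you conclude by comparing ranks of free $\mathcal{R}$-modules and invoking surjectivity, whereas the paper checks directly that $\Phi$ carries the basis $\{X_{j}^{a}t_{j}^{b}g_{j}\cdots g_{n-1}\otimes 1\}$ to the basis of $Y_{r,n}^{d}g_{n}Y_{r,n}^{d}$ identified in the proof of $(b)$.
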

\begin{proof}
$(a)$ By Theorem \ref{basis-theorem-cyclotomic} and dimension considerations, we just need to check that $Y_{r,n+1}^{d}$ is generated as a right $Y_{r,n}^{d}$-module by the given elements. This follows using Lemma \ref{gxxg-xggx}.

$(b)$ It suffices to notice, using $(a)$ and Lemma \ref{gxxg-xggx}, that $$\{X_{j}^{a}t_{j}^{b}g_{j}\cdots g_{n}\:|\:0\leq a<d,~0\leq b< r,~1\leq j\leq n\}$$
is a basis of $Y_{r,n}^{d}g_{n}Y_{r,n}^{d}$ as a free right $Y_{r,n}^{d}$-module.

$(c)$ The isomorphisms $X_{n+1}^{a}Y_{r,n}^{d}\cong Y_{r,n}^{d}$ and $t_{n+1}^{b}Y_{r,n}^{d}\cong Y_{r,n}^{d}$ are clear from $(a).$ Furthermore, the map
$$Y_{r,n}^{d}\times Y_{r,n}^{d}\rightarrow Y_{r,n}^{d}g_{n}Y_{r,n}^{d},~~(u, v)\mapsto ug_{n}v$$
is $Y_{r,n-1}^{d}$-balanced, since $g_{n}$ centralizes $Y_{r,n-1}^{d}.$ So it induces a homomorphism
$$\Phi: Y_{r,n}^{d}\otimes_{Y_{r,n-1}^{d}} Y_{r,n}^{d}\rightarrow Y_{r,n}^{d}g_{n}Y_{r,n}^{d}$$
of $(Y_{r,n}^{d}, Y_{r,n}^{d})$-bimodules. By $(a),$ $Y_{r,n}^{d}\otimes_{Y_{r,n-1}^{d}} Y_{r,n}^{d}$ is a free right $Y_{r,n}^{d}$-module with basis $$\{X_{j}^{a}t_{j}^{b}g_{j}\cdots g_{n-1}\otimes 1\:|\:0\leq a<d,~0\leq b< r,~1\leq j\leq n\}.$$
But $\Phi$ maps these elements to a basis for $Y_{r,n}^{d}g_{n}Y_{r,n}^{d}$ as a free right $Y_{r,n}^{d}$-module, using a fact observed in the proof of $(b)$.
This shows that $\Phi$ is an isomorphism.
\end{proof}

We have now decomposed $Y_{r,n+1}^{d}$ as a $(Y_{r,n}^{d}, Y_{r,n}^{d})$-bimodules. Using Lemma \ref{3-a-b-c}$(b)$ and $(c),$ the standard argument yields the following result.
\begin{theorem}\label{3-2-res}
Let $M$ be a $Y_{r,n}^{d}$-module. Then there is a natural isomorphism
$$\mathrm{Res}_{Y_{r,n}^{d}}^{Y_{r,n+1}^{d}}\mathrm{Ind}_{Y_{r,n}^{d}}^{Y_{r,n+1}^{d}}M\cong M^{\oplus rd}\oplus \mathrm{Ind}_{Y_{r,n}^{d}}^{Y_{r,n+1}^{d}}\mathrm{Res}_{Y_{r,n}^{d}}^{Y_{r,n+1}^{d}}M$$
of $Y_{r,n}^{d}$-modules.
\end{theorem}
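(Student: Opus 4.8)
The plan is to compute the composite $\mathrm{Res}\,\mathrm{Ind}$ directly as tensoring with the bimodule $Y_{r,n+1}^{d}$ and then feed in the decomposition already recorded in Lemma~\ref{3-a-b-c}. By the definitions of induction and restriction, for any $Y_{r,n}^{d}$-module $M$ we have
$$\mathrm{Res}_{Y_{r,n}^{d}}^{Y_{r,n+1}^{d}}\mathrm{Ind}_{Y_{r,n}^{d}}^{Y_{r,n+1}^{d}}M = Y_{r,n+1}^{d}\otimes_{Y_{r,n}^{d}}M,$$
where the left $Y_{r,n}^{d}$-action comes from left multiplication on the factor $Y_{r,n+1}^{d}$. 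The key observation is that this is exactly the left action appearing in the $(Y_{r,n}^{d},Y_{r,n}^{d})$-bimodule decomposition of Lemma~\ref{3-a-b-c}$(b)$, so applying $(-)\otimes_{Y_{r,n}^{d}}M$ preserves that direct sum as a decomposition of left $Y_{r,n}^{d}$-modules.

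Carrying this out, distributivity of $\otimes$ over $\oplus$ gives
$$Y_{r,n+1}^{d}\otimes_{Y_{r,n}^{d}}M \cong \Bigl(Y_{r,n}^{d}g_{n}Y_{r,n}^{d}\otimes_{Y_{r,n}^{d}}M\Bigr)\oplus\bigoplus_{0\leq a<d}\bigoplus_{0\leq b<r}\Bigl(X_{n+1}^{a}t_{n+1}^{b}Y_{r,n}^{d}\otimes_{Y_{r,n}^{d}}M\Bigr).$$
For each of the $rd$ summands of the second type I would note that $X_{n+1}^{a}t_{n+1}^{b}$ centralizes $Y_{r,n}^{d}$ (by \eqref{giXj} and the relation $g_it_j=t_{js_i}g_i$, since $s_i$ fixes $n+1$ for $i\leq n-1$), so left multiplication gives a bimodule isomorphism $X_{n+1}^{a}t_{n+1}^{b}Y_{r,n}^{d}\cong Y_{r,n}^{d}$ in the spirit of Lemma~\ref{3-a-b-c}$(c)$; hence each such summand is isomorphic to $Y_{r,n}^{d}\otimes_{Y_{r,n}^{d}}M\cong M$, and the $d\cdot r=rd$ of them together contribute $M^{\oplus rd}$. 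For the first summand I would invoke the isomorphism $Y_{r,n}^{d}g_{n}Y_{r,n}^{d}\cong Y_{r,n}^{d}\otimes_{Y_{r,n-1}^{d}}Y_{r,n}^{d}$ from Lemma~\ref{3-a-b-c}$(c)$ together with associativity of the tensor product,
$$\Bigl(Y_{r,n}^{d}\otimes_{Y_{r,n-1}^{d}}Y_{r,n}^{d}\Bigr)\otimes_{Y_{r,n}^{d}}M\cong Y_{r,n}^{d}\otimes_{Y_{r,n-1}^{d}}\Bigl(Y_{r,n}^{d}\otimes_{Y_{r,n}^{d}}M\Bigr)\cong Y_{r,n}^{d}\otimes_{Y_{r,n-1}^{d}}M,$$
which is precisely $\mathrm{Ind}_{Y_{r,n-1}^{d}}^{Y_{r,n}^{d}}\mathrm{Res}_{Y_{r,n-1}^{d}}^{Y_{r,n}^{d}}M$. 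Combining the two contributions yields the asserted isomorphism.

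Finally I would check naturality in $M$: every step used above (distributivity of $\otimes$ over $\oplus$, the rank-one bimodule identifications, and associativity of $\otimes$) is functorial, so the resulting isomorphism is natural. The only genuinely delicate point is the bookkeeping of module sides, namely confirming that the summands in Lemma~\ref{3-a-b-c}$(b)$ are sub-bimodules and that right-tensoring leaves untouched the left $Y_{r,n}^{d}$-action used to define $\mathrm{Res}$; I expect this, together with verifying that the \emph{combined} element $X_{n+1}^{a}t_{n+1}^{b}$ (rather than the separate factors treated in Lemma~\ref{3-a-b-c}$(c)$) still centralizes $Y_{r,n}^{d}$, to be the main, though routine, obstacle.
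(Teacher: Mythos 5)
Your proof is correct and is precisely the ``standard argument'' the paper invokes: its own proof consists of the single remark that Lemma \ref{3-a-b-c}$(b)$ and $(c)$ together with the standard argument yield the result, and your write-up fills in exactly those steps, including the one point the paper glosses over, namely that the combined element $X_{n+1}^{a}t_{n+1}^{b}$ (not just the separate factors of Lemma \ref{3-a-b-c}$(c)$) centralizes $Y_{r,n}^{d}$. Note also that your identification of the second summand as $\mathrm{Ind}_{Y_{r,n-1}^{d}}^{Y_{r,n}^{d}}\mathrm{Res}_{Y_{r,n-1}^{d}}^{Y_{r,n}^{d}}M$ is the correct reading of the theorem's right-hand side, whose printed indices are evidently a typo.
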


\section{Duality for cyclotomic Yokonuma-Hecke algebras}

$\widehat{Y}_{r,n}$ possesses an anti-automorphism $\tau$ defined on generators as follows:
$$\tau: g_{i}\mapsto g_{i},~~~~X_{j}\mapsto X_{j},~~~~t_{j}\mapsto t_{j}$$
for all $i=1,\ldots,n-1,$ $j=1,\ldots,n.$ If $M$ is a finite dimensional $\widehat{Y}_{r,n}$-module, we can use $\tau$ to make the dual space $M^{*}$ into a $\widehat{Y}_{r,n}$-module denoted by $M^{\tau}.$ Since $\tau$ leaves the two-sided ideal $\mathcal{J}_{d}$ invariant, it induces a duality also denoted by $\tau$ on $Y_{r,n}^{d}$ and on finite dimensional $Y_{r,n}^{d}$-modules.

In this section, we will prove that the induction functor $\mathrm{Ind}_{Y_{r,n}^{d}}^{Y_{r,n+1}^{d}}$ commutes with the $\tau$-duality. Let us first give some preliminary work.
\begin{lemma}\label{4-1-gXI-gign}
For $1\leq i\leq n$ and $a\geq 0,$ we have
$$g_{n}\cdots g_{i}X_{i}^{a}g_{i}\cdots g_{n}=X_{n+1}^{a}+(terms~lying~in~Y_{r,n}^{d}g_{n}Y_{r,n}^{d}+\sum\limits_{k=1}^{a-1}\sum\limits_{s=0}^{r-1}X_{n+1}^{k}t_{n+1}^{s}Y_{r,n}^{d}).$$
\end{lemma}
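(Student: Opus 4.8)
The plan is to prove the identity by induction on $n$, peeling off the outermost pair of $g_n$'s and reducing everything to a single core computation together with the bimodule decomposition of Lemma~\ref{3-a-b-c}$(b)$. Write the left-hand side as $g_n\bigl(g_{n-1}\cdots g_iX_i^ag_i\cdots g_{n-1}\bigr)g_n$. For $i=n$ the inner bracket is empty and we face only the core computation $g_nX_n^ag_n$; for $i<n$ the inner bracket is an element of $Y_{r,n-1}^{d}$ to which the statement at level $n-1$ applies, giving $g_{n-1}\cdots g_iX_i^ag_i\cdots g_{n-1}=X_n^a+C$ with $C\in Y_{r,n-1}^{d}g_{n-1}Y_{r,n-1}^{d}+\sum_{k=1}^{a-1}\sum_{s}X_n^kt_n^sY_{r,n-1}^{d}$. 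Conjugating by $g_n$ then splits the problem into $g_nX_n^ag_n$ and $g_nCg_n$; since we only need a \emph{containment} in the error space, it suffices to show each piece lands in $Y_{r,n}^{d}g_nY_{r,n}^{d}+\sum_{k=1}^{a-1}\sum_{s=0}^{r-1}X_{n+1}^kt_{n+1}^sY_{r,n}^{d}$, up to the single leading term $X_{n+1}^a$.

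First I would carry out the core computation. Applying \eqref{relations-1} with $i=n$ and then the quadratic relation $g_n^2=1+(q-q^{-1})e_ng_n$, one checks that the $k=a$ summand cancels exactly against the $e_ng_n$-contribution of $g_n^2$, leaving
\[
g_nX_n^ag_n=X_{n+1}^a-(q-q^{-1})e_n\sum_{k=1}^{a-1}X_n^{a-k}X_{n+1}^kg_n.
\]
Thus the leading term $X_{n+1}^a$ appears with coefficient $1$ and no $t_{n+1}$, while every residual summand carries a factor $e_n$, a trailing $g_n$, and an $X_{n+1}$-power $k\le a-1$.

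The heart of the argument is two reduction tools for disposing of such residual terms. The first is that $g_n$ centralizes $Y_{r,n-1}^{d}$ (it commutes with $g_1,\dots,g_{n-2}$, with $t_1,\dots,t_{n-1}$, and, by \eqref{giXj}, with $X_1,\dots,X_{n-1}$); hence for $u,v\in Y_{r,n-1}^{d}$ the braid relation gives $g_n(ug_{n-1}v)g_n=u\,g_ng_{n-1}g_n\,v=(ug_{n-1})g_n(g_{n-1}v)\in Y_{r,n}^{d}g_nY_{r,n}^{d}$, which handles the $Y_{r,n-1}^{d}g_{n-1}Y_{r,n-1}^{d}$ part of $g_nCg_n$. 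The second tool comes from $t_{n+1}=g_nt_ng_n^{-1}$, which yields
\[
e_ng_n=\tfrac1r\sum_{s=0}^{r-1}t_n^sg_nt_n^{-s}\in Y_{r,n}^{d}g_nY_{r,n}^{d},
\]
together with the rearrangement of \eqref{relations-1}, namely $X_{n+1}^kg_n=g_nX_n^k+(q-q^{-1})e_n\sum_{l=1}^kX_n^{k-l}X_{n+1}^l$, whose pure (i.e.\ $g_n$-free) part has $X_{n+1}$-degree at most $k$. Commuting $e_n$ past the $X$'s (they all commute, by \eqref{commutation-formulae}) and applying these identities collapses each residual term: the $g_nX_n^k$ piece goes into $Y_{r,n}^{d}g_nY_{r,n}^{d}$, while the $X_{n+1}^le_n$ pieces, after writing $e_n=\tfrac1r\sum_{s}t_n^st_{n+1}^{-s}$, become elements of $X_{n+1}^lt_{n+1}^{-s}Y_{r,n}^{d}$ with $1\le l\le k$. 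The same mechanism applied to $g_nX_n^kt_n^sg_n$ (with $1\le k\le a-1$) handles the $\sum X_n^kt_n^sY_{r,n-1}^{d}$ part, producing a leading $X_{n+1}^kt_{n+1}^s$ of degree $k\le a-1$ plus terms in $Y_{r,n}^{d}g_nY_{r,n}^{d}$.

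The main obstacle is the bookkeeping: I must verify that after all these reductions no term of $X_{n+1}$-degree $\ge a$ survives and, crucially, that no term $X_{n+1}^at_{n+1}^s$ with $s>0$ appears, so that $X_{n+1}^a$ occurs with the clean coefficient $1$. This is controlled by the two facts above: the exact cancellation of the $k=a$ summand in the core computation (which removes the only potential source of a spurious $X_{n+1}^at_{n+1}^s$), and the degree-preserving nature of the reduction $X_{n+1}^kg_n\mapsto(\text{terms of }X_{n+1}\text{-degree}\le k)$, which guarantees that every residual term, starting from $X_{n+1}$-degree $\le a-1$, stays within the allowed range throughout. Assembling the pieces and invoking the level-$(n-1)$ case completes the induction.
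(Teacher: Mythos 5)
Your proposal is correct and follows essentially the same route as the paper's proof: induction on $n$, the identical core computation $g_nX_n^ag_n=X_{n+1}^a-(q-q^{-1})e_n\sum_{k=1}^{a-1}X_n^{a-k}X_{n+1}^kg_n$ (with the $k=a$ summand cancelling against the $e_ng_n$-term of $g_n^2$), the reduction of residual terms via the rearranged \eqref{relations-1} and $e_ng_n=\tfrac1r\sum_{s=0}^{r-1}t_n^sg_nt_n^{-s}$, and an induction step resting on the fact that $g_n$ centralizes $Y_{r,n-1}^d$ — which is precisely what the paper's one-line remark about the induction step alludes to. The only slip is your claim that the inner bracket $g_{n-1}\cdots g_iX_i^ag_i\cdots g_{n-1}$ lies in $Y_{r,n-1}^d$ (it involves $g_{n-1}$, $X_n$ and $t_n$, so it does not), but this is harmless since you only ever use its level-$(n-1)$ decomposition, not membership in that subalgebra.
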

\begin{proof}
We prove it by induction on $n=i, i+1,\ldots.$ In case $n=i,$
\begin{align*}
g_{i}X_{i}^{a}g_{i}&=X_{i+1}^{a}[1+(q-q^{-1}e_{i}g_{i})]-(q-q^{-1})e_{i}\sum\limits_{k=1}^{a-1}X_{i}^{a-k}X_{i+1}^{k}g_{i}-(q-q^{-1})e_{i}X_{i+1}^{a}g_{i}\\
&=X_{i+1}^{a}-(q-q^{-1})e_{i}\sum\limits_{k=1}^{a-1}X_{i}^{a-k}[g_{i}X_{i}^{k}+(q-q^{-1})e_{i}\sum\limits_{l=1}^{k}X_{i}^{k-l}X_{i+1}^{l}]\\
&=X_{i+1}^{a}-(q-q^{-1})\frac{1}{r}\sum\limits_{k=1}^{a-1}\sum\limits_{s=0}^{r-1}X_{i}^{a-k}t_{i}^{s}g_{i}t_{i}^{-s}X_{i}^{k}\\
&~~~-(q-q^{-1})^{2}\frac{1}{r}\sum\limits_{k=1}^{a-1}\sum\limits_{l=1}^{k}\sum\limits_{s=0}^{r-1}X_{i+1}^{l}t_{i+1}^{s}X_{i}^{a-l}t_{i}^{-s},
\end{align*}
the result follows from the calculation above. The induction step is similar, noting that $g_{n}$ centralizes $Y_{r,n-1}^{d}.$
\end{proof}

From Lemma \ref{4-1-gXI-gign}, we can easily get the following result.
\begin{lemma}\label{Xn1-adYrngnyrn}
We have $$X_{n+1}^{d}=-a_{d}+(terms~lying~in~Y_{r,n}^{d}g_{n}Y_{r,n}^{d}+\sum\limits_{k=1}^{d-1}\sum\limits_{s=0}^{r-1}X_{n+1}^{k}t_{n+1}^{s}Y_{r,n}^{d}).$$
\end{lemma}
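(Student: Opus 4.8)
The plan is to exploit the single defining relation of the cyclotomic quotient, namely that $f_{1}=0$ in $Y_{r,n+1}^{d}$, and to transport it to the $(n+1)$-st strand, where Lemma \ref{4-1-gXI-gign} does all the work. By the inductive definition $f_{i}=g_{i-1}f_{i-1}g_{i-1}$ one has $f_{n+1}=g_{n}\cdots g_{1}f_{1}g_{1}\cdots g_{n}$, so $f_{n+1}=0$ in $Y_{r,n+1}^{d}$. Writing $f_{1}=\sum_{j=0}^{d}a_{j}X_{1}^{d-j}$ with $a_{0}=1$ and pulling the scalars $a_{j}\in\mathcal{R}$ out of the products, this reads
$$0=f_{n+1}=\sum_{j=0}^{d}a_{j}\,\bigl(g_{n}\cdots g_{1}X_{1}^{d-j}g_{1}\cdots g_{n}\bigr).$$

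Next I would apply Lemma \ref{4-1-gXI-gign} (with $i=1$) to each summand. Write
$$M:=Y_{r,n}^{d}g_{n}Y_{r,n}^{d}+\sum_{k=1}^{d-1}\sum_{s=0}^{r-1}X_{n+1}^{k}t_{n+1}^{s}Y_{r,n}^{d},$$
an $\mathcal{R}$-submodule of $Y_{r,n+1}^{d}$. For $j=0$ the lemma gives $g_{n}\cdots g_{1}X_{1}^{d}g_{1}\cdots g_{n}=X_{n+1}^{d}+(\text{terms in }M)$. For $1\le j\le d-1$ the exponent $a=d-j$ lies in the range $1\le a\le d-1$, so the lemma produces $X_{n+1}^{a}$ plus correction terms in $Y_{r,n}^{d}g_{n}Y_{r,n}^{d}+\sum_{k=1}^{a-1}\sum_{s=0}^{r-1}X_{n+1}^{k}t_{n+1}^{s}Y_{r,n}^{d}\subseteq M$; moreover the leading power $X_{n+1}^{a}=X_{n+1}^{a}t_{n+1}^{0}$ itself lies in $M$ precisely because $1\le a\le d-1$. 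Finally, for $j=d$ the exponent is $a=0$, the inner sum of the lemma is empty, and $X_{1}^{0}=1$, so $g_{n}\cdots g_{1}g_{1}\cdots g_{n}=1+(\text{terms in }Y_{r,n}^{d}g_{n}Y_{r,n}^{d})$.

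Assembling these, the only contribution not lying in $M$ comes from the top power $X_{n+1}^{d}$ (summand $j=0$) and the scalar $a_{d}\cdot 1$ (summand $j=d$); everything else is absorbed. The displayed identity thus collapses to $0=X_{n+1}^{d}+a_{d}+(\text{terms in }M)$, which rearranges to the claimed $X_{n+1}^{d}=-a_{d}+(\text{terms in }M)$. I expect no genuine obstacle: once Lemma \ref{4-1-gXI-gign} is in hand this is a direct substitution. The only point deserving care --- and the reason the statement is packaged with this particular space $M$ --- is the bookkeeping check that all the intermediate powers $X_{n+1}^{k}$ with $1\le k\le d-1$, together with the lower-order corrections produced by Lemma \ref{4-1-gXI-gign}, are truly swallowed by $M$, so that exactly the scalar $-a_{d}$ and the single top power $X_{n+1}^{d}$ remain outside it.
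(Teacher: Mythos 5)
Your proof is correct and is precisely the argument the paper intends: the paper's entire proof of this lemma is the remark that it follows easily from Lemma \ref{4-1-gXI-gign}, and your expansion of $0=f_{n+1}=g_{n}\cdots g_{1}f_{1}g_{1}\cdots g_{n}$ term by term via that lemma (with $i=1$ and $a=d-j$) is the natural filling-in of that remark. Your bookkeeping is also right: the intermediate powers $X_{n+1}^{a}$ with $1\le a\le d-1$ and all lower-order corrections land in the submodule $M$, the $j=d$ term contributes $a_{d}\cdot 1$ modulo $Y_{r,n}^{d}g_{n}Y_{r,n}^{d}$, and only $X_{n+1}^{d}$ and $a_{d}$ survive outside $M$, giving the stated identity.
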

\begin{lemma}\label{4-3-yrnd}
There exists a $(Y_{r,n}^{d}, Y_{r,n}^{d})$-bimodule homomorphism $\theta: Y_{r,n+1}^{d}\rightarrow Y_{r,n}^{d}$ such that $\mathrm{ker}~\theta$ contains no non-zero left ideals of $Y_{r,n+1}^{d}.$
\end{lemma}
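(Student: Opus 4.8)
The plan is to produce $\theta$ as the projection onto a distinguished bimodule direct summand of $Y_{r,n+1}^{d}$, and then to verify that its kernel contains no nonzero left ideal by a leading-term (non-degeneracy) argument built on Lemmas \ref{4-1-gXI-gign} and \ref{Xn1-adYrngnyrn}.

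First I would use the bimodule decomposition of Lemma \ref{3-a-b-c}(b),
$$Y_{r,n+1}^{d}=Y_{r,n}^{d}g_{n}Y_{r,n}^{d}\oplus\bigoplus_{0\le a<d}\bigoplus_{0\le b<r}X_{n+1}^{a}t_{n+1}^{b}Y_{r,n}^{d},$$
and define $\theta\colon Y_{r,n+1}^{d}\to Y_{r,n}^{d}$ to be the projection onto the summand indexed by $a=b=0$, namely $X_{n+1}^{0}t_{n+1}^{0}Y_{r,n}^{d}=Y_{r,n}^{d}$; equivalently, $\theta$ extracts the coefficient of the identity basis element in the right $Y_{r,n}^{d}$-basis of Lemma \ref{3-a-b-c}(a). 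Because $X_{n+1}$ and $t_{n+1}$ centralize $Y_{r,n}^{d}$, each summand is a sub-bimodule, on which left multiplication by $u$ and right multiplication by $v$ act on the coefficient by $y_{a,b}\mapsto uy_{a,b}v$; hence $\theta$ is a $(Y_{r,n}^{d},Y_{r,n}^{d})$-bimodule homomorphism.

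Next I would reformulate the kernel condition. The set $\{y\in Y_{r,n+1}^{d}\mid \theta(zy)=0\text{ for all }z\in Y_{r,n+1}^{d}\}$ is readily checked to be the largest left ideal contained in $\ker\theta$, so it suffices to prove that for every nonzero $y$ there is some $z$ with $\theta(zy)\neq0$. Writing $y$ in the right $Y_{r,n}^{d}$-basis of Lemma \ref{3-a-b-c}(a) and fixing a suitable total order on that basis, let $b_{\ast}$ be the largest basis element occurring in $y$ with nonzero coefficient $c_{\ast}\in Y_{r,n}^{d}$. I would then construct an explicit left multiplier $z_{\ast}$ tailored to $b_{\ast}$: for a tower term $X_{n+1}^{a}t_{n+1}^{b}$ one multiplies by $t_{n+1}^{r-b}X_{n+1}^{d-a}$ and invokes Lemma \ref{Xn1-adYrngnyrn} to collapse $X_{n+1}^{d}$ to the invertible scalar $-a_{d}$ (when $a\ge1$), so that the identity-component of $z_{\ast}b_{\ast}$ is a unit; for a term in $Y_{r,n}^{d}g_{n}Y_{r,n}^{d}$, say $X_{j}^{a}t_{j}^{b}g_{j}\cdots g_{n}$, one first left-multiplies by $g_{n}\cdots g_{j}$ and uses Lemma \ref{4-1-gXI-gign} to rewrite $g_{n}\cdots g_{j}X_{j}^{a}g_{j}\cdots g_{n}$ as $X_{n+1}^{a}$ plus strictly lower terms, thereby converting the $g$-tower into the $X_{n+1}$-tower before collapsing. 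Since $a_{d}$ is invertible, this arranges that $\theta(z_{\ast}b_{\ast}c_{\ast})$ is a unit multiple of $c_{\ast}$, hence nonzero.

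The main obstacle is the cross-term bookkeeping: after left multiplication by $z_{\ast}$, one must guarantee that the contributions of all the other basis terms of $y$ do not cancel the targeted coefficient in the identity slot. This is genuinely delicate, because the $Y_{r,n}^{d}g_{n}Y_{r,n}^{d}$ part leaks into the identity summand under left multiplication — already $g_{n}X_{n}g_{n}=X_{n+1}+(\text{terms in }Y_{r,n}^{d}g_{n}Y_{r,n}^{d})$ by Lemma \ref{4-1-gXI-gign} — so that part cannot simply be ignored, and the powers of $X_{n+1}$ produced by the wrap-around of Lemma \ref{Xn1-adYrngnyrn} must be tracked. The resolution is to choose the total order so that all lower-order corrections generated by Lemmas \ref{4-1-gXI-gign} and \ref{Xn1-adYrngnyrn} are strictly smaller than $b_{\ast}$; then the pairing $(z,b)\mapsto\theta(zb)$ is triangular with invertible diagonal entries, which forces $\theta(z_{\ast}y)=(\text{unit})\,c_{\ast}+(\text{strictly lower contributions})$ to be nonzero. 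Verifying this triangularity is the heart of the argument, and once it is in place the lemma follows.
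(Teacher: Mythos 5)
Your setup coincides with the paper's proof: the paper defines $\theta$ as exactly this projection onto the summand $Y_{r,n}^{d}=X_{n+1}^{0}t_{n+1}^{0}Y_{r,n}^{d}$ of the decomposition in Lemma \ref{3-a-b-c}(b), makes the same reduction (if $\theta(hy)=0$ for all $h\in Y_{r,n+1}^{d}$ then $y=0$), expands $y$ in the basis of Lemma \ref{3-a-b-c}(a), and uses the same multipliers ($X_{n+1}^{k}t_{n+1}^{-s}$ for the tower coefficients, $X_{n+1}^{k}t_{n+1}^{-s}g_{n}g_{n-1}\cdots g_{j}$ for the $g_{n}$-part) together with Lemmas \ref{4-1-gXI-gign} and \ref{Xn1-adYrngnyrn}. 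Your ``leading-term / triangular pairing'' formulation is just the paper's argument read backwards: the paper runs a forward elimination, showing $t_{0,0}=0$, then $t_{d-1,0},\ldots,t_{1,0}$, then $t_{0,1},t_{d-1,1},\ldots$, then $u_{a,b,n}$, then $u_{a,b,n-1}$, and so on, which is the same triangularity exploited in the opposite order. So this is not a genuinely different route.

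The one real issue is that you stop exactly where the paper's proof actually happens: the ``triangularity'' you defer is not a technical afterthought but is the entire mathematical content of the lemma, and as written your proposal replaces the proof by a (true) claim. For the record, your proposed resolution does work, and the order is forced on you just as you suspect: the tower elements $X_{n+1}^{a}t_{n+1}^{b}$ must sit \emph{above} all elements of $Y_{r,n}^{d}g_{n}Y_{r,n}^{d}$, because the commutative multipliers $X_{n+1}^{k}t_{n+1}^{-s}$ (with $0\le k<d$) never produce an identity-slot contribution out of the $g_{n}$-summand --- one checks $X_{n+1}^{k}t_{n+1}^{-s}g_{n}=g_{n}X_{n}^{k}t_{n}^{-s}+(q-q^{-1})\sum_{j=0}^{k-1}X_{n}^{j}X_{n+1}^{k-j}e_{n}t_{n}^{-s}$, whose correction terms have $X_{n+1}$-degree at least $1$ --- whereas a multiplier containing $g_{n}\cdots g_{j}$ applied to a tower element \emph{can} leak into the identity slot, so the $g$-targeting multipliers may only be used once all tower coefficients are known to vanish (equivalently, once the leading term lies in the $g_{n}$-part). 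Likewise, tower elements must be ordered by increasing $X_{n+1}$-degree so that the wrap-around $X_{n+1}^{d}\mapsto -a_{d}+(\mathrm{terms~in~}Y_{r,n}^{d}g_{n}Y_{r,n}^{d}+\sum_{k\ge 1}X_{n+1}^{k}t_{n+1}^{s}Y_{r,n}^{d})$ of Lemma \ref{Xn1-adYrngnyrn} is only ever triggered on the diagonal entry and never on a sub-leading one; within a fixed $X$-degree the $t$-pairing is honestly diagonal since $\theta(t_{n+1}^{r-b}t_{n+1}^{b'})=\delta_{b,b'}$. These verifications, plus the analogous ones for the $g_{n}$-part with $j=n,n-1,\ldots$ via Lemma \ref{4-1-gXI-gign} (whose error terms likewise start at $X_{n+1}$-degree $1$), are precisely the chain of evaluations $\theta(y),\theta(X_{n+1}y),\ldots,\theta(X_{n+1}^{k}t_{n+1}^{-s}g_{n}\cdots g_{j}y),\ldots$ that the paper writes out; until they are carried out, your proposal is a correct strategy statement rather than a proof.
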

\begin{proof}
By Lemma \ref{3-a-b-c}$(b),$ we know that

$$Y_{r,n+1}^{d}=Y_{r,n}^{d}\oplus \bigoplus_{b=1}^{r-1}t_{n+1}^{b}Y_{r,n}^{d}\oplus \bigoplus_{a=1}^{d-1}\bigoplus_{b=0}^{r-1}X_{n+1}^{a}t_{n+1}^{b}Y_{r,n}^{d}\oplus Y_{r,n}^{d}g_{n}Y_{r,n}^{d}$$ as a $(Y_{r,n}^{d}, Y_{r,n}^{d})$-bimodule. Let $\theta: Y_{r,n+1}^{d}\rightarrow Y_{r,n}^{d}$ be the projection on to the first summand of this bimodule decomposition, which by Lemma \ref{3-a-b-c}$(c)$ is a $(Y_{r,n}^{d}, Y_{r,n}^{d})$-bimodule homomorphism. So it suffices to prove that if $y\in Y_{r,n+1}^{d}$ has the property that $\theta(hy)=0$ for all $h\in Y_{r,n+1}^{d},$ then $y=0.$ By Lemma \ref{3-a-b-c}$(a),$ we may write
$$y=\sum\limits_{a=0}^{d-1}\sum\limits_{b=0}^{r-1}X_{n+1}^{a}t_{n+1}^{b}t_{a,b}+\sum\limits_{a=0}^{d-1}\sum\limits_{b=0}^{r-1}
\sum\limits_{j=1}^{n}X_{j}^{a}t_{j}^{b}g_{j}\cdots g_{n}u_{a,b,j}$$
for some elements $t_{a,b},$ $u_{a,b,j}\in Y_{r,n}^{d}.$

As $\theta(y)=0,$ we must have $t_{0,0}=0.$ Now $\theta(X_{n+1}y)=0$ implies that $-a_{d}t_{d-1,0}=0$ by Lemma \ref{Xn1-adYrngnyrn}, that is, $t_{d-1,0}=0.$ Similarly, $t_{d-2,0}=\cdots=t_{1,0}=0.$ As $\theta(t_{n+1}^{-1}y)=0,$ we must have $t_{0,1}=0.$ Now $\theta(X_{n+1}t_{n+1}^{-1}y)=0$ implies that $-a_{d}t_{d-1,1}=0$ by Lemma \ref{Xn1-adYrngnyrn}, that is, $t_{d-1,1}=0.$ Similarly, $t_{d-2,1}=\cdots=t_{1,1}=0.$ Similarly, we have $t_{0,2}=t_{d-1,2}=\cdots=t_{1,2}=\cdots=t_{0,r-1}=t_{d-1,r-1}=\cdots=t_{1,r-1}=0.$

Next, considering $\theta(g_{n}y)=0,$ we get $u_{0,0,n}=0$ by Lemma \ref{4-1-gXI-gign}. Since $\theta(X_{n+1}g_{n}y)=0,$ we get $-a_{d}u_{d-1,0,n}=0$ by Lemma \ref{4-1-gXI-gign} and \ref{Xn1-adYrngnyrn}, that is, $u_{d-1,0,n}=0.$ Considering $\theta(X_{n+1}^{2}g_{n}y)=\cdots=\theta(X_{n+1}^{d-1}g_{n}y)=0,$ we similarly get that $u_{d-2,0,n}=\cdots=u_{1,0,n}=0.0.$ Since $\theta(t_{n+1}^{-1}g_{n}y)=0,$ we get $u_{0,1,n}=0.$ Considering $\theta(X_{n+1}t_{n+1}^{-1}g_{n}y)=\theta(X_{n+1}^{2}t_{n+1}^{-1}g_{n}y)=\cdots=\theta(X_{n+1}^{d-1}t_{n+1}^{-1}g_{n}y)=0,$ we get that $-a_{d}u_{d-1,1,n}=-a_{d}u_{d-2,1,n}=\cdots=-a_{d}u_{1,1,n}=0,$ that is, $u_{d-1,1,n}=u_{d-2,1,n}=\cdots=u_{1,1,n}=0.$ Next, considering $\theta(t_{n+1}^{-2}g_{n}y)=\theta(X_{n+1}t_{n+1}^{-2}g_{n}y)=\cdots=\theta(X_{n+1}^{d-1}t_{n+1}^{-2}g_{n}y)=0,\ldots,
\theta(t_{n+1}^{-(r-1)}g_{n}y)=\theta(X_{n+1}t_{n+1}^{-(r-1)}g_{n}y)=\cdots=\theta(X_{n+1}^{d-1}t_{n+1}^{-(r-1)}g_{n}y)=0,$ we get that $u_{0,2,n}=u_{d-1,2,n}=\cdots=u_{1,2,n}=0,\ldots,u_{0,r-1,n}=u_{d-1,r-1,n}=\cdots=u_{1,r-1,n}=0.$

Repeat the argument again, this time considering $\theta(g_{n}g_{n-1}y), \theta(X_{n+1}g_{n}g_{n-1}y),\ldots,$ $\theta(X_{n+1}^{d-1}g_{n}g_{n-1}y), \theta(t_{n+1}^{-1}g_{n}g_{n-1}y), \theta(X_{n+1}t_{n+1}^{-1}g_{n}g_{n-1}y), \theta(X_{n+1}^{d-1}t_{n+1}^{-1}g_{n}g_{n-1}y),\ldots, \theta(t_{n+1}^{-(r-1)}$ $g_{n}g_{n-1}y),\ldots,  \theta(X_{n+1}^{d-1}t_{n+1}^{-(r-1)}g_{n}g_{n-1}y),$ to get that all $u_{a,b,n-1}=0$ for $0\leq a\leq d-1$ and $0\leq b\leq r-1.$ Continuing in this way we eventually arrive at the desired conclusion that $y=0.$
\end{proof}

Now we will prove the main result of this section.
\begin{theorem}\label{theorem4-4}
There exists a natural isomorphism $$Y_{r,n+1}^{d}\otimes_{Y_{r,n}^{d}}M\cong \mathrm{Hom}_{Y_{r,n}^{d}}(Y_{r,n+1}^{d}, M)$$
for all $Y_{r,n}^{d}$-modules $M.$
\end{theorem}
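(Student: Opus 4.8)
The plan is to build an explicit natural transformation between the two functors and prove it invertible, with the entire argument resting on the nondegenerate bimodule map $\theta$ furnished by Lemma~\ref{4-3-yrnd}. Write $A=Y_{r,n}^{d}$ and $B=Y_{r,n+1}^{d}$. By Lemma~\ref{3-a-b-c}$(a)$, $B$ is free of finite rank, say $N$, as a right $A$-module; since the anti-automorphism $\tau$ fixes both $A$ and $B$ setwise, applying $\tau$ to a right $A$-basis turns it into a left $A$-basis, so $B$ is also free of rank $N$ as a left $A$-module. For a left $A$-module $M$ I would define
$$f_{M}\colon B\otimes_{A}M\longrightarrow \mathrm{Hom}_{A}(B,M),\qquad f_{M}(b\otimes m)(x)=\theta(xb)\,m,$$
where $B$ inside the $\mathrm{Hom}$ is regarded as a left $A$-module by restriction. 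The routine checks are: right $A$-linearity of $\theta$ shows $f_{M}$ factors through $\otimes_{A}$; left $A$-linearity of $\theta$ shows $f_{M}(b\otimes m)$ is left $A$-linear in $x$; the right $B$-action on $B$ defining the $B$-structure on $\mathrm{Hom}_{A}(B,M)$ shows $f_{M}$ is left $B$-linear; and naturality in $M$ is clear. None of this is the hard part.

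Next I would reduce to $M=A$. Both $B\otimes_{A}-$ and $\mathrm{Hom}_{A}(B,-)$ are additive, commute with arbitrary direct sums (the latter because $B$ is finitely generated as a left $A$-module), and are right exact (the first always, the second because $B$ is free, hence projective, as a left $A$-module). Applying both functors to a free presentation $A^{(J)}\to A^{(I)}\to M\to 0$ and using naturality of $f$ yields a diagram with exact rows; once $f_{A}$ is an isomorphism so are the maps on the two free terms, and a cokernel comparison (five-lemma) forces $f_{M}$ to be an isomorphism. Everything thus comes down to $f_{A}$.

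Under $B\otimes_{A}A\cong B$ the map $f_{A}$ becomes $\hat\theta\colon B\to \mathrm{Hom}_{A}(B,A)$, $\hat\theta(b)(x)=\theta(xb)$, a homomorphism of $(B,A)$-bimodules. Injectivity is immediate from Lemma~\ref{4-3-yrnd}: if $\hat\theta(b)=0$ then $\theta(xb)=0$ for all $x\in B$, so the left ideal $Bb$ lies in $\ker\theta$ and hence $Bb=0$, whence $b=0$. Moreover $\mathrm{Hom}_{A}(B,A)$ is free of rank $N$ as a right $A$-module (the dual basis to a left $A$-basis of $B$), matching the right $A$-rank of $B$.

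The main obstacle is surjectivity of $\hat\theta$, equivalently the existence of dual bases exhibiting $B$ as a free \emph{Frobenius extension} of $A$ relative to $\theta$; over the noncommutative ring $A$ an injective right $A$-linear map between free modules of equal rank need not be onto, so injectivity alone does not suffice. The resolution is that the computation already carried out in the proof of Lemma~\ref{4-3-yrnd} is more than an injectivity statement: it is a Gaussian elimination. Ordering the right $A$-basis $\{X_{j}^{a}t_{j}^{b}g_{j}\cdots g_{n}\}$ of Lemma~\ref{3-a-b-c}$(a)$ appropriately and pairing it against the corresponding test elements, the Gram matrix $\bigl(\theta(u_{i}u_{k})\bigr)$ of $(x,b)\mapsto\theta(xb)$ is triangular with diagonal entries the units $1$ and $-a_{d}$, where $a_{d}=(-1)^{d}v_{1}\cdots v_{d}$ is invertible and the off-diagonal reductions are controlled by Lemmas~\ref{4-1-gXI-gign} and \ref{Xn1-adYrngnyrn}. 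Such a matrix is invertible over $A$, and inverting it produces $b_{j}\in B$ with $\theta(u_{i}b_{j})=\delta_{ij}$, i.e. a right-module dual basis, so every functional lies in the image of $\hat\theta$. Hence $\hat\theta$, and therefore each $f_{M}$, is an isomorphism, with the required naturality built into the construction of $f$.
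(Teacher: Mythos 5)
Your skeleton is the same as the paper's: both arguments reduce Theorem \ref{theorem4-4} to showing that the bimodule map $\hat\theta\colon Y_{r,n+1}^{d}\to \mathrm{Hom}_{Y_{r,n}^{d}}(Y_{r,n+1}^{d},Y_{r,n}^{d})$, $b\mapsto \theta(\cdot\,b)$, is an isomorphism (your reduction to $M=A$ via right-exactness and free presentations is just an inline proof of the fact the paper quotes from [AF, 20.10]), and both get injectivity of $\hat\theta$ from Lemma \ref{4-3-yrnd}. You part ways at surjectivity, where the paper says ``dimension considerations''. Your instinct that this deserves scrutiny is reasonable, but your diagnosis is off: the issue is not noncommutativity of $A$ (over a field, an injective $\mathcal{R}$-linear map between spaces of equal finite dimension is onto, which is all that Kleshchev's original field-based argument needs); the real subtlety is that this paper works over $\mathcal{R}=\mathbb{Z}[\tfrac1r][q,q^{-1},\zeta]$, which is not a field, so injectivity of a map between finite free $\mathcal{R}$-modules of equal rank does not by itself give surjectivity.

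Unfortunately your replacement argument has a genuine gap at exactly its key step. First, a misstatement: what the elimination in the proof of Lemma \ref{4-3-yrnd} establishes is triangularity, with unit diagonal entries $1$ and $-a_d$, of the matrix $\bigl(\theta(h_iu_k)\bigr)$, where $u_k$ runs over the right $A$-basis of Lemma \ref{3-a-b-c}(a) and $h_i$ over the \emph{different} test elements $X_{n+1}^{c}t_{n+1}^{-e}g_ng_{n-1}\cdots g_j$; it says nothing about the matrix $\bigl(\theta(u_iu_k)\bigr)$ that you wrote down, whose triangularity is not at all clear. Second, and more seriously: inverting the honest matrix produces elements $b_j$ with $\theta(h_ib_j)=\delta_{ij}$, but this does not yet yield surjectivity. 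The functionals in $\mathrm{Hom}_{A}(B,A)$ are \emph{left} $A$-linear, so two of them agreeing on the family $\{h_i\}$ (or on the right basis $\{u_k\}$) agree on $B$ only if that family generates $B$ as a \emph{left} $A$-module; from $\theta(h_i b)=\psi(h_i)$ for all $i$ you cannot conclude $\hat\theta(b)=\psi$ without knowing $\sum_i Ah_i=B$, and neither $\{h_i\}$ nor $\{u_k\}$ is known to be a left generating set. This is precisely the left/right trap you accused the paper of falling into. The gap is fixable: either prove by another computation of the same kind (a $\tau$-twisted variant of Lemma \ref{3-a-b-c}(a)) that the $h_i$ form a left $A$-basis, or drop dual bases altogether and note that invertibility of $\bigl(\theta(h_iu_k)\bigr)$ makes $\hat\theta$ a \emph{split} injection between free right $A$-modules of the same finite rank, and since $A$ is module-finite over the Noetherian ring $\mathcal{R}$, a Noetherian module cannot be a proper direct summand of itself, forcing $\hat\theta$ to be onto. (Alternatively, one repairs the paper's own wording by observing that Lemma \ref{4-3-yrnd} holds over every residue field of $\mathcal{R}$, so $\hat\theta$ is bijective after every such specialization, hence bijective over $\mathcal{R}$.) As written, however, your surjectivity step does not close.
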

\begin{proof}
We show that there exists an isomorphism
$$\varphi: Y_{r,n+1}^{d}\rightarrow \mathrm{Hom}_{Y_{r,n}^{d}}(Y_{r,n+1}^{d}, Y_{r,n}^{d})$$
of $(Y_{r,n+1}^{d}, Y_{r,n}^{d})$-bimodules. Then, applying the functor $?\otimes_{Y_{r,n}^{d}}M,$ we obtain natural isomorphisms
$$Y_{r,n+1}^{d}\otimes_{Y_{r,n}^{d}}M\overset{\varphi\otimes\mathrm{id}}{\longrightarrow}\mathrm{Hom}_{Y_{r,n}^{d}}(Y_{r,n+1}^{d}, Y_{r,n}^{d})\otimes_{Y_{r,n}^{d}}M\cong \mathrm{Hom}_{Y_{r,n}^{d}}(Y_{r,n+1}^{d}, M)$$
as required. Note that the existence of the second isomorphism here follows from the fact that $Y_{r,n+1}^{d}$ is a projective left $Y_{r,n}^{d}$-module and [AF, 20.10].

To construct $\varphi,$ let $\theta$ be as in Lemma \ref{4-3-yrnd}, and define $\varphi(h)$ to be the map $\theta_{h},$ for each $h\in Y_{r,n+1}^{d},$ where
$$\theta_{h}: Y_{r,n+1}^{d}\rightarrow Y_{r,n}^{d},~~~h'\mapsto \theta(h'h).$$
We can easily check that $\varphi,$ which is defined above, is then a well-defined homomorphism of $(Y_{r,n+1}^{d}, Y_{r,n}^{d})$-bimodules. To see that it is an isomorphism, it suffices by dimension considerations to check that it is injective. If $\varphi(h)=0$ for some $h\in Y_{r,n+1}^{d},$ then for every $x\in Y_{r,n+1}^{d},$ $\theta(xh)=0,$ that is the left ideal $Y_{r,n+1}^{d}h$ is contained in $\mathrm{Ker}~\theta.$ So Lemma \ref{4-3-yrnd} implies $h=0.$
\end{proof}

\begin{corollary}\label{corollary4-5-Frobenius}
$Y_{r,n}^{d}$ is a Frobenius algebra, that is, there is an isomorphism of left $Y_{r,n}^{d}$-modules $Y_{r,n}^{d}\cong \mathrm{Hom}_{\mathcal{R}}(Y_{r,n}^{d}, \mathcal{R})$ between the left regular module and the $\mathcal{R}$-linear dual of the right regular module.
\end{corollary}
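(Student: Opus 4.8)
The plan is to deduce the corollary from Theorem \ref{theorem4-4} by induction on $n$, using the standard principle that a Frobenius extension of a Frobenius algebra is again Frobenius. First I would reinterpret Theorem \ref{theorem4-4}: reading off the bimodule isomorphism $\varphi$ constructed in its proof (equivalently, setting $M=Y_{r,n}^{d}$) gives an isomorphism
$$Y_{r,n+1}^{d}\cong \mathrm{Hom}_{Y_{r,n}^{d}}(Y_{r,n+1}^{d},Y_{r,n}^{d})$$
of $(Y_{r,n+1}^{d},Y_{r,n}^{d})$-bimodules, where the right-hand side is formed with respect to the left $Y_{r,n}^{d}$-module structure on $Y_{r,n+1}^{d}$. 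Combined with Lemma \ref{3-a-b-c}$(a)$ (and its left-handed analogue, obtained by applying the anti-automorphism $\tau$), which exhibits $Y_{r,n+1}^{d}$ as a free module of finite rank over $Y_{r,n}^{d}$ on either side, this says precisely that $Y_{r,n+1}^{d}$ is a Frobenius extension of $Y_{r,n}^{d}$ for every $n\geq 1$.

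For the base case $n=1$, the symmetric group $\mathfrak{S}_{1}$ is trivial, so Theorem \ref{basis-theorem-cyclotomic} supplies the basis $\{X_{1}^{a}t_{1}^{b}\:|\:0\leq a<d,\,0\leq b<r\}$ and identifies $Y_{r,1}^{d}$ with the commutative algebra $\mathcal{R}[X_{1}]/(f_{1})\otimes_{\mathcal{R}}\mathcal{R}[t_{1}]/(t_{1}^{r}-1)$. Each tensor factor is a Frobenius $\mathcal{R}$-algebra (the first because $f_{1}$ is monic of degree $d$, the second being the group algebra of $\mathbb{Z}/r\mathbb{Z}$), and a tensor product of Frobenius algebras is Frobenius; hence $Y_{r,1}^{d}\cong \mathrm{Hom}_{\mathcal{R}}(Y_{r,1}^{d},\mathcal{R})$ as left modules.

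For the inductive step I would write $A=Y_{r,n-1}^{d}$ and $B=Y_{r,n}^{d}$, assume $A\cong \mathrm{Hom}_{\mathcal{R}}(A,\mathcal{R})$ as left $A$-modules, and assemble the chain of left $B$-module isomorphisms
$$\mathrm{Hom}_{\mathcal{R}}(B,\mathcal{R})\cong \mathrm{Hom}_{A}(B,\mathrm{Hom}_{\mathcal{R}}(A,\mathcal{R}))\cong \mathrm{Hom}_{A}(B,A)\cong B.$$
Here the first isomorphism is the restriction--coinduction adjunction along $\mathcal{R}\hookrightarrow A$, the second is induced by the inductive hypothesis applied in the second $\mathrm{Hom}$ slot, and the third is the Frobenius-extension isomorphism from the previous paragraph. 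This completes the induction and gives the corollary.

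The only delicate point is the bookkeeping of module sides: one must verify that the adjunction isomorphism is left $B$-linear for the action $(b\cdot\phi)(x)=\phi(xb)$, that a left-$A$-linear isomorphism $A\cong\mathrm{Hom}_{\mathcal{R}}(A,\mathcal{R})$ induces a left-$B$-linear isomorphism after applying $\mathrm{Hom}_{A}(B,-)$ (which holds because the residual $B$-action lives in the first $\mathrm{Hom}$ slot), and that the isomorphism extracted from Theorem \ref{theorem4-4} is left $B$-linear. I do not expect a genuine obstacle here, since all the essential content is already contained in Theorem \ref{theorem4-4}; the remaining arguments are elementary adjunctions together with the routine verification that every map in the chain respects the left $Y_{r,n}^{d}$-action.
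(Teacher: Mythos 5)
Your proof is correct and is essentially the paper's own argument: the paper also proceeds by induction on $n$, and its inductive step is exactly your chain of isomorphisms read in the opposite direction (the paper applies Theorem \ref{theorem4-4} with $M=\mathrm{Hom}_{\mathcal{R}}(Y_{r,n-1}^{d},\mathcal{R})$ and then the $\otimes$--$\mathrm{Hom}$ adjunction, whereas you apply Theorem \ref{theorem4-4} with $M=Y_{r,n-1}^{d}$ and transport the inductive hypothesis through $\mathrm{Hom}_{Y_{r,n-1}^{d}}(Y_{r,n}^{d},-)$). The only substantive difference is that you verify the base case $n=1$ explicitly, as a tensor product of the Frobenius algebras $\mathcal{R}[X_{1}]/(f_{1})$ and $\mathcal{R}[t_{1}]/(t_{1}^{r}-1)$, where the paper simply declares it obvious.
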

\begin{proof}
Proceed by induction on $n.$ In case $n=1,$ it is obvious. For the induction step,
\begin{align*}
Y_{r,n}^{d}&\cong Y_{r,n}^{d}\otimes_{Y_{r,n-1}^{d}}Y_{r,n-1}^{d}\cong Y_{r,n}^{d}\otimes_{Y_{r,n-1}^{d}} \mathrm{Hom}_{\mathcal{R}}(Y_{r,n-1}^{d}, \mathcal{R})\\
&\cong \mathrm{Hom}_{Y_{r,n-1}^{d}}(Y_{r,n}^{d}, \mathrm{Hom}_{\mathcal{R}}(Y_{r,n-1}^{d}, \mathcal{R}))\cong \mathrm{Hom}_{\mathcal{R}}(Y_{r,n-1}^{d}\otimes_{Y_{r,n-1}^{d}}Y_{r,n}^{d}, \mathcal{R})\\
&\cong \mathrm{Hom}_{\mathcal{R}}(Y_{r,n}^{d}, \mathcal{R}),
\end{align*}
applying Theorem \ref{theorem4-4} and adjointness of $\otimes$ and $\mathrm{Hom}.$
\end{proof}

The following result says that the induction functor $\mathrm{Ind}_{Y_{r,n}^{d}}^{Y_{r,n+1}^{d}}$ commutes with $\tau.$
\begin{corollary}\label{corollarty4-6-exact}
The exact functor $\mathrm{Ind}_{Y_{r,n}^{d}}^{Y_{r,n+1}^{d}}$ is both left and right adjoint to $\mathrm{Res}_{Y_{r,n}^{d}}^{Y_{r,n+1}^{d}}.$ Moreover, it commutes with the $\tau$-duality in the sense that there is a natural isomorphism
$$\mathrm{Ind}_{Y_{r,n}^{d}}^{Y_{r,n+1}^{d}}(M^{\tau})\cong (\mathrm{Ind}_{Y_{r,n}^{d}}^{Y_{r,n+1}^{d}}M)^{\tau}$$
for all finite dimensional $Y_{r,n}^{d}$-modules $M.$
\end{corollary}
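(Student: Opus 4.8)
The plan is to extract both assertions from Theorem \ref{theorem4-4} together with the standard tensor--hom adjunctions, handling the biadjointness first and the $\tau$-compatibility second. Since $\mathrm{Ind}_{Y_{r,n}^{d}}^{Y_{r,n+1}^{d}}=Y_{r,n+1}^{d}\otimes_{Y_{r,n}^{d}}-$ and $Y_{r,n+1}^{d}$ is free, hence flat, as a right $Y_{r,n}^{d}$-module by Lemma \ref{3-a-b-c}$(a)$, induction is exact, and the tensor--hom adjunction immediately exhibits it as a left adjoint of $\mathrm{Res}_{Y_{r,n}^{d}}^{Y_{r,n+1}^{d}}$. For the right adjoint I would write $\mathrm{Coind}:=\mathrm{Hom}_{Y_{r,n}^{d}}(Y_{r,n+1}^{d},-)$, which is right adjoint to $\mathrm{Res}$ by the usual hom--tensor adjunction; Theorem \ref{theorem4-4} provides a natural isomorphism $\mathrm{Ind}\cong\mathrm{Coind}$, so $\mathrm{Ind}$ is simultaneously a right adjoint of $\mathrm{Res}$. (Being both a left and a right adjoint gives an alternative proof of exactness.)

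For the $\tau$-commutation I would isolate two ingredients. First, $\mathrm{Res}$ commutes with the duality: because $\tau$ fixes each $g_{i}$, $X_{j}$, $t_{j}$ and the subalgebra $Y_{r,n}^{d}\subseteq Y_{r,n+1}^{d}$ is generated by such elements, $\tau$ preserves $Y_{r,n}^{d}$ and restricts to its own anti-automorphism there; consequently, for a finite dimensional $Y_{r,n+1}^{d}$-module $N$ the modules $\mathrm{Res}(N^{\tau})$ and $(\mathrm{Res}\,N)^{\tau}$ share the same underlying dual space and the same twisted $Y_{r,n}^{d}$-action, giving a natural isomorphism $\mathrm{Res}\circ(-)^{\tau}\cong(-)^{\tau}\circ\mathrm{Res}$. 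Second, the contravariant functor $(-)^{\tau}$ is an involutive self-equivalence on finite dimensional modules (as $\tau^{2}=\mathrm{id}$), and conjugating an adjunction by such dualities interchanges left and right adjoints: applying the dualities on the two module categories to $\mathrm{Ind}\dashv\mathrm{Res}$ and using the first ingredient shows that $(-)^{\tau}\circ\mathrm{Ind}\circ(-)^{\tau}$ is right adjoint to $\mathrm{Res}$.

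By uniqueness of adjoints this yields $(-)^{\tau}\circ\mathrm{Ind}\circ(-)^{\tau}\cong\mathrm{Coind}\cong\mathrm{Ind}$, where the last isomorphism is again Theorem \ref{theorem4-4}. Evaluating at a finite dimensional $M$ gives $(\mathrm{Ind}(M^{\tau}))^{\tau}\cong\mathrm{Ind}(M)$, and applying $(-)^{\tau}$ once more, together with $M^{\tau\tau}\cong M$, produces the desired natural isomorphism $\mathrm{Ind}(M^{\tau})\cong(\mathrm{Ind}\,M)^{\tau}$. The only steps requiring care are the abstract ``duality swaps adjoints'' lemma and the compatibility $\mathrm{Res}\circ(-)^{\tau}\cong(-)^{\tau}\circ\mathrm{Res}$; everything else is either formal or already established. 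I expect the adjunction-swapping step to be the one worth writing out explicitly, by chasing the chain of natural isomorphisms $\mathrm{Hom}$ through the two dualities, though it is routine once set up.
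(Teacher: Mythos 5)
Your proposal is correct and follows essentially the same route as the paper: use Theorem \ref{theorem4-4} together with the hom--tensor adjunction to see that $\mathrm{Ind}_{Y_{r,n}^{d}}^{Y_{r,n+1}^{d}}$ is right (as well as left) adjoint to $\mathrm{Res}_{Y_{r,n}^{d}}^{Y_{r,n+1}^{d}}$, observe that $\tau\circ\mathrm{Ind}\circ\tau$ is also right adjoint to restriction, and conclude by uniqueness of adjoint functors. The only difference is presentational: you spell out the ``standard check'' (that $\mathrm{Res}$ commutes with $\tau$ and that conjugating an adjunction by dualities swaps sides) which the paper leaves implicit.
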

\begin{proof}
The fact that $\mathrm{Ind}_{Y_{r,n}^{d}}^{Y_{r,n+1}^{d}}=Y_{r,n+1}^{d}\otimes_{Y_{r,n}^{d}}?$ is right adjoint to $\mathrm{Res}_{Y_{r,n}^{d}}^{Y_{r,n+1}^{d}}$ is immediate from Theorem \ref{theorem4-4}, since $\mathrm{Hom}_{Y_{r,n}^{d}}(Y_{r,n+1}^{d}, ?)$ is right adjoint to restriction by adjointness of $\otimes$ and $\mathrm{Hom}.$ But on finite dimensional modules, a standard check shows that the functor $\tau\circ\mathrm{Ind}_{Y_{r,n}^{d}}^{Y_{r,n+1}^{d}}\circ\tau$ is also right adjoint to restriction. Now the commutativity of $\mathrm{Ind}_{Y_{r,n}^{d}}^{Y_{r,n+1}^{d}}$ and $\tau$ follows by uniqueness of adjoint functors.
\end{proof}

\section{Combinatorics}

In this section we will review the notations and combinatorial tools that we will need in the sequel.

Let $\mathbf{s}=\{1,2,\ldots,n\}.$ For any nonempty subset $I\subseteq \mathbf{s}$ we define the following element $E_{I}$ by $$E_{I} :=\prod_{i, j\in I; i<j}e_{i,j},$$where by convention $E_{I}=1$ if $|I|=1.$

We also need a further generalization of this. We say that the set $A=\{I_{1}, I_{2},\ldots,I_{k}\}$ is a set partition of $\mathbf{s}$ if the $I_{j}$'s are nonempty and disjoint subsets of $\mathbf{s},$ and their union is $\mathbf{s}.$ We refer to them as the blocks of $A.$ We denote by $\mathcal{SP}_{n}$ the set of all set partitions of $\mathbf{s}.$ For $A=\{I_{1}, I_{2},\ldots,I_{k}\}\in \mathcal{SP}_{n}$ we then define $E_{A} :=\prod_{j}E_{I_{j}}.$

We extend the right action of $\mathfrak{S}_{n}$ on $\mathbf{s}$ to a right action on $\mathcal{SP}_{n}$ by defining $Aw :=\{I_{1}w,\ldots,I_{k}w\}\in \mathcal{SP}_{n}$ for $w\in \mathfrak{S}_{n}.$ Then we can easily get the following lemma.
\begin{lemma}\label{5-1-lemmaaa}
For $A\in \mathcal{SP}_{n}$ and $w\in \mathfrak{S}_{n},$ we have $$g_{w}E_{A}=E_{Aw^{-1}}g_{w}.$$
In particular, if $w$ leaves invariant every block of $A,$ or more generally permutes some of the blocks of $A,$ then $g_{w}$ commutes with $E_{A}.$
\end{lemma}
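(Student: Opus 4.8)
The plan is to reduce everything to the case of a single Coxeter generator and then propagate along a reduced word. The one algebraic ingredient I need is the last relation in \eqref{relations}, namely $e_{j,k}g_i=g_ie_{js_i,ks_i}$; rewriting it by substituting $a=js_i,\ b=ks_i$ (and using $s_i^2=1$) yields the form I will actually use,
\begin{equation*}
g_i\,e_{a,b}=e_{as_i,\,bs_i}\,g_i\qquad\text{for all }a,b\in\{1,\dots,n\}.
\end{equation*}
Together with the commutativity of the $e_{a,b}$ among themselves (third relation in \eqref{relations}) and the symmetry $e_{a,b}=e_{b,a}$ recorded after \eqref{idempotents}, this is all the structure I will invoke.

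First I would establish the simple-reflection case $g_iE_A=E_{As_i}g_i$. Writing $E_A=\prod_j\prod_{a<b,\ a,b\in I_j}e_{a,b}$ and pushing $g_i$ to the right through each factor, the displayed relation turns every $e_{a,b}$ into $e_{as_i,bs_i}$ and leaves a single $g_i$ on the right, so that
\begin{equation*}
g_iE_A=\Big(\prod_j\prod_{a<b,\ a,b\in I_j}e_{as_i,\,bs_i}\Big)g_i.
\end{equation*}
The key relabelling step is to observe that, for a fixed block $I_j$, as $\{a,b\}$ runs over the $2$-subsets of $I_j$ the pair $\{as_i,bs_i\}$ runs over the $2$-subsets of $I_js_i$; since $e_{as_i,bs_i}=e_{bs_i,as_i}$, the inner product is exactly $E_{I_js_i}$ irrespective of any ordering convention. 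Hence the bracket equals $\prod_jE_{I_js_i}=E_{As_i}$, giving $g_iE_A=E_{As_i}g_i$ (and $As_i\in\mathcal{SP}_n$ since $s_i$ is a bijection).

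Finally I would pass to general $w$. Choosing a reduced expression $w=s_{i_1}\cdots s_{i_p}$, so that $g_w=g_{i_1}\cdots g_{i_p}$ by Matsumoto's lemma, I apply the simple-reflection identity repeatedly from the right:
\begin{equation*}
g_wE_A=g_{i_1}\cdots g_{i_{p-1}}E_{As_{i_p}}g_{i_p}=\cdots=E_{As_{i_p}s_{i_{p-1}}\cdots s_{i_1}}\,g_w.
\end{equation*}
Because the reflections accumulate in the reverse order, the subscript is $A\,(s_{i_p}\cdots s_{i_1})=Aw^{-1}$, which is the assertion. The ``In particular'' clause then follows at once: if $w$ (equivalently $w^{-1}$, since the setwise stabiliser of a set partition is a subgroup) permutes the blocks of $A$, then $Aw^{-1}=A$ as set partitions, whence $E_{Aw^{-1}}=E_A$ and $g_w$ commutes with $E_A$. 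The only points demanding care are in these last two steps---the relabelling that identifies the transformed inner product with $E_{I_js_i}$, and the order-reversal that produces $w^{-1}$ rather than $w$---but neither constitutes a genuine obstacle.
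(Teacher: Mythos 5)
Your proof is correct: the paper states this lemma without proof (it is presented as an easy consequence of the relations \eqref{relations}), and your argument --- reducing to a simple reflection via $g_ie_{a,b}=e_{as_i,\,bs_i}g_i$, relabelling the $2$-subsets of each block using $e_{a,b}=e_{b,a}$ and the mutual commutativity of the $e_{a,b}$, then propagating along a reduced word so that the reflections compose to $w^{-1}$ --- is precisely the intended routine verification. Both delicate points (the identification of the transformed product with $E_{I_js_i}$ and the order-reversal producing $Aw^{-1}$ rather than $Aw$) are handled correctly, so there is nothing to add.
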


$\mu=(\mu_{1},\ldots,\mu_{k})$ is called a composition of $n$ if it is a finite sequence of nonnegative integers whose sum is $n.$ A composition $\mu$ is a partition of $n$ if its parts are non-increasing. We write $\mu \models n$ (resp. $\lambda\vdash n$) if $\mu$ is a composition (resp. partition) of $n,$ and we define $|\mu| :=n$ (resp. $|\lambda| :=n$).

We associate a Young diagram to a composition $\mu,$ which is the set $$[\mu] :=\{(i,j)\:|\:i\geq 1~\mathrm{and}~1\leq j\leq \mu_{i}\}.$$ We will regard $[\mu]$ as an array of boxes, or nodes, in the plane. For $\mu \models n,$ we define a $\mu$-tableau by replacing each node of $[\mu]$ by one of the integers $1,2,\ldots,n,$ allowing no repeats.

For $\mu \models n,$ we say that a $\mu$-tableau $\mathfrak{t}$ is row standard if the entries in each row of $\mathfrak{t}$ increase from left to right. A $\mu$-tableau $\mathfrak{t}$ is standard if $\mu$ is a partition, $\mathfrak{t}$ is row standard and the entries in each column increase from top to bottom. For a composition $\mu$ of $n,$ we denote by $\mathfrak{t}^{\mu}$ the $\mu$-tableau in which $1,2,\ldots,n$ appear in increasing order from left to right along the rows of $[\mu].$

The symmetric group $\mathfrak{S}_{n}$ acts from the right on the set of $\mu$-tableaux by permuting the entries in each tableau. For any composition $\mu=(\mu_{1},\ldots,\mu_{k})$ of $n$ we define the Young subgroup $\mathfrak{S}_{\mu} :=\mathfrak{S}_{\mu_{1}}\times\cdots\times\mathfrak{S}_{\mu_{k}},$ which is the row stabilizer of $\mathfrak{t}^{\mu}.$

Let $\lambda=(\lambda_{1},\ldots,\lambda_{k})$ and $\mu=(\mu_{1},\ldots,\mu_{l})$ be two compositions of $n.$ We say that $\lambda\unrhd\mu$ if $$\sum_{i=1}^{j}\lambda_{i}\geq \sum_{i=1}^{j}\mu_{i}~~~~~~\mathrm{for~all~}j\geq 1.$$ If $\lambda\unrhd\mu$ and $\lambda\neq\mu,$ we write $\lambda\rhd\mu.$

We extend the partial order above to tableaux as follows. If $\mathfrak{v}$ is a row standard $\lambda$-tableau and $1\leq k\leq n,$ then the entries $1,2,\ldots,k$ in $\mathfrak{v}$ occupy the diagram of a composition; let $\mathfrak{v}_{\downarrow k}$ denote this composition. Let $\lambda$ and $\mu$ be two compositions of $n.$ Suppose that $\mathfrak{s}$ is a row standard $\lambda$-tableau and that $\mathfrak{t}$ is a row standard $\mu$-tableau. We say that $\mathfrak{s}$ dominates $\mathfrak{t},$ and we write $\mathfrak{s}\unrhd\mathfrak{t}$ if $\mathfrak{s}_{\downarrow k}\unrhd\mathfrak{t}_{\downarrow k}$ for all $k.$ If $\mathfrak{s}\unrhd\mathfrak{t}$ and $\mathfrak{s}\neq\mathfrak{t},$ then we write $\mathfrak{s}\rhd\mathfrak{t}.$

Let $d\in \mathbb{Z}_{\geq 1}.$ A $d$-composition (resp. $d$-partition) of $n$ is an ordered $d$-tuple $\bm{\lambda}=(\lambda^{(1)},\lambda^{(2)},\ldots,\lambda^{(d)})$ of compositions (resp. partitions) $\lambda^{(k)}$ such that $\sum_{k=1}^{d}|\lambda^{(k)}|=n.$ By [ChPA2, $\S3.1$], the combinatorial objects appearing in the representation theory of the cyclotomic Yokonuma-Hecke algebra $Y_{r,n}^{d}$ will be $(r,d)$-compositions (resp. $(r,d)$-partitions). By definition, an $(r,d)$-composition (resp. $(r,d)$-partition) of $n$ is an ordered $r$-tuple $\underline{\bm{\lambda}}=(\bm{\lambda}^{(1)},\ldots,\bm{\lambda}^{(r)})=((\lambda_{1}^{(1)},\ldots,\lambda_{d}^{(1)}),\ldots,(\lambda_{1}^{(r)},\ldots,\lambda_{d}^{(r)}))$ of $d$-compositions (resp. $d$-partitions) $(\lambda_{1}^{(k)},\ldots,\lambda_{d}^{(k)})$ ($1\leq k\leq r$) such that $\sum_{k=1}^{r}\sum_{j=1}^{d}|\lambda_{j}^{(k)}|=n.$ We denote by $\mathcal{C}_{r,n}^{d}$ (resp. $\mathcal{P}_{r,n}^{d}$) the set of $(r,d)$-compositions (resp. $(r,d)$-partitions) of $n.$ We will say that the $l$-th composition (resp. partition) of the $k$-th $r$-tuple has position $(k,l).$

A triplet $\bm{\theta}=(\theta, k, l)$ consisting of a node $\theta,$ an integer $k\in \{1,\ldots,r\},$ and an integer $l\in \{1,\ldots,d\}$ is called an $(r,d)$-node. We call $k$ the $r$-position of $\bm{\theta}$ and $l$ the $d$-position of $\bm{\theta}.$ We shall also say that the $(r,d)$-node $\bm{\theta}$ has position $(k,l).$ For each $\underline{\bm{\lambda}}\in \mathcal{C}_{r,n}^{d}$ (resp. $\mathcal{P}_{r,n}^{d}$), we shall denote by $[\underline{\bm{\lambda}}]$ the set of $(r,d)$-nodes such that the subset consisting of the $(r,d)$-nodes having position $(k,l)$ forms a usual composition (resp. partition) $\lambda_{l}^{(k)}$, for any $k\in \{1,\ldots,r\}$ and $l\in \{1,\ldots,d\}.$

Let $\underline{\bm{\lambda}}=((\lambda_{1}^{(1)},\ldots,\lambda_{d}^{(1)}),\ldots,(\lambda_{1}^{(r)},\ldots,\lambda_{d}^{(r)}))$ be an $(r,d)$-composition of $n.$ An $(r,d)$-tableau $\mathfrak{t}=((\mathfrak{t}_{1}^{(1)},\ldots,\mathfrak{t}_{d}^{(1)}),\ldots,(\mathfrak{t}_{1}^{(r)},\ldots,\mathfrak{t}_{d}^{(r)}))$ of shape $\underline{\bm{\lambda}}$ is obtained by placing each $(r,d)$-node of $[\underline{\bm{\lambda}}]$ by one of the integers $1,2,\ldots,n,$ allowing no repeats. We will call the number $n$ the size of $\mathfrak{t}$ and the $\mathfrak{t}_{l}^{(k)}$'s the components of $\mathfrak{t}.$ Each $(r,d)$-node $\bm{\theta}$ of $\mathfrak{t}$ is labelled by $((a, b), k, l)$ if it lies in row $a$ and column $b$ of the component $\mathfrak{t}_{l}^{(k)}$ of $\mathfrak{t}.$

For each $\underline{\bm{\mu}}\in \mathcal{C}_{r,n}^{d},$ an $(r,d)$-tableau of shape $\underline{\bm{\mu}}$ is called row standard if the numbers increase along any row (from left to right) of each diagram in $[\underline{\bm{\mu}}].$ For each $\underline{\bm{\lambda}}\in \mathcal{P}_{r,n}^{d},$ an $(r,d)$-tableau of shape $\underline{\bm{\lambda}}$ is called standard if the numbers increase along any row (from left to right) and down any column (from top to bottom) of each diagram in $[\underline{\bm{\lambda}}].$ From now on, we denote by $\text{Std}(\underline{\bm{\lambda}})$ the set of all standard $(r,d)$-tableaux of size $n$ and of shape $\underline{\bm{\lambda}},$ which is endowed with an action of $\mathfrak{S}_{n}$ from the right by permuting the entries in each $(r,d)$-tableau.

For each $\underline{\bm{\lambda}}\in \mathcal{C}_{r,n}^{d},$ we denote by $\mathfrak{t}^{\underline{\bm{\lambda}}}$ the standard $(r,d)$-tableau of shape $\underline{\bm{\lambda}}$ in which $1,2,\ldots,n$ appear in increasing order from left to right along the rows of the first diagram, and then along the rows of the second diagram, and so on.

For each $\underline{\bm{\lambda}}=((\lambda_{1}^{(1)},\ldots,\lambda_{d}^{(1)}),\ldots,(\lambda_{1}^{(r)},\ldots,\lambda_{d}^{(r)}))\in \mathcal{C}_{r,n}^{d},$ we have a Young subgroup
$$\mathfrak{S}_{\underline{\bm{\lambda}}} :=\mathfrak{S}_{\lambda_{1}^{(1)}}\times\cdots\times\mathfrak{S}_{\lambda_{d}^{(1)}}\times\cdots\times
\mathfrak{S}_{\lambda_{1}^{(r)}}\times\cdots\times\mathfrak{S}_{\lambda_{d}^{(r)}},$$
which is exactly the row stabilizer of $\mathfrak{t}^{\underline{\bm{\lambda}}}.$

For each $\underline{\bm{\lambda}}\in \mathcal{C}_{r,n}^{d}$ and a row standard $(r,d)$-tableau $\mathfrak{s}$ of shape $\underline{\bm{\lambda}},$ let $d(\mathfrak{s})$ be the element of $\mathfrak{S}_{n}$ such that $\mathfrak{s}=\mathfrak{t}^{\underline{\bm{\lambda}}}d(\mathfrak{s}).$ Then $d(\mathfrak{s})$ is a distinguished right coset representative of $\mathfrak{S}_{\underline{\bm{\lambda}}}$ in $\mathfrak{S}_{n},$ that is, $l(wd(\mathfrak{s}))=l(w)+l(d(\mathfrak{s}))$ for any $w\in \mathfrak{S}_{\underline{\bm{\lambda}}}.$ In this way, we obtain a correspondence between the set of row standard $(r,d)$-tableaux of shape $\underline{\bm{\lambda}}$ and the set of distinguished right coset representatives of $\mathfrak{S}_{\underline{\bm{\lambda}}}$ in $\mathfrak{S}_{n}.$

\begin{definition}
Let $\underline{\bm{\lambda}}$ be an $(r,d)$-composition of $n$ such that $\mathfrak{t}$ is an $(r,d)$-tableau of shape $\underline{\bm{\lambda}}.$ For each $a=1,2,\ldots,n,$ we write $\text{p}_{\mathfrak{t}}(a)=(\text{p}_{\mathfrak{t}}^{r}(a), \text{p}_{\mathfrak{t}}^{d}(a)) :=(k,l)$ if $a$ appears in the component $\mathfrak{t}_{l}^{(k)}$ of $\mathfrak{t}.$ We say that an $(r,d)$-tableau $\mathfrak{t}$ of shape $\lambda$ is of initial kind if $\text{p}_{\mathfrak{t}}(a)=\text{p}_{\mathfrak{t}^{\underline{\bm{\lambda}}}}(a)$ for all $a=1,2,\ldots,n.$
\end{definition}

We define a partial order on the set of $(r,d)$-compositions and $(r,d)$-tableaux, which is similar to the case of compositions and tableaux.

\begin{definition}\label{Def-def}
Let $\underline{\bm{\lambda}}=((\lambda_{1}^{(1)},\ldots,\lambda_{d}^{(1)}),\ldots,(\lambda_{1}^{(r)},\ldots,\lambda_{d}^{(r)}))$ and $\underline{\bm{\mu}}=((\mu_{1}^{(1)},\ldots,\mu_{d}^{(1)}),$ $\ldots,(\mu_{1}^{(r)},\ldots,\mu_{d}^{(r)}))$ be two $(r,d)$-compositions of $n.$ We say that $\underline{\bm{\lambda}}$ dominates $\underline{\bm{\mu}},$ and we write $\underline{\bm{\lambda}}\unrhd \underline{\bm{\mu}}$ if and only if $$\sum_{i=1}^{k-1}\sum_{j=1}^{d}|\lambda_{j}^{(i)}|+\sum_{j=1}^{l-1}|\lambda_{j}^{(k)}|+\sum_{i=1}^{p}\lambda_{l,i}^{(k)}\geq \sum_{i=1}^{k-1}\sum_{j=1}^{d}|\mu_{j}^{(i)}|+\sum_{j=1}^{l-1}|\mu_{j}^{(k)}|+\sum_{i=1}^{p}\mu_{l,i}^{(k)}$$
for all $k,$ $l$ and $p$ with $1\leq k\leq r,$ $1\leq l\leq d$ and $p\geq 0.$ If $\underline{\bm{\lambda}}\unrhd\underline{\bm{\mu}}$ and $\underline{\bm{\lambda}}\neq \underline{\bm{\mu}},$ we write $\underline{\bm{\lambda}}\rhd \underline{\bm{\mu}}.$
\end{definition}

We extend the partial order above to row standard $(r,d)$-tableaux as follows. If $\mathfrak{v}$ is a row standard $(r,d)$-tableau of shape $\underline{\bm{\lambda}}$ and $1\leq k\leq n,$ then the entries $1,2,\ldots,k$ in $\mathfrak{v}$ occupy the diagrams of an $(r,d)$-composition; let $\mathrm{shape}(\mathfrak{v}_{\downarrow k})$ denote this $(r,d)$-composition. Let $\underline{\bm{\lambda}}, \underline{\bm{\mu}}\in \mathcal{C}_{r,n}^{d}.$ Suppose that $\mathfrak{s}$ is a row standard $(r,d)$-tableau of shape $\underline{\bm{\lambda}}$ and that $\mathfrak{t}$ is a row standard $(r,d)$-tableau of shape $\underline{\bm{\mu}}$. We say that $\mathfrak{s}$ dominates $\mathfrak{t},$ and we write $\mathfrak{s}\unrhd\mathfrak{t}$ if $\mathrm{shape}(\mathfrak{s}_{\downarrow k})\unrhd \mathrm{shape}(\mathfrak{t}_{\downarrow k})$ for all $k.$ If $\mathfrak{s}\unrhd\mathfrak{t}$ and $\mathfrak{s}\neq\mathfrak{t},$ then we write $\mathfrak{s}\rhd\mathfrak{t}.$

\section{Cellular bases of cyclotomic Yokonuma-Hecke algebras}

In this section, largely inspired by the results of [DJM] and [ER], we shall construct an explicit cellular basis of the cyclotomic Yokonuma-Hecke algebra $Y_{r,n}^{d}$.

Let us first recall the definition of a cellular basis following [GL].

\begin{definition}\label{cellular-bases}
Let $k$ be an integral domain. An associative $k$-algebra $A$ is called a cellular algebra with a cell datum $(\Lambda, M, C, i)$ if the following conditions are satisfied:\vskip1.5mm

(C1) The finite set $\Lambda$ is partially ordered. Associated with each $\lambda\in \Lambda$ there is a finite set $M(\lambda).$ The algebra $A$ has a $k$-basis $C_{s, t}^{\lambda},$ where $(s, t)$ runs through all elements of $M(\lambda)\times M(\lambda)$ for all $\lambda\in \Lambda.$\vskip1.5mm

(C2) The map $i$ is a $k$-linear anti-automorphism of $A$ with $i^{2}=id$ which sends $C_{s, t}^{\lambda}$ to $C_{t, s}^{\lambda}.$\vskip1.5mm

(C3) For each $\lambda\in \Lambda,$ $s, t\in M(\lambda)$ and each $a\in A$, the product $C_{s, t}^{\lambda}a$ can be written as $\sum_{u\in M(\lambda)}r_{t}^{u}(a)C_{s, u}^{\lambda}+A^{>\lambda},$ where $r_{t}^{u}(a)\in k$ is independent of $s$ and $A^{>\lambda}$ is the $k$-submodule of $A$ generated by $\{C_{s', t'}^{\mu}\:|\:\mu>\lambda; s', t'\in M(\mu)\}.$
\end{definition}

We now fix once and for all a total order on the set of $r$-th roots of unity via setting $\zeta_{k} :=\zeta^{k-1}$ for $1\leq k\leq r.$ Then we define a set partition $A_{\underline{\bm{\lambda}}}\in \mathcal{SP}_{n}$ for any $(r,d)$-composition $\underline{\bm{\lambda}}.$

\begin{definition}\label{definition-1}
Let $\underline{\bm{\lambda}}=((\lambda_{1}^{(1)},\ldots,\lambda_{d}^{(1)}),\ldots,(\lambda_{1}^{(r)},\ldots,\lambda_{d}^{(r)}))\in \mathcal{C}_{r,n}^{d}.$ Suppose that we choose all $1\leq i_{1}< i_{2}<\cdots < i_{p}\leq r$ such that $(\lambda_{1}^{(i_1)},\ldots,\lambda_{d}^{(i_1)}),$ $(\lambda_{1}^{(i_2)},\ldots,\lambda_{d}^{(i_2)}),\ldots,$$(\lambda_{1}^{(i_p)},$\\$\ldots,\lambda_{d}^{(i_p)})$ are nonempty. Define $a_{k} :=\sum_{j=1}^{k}|\bm{\lambda}^{(i_{j})}|$ for $1\leq k\leq p,$ where $|\bm{\lambda}^{(i_{j})}|=\sum_{l=1}^{d}|\lambda_{l}^{(i_{j})}|.$ Then the set partition $A_{\underline{\bm{\lambda}}}$ associated with $\underline{\bm{\lambda}}$ is defined as $$A_{\underline{\bm{\lambda}}} :=\{\{1,\ldots,a_{1}\},\{a_{1}+1,\ldots,a_{2}\},\ldots,\{a_{p-1}+1,\ldots,n\}\},$$ which may be written as $A_{\underline{\bm{\lambda}}}=\{I_{1},I_{2},\ldots,I_{p}\},$ and is referred to the blocks of $A_{\underline{\bm{\lambda}}}$ in the order given above.
\end{definition}

The following lemma can be easily proved, which we shall use frequently in the sequel.

\begin{lemma}\label{6-3-lemma}
Let $A_{\underline{\bm{\lambda}}}=\{I_{1},I_{2},\ldots,I_{p}\}$ and let $k_{1}, k_{2}\in I_{k}$ for some $1\leq k\leq p.$ Then we have
\begin{equation}\label{tkEAlambda}
t_{k_{1}}E_{A_{\underline{\bm{\lambda}}}}=E_{A_{\underline{\bm{\lambda}}}}t_{k_{1}}=
t_{k_{2}}E_{A_{\underline{\bm{\lambda}}}}=E_{A_{\underline{\bm{\lambda}}}}t_{k_{2}}.
\end{equation}
\end{lemma}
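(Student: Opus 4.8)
The plan is to reduce the fourfold equality \eqref{tkEAlambda} to two elementary facts about the generators $t_i$ and the idempotents $e_{i,j}$: first, that each $t_k$ commutes with $E_{A_{\underline{\bm{\lambda}}}}$; and second, the absorption identity $t_i e_{i,j} = t_j e_{i,j}$, which lets us trade $t_{k_1}$ for $t_{k_2}$ whenever the factor $e_{k_1,k_2}$ is present.

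First I would establish that $t_{k_1} E_{A_{\underline{\bm{\lambda}}}} = E_{A_{\underline{\bm{\lambda}}}} t_{k_1}$, and likewise for $k_2$. Since $E_{A_{\underline{\bm{\lambda}}}}$ is by definition a product of factors $e_{i,j}$, this is immediate from the relation $t_i e_{j,k} = e_{j,k} t_i$ recorded in \eqref{relations}: the element $t_{k_1}$ commutes with each factor, hence with the whole product. This settles the outer two equalities of \eqref{tkEAlambda} and, combined with the central claim below, also the one linking the two right-hand products, since $E_{A_{\underline{\bm{\lambda}}}} t_{k_1} = t_{k_1} E_{A_{\underline{\bm{\lambda}}}} = t_{k_2} E_{A_{\underline{\bm{\lambda}}}} = E_{A_{\underline{\bm{\lambda}}}} t_{k_2}$. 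Thus everything reduces to the single identity $t_{k_1} E_{A_{\underline{\bm{\lambda}}}} = t_{k_2} E_{A_{\underline{\bm{\lambda}}}}$.

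The heart of the argument is the identity $t_i e_{i,j} = t_j e_{i,j}$, valid for all $i,j$. To verify it I would expand $t_i e_{i,j} = \frac{1}{r}\sum_{s=0}^{r-1} t_i^{s+1} t_j^{-s}$ and shift the summation index, invoking $t_i^r = 1 = t_j^r$ from \eqref{rel-def-Y1} to match the terms across the cyclic wrap; the sum rewrites as $\frac{1}{r}\sum_{s=0}^{r-1} t_i^s t_j^{1-s} = t_j e_{i,j}$. With this in hand, I note that since $k_1,k_2$ lie in the same block $I_k$ of $A_{\underline{\bm{\lambda}}}$, the factor $e_{k_1,k_2}$ (equal to $e_{k_2,k_1}$ by $e_{i,k}=e_{k,i}$) occurs in $E_{I_k}$, hence in $E_{A_{\underline{\bm{\lambda}}}}$. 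Because the $e_{i,j}$ all commute with one another by \eqref{relations} and $t_{k_1}$ commutes with every $e_{i,j}$, I may write $E_{A_{\underline{\bm{\lambda}}}} = e_{k_1,k_2} R$ with $t_{k_1}$ commuting past $R$, and then compute $t_{k_1} E_{A_{\underline{\bm{\lambda}}}} = (t_{k_1} e_{k_1,k_2}) R = (t_{k_2} e_{k_1,k_2}) R = t_{k_2} E_{A_{\underline{\bm{\lambda}}}}$, the middle step being the absorption identity. The degenerate case $|I_k|=1$ forces $k_1 = k_2$ and is trivial.

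There is no genuine obstacle here; the lemma is a direct consequence of the defining relations. The only point demanding care is the cyclic reindexing in the absorption identity, where one must use $t_i^r = 1$ so that the shifted index still ranges over a complete set of residues modulo $r$. Once that identity is recorded, the commutation relations of \eqref{relations} carry out the remainder mechanically.
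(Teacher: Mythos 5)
Your proof is correct, and since the paper leaves this lemma unproved (stating only that it ``can be easily proved''), your argument supplies exactly the intended routine verification: commutativity of $t_{k_1}$ with each factor $e_{i,j}$ from \eqref{relations}, plus the absorption identity $t_{i}e_{i,j}=t_{j}e_{i,j}$ obtained by cyclically reindexing the sum defining $e_{i,j}$ using $t_i^r=t_j^r=1$, applied to the factor $e_{k_1,k_2}$ which is present in $E_{A_{\underline{\bm{\lambda}}}}$ precisely because $k_1,k_2$ lie in the same block.
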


From the basis Theorem \ref{basis-theorem-cyclotomic} we can get that $t_{i}$ acts diagonalizably on $Y_{r,n}^{d}$ with minimal polynomial $t_{i}^{r}-1=\Pi_{l=1}^{r}(t_{i}-\zeta_{l}).$ Hence, if we define $u_{i,k}=\Pi_{l=1;l\neq k}^{r}(t_{i}-\zeta_{l}),$ we get that $u_{i,k}$ is the eigenspace for the action of $t_{i}$ on $Y_{r,n}^{d}$ with eigenvalue $\zeta_{k},$ that is, $$\{y\in Y_{r,n}^{d}\:|\:t_{i}y=\zeta_{k}y\}=u_{i,k}Y_{r,n}^{d}.$$

This motivates us to give the following definition.

\begin{definition}\label{definition-2}
Let $\underline{\bm{\lambda}}=((\lambda_{1}^{(1)},\ldots,\lambda_{d}^{(1)}),\ldots,(\lambda_{1}^{(r)},\ldots,\lambda_{d}^{(r)}))\in \mathcal{C}_{r,n}^{d},$ and let $a_{k} :=\sum_{j=1}^{k}|\bm{\lambda}^{(i_{j})}|$ $(1\leq k\leq p)$ be defined as above. Then we define $$u_{\underline{\bm{\lambda}}} :=u_{a_{1},i_{1}}u_{a_{2},i_{2}}\cdots u_{a_{p},i_{p}}.$$
\end{definition}

From Lemma \ref{6-3-lemma} and the definitions we can easily get the following lemma.

\begin{lemma}\label{6-5-lemma}
We set $U_{\underline{\bm{\lambda}}} :=u_{\underline{\bm{\lambda}}}E_{A_{\underline{\bm{\lambda}}}}.$ Let $A_{\underline{\bm{\lambda}}}=\{I_{1},I_{2},\ldots,I_{p}\}$ and let $k_{d}$ be any element of $I_{d}.$ Then we have $$U_{\underline{\bm{\lambda}}}=\prod_{d=1}^{p}u_{k_{d}, i_{d}}E_{A_{\underline{\bm{\lambda}}}}.$$
In particular, we have for any $i\in I_{d}$ that
\begin{equation}\label{tiU-lambda}
t_{i}U_{\underline{\bm{\lambda}}}=\zeta_{i_{d}}U_{\underline{\bm{\lambda}}}.
\end{equation}
\end{lemma}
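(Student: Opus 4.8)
The plan is to reduce the whole statement to Lemma~\ref{6-3-lemma} together with two commutativity facts. First, the generators $t_{1},\ldots,t_{n}$ commute with one another by \eqref{rel-def-Y1}. Second, $t_{i}$ commutes with $E_{A_{\underline{\bm{\lambda}}}}$ for every $i$, because $E_{A_{\underline{\bm{\lambda}}}}$ is a product of the idempotents $e_{j,k}$, each of which commutes with every $t_{i}$ by \eqref{relations}. In particular every $u_{k,i_{m}}=\prod_{l\neq i_{m}}(t_{k}-\zeta_{l})$ is a polynomial in the single commuting element $t_{k}$, so the various factors $u_{k,i_{m}}$ commute with each other and with $E_{A_{\underline{\bm{\lambda}}}}$.

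The key preliminary step is to upgrade Lemma~\ref{6-3-lemma} from the linear level to the polynomial level: for $k_{1},k_{2}\in I_{m}$ and any $s\geq 0$ I claim
$$t_{k_{1}}^{s}E_{A_{\underline{\bm{\lambda}}}}=t_{k_{2}}^{s}E_{A_{\underline{\bm{\lambda}}}}.$$
This follows by an easy induction on $s$: peel off one factor, replace $t_{k_{1}}E_{A_{\underline{\bm{\lambda}}}}$ by $t_{k_{2}}E_{A_{\underline{\bm{\lambda}}}}$ via Lemma~\ref{6-3-lemma}, and slide the new $t_{k_{2}}$ past the remaining $t_{k_{1}}^{s-1}$ using commutativity. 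Consequently $f(t_{k_{1}})E_{A_{\underline{\bm{\lambda}}}}=f(t_{k_{2}})E_{A_{\underline{\bm{\lambda}}}}$ for every polynomial $f$; applying this to $u_{k,i_{m}}$ and recalling that $a_{m}\in I_{m}$ gives the single-factor substitution
$$u_{a_{m},i_{m}}E_{A_{\underline{\bm{\lambda}}}}=u_{k_{m},i_{m}}E_{A_{\underline{\bm{\lambda}}}}\qquad\text{for every }k_{m}\in I_{m}.$$

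For the first displayed identity I would replace the factors $u_{a_{m},i_{m}}$ by $u_{k_{m},i_{m}}$ one at a time. Set
$$V_{m}:=u_{k_{1},i_{1}}\cdots u_{k_{m},i_{m}}\,u_{a_{m+1},i_{m+1}}\cdots u_{a_{p},i_{p}}\,E_{A_{\underline{\bm{\lambda}}}},$$
so that $V_{0}=u_{\underline{\bm{\lambda}}}E_{A_{\underline{\bm{\lambda}}}}=U_{\underline{\bm{\lambda}}}$ and $V_{p}=\prod_{m=1}^{p}u_{k_{m},i_{m}}E_{A_{\underline{\bm{\lambda}}}}$. To see $V_{m-1}=V_{m}$, use that all the $u$'s commute to bring $u_{a_{m},i_{m}}$ adjacent to $E_{A_{\underline{\bm{\lambda}}}}$, apply the single-factor substitution to turn it into $u_{k_{m},i_{m}}$, and move that factor back to its place. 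Chaining $V_{0}=V_{1}=\cdots=V_{p}$ yields $U_{\underline{\bm{\lambda}}}=\prod_{m=1}^{p}u_{k_{m},i_{m}}E_{A_{\underline{\bm{\lambda}}}}$.

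Finally, for \eqref{tiU-lambda} with $i\in I_{d}$, I would invoke the identity just proved with the choice $k_{d}=i$ (and arbitrary choices in the other blocks) and, using commutativity, collect the $d$-th factor next to $E_{A_{\underline{\bm{\lambda}}}}$, writing $U_{\underline{\bm{\lambda}}}=\bigl(\prod_{m\neq d}u_{k_{m},i_{m}}\bigr)u_{i,i_{d}}E_{A_{\underline{\bm{\lambda}}}}$. Since $t_{i}$ commutes with each $u_{k_{m},i_{m}}$ for $m\neq d$ and with $E_{A_{\underline{\bm{\lambda}}}}$, it may be moved next to $u_{i,i_{d}}$, where one uses $t_{i}u_{i,i_{d}}=\zeta_{i_{d}}u_{i,i_{d}}$. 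This eigenvalue relation holds because $(t_{i}-\zeta_{i_{d}})u_{i,i_{d}}=\prod_{l=1}^{r}(t_{i}-\zeta_{l})=t_{i}^{r}-1=0$ in $Y_{r,n}^{d}$, the product running over all $r$-th roots of unity. This gives $t_{i}U_{\underline{\bm{\lambda}}}=\zeta_{i_{d}}U_{\underline{\bm{\lambda}}}$. The only genuine bookkeeping is the factor-by-factor replacement, and since everything in sight commutes there is no real obstacle beyond organizing that induction cleanly.
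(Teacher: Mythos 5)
Your proposal is correct and takes essentially the same approach as the paper, which gives no detailed proof but simply remarks that the lemma follows from Lemma \ref{6-3-lemma} and the definitions. Your argument---upgrading Lemma \ref{6-3-lemma} to powers of the $t_k$'s, substituting block by block, and deriving the eigenvalue relation from $(t_i-\zeta_{i_d})u_{i,i_d}=t_i^r-1=0$---is precisely that intended deduction, written out in full.
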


\begin{definition}\label{definition-3}
Let $\underline{\bm{\lambda}}=((\lambda_{1}^{(1)},\ldots,\lambda_{d}^{(1)}),\ldots,(\lambda_{1}^{(r)},\ldots,\lambda_{d}^{(r)}))\in \mathcal{C}_{r,n}^{d}.$ Associated with $\underline{\bm{\lambda}}$ we can define the following elements $a_{l}^{k}$ and $b_{k}$:$$a_{l}^{k} :=\sum_{m=1}^{l-1}|\lambda_{m}^{(k)}|,~~~~b_{k} :=\sum_{j=1}^{k-1}\sum_{i=1}^{d}|\lambda_{i}^{(j)}|~~~~\mathrm{for}~1\leq k\leq r~\mathrm{and}~1\leq l\leq d.$$ Associated with these elements we can define an element $u_{\mathbf{a}}^{+} :=u_{\mathbf{a}, 1}u_{\mathbf{a}, 2}\cdots u_{\mathbf{a}, r},$ where $$u_{\mathbf{a}, k} :=\prod_{l=1}^{d}\prod_{j=1}^{a_{l}^{k}}(X_{b_{k}+j}-v_{l}).$$
\end{definition}

We can now define the key ingredient of the cellular basis for $Y_{r,n}^{d}.$

\begin{definition}\label{definition-4}
Let $\underline{\bm{\lambda}}\in \mathcal{C}_{r,n}^{d}$ and define $u_{\mathbf{a}}^{+}$ as above. Let $x_{\underline{\bm{\lambda}}}=\sum_{w\in \mathfrak{S}_{\underline{\bm{\lambda}}}}q^{l(w)}g_{w}.$ Then we define the element $m_{\underline{\bm{\lambda}}}$ of $Y_{r,n}^{d}$ as follows:
\begin{equation}\label{mUlam-lambda}
m_{\underline{\bm{\lambda}}} :=U_{\underline{\bm{\lambda}}}u_{\mathbf{a}}^{+}x_{\underline{\bm{\lambda}}}=
u_{\underline{\bm{\lambda}}}E_{A_{\underline{\bm{\lambda}}}}u_{\mathbf{a}}^{+}x_{\underline{\bm{\lambda}}}.
\end{equation}
\end{definition}

The following lemma gives some basic properties of the element $m_{\underline{\bm{\lambda}}}.$
\begin{lemma}\label{lemma6-8-Ueij}
$(a)$ suppose that $\underline{\bm{\lambda}}=((\lambda_{1}^{(1)},\ldots,\lambda_{d}^{(1)}),\ldots,(\lambda_{1}^{(r)},\ldots,\lambda_{d}^{(r)}))\in \mathcal{C}_{r,n}^{d},$ and let $\alpha=(|\lambda_{1}^{(1)}|,\ldots,|\lambda_{d}^{(1)}|,\ldots,|\lambda_{1}^{(r)}|,\ldots,|\lambda_{d}^{(r)}|).$ If $w\in \mathfrak{S}_{\alpha},$ then $g_{w}$ commutes with $U_{\underline{\bm{\lambda}}}$ and $u_{\mathbf{a}}^{+},$ respectively. In particular, we have that
\begin{equation}\label{mueaxu-lambda}
m_{\underline{\bm{\lambda}}}=u_{\underline{\bm{\lambda}}}E_{A_{\underline{\bm{\lambda}}}}x_{\underline{\bm{\lambda}}}u_{\mathbf{a}}^{+}=
u_{\underline{\bm{\lambda}}}x_{\underline{\bm{\lambda}}}E_{A_{\underline{\bm{\lambda}}}}u_{\mathbf{a}}^{+}
=x_{\underline{\bm{\lambda}}}u_{\underline{\bm{\lambda}}}E_{A_{\underline{\bm{\lambda}}}}u_{\mathbf{a}}^{+}=
x_{\underline{\bm{\lambda}}}u_{\mathbf{a}}^{+}u_{\underline{\bm{\lambda}}}E_{A_{\underline{\bm{\lambda}}}}.
\end{equation}

$(b)$ If $e_{i,j}$ does not appear in the product $E_{A_{\underline{\bm{\lambda}}}}$, that is, if $i$ and $j$ occur in two different blocks of $A_{\underline{\bm{\lambda}}},$ then we have $U_{A_{\underline{\bm{\lambda}}}}e_{i,j}=0.$
\end{lemma}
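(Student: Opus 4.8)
The plan is to prove (a) by establishing the two commutation claims separately and then reading off the four expressions for $m_{\underline{\bm{\lambda}}}$, and to prove (b) by a direct eigenvalue computation. Throughout I would use that $\mathfrak{S}_{\underline{\bm{\lambda}}}\subseteq \mathfrak{S}_{\alpha}$ and that every $w\in \mathfrak{S}_{\alpha}$ preserves each block $I_k$ of $A_{\underline{\bm{\lambda}}}$ (the blocks being unions of the intervals permuted by $\mathfrak{S}_{\alpha}$); in particular, by Lemma \ref{5-1-lemmaaa}, $g_w$ commutes with $E_{A_{\underline{\bm{\lambda}}}}$ for such $w$.

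First I would show $g_w U_{\underline{\bm{\lambda}}}=U_{\underline{\bm{\lambda}}}g_w$ for $w\in \mathfrak{S}_{\alpha}$. Moving $g_w$ past the polynomial $u_{\underline{\bm{\lambda}}}$ in the $t$'s via the relation $g_w t_j=t_{jw^{-1}}g_w$ replaces each index $a_k$ by $a_k w^{-1}$, so that $g_w u_{\underline{\bm{\lambda}}}E_{A_{\underline{\bm{\lambda}}}}=\big(\prod_k u_{a_k w^{-1},i_k}\big)E_{A_{\underline{\bm{\lambda}}}}g_w$. Since $w$ preserves each block, $a_kw^{-1}\in I_k$, and Lemma \ref{6-5-lemma} (which permits any representative of $I_k$) identifies $\big(\prod_k u_{a_kw^{-1},i_k}\big)E_{A_{\underline{\bm{\lambda}}}}$ with $U_{\underline{\bm{\lambda}}}$. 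This is the conceptual heart of (a): although $u_{\underline{\bm{\lambda}}}$ alone need not commute with $g_w$, the product $U_{\underline{\bm{\lambda}}}=u_{\underline{\bm{\lambda}}}E_{A_{\underline{\bm{\lambda}}}}$ does, precisely because $E_{A_{\underline{\bm{\lambda}}}}$ renders the choice of block representative immaterial.

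Next I would show $g_w u_{\mathbf{a}}^{+}=u_{\mathbf{a}}^{+}g_w$. The key observation is that $u_{\mathbf{a}}^{+}$ is a product, over the components $(k,l_0)$ of $\underline{\bm{\lambda}}$, of the symmetric expressions $\prod_{p}\big(\prod_{l>l_0}(X_{p}-v_l)\big)$ in the variables $X_p$ with $p$ ranging over the $\alpha$-interval of that component; this comes from regrouping the factors in Definition \ref{definition-3}. Thus it suffices to check that a generator $g_i$ with $i,i+1$ in a common $\alpha$-interval commutes with any symmetric polynomial in the $X$'s of that interval, and commutes with the remaining factors because $g_iX_p=X_pg_i$ for $p\neq i,i+1$. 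Reducing to two variables (the other $X_p$ playing the role of coefficients commuting with $g_i$), one checks using Lemma \ref{gxxg-xggx} that $g_i$ commutes with $X_i+X_{i+1}$ and with $X_iX_{i+1}$; as these generate the symmetric polynomials in $X_i,X_{i+1}$, the claim follows. This computation, where the correction terms involving $e_i$ in \eqref{relations-1}--\eqref{relations-2} must cancel, is the main technical obstacle.

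With both commutation statements in hand, \eqref{mueaxu-lambda} follows formally: $x_{\underline{\bm{\lambda}}}=\sum_{w\in \mathfrak{S}_{\underline{\bm{\lambda}}}}q^{l(w)}g_w$ commutes with $U_{\underline{\bm{\lambda}}}$ and with $u_{\mathbf{a}}^{+}$ since $\mathfrak{S}_{\underline{\bm{\lambda}}}\subseteq \mathfrak{S}_{\alpha}$, while $U_{\underline{\bm{\lambda}}}$ commutes with $u_{\mathbf{a}}^{+}$ because the former is a polynomial in the $t$'s and the latter in the $X$'s, which commute by \eqref{commutation-formulae}; moving $x_{\underline{\bm{\lambda}}}$ and $u_{\mathbf{a}}^{+}$ through $U_{\underline{\bm{\lambda}}}$ and using that $E_{A_{\underline{\bm{\lambda}}}}$ commutes with $x_{\underline{\bm{\lambda}}}$ yields the four displayed forms. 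Finally, for (b) suppose $i\in I_a$ and $j\in I_b$ with $a\neq b$, so the associated $r$-positions $i_a,i_b$ are distinct. Since $U_{\underline{\bm{\lambda}}}$ is a polynomial in the $t$'s it commutes with $t_i,t_j$, and by \eqref{tiU-lambda} we have $U_{\underline{\bm{\lambda}}}t_i^{s}t_j^{-s}=\zeta_{i_a}^{s}\zeta_{i_b}^{-s}U_{\underline{\bm{\lambda}}}$; hence
\[
U_{\underline{\bm{\lambda}}}e_{i,j}=\frac{1}{r}\sum_{s=0}^{r-1}\big(\zeta_{i_a}\zeta_{i_b}^{-1}\big)^{s}\,U_{\underline{\bm{\lambda}}}=0,
\]
because $\zeta_{i_a}\zeta_{i_b}^{-1}=\zeta^{\,i_a-i_b}$ is a nontrivial $r$-th root of unity and the geometric sum vanishes.
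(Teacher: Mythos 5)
Your proof is correct, and for the parts the paper actually argues it follows the same route: you get the commutation of $g_w$ with $U_{\underline{\bm{\lambda}}}$ from $g_wE_{A_{\underline{\bm{\lambda}}}}=E_{A_{\underline{\bm{\lambda}}}}g_w$ (Lemma \ref{5-1-lemmaaa}), the index shift $u_{i,k}g_{w}=g_{w}u_{iw,k}$, and the block-representative freedom of Lemma \ref{6-5-lemma}; and you get part (b) from the eigenvalue relation \eqref{tiU-lambda} plus the vanishing of the geometric sum $\sum_{s=0}^{r-1}(\zeta_{i_a}\zeta_{i_b}^{-1})^{s}$, which is exactly what the paper's terse citation of Lemmas \ref{6-3-lemma} and \ref{6-5-lemma} amounts to. Where you differ is in completeness: the paper's printed proof never addresses the asserted commutation of $g_w$ with $u_{\mathbf{a}}^{+}$ (it implicitly defers to the analogous Ariki--Koike computation in [DJM]), whereas you supply a genuine argument --- regrouping the product in Definition \ref{definition-3} so that each factor is a symmetric polynomial in the variables $X_p$ over a single $\alpha$-interval, and then checking via Lemma \ref{gxxg-xggx} that $g_i$ commutes with $X_i+X_{i+1}$ and with $X_iX_{i+1}$, the $e_i$-correction terms cancelling. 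That two-variable check is precisely the point where the Yokonuma deformation (the idempotents $e_i$ in \eqref{relations-1}--\eqref{relations-2}) could have caused trouble, so making it explicit strengthens rather than deviates from the paper's argument; the remaining step, deducing the four rewritings of $m_{\underline{\bm{\lambda}}}$ in \eqref{mueaxu-lambda} from the two commutations together with \eqref{commutation-formulae}, is routine and matches the paper.
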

\begin{proof}
$(a)$ Lemma \ref{5-1-lemmaaa} implies that $g_{w}$ and $E_{A_{\underline{\bm{\lambda}}}}$ commute for $w\in \mathfrak{S}_{\alpha}.$ Moreover, from the definition of $u_{i,k}$ we get that $u_{i,k}g_{w}=g_{w}u_{iw, k},$ and so Lemma \ref{6-5-lemma} implies that $g_{w}$ commutes with $u_{\underline{\bm{\lambda}}}$ for $w\in \mathfrak{S}_{\alpha}.$

$(b)$ It follows from Lemmas \ref{6-3-lemma} and \ref{6-5-lemma}.
\end{proof}

\begin{lemma}\label{lemma6-9-mgw}
Assume that $\underline{\bm{\lambda}}\in \mathcal{C}_{r,n}^{d}$. Then we have the following equation$:$
\begin{equation}\label{mlambdagw-qwlam}
m_{\underline{\bm{\lambda}}}g_{w}=q^{l(w)}m_{\underline{\bm{\lambda}}}~~~~~for~all~w\in \mathfrak{S}_{\underline{\bm{\lambda}}}.
\end{equation}
\end{lemma}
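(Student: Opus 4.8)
The plan is to reduce the statement to the classical Iwahori--Hecke symmetrizer identity $x g_i = q\,x$, the decisive point being that the idempotent factor $E_{A_{\underline{\bm{\lambda}}}}$ occurring in $m_{\underline{\bm{\lambda}}}$ forces each $e_i$ with $s_i\in\mathfrak{S}_{\underline{\bm{\lambda}}}$ to act as the identity, so that the quadratic relation $g_i^2=1+(q-q^{-1})e_ig_i$ collapses to the genuine type $A$ relation.

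First I would establish the core identity
$$E_{A_{\underline{\bm{\lambda}}}}\,x_{\underline{\bm{\lambda}}}\,g_i=q\,E_{A_{\underline{\bm{\lambda}}}}\,x_{\underline{\bm{\lambda}}}$$
for each simple reflection $s_i\in\mathfrak{S}_{\underline{\bm{\lambda}}}$. Granting this, the lemma follows quickly: using the rewriting $m_{\underline{\bm{\lambda}}}=u_{\underline{\bm{\lambda}}}E_{A_{\underline{\bm{\lambda}}}}x_{\underline{\bm{\lambda}}}u_{\mathbf{a}}^{+}$ from \eqref{mueaxu-lambda} and the fact that $u_{\mathbf{a}}^{+}$ commutes with $g_w$ for $w\in\mathfrak{S}_{\underline{\bm{\lambda}}}\subseteq\mathfrak{S}_{\alpha}$ (Lemma \ref{lemma6-8-Ueij}(a)), one gets $m_{\underline{\bm{\lambda}}}g_w=u_{\underline{\bm{\lambda}}}\bigl(E_{A_{\underline{\bm{\lambda}}}}x_{\underline{\bm{\lambda}}}g_w\bigr)u_{\mathbf{a}}^{+}$; and a general $w\in\mathfrak{S}_{\underline{\bm{\lambda}}}$ is handled by peeling off the generators of a reduced expression $w=s_{i_1}\cdots s_{i_{l(w)}}$ (with all $s_{i_j}\in\mathfrak{S}_{\underline{\bm{\lambda}}}$) one at a time, each step contributing a factor $q$, so that $E_{A_{\underline{\bm{\lambda}}}}x_{\underline{\bm{\lambda}}}g_w=q^{l(w)}E_{A_{\underline{\bm{\lambda}}}}x_{\underline{\bm{\lambda}}}$.

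For the core identity I would record two observations. Since $s_i\in\mathfrak{S}_{\underline{\bm{\lambda}}}$, the integers $i,i+1$ lie in a single component of $\underline{\bm{\lambda}}$, hence in one block of $A_{\underline{\bm{\lambda}}}$; thus $e_i=e_{i,i+1}$ is a factor of $E_{A_{\underline{\bm{\lambda}}}}$ and, as the $e_{j,k}$ commute, $e_iE_{A_{\underline{\bm{\lambda}}}}=E_{A_{\underline{\bm{\lambda}}}}e_i=E_{A_{\underline{\bm{\lambda}}}}$. Multiplying $g_i^2=1+(q-q^{-1})e_ig_i$ on the left by $E_{A_{\underline{\bm{\lambda}}}}$ then gives the type $A$ relation
$$E_{A_{\underline{\bm{\lambda}}}}g_i^2=E_{A_{\underline{\bm{\lambda}}}}\bigl(1+(q-q^{-1})g_i\bigr).$$
Secondly, every $v\in\mathfrak{S}_{\underline{\bm{\lambda}}}$ leaves each block of $A_{\underline{\bm{\lambda}}}$ invariant, so $g_v$ commutes with $E_{A_{\underline{\bm{\lambda}}}}$ by Lemma \ref{5-1-lemmaaa}. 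With these in hand the computation is standard: writing $\mathfrak{S}_{\underline{\bm{\lambda}}}=W^{+}\sqcup W^{+}s_i$ with $l(vs_i)=l(v)+1$ for $v\in W^{+}$, one has $x_{\underline{\bm{\lambda}}}=\sum_{v\in W^{+}}q^{l(v)}g_v(1+qg_i)$; commuting $E_{A_{\underline{\bm{\lambda}}}}$ past each $g_v$ and applying the displayed relation yields $E_{A_{\underline{\bm{\lambda}}}}g_v(1+qg_i)g_i=q\,g_vE_{A_{\underline{\bm{\lambda}}}}(1+qg_i)$, and summing over $v$ gives $E_{A_{\underline{\bm{\lambda}}}}x_{\underline{\bm{\lambda}}}g_i=q\,E_{A_{\underline{\bm{\lambda}}}}x_{\underline{\bm{\lambda}}}$.

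The only genuine subtlety, and what I expect to be the main obstacle, is positional bookkeeping rather than any new input: one must keep $E_{A_{\underline{\bm{\lambda}}}}$ adjacent on the left throughout, because it is exactly $e_iE_{A_{\underline{\bm{\lambda}}}}=E_{A_{\underline{\bm{\lambda}}}}$ that cancels the unwanted $e_i$-term, while the intervening $g_v$ has to be moved aside via its commutation with $E_{A_{\underline{\bm{\lambda}}}}$. Everything else is the classical symmetrizer manipulation together with the commutativity statements already recorded in Lemmas \ref{5-1-lemmaaa} and \ref{lemma6-8-Ueij}.
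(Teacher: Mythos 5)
Your proof is correct and follows essentially the same route as the paper: both arguments rest on the two facts that $E_{A_{\underline{\bm{\lambda}}}}$ commutes with $g_v$ for $v\in\mathfrak{S}_{\underline{\bm{\lambda}}}$ (Lemma \ref{5-1-lemmaaa}) and that $e_iE_{A_{\underline{\bm{\lambda}}}}=E_{A_{\underline{\bm{\lambda}}}}$ collapses $g_i^2$ to the type $A$ quadratic relation, followed by the classical symmetrizer computation (your coset factorization $\mathfrak{S}_{\underline{\bm{\lambda}}}=W^{+}\sqcup W^{+}s_i$ is just a repackaging of the paper's splitting of the sum according to whether $l(ws_i)>l(w)$ or $l(ws_i)<l(w)$). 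If anything, your write-up is slightly more complete than the paper's, since you explicitly track the factor $u_{\mathbf{a}}^{+}$ via Lemma \ref{lemma6-8-Ueij}(a) and spell out the reduction from general $w$ to simple reflections, both of which the paper leaves implicit.
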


\begin{proof}
For any $i$ such that $s_{i}\in \mathfrak{S}_{\underline{\bm{\lambda}}},$ we have $$m_{\underline{\bm{\lambda}}}g_{i}=\sum_{\substack{w\in \mathfrak{S}_{\underline{\bm{\lambda}}}\\l(ws_{i})>l(w)}}q^{l(w)}u_{\underline{\bm{\lambda}}}E_{A_{\underline{\bm{\lambda}}}}g_{w}g_{i}+\sum_{\substack{w\in \mathfrak{S}_{\underline{\bm{\lambda}}}\\l(ws_{i})<l(w)}}q^{l(w)}u_{\underline{\bm{\lambda}}}E_{A_{\underline{\bm{\lambda}}}}g_{ws_{i}}g_{i}^{2}.$$

Since $ws_{i}\in \mathfrak{S}_{\underline{\bm{\lambda}}},$ we have $E_{A_{\underline{\bm{\lambda}}}}g_{ws_{i}}=g_{ws_{i}}E_{A_{\underline{\bm{\lambda}}}}$ by Lemma \ref{5-1-lemmaaa}. While $E_{A_{\underline{\bm{\lambda}}}}g_{i}^{2}=E_{A_{\underline{\bm{\lambda}}}}(1+(q-q^{-1})g_{i}),$ thus we have
\begin{align*}
m_{\underline{\bm{\lambda}}}g_{i}=&\sum_{\substack{w\in \mathfrak{S}_{\underline{\bm{\lambda}}}\\l(ws_{i})<l(w)}}q^{l(w)-1}u_{\underline{\bm{\lambda}}}E_{A_{\underline{\bm{\lambda}}}}g_{w}
+\sum_{\substack{w\in \mathfrak{S}_{\underline{\bm{\lambda}}}\\l(ws_{i})<l(w)}}q^{l(w)}u_{\underline{\bm{\lambda}}}E_{A_{\underline{\bm{\lambda}}}}g_{ws_{i}}\\
&+\sum_{\substack{w\in \mathfrak{S}_{\underline{\bm{\lambda}}}\\l(ws_{i})<l(w)}}q^{l(w)}(q-q^{-1})u_{\underline{\bm{\lambda}}}E_{A_{\underline{\bm{\lambda}}}}g_{w}\\
=&qm_{\underline{\bm{\lambda}}}.
\end{align*}

We are done.
\end{proof}

In this section, let us rewrite the anti-automorphism $\tau$ on $Y_{r,n}^{d}$ introduced in Section 4 by $\ast,$ which is determined by $$g_{i}^{\ast}=g_{i},\quad t_{j}^{\ast}=t_{j},\quad X_{j}^{\ast}=X_{j}\quad \mathrm{for}~1\leq i\leq n-1~\mathrm{and}~1\leq j\leq n.$$

\begin{definition}
Let $\underline{\bm{\lambda}}\in \mathcal{C}_{r,n}^{d},$ and let $\mathfrak{s}$ and $\mathfrak{t}$ be two row standard $(r,d)$-tableaux of shape $\underline{\bm{\lambda}}.$ We then define $m_{\mathfrak{s}\mathfrak{t}}=g_{d(\mathfrak{s})}^{\ast}m_{\underline{\bm{\lambda}}}g_{d(\mathfrak{t})}.$
\end{definition}

Since $m_{\underline{\bm{\lambda}}}^{\ast}=m_{\underline{\bm{\lambda}}},$ we have $m_{\mathfrak{s}\mathfrak{t}}^{\ast}=m_{\mathfrak{t}\mathfrak{s}}.$

One of the aims of this section is to show that the elements $m_{\mathfrak{s}\mathfrak{t}},$ as $(\mathfrak{s}, \mathfrak{t})$ runs over the ordered pairs of standard $(r,d)$-tableaux of the same shape, give a cellular basis of $Y_{r,n}^{d}.$

\begin{lemma}\label{lemma6-11-mlam}
Suppose that $\underline{\bm{\lambda}}\in \mathcal{C}_{r,n}^{d}$ and that $\mathfrak{s}$ and $\mathfrak{t}$ are row standard $(r,d)$-tableaux of shape $\underline{\bm{\lambda}}.$ For each $h\in Y_{r,n},$ we have that $m_{\mathfrak{s}\mathfrak{t}}h$ is a linear combination of terms of the form $m_{\mathfrak{s}\mathfrak{v}},$ where $\mathfrak{v}$ is a row standard $(r,d)$-tableau of shape $\underline{\bm{\lambda}}.$
\end{lemma}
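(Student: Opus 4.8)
The plan is to strip off the left factor in $m_{\mathfrak{s}\mathfrak{t}}$ and reduce everything to a statement about right multiplication on $m_{\underline{\bm{\lambda}}}$ alone, where I can invoke the two multiplication rules already proved: the eigenvalue property of the $t_i$ on $U_{\underline{\bm{\lambda}}}$ (equation \eqref{tiU-lambda} of Lemma \ref{6-5-lemma}) and the $q$-eigenvector identity $m_{\underline{\bm{\lambda}}}g_w=q^{l(w)}m_{\underline{\bm{\lambda}}}$ for $w\in\mathfrak{S}_{\underline{\bm{\lambda}}}$ (equation \eqref{mlambdagw-qwlam} of Lemma \ref{lemma6-9-mgw}). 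Since $m_{\mathfrak{s}\mathfrak{t}}=g_{d(\mathfrak{s})}^{\ast}m_{\underline{\bm{\lambda}}}g_{d(\mathfrak{t})}$ and likewise $m_{\mathfrak{s}\mathfrak{v}}=g_{d(\mathfrak{s})}^{\ast}m_{\underline{\bm{\lambda}}}g_{d(\mathfrak{v})}$, and the factor $g_{d(\mathfrak{s})}^{\ast}$ plays no role, it suffices to show that for every $h'\in Y_{r,n}$ (I will take $h'=g_{d(\mathfrak{t})}h$) the element $m_{\underline{\bm{\lambda}}}h'$ lies in the $\mathcal{R}$-span of the elements $m_{\underline{\bm{\lambda}}}g_{d(\mathfrak{v})}$, where $\mathfrak{v}$ ranges over the row standard $(r,d)$-tableaux of shape $\underline{\bm{\lambda}}$; applying $g_{d(\mathfrak{s})}^{\ast}$ on the left then produces exactly a combination of the $m_{\mathfrak{s}\mathfrak{v}}$. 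Because $Y_{r,n}$ is spanned over $\mathcal{R}$ by the monomials $t^{\beta}g_{w}$ with $0\le\beta_i<r$ and $w\in\mathfrak{S}_{n}$, I only need to treat $h'=t^{\beta}g_{w}$.

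First I would dispose of the factor $t^{\beta}$ by showing $m_{\underline{\bm{\lambda}}}t_j=\zeta_{i_d}m_{\underline{\bm{\lambda}}}$ whenever $j$ lies in the block $I_d$ of $A_{\underline{\bm{\lambda}}}$. Starting from $m_{\underline{\bm{\lambda}}}=u_{\underline{\bm{\lambda}}}E_{A_{\underline{\bm{\lambda}}}}u_{\mathbf{a}}^{+}x_{\underline{\bm{\lambda}}}$ and using the relation $g_it_j=t_{js_i}g_i$, I would rewrite $x_{\underline{\bm{\lambda}}}t_j=\sum_{w\in\mathfrak{S}_{\underline{\bm{\lambda}}}}q^{l(w)}t_{jw^{-1}}g_w$, and then commute each reindexed $t_{jw^{-1}}$ leftward: it commutes with $u_{\mathbf{a}}^{+}$ (a polynomial in the $X$'s) and with $E_{A_{\underline{\bm{\lambda}}}}$ (a product of the $e_{i,k}$'s) by the defining relations, reaching $U_{\underline{\bm{\lambda}}}=u_{\underline{\bm{\lambda}}}E_{A_{\underline{\bm{\lambda}}}}$. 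The crucial observation is that $\mathfrak{S}_{\underline{\bm{\lambda}}}$ preserves every block of $A_{\underline{\bm{\lambda}}}$, so $jw^{-1}\in I_d$ for all $w\in\mathfrak{S}_{\underline{\bm{\lambda}}}$; hence $t_{jw^{-1}}U_{\underline{\bm{\lambda}}}=\zeta_{i_d}U_{\underline{\bm{\lambda}}}$ by \eqref{tiU-lambda}, with the scalar $\zeta_{i_d}$ independent of $w$. Pulling out this common scalar collapses the sum back to $m_{\underline{\bm{\lambda}}}$, giving $m_{\underline{\bm{\lambda}}}t_j=\zeta_{i_d}m_{\underline{\bm{\lambda}}}$; iterating yields $m_{\underline{\bm{\lambda}}}t^{\beta}=c_{\beta}m_{\underline{\bm{\lambda}}}$ for a scalar $c_{\beta}\in\mathcal{R}$, so $m_{\underline{\bm{\lambda}}}t^{\beta}g_w=c_{\beta}m_{\underline{\bm{\lambda}}}g_w$ and the problem is reduced to the factor $g_w$.

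For the factor $g_w$ I would use the standard coset combinatorics recorded in Section 5: every $w\in\mathfrak{S}_{n}$ factors as $w=u\,d(\mathfrak{v})$ with $u\in\mathfrak{S}_{\underline{\bm{\lambda}}}$ and $d(\mathfrak{v})$ the distinguished right coset representative of $\mathfrak{S}_{\underline{\bm{\lambda}}}w$, where $\mathfrak{v}$ is the associated row standard $(r,d)$-tableau of shape $\underline{\bm{\lambda}}$, and with $l(w)=l(u)+l(d(\mathfrak{v}))$. Length-additivity forces $g_w=g_u g_{d(\mathfrak{v})}$, and then \eqref{mlambdagw-qwlam} gives $m_{\underline{\bm{\lambda}}}g_w=q^{l(u)}m_{\underline{\bm{\lambda}}}g_{d(\mathfrak{v})}$. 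Combining the two reductions, $m_{\underline{\bm{\lambda}}}t^{\beta}g_w=c_{\beta}q^{l(u)}m_{\underline{\bm{\lambda}}}g_{d(\mathfrak{v})}$, which is of the required form; summing over the monomials appearing in $h'$ and left multiplying by $g_{d(\mathfrak{s})}^{\ast}$ completes the argument.

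I expect the only delicate point to be the $t^{\beta}$-reduction: one must keep track of the reindexing $t_j\mapsto t_{jw^{-1}}$ produced by commuting past $x_{\underline{\bm{\lambda}}}$ and verify that the resulting eigenvalue is the same for every $w\in\mathfrak{S}_{\underline{\bm{\lambda}}}$, which is precisely where the block-preserving property of $\mathfrak{S}_{\underline{\bm{\lambda}}}$ and Lemma \ref{6-5-lemma} enter. By contrast, the $g_w$-part is a routine application of the Coxeter coset decomposition together with Lemma \ref{lemma6-9-mgw}.
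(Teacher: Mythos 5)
Your proof is correct and takes essentially the same route as the paper, whose own proof just points to [ER, Lemma 16] and cites exactly the ingredients you use: the expansion of $Y_{r,n}$ in the monomials $t^{\beta}g_{w}$, the eigenvalue identity $m_{\underline{\bm{\lambda}}}t_{i}=\zeta_{i_{d}}m_{\underline{\bm{\lambda}}}$ for $i\in I_{d}$, the coset factorization $w=u\,d(\mathfrak{v})$ with $u\in\mathfrak{S}_{\underline{\bm{\lambda}}}$, and Lemma \ref{lemma6-9-mgw}. You in fact supply more detail than the paper, since you derive the $t$-eigenvalue identity (via $g_{w}t_{j}=t_{jw^{-1}}g_{w}$ and the block-preserving property of $\mathfrak{S}_{\underline{\bm{\lambda}}}$) rather than citing it as a known fact.
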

\begin{proof}
The proof is similar to that of [ER, Lemma 16] by using the basis Theorem \ref{basis-theorem-cyclotomic} and the fact that $m_{\underline{\bm{\lambda}}}t_{i}=\zeta_{i_{d}}m_{\underline{\bm{\lambda}}}$ for any $i\in I_{d}$. We skip the details.
\end{proof}

The proof of the next lemma is similar to that of [DJM, Lemma 3.17].

\begin{lemma}\label{lemma6-12-mlambda}
Suppose that $\underline{\bm{\lambda}}=((\lambda_{1}^{(1)},\ldots,\lambda_{d}^{(1)}),\ldots,(\lambda_{1}^{(r)},\ldots,\lambda_{d}^{(r)}))\in \mathcal{C}_{r,n}^{d},$ and let $\alpha=(|\lambda_{1}^{(1)}|,\ldots,|\lambda_{d}^{(1)}|,\ldots,|\lambda_{1}^{(r)}|,\ldots,|\lambda_{d}^{(r)}|).$ Suppose that $w$ is a distinguished right coset representative of $\mathfrak{S}_{\alpha}$ in $\mathfrak{S}_{n}$ and that $\mathfrak{s}$ is a row standard $(r,d)$-tableau of initial kind. Then the following hold$:$

$(a)$ The tableau $\mathfrak{s}w$ is row standard. Moreover, if $\mathfrak{s}$ is standard then $\mathfrak{s}w$ is standard.

$(b)$ If $\mathfrak{t}$ is a row standard $(r,d)$-tableau of initial kind with $\mathfrak{s}\rhd\mathfrak{t}$ then $\mathfrak{s}w\rhd\mathfrak{t}w.$
\end{lemma}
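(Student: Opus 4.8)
The plan is to deduce both parts from a single fact about distinguished coset representatives: if $w$ is a distinguished right coset representative of $\mathfrak{S}_{\alpha}$, then $w$ preserves the order of any two integers lying in a common block of $\alpha$, i.e.\ $iw<jw$ whenever $i<j$ belong to the same block. First I would record this fact. By definition $l(vw)=l(v)+l(w)$ for all $v\in\mathfrak{S}_{\alpha}$, which is equivalent to $l(s_iw)>l(w)$ for every $s_i\in\mathfrak{S}_{\alpha}$, that is, for every $i$ with $i,i+1$ in a common block. A direct inversion count (the one-line notations of $w$ and $s_iw$ differ only by swapping the entries in positions $i,i+1$) gives $l(s_iw)>l(w)\iff iw<(i+1)w$, and chaining these inequalities across a block yields the claim. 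I would also note that, since $\mathfrak{s}$ is of initial kind, $d(\mathfrak{s})\in\mathfrak{S}_{\alpha}$, so the entries occupying any one component of $\mathfrak{s}$ are exactly one block of $\alpha$; the same holds for $\mathfrak{t}$.

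For part $(a)$, the right action sends the entry $a$ at a node $N$ to $aw$ at the same node, so $\mathfrak{s}w$ again has shape $\underline{\bm{\lambda}}$. If $N$ lies immediately to the left of $N'$ in a row of some component, then their $\mathfrak{s}$-entries satisfy $\mathfrak{s}(N)<\mathfrak{s}(N')$ and lie in the same component, hence the same block; the order-preservation fact gives $\mathfrak{s}(N)w<\mathfrak{s}(N')w$, so $\mathfrak{s}w$ is row standard. When $\mathfrak{s}$ is standard, the identical argument applied to two vertically adjacent nodes of a component (again in one block) shows the columns of $\mathfrak{s}w$ strictly increase, so $\mathfrak{s}w$ is standard.

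For part $(b)$, which is the main obstacle, the difficulty is that passing from $\mathfrak{s}$ to $\mathfrak{s}w$ alters which entries fall below a global threshold $k$, and this change is \emph{not} induced by a single shifted threshold. I would resolve this by a block-wise bookkeeping of the partial sums in Definition \ref{Def-def}. Fix $k$ and a dominance index, namely a component $C$ (in the linear order of components) together with a row $P$; the defining partial sum for $\mathrm{shape}(\mathfrak{v}_{\downarrow k})$ at this index splits as the number of entries $\le k$ in all components strictly preceding $C$, plus the number of entries $\le k$ lying in the first $P$ rows of $C$. Because $w$ is order preserving on each block, the set $\{a:aw\le k\}$ meets the block of each component in an \emph{initial segment} whose size $c_C(k)$ depends only on $w$ and $k$, not on the filling; hence the "preceding components" piece agrees for $\mathfrak{s}w$ and $\mathfrak{t}w$. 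For the second piece, replacing the entries of $C$ by their $w$-images preserves their relative ranks, so the number of the $c$ smallest entries of $C$ sitting in rows $\le P$ is the same for $\mathfrak{v}$ and $\mathfrak{v}w$; call this number $g^{\mathfrak{v}}_C(c;P)$. Thus at the chosen index the difference of partial sums for $\mathfrak{s}w$ and $\mathfrak{t}w$ equals $g^{\mathfrak{s}}_C(c_C(k);P)-g^{\mathfrak{t}}_C(c_C(k);P)$.

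Finally I would extract the needed inequalities from the hypothesis $\mathfrak{s}\unrhd\mathfrak{t}$. Running the same bookkeeping on $\mathfrak{s}_{\downarrow k},\mathfrak{t}_{\downarrow k}$ (without $w$) turns $\mathrm{shape}(\mathfrak{s}_{\downarrow k})\unrhd\mathrm{shape}(\mathfrak{t}_{\downarrow k})$ into $g^{\mathfrak{s}}_C(b_C(k);P)\ge g^{\mathfrak{t}}_C(b_C(k);P)$, where $b_C(k)$ is the number of entries $\le k$ in $C$; as $k$ sweeps through the block of $C$, $b_C(k)$ attains every value in $\{0,1,\ldots,|C|\}$, so $g^{\mathfrak{s}}_C(c;P)\ge g^{\mathfrak{t}}_C(c;P)$ for all admissible $c$ and all $P$. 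Specializing $c=c_C(k)$ then gives $\mathfrak{s}w\unrhd\mathfrak{t}w$. Strictness is immediate from invertibility of $w$: if $\mathfrak{s}w=\mathfrak{t}w$ then $\mathfrak{s}=\mathfrak{t}$, contrary to $\mathfrak{s}\rhd\mathfrak{t}$; hence $\mathfrak{s}w\rhd\mathfrak{t}w$.
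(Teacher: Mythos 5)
Your proof is correct. Keep in mind, though, that the paper offers no argument for this lemma at all: it only remarks that the proof ``is similar to that of [DJM, Lemma 3.17]''. So your write-up is best read as a self-contained reconstruction of that Dipper--James--Mathas argument rather than a genuinely different route: its two pillars --- (1) a distinguished right coset representative of $\mathfrak{S}_{\alpha}$ preserves the relative order of entries within each block of $\alpha$, which gives part $(a)$; and (2) for part $(b)$, the entries of $\mathfrak{v}w$ that are $\le k$ occupy, inside each component $C$, exactly the nodes holding the $c_C(k)$ smallest entries of $\mathfrak{v}$, where $c_C(k)$ depends only on $w$, $k$ and the block and not on the filling --- are precisely the mechanisms behind the cited proof. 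Your threshold-sweeping step, recovering $g^{\mathfrak{s}}_C(c;P)\ge g^{\mathfrak{t}}_C(c;P)$ for \emph{every} $c$ from the dominance hypothesis by letting $b_C(k)$ run through $\{0,1,\ldots,|C|\}$ as $k$ varies, is the one place where a real idea is needed, and it is handled correctly; so is the strictness claim at the end.

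One point that you use but should state as a hypothesis rather than derive: ``the same holds for $\mathfrak{t}$'' is not automatic from the lemma's wording, because being of initial kind is a condition relative to a tableau's \emph{own} shape. The conclusion genuinely needs the components of $\mathfrak{t}$ to be filled by the same blocks of $\alpha$ as those of $\mathfrak{s}$. For instance, take $n=4$ with two components (say $r=1$, $d=2$), let $\mathfrak{s}$ be the row-reading tableau of shape $((3),(1))$, let $\mathfrak{t}$ be the row-reading tableau of shape $((2),(2))$, and let $w=s_3$, which is a distinguished right coset representative for $\mathfrak{S}_{(3,1)}$; then $\mathfrak{s}\rhd\mathfrak{t}$ and both tableaux are of initial kind, yet $\mathfrak{t}w$ is not even row standard, so part $(b)$ fails in any reasonable sense. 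This same-blocks condition is implicit in [DJM, Lemma 3.17] and is satisfied in the paper's only application of the lemma (Proposition \ref{propo6-13}, where $|\mu_{l}^{(k)}|=|\lambda_{l}^{(k)}|$ for all $k,l$), so your reading is the intended one; just make it explicit.
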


The proof of the following proposition is inspired by the proof of [DJM, Proposition 3.18] and [ER, Lemma 17]. It allows us to restrict ourselves to the case of $(r,d)$-partitions.

\begin{proposition}\label{propo6-13}
Suppose that $\underline{\bm{\lambda}}\in \mathcal{C}_{r,n}^{d}$ and that $\mathfrak{s}$ and $\mathfrak{t}$ are row standard $(r,d)$-tableaux of shape $\underline{\bm{\lambda}}.$ Then $m_{\mathfrak{s}\mathfrak{t}}$ is a linear combination of terms of the form $m_{\mathfrak{u}\mathfrak{v}},$ where $\mathfrak{u}$ and $\mathfrak{v}$ are standard $(r,d)$-tableaux of shape $\underline{\bm{\mu}}$ such that $\mathfrak{u}\unrhd \mathfrak{s},$ $\mathfrak{v}\unrhd \mathfrak{t}$ and $\underline{\bm{\mu}}\in \mathcal{P}_{r,n}^{d}.$
\end{proposition}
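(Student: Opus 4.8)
The plan is to argue by induction on the dominance order $\unrhd$ on $\mathcal{C}_{r,n}^{d}$ (Definition~\ref{Def-def}), the base case being that $\underline{\bm{\lambda}}$ is already an $(r,d)$-partition and that $\mathfrak{s},\mathfrak{t}$ are standard, where there is nothing to prove. For the inductive step there are two moves to make: first, if $\underline{\bm{\lambda}}$ is not a partition, I would replace its shape by a strictly dominating $(r,d)$-partition shape obtained by sorting the rows inside each component; second, once the shape is a partition, I would replace row standard but non-standard tableaux by standard ones that dominate. Because $m_{\mathfrak{s}\mathfrak{t}}^{\ast}=m_{\mathfrak{t}\mathfrak{s}}$ and $\ast$ fixes $m_{\underline{\bm{\lambda}}}$, it is enough to improve one side (say $\mathfrak{t}$, acting on the right of $m_{\underline{\bm{\lambda}}}$) and then apply $\ast$ to improve the other, the shape being changed for both simultaneously.

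The key structural observation, which is what makes the Yokonuma case collapse onto the classical type-$A$ computation of [DJM], is that the two extra factors in $m_{\underline{\bm{\lambda}}}=u_{\underline{\bm{\lambda}}}E_{A_{\underline{\bm{\lambda}}}}u_{\mathbf{a}}^{+}x_{\underline{\bm{\lambda}}}$ are insensitive to the sorting operation. Indeed $U_{\underline{\bm{\lambda}}}=u_{\underline{\bm{\lambda}}}E_{A_{\underline{\bm{\lambda}}}}$ depends only on the sizes $|\bm{\lambda}^{(k)}|$ of the nonempty $r$-position groups, and $u_{\mathbf{a}}^{+}$ depends only on the numbers $a_{l}^{k}=\sum_{m<l}|\lambda_{m}^{(k)}|$ and $b_{k}$; permuting rows within a single component alters none of these. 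Hence only $x_{\underline{\bm{\lambda}}}$ changes under sorting, and by Lemma~\ref{lemma6-8-Ueij}(a) both $U_{\underline{\bm{\lambda}}}$ and $u_{\mathbf{a}}^{+}$ commute with every $g_{w}$ for $w\in \mathfrak{S}_{\alpha}$, so they may be carried through the computation untouched. This reduces the sorting step to the known statement about the Hecke symmetrizer $x_{\underline{\bm{\lambda}}}$, applied one component at a time as in [ER, Lemma 17].

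To execute the first move, I would suppose $\underline{\bm{\lambda}}$ is not a partition, so some component has adjacent rows of lengths $a<b$ occurring in the wrong order; let $\underline{\bm{\nu}}\rhd\underline{\bm{\lambda}}$ be the composition obtained by swapping them and let $w$ be the block transposition interchanging the two corresponding blocks of entries. This $w$ permutes entries within a single block of $A_{\underline{\bm{\lambda}}}$, so by Lemma~\ref{5-1-lemmaaa} it fixes $A_{\underline{\bm{\lambda}}}$ and $g_{w}$ commutes with $E_{A_{\underline{\bm{\lambda}}}}$ (and with $u_{\underline{\bm{\lambda}}},u_{\mathbf{a}}^{+}$ as above). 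The standard conjugation identity relating $x_{\underline{\bm{\lambda}}}$ and $x_{\underline{\bm{\nu}}}$ through $g_{w}$, combined with Lemmas~\ref{lemma6-9-mgw} and~\ref{lemma6-11-mlam}, then expresses $m_{\mathfrak{s}\mathfrak{t}}$ as a linear combination of elements $m_{\mathfrak{s}'\mathfrak{t}'}$ of shape $\underline{\bm{\nu}}$ together with error terms of strictly more dominant shape; Lemma~\ref{lemma6-12-mlambda} guarantees that the new tableaux remain row standard with $\mathfrak{s}'\unrhd\mathfrak{s}$, $\mathfrak{t}'\unrhd\mathfrak{t}$, and that the dominance bookkeeping is preserved under $w$. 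Since $\underline{\bm{\nu}}\rhd\underline{\bm{\lambda}}$, the inductive hypothesis applies. For the second move, with $\underline{\bm{\lambda}}$ now a partition, I would use the Garnir-type straightening of [DJM]: applying Lemma~\ref{lemma6-11-mlam} repeatedly rewrites $m_{\underline{\bm{\lambda}}}g_{d(\mathfrak{t})}$ for row standard $\mathfrak{t}$ as a combination of $m_{\underline{\bm{\lambda}}}g_{d(\mathfrak{v})}$ with $\mathfrak{v}$ standard and $\mathfrak{v}\unrhd\mathfrak{t}$, and then $\ast$ does the same for $\mathfrak{s}$.

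The main obstacle will be making the composition-to-partition conjugation identity precise in the presence of the Yokonuma idempotents and tracking the dominance order correctly. Once one checks that the block-swapping permutation $w$ genuinely fixes the set partition $A_{\underline{\bm{\lambda}}}$ (so that $g_{w}$ slides past $E_{A_{\underline{\bm{\lambda}}}}$ via Lemma~\ref{5-1-lemmaaa}) and that $u_{\underline{\bm{\lambda}}},u_{\mathbf{a}}^{+}$ are literally unchanged, the idempotent factors cause no difficulty, and the argument becomes the type-$A$ computation of [DJM, Proposition 3.18] transplanted componentwise. The remaining care is to verify, using Lemma~\ref{lemma6-12-mlambda}, that every error term produced has strictly dominant shape, which is what makes the induction on $\unrhd$ terminate, and that all coefficients stay in $\mathcal{R}$; these points are routine but must be done to close the argument.
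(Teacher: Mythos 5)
Your key structural observation is sound, and it is in fact the same one the paper exploits: $U_{\underline{\bm{\lambda}}}$ and $u_{\mathbf{a}}^{+}$ depend only on the component sizes $|\lambda_{l}^{(k)}|$, so they are literally unchanged when each component composition is replaced by a partition of the same size, and they commute with $g_{w}$ for $w\in\mathfrak{S}_{\alpha}$ by Lemma~\ref{lemma6-8-Ueij}(a). The genuine gap lies in the dominance bookkeeping, which is the actual content of the proposition, and it occurs in both of your moves. For Move 1, the conjugation identity $g_{w}x_{\underline{\bm{\lambda}}}=x_{\underline{\bm{\nu}}}g_{w}$ does hold for the minimal-length $w$ conjugating $\mathfrak{S}_{\underline{\bm{\lambda}}}$ onto $\mathfrak{S}_{\underline{\bm{\nu}}}$, but the shape-$\underline{\bm{\nu}}$ tableaux it produces need \emph{not} dominate the originals. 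Already for $r=d=1$, $n=3$, take $\underline{\bm{\lambda}}=(1,2)$ and $\mathfrak{s}=\mathfrak{t}=\mathfrak{t}^{\underline{\bm{\lambda}}}$: here $\underline{\bm{\nu}}=(2,1)$, $w=s_2s_1$ satisfies $g_{w}x_{(1,2)}=x_{(2,1)}g_{w}$, and unwinding $x_{(1,2)}=g_{w}^{-1}x_{(2,1)}g_{w}$ produces the row standard $(2,1)$-tableau $\mathfrak{t}^{(2,1)}w$ with rows $\{2,3\}$ and $\{1\}$; at level $k=1$ its restricted shape is $(0,1)$, which does not dominate the restricted shape $(1)$ of $\mathfrak{t}^{(1,2)}$, so the required relation $\unrhd$ fails. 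Lemma~\ref{lemma6-12-mlambda}, which you invoke to rescue this, is about right multiplication of tableaux \emph{of initial kind} by distinguished coset representatives of $\mathfrak{S}_{\alpha}$; it says nothing about the row-sorting conjugation and cannot yield $\mathfrak{s}'\unrhd\mathfrak{s}$, $\mathfrak{t}'\unrhd\mathfrak{t}$ here. Likewise, the error terms coming from expanding $g_{w}^{-1}$ and $g_{w}g_{d(\mathfrak{t})}$ live in the same shape $\underline{\bm{\nu}}$ (Lemma~\ref{lemma6-11-mlam} only returns row standard tableaux of the \emph{same} shape, with no dominance control), not in strictly more dominant shapes as your induction requires.

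Move 2 has the same defect: iterating Lemma~\ref{lemma6-11-mlam} can never upgrade row standard to standard, nor does it track dominance; the statement that a row standard tableau of partition shape can be straightened into standard ones $\mathfrak{v}\unrhd\mathfrak{t}$ modulo terms of strictly more dominant shape is precisely Murphy's theorem [Mu, Theorem 4.18], and your plan would have to reprove it, including exactly the dominance bookkeeping that fails in Move 1. This is where the paper's route is essentially different and avoids the problem: it first factorizes $\mathfrak{s}=\mathfrak{s}'w_{1}$, $\mathfrak{t}=\mathfrak{t}'w_{2}$ with $\mathfrak{s}',\mathfrak{t}'$ of initial kind and $w_{1},w_{2}$ distinguished coset representatives of $\mathfrak{S}_{\alpha}$ (so that lengths add and $m_{\mathfrak{s}\mathfrak{t}}=g_{w_1}^{\ast}m_{\mathfrak{s}'\mathfrak{t}'}g_{w_2}$); only after this reduction does $g_{d(\mathfrak{s}')}^{\ast}x_{\underline{\bm{\lambda}}}g_{d(\mathfrak{t}')}$ factor into $rd$ commuting one-component pieces, to each of which the $r=d=1$ case (Murphy's theorem, valid in $Y_{r,n}^{d}$ because $U_{\underline{\bm{\lambda}}}$ absorbs the idempotents and Hecke-ifies the quadratic relation, cf.\ Lemma~\ref{lemma6-9-mgw}) is applied as a black box, dominance included; finally Lemma~\ref{lemma6-12-mlambda} is used within its actual hypotheses to transport standardness and dominance back through $w_{1},w_{2}$. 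Without the initial-kind reduction your componentwise transplantation is not even available, since a general $d(\mathfrak{t})$ mixes entries across components; and without citing Murphy, your two moves do not close the induction. These points are not routine clean-up; they are the proof.
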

\begin{proof}
When $r=d=1,$ we can adapt the proof of [Mu, Theorem 4.18] (see also [Ma1, Lemma 3.14]) to our setting by using Lemma \ref{lemma6-9-mgw} among other things. Thus we can conclude that it is true in this case.

In the general case, suppose that $\underline{\bm{\lambda}}=((\lambda_{1}^{(1)},\ldots,\lambda_{d}^{(1)}),\ldots,(\lambda_{1}^{(r)},\ldots,\lambda_{d}^{(r)}))\in \mathcal{C}_{r,n}^{d},$ and let $\alpha=(|\lambda_{1}^{(1)}|,\ldots,|\lambda_{d}^{(1)}|,\ldots,|\lambda_{1}^{(r)}|,\ldots,|\lambda_{d}^{(r)}|).$ We may write $\mathfrak{s}=\mathfrak{s}'w_{1}$ and $\mathfrak{t}=\mathfrak{t}'w_{2},$ where $\mathfrak{s}'$ and $\mathfrak{t}'$ are row standard $(r, d)$-tableaux of shape $\underline{\bm{\lambda}}$ of initial kind, and $w_{1}$ and $w_{2}$ are distinguished right coset representatives for $\mathfrak{S}_{\alpha}$ in $\mathfrak{S}_{n}.$ Then we have $d(\mathfrak{s})=d(\mathfrak{s}')w_{1},$ $d(\mathfrak{t})=d(\mathfrak{t}')w_{2}$ and $l(d(\mathfrak{s}))=l(d(\mathfrak{s}'))+l(w_{1}),$ $l(d(\mathfrak{t}))=l(d(\mathfrak{t}'))+l(w_{2}).$ Therefore, we have $m_{\mathfrak{s}\mathfrak{t}}=g_{w_{1}}^{\ast}m_{\mathfrak{s}'\mathfrak{t}'}g_{w_{2}},$ and $$m_{\mathfrak{s}'\mathfrak{t}'}=g_{d(\mathfrak{s}')}^{\ast}u_{\underline{\bm{\lambda}}}E_{A_{\underline{\bm{\lambda}}}}
u_{\mathbf{a}}^{+}x_{\underline{\bm{\lambda}}}g_{d(\mathfrak{t}')}=
u_{\mathbf{a}}^{+}U_{\underline{\bm{\lambda}}}g_{d(\mathfrak{s}')}^{\ast}x_{\underline{\bm{\lambda}}}g_{d(\mathfrak{t}')}.$$

We may write $g_{d(\mathfrak{s}')}^{\ast}x_{\underline{\bm{\lambda}}}g_{d(\mathfrak{t}')}$ as a product of $rd$ commuting terms, one for each component of $\underline{\bm{\lambda}};$ that is, $$g_{d(\mathfrak{s}')}^{\ast}x_{\underline{\bm{\lambda}}}g_{d(\mathfrak{t}')}=x_{1}^{(1)}\cdots x_{d}^{(1)}\cdots x_{1}^{(r)}\cdots x_{d}^{(r)},$$ where the $(k,l)$-th term $x_{l}^{(k)}$ involves only elements $g_{w}$ with $w\in \mathfrak{S}(\{b_{k}+a_{l}^{k}+1,b_{k}+a_{l}^{k}+2,\ldots,b_{k}+a_{l+1}^{k}\}).$ For example, $x_{1}^{(1)}=g_{d(\mathfrak{s}'^{(1)}_{1})}^{\ast}x_{\lambda_{1}^{(1)}}g_{d(\mathfrak{t}'^{(1)}_{1})},$ where $\mathfrak{s}'^{(1)}_{1}$ (resp. $\mathfrak{t}'^{(1)}_{1}$) is the first component of $\mathfrak{s}'$ (resp. $\mathfrak{t}'$).

By applying the results of the special case $r=d=1,$ we may write each $U_{\underline{\bm{\lambda}}}x_{l}^{(k)}$ as a linear combination of terms $U_{\underline{\bm{\lambda}}}g_{d(\mathfrak{u}'^{(k)}_{l})}^{\ast}x_{\lambda_{l}^{(k)}}g_{d(\mathfrak{v}'^{(k)}_{l})},$ where $\mathfrak{u}'^{(k)}_{l}$ and $\mathfrak{v}'^{(k)}_{l}$ are standard $\mu^{(k)}_{l}$-tableau for some partition $\mu_{l}^{(k)}$ such that $|\mu_{l}^{(k)}|=|\lambda_{l}^{(k)}|,$ and satisfy $\mathfrak{u}'^{(k)}_{l}\unrhd \mathfrak{s}'^{(k)}_{l}$ and $\mathfrak{v}'^{(k)}_{l}\unrhd \mathfrak{t}'^{(k)}_{l}.$ We then conclude that $U_{\underline{\bm{\lambda}}}g_{d(\mathfrak{s}')}^{\ast}x_{\underline{\bm{\lambda}}}g_{d(\mathfrak{t}')}$ is a linear combination of terms of the form $U_{\underline{\bm{\lambda}}}g_{d(\mathfrak{u}')}^{\ast}x_{\underline{\bm{\mu}}}g_{d(\mathfrak{v}')},$ where $\mathfrak{u}'$ and $\mathfrak{v}'$ are standard $(r,d)$-tableaux of shape $\underline{\bm{\mu}}$ for some $(r,d)$-partition $\underline{\bm{\mu}}$ of $n,$ and moreover, $\mathfrak{u}'\unrhd \mathfrak{s}'$ and $\mathfrak{v}'\unrhd \mathfrak{t}'.$ We also have that $\mathfrak{u}'$ and $\mathfrak{s}'$ are of initial kind, and $|\mu_{l}^{(k)}|=|\lambda_{l}^{(k)}|$ for all $1\leq k\leq r$ and $1\leq l\leq d.$ Therefore, $m_{\mathfrak{s}'\mathfrak{t}'}$ is a linear combination of elements $$u_{\mathbf{a}}^{+}U_{\underline{\bm{\mu}}}g_{d(\mathfrak{u}')}^{\ast}x_{\underline{\bm{\mu}}}g_{d(\mathfrak{v}')}=
g_{d(\mathfrak{u}')}^{\ast}m_{\underline{\bm{\mu}}}g_{d(\mathfrak{v}')}=
m_{\mathfrak{u}'\mathfrak{v}'},$$ where the sum runs over the same set of pairs $(\mathfrak{u}', \mathfrak{v}')$ as above. Thus, $m_{\mathfrak{s}\mathfrak{t}}=g_{w_{1}}^{\ast}m_{\mathfrak{s}'\mathfrak{t}'}g_{w_{2}}$ is a linear combination of terms $g_{w_{1}}^{\ast}m_{\mathfrak{u}'\mathfrak{v}'}g_{w_{2}}.$

Since $\mathfrak{u}'$ (resp. $\mathfrak{s}'$) is of initial kind and $w_{1}$ (resp. $w_{2}$) is a distinguished right coset representative for $\mathfrak{S}_{\alpha}$ in $\mathfrak{S}_{n},$ we have, by Lemma \ref{lemma6-12-mlambda}, $\mathfrak{u}=\mathfrak{u}'w_{1}$ (resp. $\mathfrak{v}=\mathfrak{v}'w_{2}$) is standard, and $\mathfrak{u}'w_{1}\unrhd \mathfrak{s}'w_{1}$ (resp. $\mathfrak{v}'w_{2}\unrhd \mathfrak{t}'w_{2}$); that is, $\mathfrak{u}\unrhd \mathfrak{s}$ (resp. $\mathfrak{v}\unrhd \mathfrak{t}$). So we have $g_{w_{1}}^{\ast}m_{\mathfrak{u}'\mathfrak{v}'}g_{w_{2}}=m_{\mathfrak{u}\mathfrak{v}}.$ We have proved the proposition.
\end{proof}

Combining Lemma \ref{lemma6-11-mlam} and Proposition \ref{propo6-13}, we immediately get the following result.

\begin{corollary}\label{corollary6-14}
Suppose that $\underline{\bm{\lambda}}\in \mathcal{C}_{r,n}^{d}$ and that $\mathfrak{s}$ and $\mathfrak{t}$ are row standard $(r,d)$-tableaux of shape $\underline{\bm{\lambda}}.$ If $h\in Y_{r,n}$, then $m_{\mathfrak{s}\mathfrak{t}}h$ is a linear combination of terms of the form $m_{\mathfrak{u}\mathfrak{v}},$ where $\mathfrak{u}$ and $\mathfrak{v}$ are standard $(r,d)$-tableaux of shape $\underline{\bm{\mu}}$ for some $(r,d)$-partition $\underline{\bm{\mu}}$ such that $\mathfrak{u}\unrhd \mathfrak{s}$ and $\underline{\bm{\mu}}\unrhd \underline{\bm{\lambda}}.$
\end{corollary}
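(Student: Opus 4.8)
The plan is to chain together the two preceding results and then read off the shape-level domination from the tableau-level domination; no new computation beyond these two inputs should be needed.

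First I would invoke Lemma~\ref{lemma6-11-mlam}. Since $h\in Y_{r,n}$, that lemma expresses
\[
m_{\mathfrak{s}\mathfrak{t}}h=\sum_{\mathfrak{v}}c_{\mathfrak{v}}\,m_{\mathfrak{s}\mathfrak{v}},
\]
where each $c_{\mathfrak{v}}\in\mathcal{R}$ and $\mathfrak{v}$ ranges over row standard $(r,d)$-tableaux of shape $\underline{\bm{\lambda}}$. The left index $\mathfrak{s}$ is untouched; only the right index varies, and crucially the resulting $\mathfrak{v}$ are merely row standard, not yet standard.

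Next I would feed each term $m_{\mathfrak{s}\mathfrak{v}}$ into Proposition~\ref{propo6-13}. Because $\mathfrak{s}$ and $\mathfrak{v}$ are both row standard of shape $\underline{\bm{\lambda}}$, the proposition rewrites $m_{\mathfrak{s}\mathfrak{v}}$ as a linear combination of terms $m_{\mathfrak{u}\mathfrak{w}}$ in which $\mathfrak{u}$ and $\mathfrak{w}$ are standard $(r,d)$-tableaux of a common shape $\underline{\bm{\mu}}\in\mathcal{P}_{r,n}^{d}$ with $\mathfrak{u}\unrhd\mathfrak{s}$ and $\mathfrak{w}\unrhd\mathfrak{v}$. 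Substituting these expansions back, $m_{\mathfrak{s}\mathfrak{t}}h$ is displayed as a linear combination of terms $m_{\mathfrak{u}\mathfrak{w}}$ with $\mathfrak{u},\mathfrak{w}$ standard of shape $\underline{\bm{\mu}}\in\mathcal{P}_{r,n}^{d}$ and $\mathfrak{u}\unrhd\mathfrak{s}$, which is exactly the assertion on the tableau side.

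It then remains only to promote $\mathfrak{u}\unrhd\mathfrak{s}$ to the shape inequality $\underline{\bm{\mu}}\unrhd\underline{\bm{\lambda}}$. By the definition of dominance on row standard $(r,d)$-tableaux, $\mathfrak{u}\unrhd\mathfrak{s}$ means $\mathrm{shape}(\mathfrak{u}_{\downarrow k})\unrhd\mathrm{shape}(\mathfrak{s}_{\downarrow k})$ for all $k$; specializing to $k=n$ gives $\mathfrak{u}_{\downarrow n}=\mathfrak{u}$ and $\mathfrak{s}_{\downarrow n}=\mathfrak{s}$, hence $\underline{\bm{\mu}}=\mathrm{shape}(\mathfrak{u})\unrhd\mathrm{shape}(\mathfrak{s})=\underline{\bm{\lambda}}$, as required. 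Since Lemma~\ref{lemma6-11-mlam} and Proposition~\ref{propo6-13} carry all the genuine content, there is no substantive obstacle; the only point meriting care is that Lemma~\ref{lemma6-11-mlam} outputs merely row standard tableaux, so one must apply Proposition~\ref{propo6-13}, whose hypotheses accept row standard input, rather than assuming standardness prematurely.
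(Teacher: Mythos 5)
Your proposal is correct and follows exactly the paper's own argument: the paper proves this corollary simply by combining Lemma~\ref{lemma6-11-mlam} (which expands $m_{\mathfrak{s}\mathfrak{t}}h$ into terms $m_{\mathfrak{s}\mathfrak{v}}$ with $\mathfrak{v}$ row standard of shape $\underline{\bm{\lambda}}$) with Proposition~\ref{propo6-13} (which rewrites each such term using standard tableaux of dominating shapes). Your additional observation that $\underline{\bm{\mu}}\unrhd\underline{\bm{\lambda}}$ follows from $\mathfrak{u}\unrhd\mathfrak{s}$ by evaluating the tableau dominance at $k=n$ is a correct filling-in of a detail the paper leaves implicit.
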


The proof of the following proposition is inspired by the proof of [DJM, Proposition 3.20], which shows what happens when we multiply $m_{\mathfrak{s}\mathfrak{t}}$ by $X_{1}$.

\begin{proposition}\label{proposition6-15X}
Suppose that $\underline{\bm{\lambda}}=((\lambda_{1}^{(1)},\ldots,\lambda_{d}^{(1)}),\ldots,(\lambda_{1}^{(r)},\ldots,\lambda_{d}^{(r)}))\in \mathcal{P}_{r,n}^{d}$ and that $\mathfrak{s}$ and $\mathfrak{t}$ are standard $(r,d)$-tableaux of shape $\underline{\bm{\lambda}}.$ Then $m_{\mathfrak{s}\mathfrak{t}}X_{1}=x_{1}+x_{2},$ where

$(1)$ $x_{1}$ is a linear combination of terms of the form $m_{\mathfrak{u}\mathfrak{v}},$ where $\mathfrak{u}$ and $\mathfrak{v}$ are standard

\qquad \hspace{-2mm}$(r,d)$-tableaux of shape $\underline{\bm{\lambda}}$ with $\mathfrak{u}\unrhd \mathfrak{s},$ and

$(2)$ $x_{2}$ is a linear combination of terms of the form $m_{\mathfrak{u}\mathfrak{v}},$ where $\mathfrak{u}$ and $\mathfrak{v}$ are standard

\qquad \hspace{-2mm}$(r,d)$-tableaux of shape $\underline{\bm{\mu}}$ for some $(r,d)$-partition $\mu$ with $\underline{\bm{\mu}}\rhd \underline{\bm{\lambda}}.$
\end{proposition}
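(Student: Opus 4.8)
The plan is to follow the strategy of [DJM, Proposition 3.20], importing the treatment of the Yokonuma idempotents from [ER], and to reduce the whole statement to a single ``local'' computation describing the effect of a Jucys--Murphy-type element on $m_{\underline{\bm{\lambda}}}$. Since $\mathfrak{t}$ is standard, the entry $1$ occupies the top-left node of some component; write $(k_0,l_0):=\mathrm{p}_{\mathfrak{t}}(1)$ and let $c:=b_{k_0}+a_{l_0}^{k_0}+1$ be the integer that $\mathfrak{t}^{\underline{\bm{\lambda}}}$ places in that node (equivalently, $c=1\cdot d(\mathfrak{t})^{-1}$). Using \eqref{giXj} together with \eqref{relations-1} and \eqref{relations-2}, I would commute $X_1$ leftwards through $g_{d(\mathfrak{t})}$ to obtain
$$m_{\mathfrak{s}\mathfrak{t}}X_1=g_{d(\mathfrak{s})}^{\ast}\,m_{\underline{\bm{\lambda}}}X_c\,g_{d(\mathfrak{t})}+(\text{correction terms}),$$
in which each correction carries a strictly shorter element $g_{w'}$ (with $l(w')<l(d(\mathfrak{t}))$), an idempotent $e_i$, and a single extra $X$-factor. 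These corrections are absorbed by a secondary induction on $l(d(\mathfrak{t}))$ together with Corollary \ref{corollary6-14}, so that the problem is reduced to analysing $m_{\underline{\bm{\lambda}}}X_c$ for the first node $c$ of the component $(k_0,l_0)$.

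To analyse $m_{\underline{\bm{\lambda}}}X_c$, I would use the reorderings in \eqref{mueaxu-lambda} and the fact that $X_c$ commutes with $u_{\underline{\bm{\lambda}}}$, with $E_{A_{\underline{\bm{\lambda}}}}$ and with $u_{\mathbf{a}}^{+}$ (all being polynomials in the pairwise commuting $t$'s and $X$'s). The only genuine interaction is with the symmetriser $x_{\underline{\bm{\lambda}}}$: factoring $x_{\underline{\bm{\lambda}}}=x_{\nu}\,y$ with $\nu=\lambda_{l_0}^{(k_0)}$ and $y$ supported away from $c$, I would write $x_{\underline{\bm{\lambda}}}X_c=X_cx_{\underline{\bm{\lambda}}}+(\text{commutator})$. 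The commutator terms, computed from \eqref{relations-1}--\eqref{relations-2}, are supported inside the single component $\nu$; by the $r=d=1$ case already settled inside the proof of Proposition \ref{propo6-13} they contribute terms $m_{\mathfrak{u}\mathfrak{v}}$ of the same shape $\underline{\bm{\lambda}}$ with $\mathfrak{u}\unrhd\mathfrak{s}$, i.e. to $x_1$. The remaining piece $u_{\underline{\bm{\lambda}}}E_{A_{\underline{\bm{\lambda}}}}u_{\mathbf{a}}^{+}X_cx_{\underline{\bm{\lambda}}}$ is handled next.

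For that piece, let $P(X_c)=\prod_{l>l_0}(X_c-v_l)$ be the factor of $u_{\mathbf{a}}^{+}$ involving $X_c$ (a direct count from Definition \ref{definition-3} shows these are exactly the factors indexed by $l>l_0$), so that $u_{\mathbf{a}}^{+}X_c=\bigl(X_cP(X_c)\bigr)\,Q$ with $X_c$ commuting with the complementary factor $Q$. I would then apply the polynomial identity
$$X_cP(X_c)=v_{l_0}P(X_c)+\prod_{l\geq l_0}(X_c-v_l).$$
The first summand reproduces $v_{l_0}m_{\underline{\bm{\lambda}}}$, hence $v_{l_0}m_{\mathfrak{s}\mathfrak{t}}$ after conjugation, which is the $\mathfrak{u}=\mathfrak{s}$ contribution to $x_1$. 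In the second summand $\prod_{l\geq l_0}(X_c-v_l)$ is precisely the $u_{\mathbf{a}}^{+}$-factor that node $c$ would carry in $d$-position $l_0-1$: when $l_0>1$ this rewrites the term as a combination reducing to $m_{\underline{\bm{\mu}}}$ with $c$ shifted to a smaller $d$-position, which by Definition \ref{Def-def} satisfies $\underline{\bm{\mu}}\rhd\underline{\bm{\lambda}}$; when $l_0=1$ it completes the full cyclotomic polynomial $\prod_{l=1}^{d}(X_c-v_l)$, which by $f_c=0$ in $Y_{r,n}^{d}$ reduces modulo $\mathcal{J}_d$ to lower-degree terms whose shapes again strictly dominate $\underline{\bm{\lambda}}$. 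Re-expanding all of these into the standard basis by Proposition \ref{propo6-13} and Corollary \ref{corollary6-14} assembles them into $x_2$.

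The main obstacle will be the Yokonuma-specific bookkeeping of the idempotent corrections $e_i$ generated at every stage: in the passage $g_{d(\mathfrak{t})}X_1\to X_cg_{d(\mathfrak{t})}$, in the commutator of $X_c$ with $x_{\nu}$, and in the reduction $f_c=0$. One must verify, using Lemma \ref{lemma6-8-Ueij}$(b)$ (an $e_{i,j}$ straddling two blocks of $A_{\underline{\bm{\lambda}}}$ annihilates $U_{\underline{\bm{\lambda}}}$) together with Lemma \ref{6-3-lemma}, that these idempotents are compatible with $E_{A_{\underline{\bm{\lambda}}}}$ and disturb neither the eigenvalue $v_{l_0}$ nor the dominance classification, and one must check that the node-shift and the cyclotomic reduction genuinely land in shapes that are strictly more dominant in the order of Definition \ref{Def-def}. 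This is exactly the point at which the arguments of [DJM] and [ER] have to be merged rather than merely quoted.
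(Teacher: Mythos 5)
Your proposal follows, in outline, the same route as the paper's proof (which is modelled on [DJM, Proposition 3.20]): isolate the node of $\mathfrak{t}$ containing $1$, observe that the factors of $u_{\mathbf{a}}^{+}$ involving $X_{c}$ are exactly $\prod_{l>l_{0}}(X_{c}-v_{l})$, split off the eigenvalue $v_{l_{0}}$, and read the residual factor $\prod_{l\geq l_{0}}(X_{c}-v_{l})$ as the $u^{+}$-element of the shape obtained by moving the node to $d$-position $l_{0}-1$ (the paper's $u_{\mathbf{b}}^{+}$ and $\underline{\bm{\nu}}\rhd\underline{\bm{\lambda}}$), finishing with Proposition \ref{propo6-13} and Corollary \ref{corollary6-14}. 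But two of your steps fail as stated. The first is the reduction to $m_{\underline{\bm{\lambda}}}X_{c}$. If you push $X_{1}$ through $g_{d(\mathfrak{t})}$ using \eqref{relations-1}--\eqref{relations-2}, the correction terms have the form $g_{d(\mathfrak{s})}^{\ast}m_{\underline{\bm{\lambda}}}g_{w'}e_{i}X_{j}$ with $l(w')<l(d(\mathfrak{t}))$ and, in general, $j\neq 1$. Corollary \ref{corollary6-14} only controls right multiplication by elements of $Y_{r,n}$, which contains no $X$'s, so it cannot absorb these terms; and your ``secondary induction on $l(d(\mathfrak{t}))$'' would need the statement of the proposition for $m_{\mathfrak{u}\mathfrak{v}}X_{j}$ with arbitrary $j$, which is precisely Proposition \ref{proposition7-3-jm} --- proved later in the paper \emph{from} the present proposition, so the induction is circular. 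The paper avoids corrections altogether: it factors $d(\mathfrak{t})=y\sigma$ with $y\in\mathfrak{S}_{\alpha}$ and $\sigma=s_{c-1}s_{c-2}\cdots s_{1}\sigma'$, where $\sigma'$ fixes $1$, and uses the exact identity $g_{i}X_{i}=X_{i+1}g_{i}^{-1}$ coming from \eqref{JM-elements} (no $e_{i}$-correction), so that $g_{c-1}\cdots g_{1}X_{1}=X_{c}\,g_{c-1}^{-1}\cdots g_{1}^{-1}$ with $g_{i}^{-1}\in Y_{r,n}$ by \eqref{inverse}; every term then stays of the form (polynomial in the $X$'s)$\cdot$(element of $Y_{r,n}$), and Corollary \ref{corollary6-14} applies. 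You should adopt this factorization; the step-by-step commutation cannot be repaired without first proving the stronger JM statement.

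The second, more serious, gap is your treatment of the case $l_{0}=1$. The claim that $\prod_{l=1}^{d}(X_{c}-v_{l})$ ``reduces by $f_{c}=0$'' is false when $c>1$ (i.e.\ $k_{0}>1$): by \eqref{fi-gifigi}, $f_{c}=g_{c-1}f_{c-1}g_{c-1}$, and this is \emph{not} equal to $\prod_{l=1}^{d}(X_{c}-v_{l})$ in $Y_{r,n}^{d}$. Already for $r=2$, $d=1$, $n=2$ one has $f_{2}=g_{1}f_{1}g_{1}=0$ while $X_{2}-v_{1}=v_{1}(q-q^{-1})e_{1}g_{1}\neq 0$. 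Moreover, even after rewriting $X_{c}^{d}$ via Lemma \ref{fixid-hiad}, the lower-degree terms are arbitrary monomials in $X_{1},\ldots,X_{c}$ times elements of $Y_{r,c}$, and nothing in your argument shows that these assemble into elements $m_{\mathfrak{u}\mathfrak{v}}$ of strictly dominating shape. What actually happens is that this contribution \emph{vanishes}: the paper disposes of it by ``$u_{\mathbf{b}}^{+}=0$,'' which is literally the cyclotomic relation $f_{1}=0$ when $k_{0}=1$, while for $k_{0}>1$ the statement one needs is $U_{\underline{\bm{\lambda}}}\prod_{l=1}^{d}(X_{b_{k_{0}}+1}-v_{l})=0$; this holds because, modulo terms containing an $e_{i,j}$ with $i,j$ in different blocks of $A_{\underline{\bm{\lambda}}}$ --- killed by Lemma \ref{lemma6-8-Ueij}(b) --- the element $X_{b_{k_{0}}+1}$ agrees with the conjugate $g_{b_{k_{0}}}\cdots g_{1}X_{1}g_{1}\cdots g_{b_{k_{0}}}$ of $X_{1}$, so the relation $f_{1}=0$ applies (in the example above, $(t_{1}+1)(t_{2}-1)(X_{2}-v_{1})=0$). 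Keeping these terms alive as ``dominating'' contributions therefore does not prove the proposition. Two smaller inaccuracies of the same kind: the single-component input you invoke is not ``settled inside the proof of Proposition \ref{propo6-13}'' (that proof settles the straightening of row-standard tableaux, i.e.\ [Mu, Theorem 4.18]); what you need is the Jucys--Murphy property for the Hecke algebra of type $A$, i.e.\ [JM, Proposition 3.7], exactly as in the paper's proof of Proposition \ref{proposition7-3-jm}. And in the node-shift step you pass silently from the symmetriser $x_{\underline{\bm{\lambda}}}$ to $x_{\underline{\bm{\nu}}}$, whereas the identity $x_{\underline{\bm{\lambda}}}u_{\mathbf{b}}^{+}=\sum_{i}g_{c_{i}}^{\ast}x_{\underline{\bm{\nu}}}u_{\mathbf{b}}^{+}$ (a sum over coset representatives of one row stabiliser in the other) is the actual computational content of [DJM, Proposition 3.20] at this point.
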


\begin{proof}
Let $\alpha=((|\lambda_{1}^{(1)}|,\ldots,|\lambda_{d}^{(1)}|),\ldots,(|\lambda_{1}^{(r)}|,\ldots,|\lambda_{d}^{(r)}|)),$ and let $c=(c_{1}^{1},\ldots,c_{d}^{1},\ldots,c_{1}^{r},\\\ldots,c_{d}^{r}),$ where $$c_{l}^{k}=a_{l}^{k}+b_{k}=\sum_{m=1}^{l-1}|\lambda_{m}^{(k)}|+\sum_{j=1}^{k-1}\sum_{i=1}^{d}|\lambda_{i}^{(j)}|~~~~\mathrm{for}~1\leq k\leq r~\mathrm{and}~1\leq l\leq d.$$

We write $d(\mathfrak{t})=yc$ with $y\in \mathfrak{S}_{\alpha}$ and $c$ a distinguished right coset representative for $\mathfrak{S}_{\alpha}$ in $\mathfrak{S}_{n}.$ Then the $\alpha$-tableau $\mathfrak{t}^{\alpha}c$ is row standard. Assume that $1$ is in row $j$ of the $k$-th component of $\mathfrak{t}^{\alpha}c$ with $1\leq k\leq r$ and $1\leq j\leq d.$

Let $c=(c_{j}^{k}, c_{j}^{k}+1)(c_{j}^{k}-1, c_{j}^{k})\cdots (1,2)c',$ where $l(c)=c_{j}^{k}+l(c')$ and $c'$ fixes 1. Thus, $g_{c}=g_{c_{j}^{k}}g_{c_{j}^{k}-1}\cdots g_{1}g_{c'}$ and $g_{c'}X_{1}=X_{1}g_{c'}$. Let $b=(c_{1}^{1},\ldots,c_{d}^{1},\ldots,c_{1}^{k},\ldots,c_{j-1}^{k}, c_{j}^{k}+1, c_{j+1}^{k},\ldots,c_{d}^{k},\ldots,c_{1}^{r},\ldots,c_{d}^{r}).$ Then we have $$u_{\mathbf{a}}^{+}g_{c}X_{1}=u_{\mathbf{a}}^{+}g_{c_{j}^{k}}g_{c_{j}^{k}-1}\cdots g_{1}X_{1}g_{c'}=u_{\mathbf{a}}^{+}h_{1}+u_{\mathbf{b}}^{+}h_{2}$$ for some $h_{1}, h_{2}\in Y_{r,n}$ by the same argument as in [DJM, Lemma 3.4]. Since $y\in \mathfrak{S}_{\alpha}$ and $\mathfrak{t}^{\lambda}y$ is standard, $y$ fixes $c_{j}^{k}+1;$ therefore, $g_{y}$ commutes with $u_{\mathbf{b}}^{+}.$ Hence we have
\begin{align*}
m_{\mathfrak{s}\mathfrak{t}}X_{1}&=g_{d(\mathfrak{s})}^{\ast}u_{\underline{\bm{\lambda}}}E_{A_{\underline{\bm{\lambda}}}}u_{\mathbf{a}}^{+}x_{\underline{\bm{\lambda}}}g_{d(\mathfrak{t})}\\
&=g_{d(\mathfrak{s})}^{\ast}u_{\underline{\bm{\lambda}}}E_{A_{\underline{\bm{\lambda}}}}x_{\underline{\bm{\lambda}}}u_{\mathbf{a}}^{+}g_{y}g_{c}X_{1}\\
&=g_{d(\mathfrak{s})}^{\ast}u_{\underline{\bm{\lambda}}}E_{A_{\underline{\bm{\lambda}}}}x_{\underline{\bm{\lambda}}}g_{y}u_{\mathbf{a}}^{+}g_{c}X_{1}\\
&=g_{d(\mathfrak{s})}^{\ast}u_{\underline{\bm{\lambda}}}E_{A_{\underline{\bm{\lambda}}}}x_{\underline{\bm{\lambda}}}g_{y}(u_{\mathbf{a}}^{+}h_{1}+u_{\mathbf{b}}^{+}h_{2})\\
&=g_{d(\mathfrak{s})}^{\ast}u_{\underline{\bm{\lambda}}}E_{A_{\underline{\bm{\lambda}}}}x_{\underline{\bm{\lambda}}}(u_{\mathbf{a}}^{+}g_{y}h_{1}+u_{\mathbf{b}}^{+}g_{y}h_{2}).
\end{align*}

The first term, $g_{d(\mathfrak{s})}^{\ast}u_{\underline{\bm{\lambda}}}E_{A_{\underline{\bm{\lambda}}}}x_{\underline{\bm{\lambda}}}u_{\mathbf{a}}^{+}g_{y}h_{1},$ is a linear combination of terms of the required form by Corollary \ref{corollary6-14}. If $j=1,$ then $u_{\mathbf{b}}^{+}=0.$ Assume that $j\geq 2.$ We turn our attention to the second term $g_{d(\mathfrak{s})}^{\ast}u_{\underline{\bm{\lambda}}}E_{A_{\underline{\bm{\lambda}}}}x_{\underline{\bm{\lambda}}}u_{\mathbf{b}}^{+}g_{y}h_{2}$.

We define an $(r, d)$-composition $\underline{\bm{\nu}}=((\nu_{1}^{(1)},\ldots,\nu_{d}^{(1)}),\ldots,(\nu_{1}^{(r)},\ldots,\nu_{d}^{(r)}))$ of $n$ by $\nu_{q}^{(p)}=\lambda_{q}^{(p)}$ for $p\neq k$ and $1\leq q\leq d,$ $\nu_{i}^{(k)}=\lambda_{i}^{(k)}$ for $i\neq j-1, j,$ $\nu_{j-1}^{(k)}=(\lambda_{j-1, 1}^{(k)},\lambda_{j-1, 2}^{(k)},\ldots,1)$ and $\nu_{j}^{(k)}=(\lambda_{j, 1}^{(k)}-1,\lambda_{j, 2}^{(k)},\ldots).$ Note that $\underline{\bm{\nu}}\rhd \underline{\bm{\lambda}}.$

Let $l=\lambda_{j, 1}^{(k)}.$ Let $G_{1}$ be the symmetric group on these numbers $c_{j}^{k}+1,\ldots,c_{j}^{k}+1,$ and let $G_{2}$ be the symmetric group on these numbers $c_{j}^{k}+2,\ldots,c_{j}^{k}+1.$ Let $c_{1},c_{2},\ldots,c_{l}$ be the distinguished right coset representatives for $G_{2}$ in $G_{1},$ ordered in terms of increasing order. Let $\mathfrak{u}_{i}=\mathfrak{t}^{\nu}c_{i}d(\mathfrak{s})$ for $1\leq i\leq l.$

Then by the same argument as in the proof of [DJM, Proposition 3.20], we can get that
\begin{align*}
g_{d(\mathfrak{s})}^{\ast}u_{\lambda}E_{A_{\lambda}}x_{\lambda}u_{\mathbf{b}}^{+}&=
g_{d(\mathfrak{s})}^{\ast}\sum_{i=1}^{l}g_{c_{i}}^{\ast}x_{\nu}u_{\lambda}E_{A_{\lambda}}u_{\mathbf{b}}^{+}\\
&=\sum_{i=1}^{l}g_{c_{i}d(\mathfrak{s})}^{\ast}x_{\nu}u_{\nu}E_{A_{\nu}}u_{\mathbf{b}}^{+}\\
&=\sum_{i=1}^{l}m_{\mathfrak{u}_{i}\mathfrak{t}^{\nu}}.
\end{align*}

Hence, $g_{d(\mathfrak{s})}^{\ast}u_{\underline{\bm{\lambda}}}E_{A_{\underline{\bm{\lambda}}}}x_{\underline{\bm{\lambda}}}u_{\mathbf{b}}^{+}g_{y}h_{2}$ is a linear combination of terms of the required form by Corollary \ref{corollary6-14}. We have completed the proof of this proposition.
\end{proof}

The proof of the next lemma is similar to that of [ER, Lemma 15] by using Lemma \ref{lemma6-8-Ueij}(b).

\begin{lemma}\label{lemma6-16-lambd}
Suppose that $\underline{\bm{\lambda}}\in \mathcal{C}_{r,n}^{d}$ and that $\mathfrak{s}$ and $\mathfrak{t}$ are standard $(r,d)$-tableaux of shape $\underline{\bm{\lambda}}.$ If $i$ $($resp. $j$$)$ occurs in the component $\mathfrak{t}_{l}^{(k)}$ $($resp. $\mathfrak{t}_{l'}^{(k')}$$)$ of $\mathfrak{t}$ with $k\neq k'$, then we have $m_{\mathfrak{s}\mathfrak{t}}e_{i,j}=0.$
\end{lemma}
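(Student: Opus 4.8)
The plan is to push the idempotent $e_{i,j}$ leftward through all of $m_{\mathfrak{s}\mathfrak{t}}$ until it meets the factor $U_{\underline{\bm{\lambda}}}=u_{\underline{\bm{\lambda}}}E_{A_{\underline{\bm{\lambda}}}}$, carefully tracking how its two indices get permuted, and then to invoke Lemma \ref{lemma6-8-Ueij}$(b)$. The engine is the commutation rule $g_{w}e_{a,b}=e_{aw^{-1},bw^{-1}}g_{w}$ for $w\in\mathfrak{S}_{n}$, which follows by induction on $l(w)$ from the last relation in \eqref{relations} (and is exactly the $e_{a,b}$-analogue of Lemma \ref{5-1-lemmaaa}), together with the fact that every $t$ commutes with every $X$ by \eqref{commutation-formulae}, so that each $e_{a,b}$ commutes with the polynomial $u_{\mathbf{a}}^{+}$.

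First I would record the relevant combinatorics. By Definition \ref{definition-1} the blocks $I_{1},\ldots,I_{p}$ of $A_{\underline{\bm{\lambda}}}$ are precisely the sets of entries of $\mathfrak{t}^{\underline{\bm{\lambda}}}$ sharing a common $r$-position; thus two integers lie in the same block of $A_{\underline{\bm{\lambda}}}$ if and only if they occupy the same $r$-position in $\mathfrak{t}^{\underline{\bm{\lambda}}}$. Since $\mathfrak{t}=\mathfrak{t}^{\underline{\bm{\lambda}}}d(\mathfrak{t})$ under the right action by entry permutation, the box of $\mathfrak{t}$ containing $a$ is the box of $\mathfrak{t}^{\underline{\bm{\lambda}}}$ containing $a\,d(\mathfrak{t})^{-1}$, so $\mathrm{p}_{\mathfrak{t}}^{r}(a)=\mathrm{p}_{\mathfrak{t}^{\underline{\bm{\lambda}}}}^{r}(a\,d(\mathfrak{t})^{-1})$. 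Hence the hypothesis $\mathrm{p}_{\mathfrak{t}}^{r}(i)=k\neq k'=\mathrm{p}_{\mathfrak{t}}^{r}(j)$ says precisely that $i\,d(\mathfrak{t})^{-1}$ and $j\,d(\mathfrak{t})^{-1}$ lie in different blocks of $A_{\underline{\bm{\lambda}}}$. Moreover $\mathfrak{S}_{\underline{\bm{\lambda}}}$, being the row stabilizer of $\mathfrak{t}^{\underline{\bm{\lambda}}}$, permutes entries only within components and hence stabilizes each block $I_{m}$ setwise; therefore for every $w\in\mathfrak{S}_{\underline{\bm{\lambda}}}$ the integers $i\,d(\mathfrak{t})^{-1}w^{-1}$ and $j\,d(\mathfrak{t})^{-1}w^{-1}$ still lie in different blocks.

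With this in hand the computation is short. Writing $m_{\mathfrak{s}\mathfrak{t}}=g_{d(\mathfrak{s})}^{\ast}u_{\underline{\bm{\lambda}}}E_{A_{\underline{\bm{\lambda}}}}u_{\mathbf{a}}^{+}x_{\underline{\bm{\lambda}}}g_{d(\mathfrak{t})}$ and using that $d(\mathfrak{t})$ is a distinguished right coset representative of $\mathfrak{S}_{\underline{\bm{\lambda}}}$, so that $g_{w}g_{d(\mathfrak{t})}=g_{wd(\mathfrak{t})}$ for $w\in\mathfrak{S}_{\underline{\bm{\lambda}}}$, I would compute
\begin{align*}
x_{\underline{\bm{\lambda}}}g_{d(\mathfrak{t})}e_{i,j}
&=\sum_{w\in\mathfrak{S}_{\underline{\bm{\lambda}}}}q^{l(w)}g_{wd(\mathfrak{t})}e_{i,j}\\
&=\sum_{w\in\mathfrak{S}_{\underline{\bm{\lambda}}}}q^{l(w)}e_{i(wd(\mathfrak{t}))^{-1},\,j(wd(\mathfrak{t}))^{-1}}\,g_{wd(\mathfrak{t})}.
\end{align*}
Commuting each $e_{i(wd(\mathfrak{t}))^{-1},\,j(wd(\mathfrak{t}))^{-1}}$ leftward past $u_{\mathbf{a}}^{+}$ and substituting back, $m_{\mathfrak{s}\mathfrak{t}}e_{i,j}$ becomes a sum of terms each having $U_{\underline{\bm{\lambda}}}\,e_{i(wd(\mathfrak{t}))^{-1},\,j(wd(\mathfrak{t}))^{-1}}$ as a left factor. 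Since $(wd(\mathfrak{t}))^{-1}=d(\mathfrak{t})^{-1}w^{-1}$, the second paragraph shows the two indices lie in different blocks of $A_{\underline{\bm{\lambda}}}$, so Lemma \ref{lemma6-8-Ueij}$(b)$ forces each such factor, and hence $m_{\mathfrak{s}\mathfrak{t}}e_{i,j}$, to vanish.

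The only real subtlety, and the step I expect to be the main obstacle, is the index bookkeeping of the second paragraph: one must correctly translate the tableau hypothesis through the right action $\mathfrak{t}=\mathfrak{t}^{\underline{\bm{\lambda}}}d(\mathfrak{t})$ and verify that right multiplication by $w^{-1}\in\mathfrak{S}_{\underline{\bm{\lambda}}}$ never moves $i\,d(\mathfrak{t})^{-1}$ and $j\,d(\mathfrak{t})^{-1}$ into a common block. Once the block-preservation of $\mathfrak{S}_{\underline{\bm{\lambda}}}$ is pinned down, the remaining algebra is a routine application of the commutation relations and Lemma \ref{lemma6-8-Ueij}$(b)$.
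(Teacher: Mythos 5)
Your proof is correct and is essentially the argument the paper intends: the paper's own proof just defers to [ER, Lemma 15] together with Lemma \ref{lemma6-8-Ueij}(b), and your writeup is exactly that argument adapted to the cyclotomic setting --- push $e_{i,j}$ leftward through $g_{d(\mathfrak{t})}$ and $x_{\underline{\bm{\lambda}}}$ via $g_{w}e_{a,b}=e_{aw^{-1},bw^{-1}}g_{w}$, commute it past $u_{\mathbf{a}}^{+}$ using \eqref{commutation-formulae}, and annihilate each term with $U_{\underline{\bm{\lambda}}}e_{a,b}=0$ for indices in different blocks. Your block bookkeeping (translating the hypothesis through $\mathfrak{t}=\mathfrak{t}^{\underline{\bm{\lambda}}}d(\mathfrak{t})$ and noting that $\mathfrak{S}_{\underline{\bm{\lambda}}}$ stabilizes each block of $A_{\underline{\bm{\lambda}}}$) is exactly the point that makes Lemma \ref{lemma6-8-Ueij}(b) applicable, so the proposal is complete.
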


Let us denote by $\mathcal{T}_{n}$ the $\mathcal{R}$-subalgebra of $Y_{r,n}^{d}$ generated by $t_{1},t_{2},\ldots,t_{n}.$ Using Lemma \ref{lemma6-16-lambd}, we can prove the following proposition by the same argument as in [ER, Lemma 18 and Proposition 1].

\begin{proposition}\label{proposi6-17-mss}
We set $$\mathcal{M}_{n} :=\Big\{\mathfrak{s}\:|\:\mathfrak{s}\in \mathrm{Std}(((0),\ldots,(1^{n_{1}})),\ldots,((0),\ldots,(1^{n_{r}}))),~where~n_{i}\geq 0~and~\sum_{1\leq i\leq r}n_{i}=n\Big\}.$$
Then for all $\mathfrak{s}\in \mathcal{M}_{n},$ we have $m_{\mathfrak{s}\mathfrak{s}}$ belongs to $\mathcal{T}_{n}.$ Moreover, the set $\{m_{\mathfrak{s}\mathfrak{s}}\:|\:\mathfrak{s}\in \mathcal{M}_{n}\}$ is an $\mathcal{R}$-basis of $\mathcal{T}_{n}.$
\end{proposition}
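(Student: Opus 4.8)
The plan is to compute $m_{\underline{\bm\lambda}}$ for the shapes occurring in $\mathcal{M}_n$, then to show that conjugation by $g_{d(\mathfrak{s})}$ keeps us inside $\mathcal{T}_n$, and finally to identify the $m_{\mathfrak{s}\mathfrak{s}}$ with unit multiples of the primitive idempotents of $\mathcal{T}_n$. First I would observe that every shape $\underline{\bm\lambda}$ indexing a tableau of $\mathcal{M}_n$ is a ``column'' $(r,d)$-partition: each $\lambda_l^{(k)}$ is empty unless $l=d$, in which case it is a single column $(1^{n_k})$. Hence the Young subgroup $\mathfrak{S}_{\underline{\bm\lambda}}$ is trivial, so $x_{\underline{\bm\lambda}}=1$; moreover every $a_l^k=\sum_{m=1}^{l-1}|\lambda_m^{(k)}|$ vanishes, so each $u_{\mathbf a,k}$ is an empty product and $u_{\mathbf a}^{+}=1$. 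Therefore $m_{\underline{\bm\lambda}}=U_{\underline{\bm\lambda}}=u_{\underline{\bm\lambda}}E_{A_{\underline{\bm\lambda}}}$, which lies in $\mathcal{T}_n$ because $u_{\underline{\bm\lambda}}$ is a polynomial in the $t_i$ and each $e_{i,j}\in\mathcal{T}_n$. This settles $m_{\mathfrak{s}\mathfrak{s}}\in\mathcal{T}_n$ in the base case $\mathfrak{s}=\mathfrak{t}^{\underline{\bm\lambda}}$.

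For a general $\mathfrak{s}\in\mathcal{M}_n$ we have $m_{\mathfrak{s}\mathfrak{s}}=g_{d(\mathfrak{s})}^{\ast}U_{\underline{\bm\lambda}}g_{d(\mathfrak{s})}$. I would fix a reduced expression $d(\mathfrak{s})=s_{j_1}\cdots s_{j_\ell}$, so $g_{d(\mathfrak{s})}=g_{j_1}\cdots g_{j_\ell}$ and $g_{d(\mathfrak{s})}^{\ast}=g_{j_\ell}\cdots g_{j_1}$, and peel the reflections off from the inside outwards. The one-step move is as follows: for a ``labelled'' element $U_B=u_B E_B$ attached to a set partition $B$ whose blocks carry distinct colours among $\zeta_1,\dots,\zeta_r$, the relations $g_it_j=t_{js_i}g_i$ and $e_{a,b}g_i=g_ie_{as_i,bs_i}$ give $g_iU_B=U_{Bs_i}g_i$, whence by \eqref{rel-def-Y1}
\begin{equation*}
g_iU_Bg_i=U_{Bs_i}g_i^2=U_{Bs_i}+(q-q^{-1})U_{Bs_i}e_ig_i .
\end{equation*}
Since $U_{Bs_i}$ forces $t_i$ to equal $\zeta$ of the colour of the block of $Bs_i$ containing $i$, exactly as in Lemma~\ref{lemma6-8-Ueij}$(b)$ the product $U_{Bs_i}e_i$ vanishes whenever $i$ and $i+1$ lie in different blocks of $Bs_i$, and then $g_iU_Bg_i=U_{Bs_i}\in\mathcal{T}_n$. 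The decisive structural fact is that $d(\mathfrak{s})$ is a distinguished representative shuffling the consecutively ordered blocks $I_1,\dots,I_p$ of $A_{\underline{\bm\lambda}}$ increasingly onto the contents $S_1,\dots,S_r$ of the components of $\mathfrak{s}$, so this ``different block'' hypothesis holds at every stage of the inside-out peeling. Carrying this out yields the closed form $m_{\mathfrak{s}\mathfrak{s}}=u_{\mathfrak{s}}E_{A_{\mathfrak{s}}}\in\mathcal{T}_n$, where $A_{\mathfrak{s}}=A_{\underline{\bm\lambda}}d(\mathfrak{s})$ has blocks the nonempty components $S_1,\dots,S_r$ of $\mathfrak{s}$ and $u_{\mathfrak{s}}$ projects each $t_i$ onto $\zeta_{c_{\mathfrak{s}}(i)}$, with $c_{\mathfrak{s}}(i)$ the $r$-position of $i$ in $\mathfrak{s}$. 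I expect the main obstacle to be precisely this verification that every simple reflection produced by the peeling connects indices in different blocks, so that all $g$-terms are annihilated; this is where the distinguished-coset-representative property of $d(\mathfrak{s})$ (and the standardness of $\mathfrak{s}$) is essential, and it is the analogue of [ER, Lemma~18].

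For the basis statement I would pass to the primitive idempotents of $\mathcal{T}_n$. Since $\tfrac1r,\zeta\in\mathcal{R}$ and $t_i^r=1$, the subalgebra $\mathcal{T}_n$ is the group algebra $\mathcal{R}[(\mathbb{Z}/r\mathbb{Z})^n]$, and the elements $f_c:=\prod_{i=1}^n\tfrac1r\sum_{s=0}^{r-1}\zeta_{c(i)}^{-s}t_i^s$, indexed by maps $c\colon\{1,\dots,n\}\to\{1,\dots,r\}$, form a complete family of orthogonal idempotents and an $\mathcal{R}$-basis of $\mathcal{T}_n$; in particular $\dim_{\mathcal{R}}\mathcal{T}_n=r^n=|\mathcal{M}_n|$. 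Using \eqref{tiU-lambda} together with $t_ig_{d(\mathfrak{s})}^{\ast}=g_{d(\mathfrak{s})}^{\ast}t_{id(\mathfrak{s})^{-1}}$ I would check that $t_im_{\mathfrak{s}\mathfrak{s}}=\zeta_{c_{\mathfrak{s}}(i)}m_{\mathfrak{s}\mathfrak{s}}$ for all $i$, so $m_{\mathfrak{s}\mathfrak{s}}$ is a simultaneous $t$-eigenvector and hence a scalar multiple $m_{\mathfrak{s}\mathfrak{s}}=\kappa_{\mathfrak{s}}f_{c_{\mathfrak{s}}}$. Evaluating the character $t_i\mapsto\zeta_{c_{\mathfrak{s}}(i)}$ on the closed form $u_{\mathfrak{s}}E_{A_{\mathfrak{s}}}$ (each $e_{a,b}$ maps to $1$ since $c_{\mathfrak{s}}$ is constant on blocks) gives $\kappa_{\mathfrak{s}}=\prod_{j=1}^{p}\prod_{l\neq i_j}(\zeta_{i_j}-\zeta_l)=\prod_{j=1}^{p}r\zeta_{i_j}^{-1}$, a unit of $\mathcal{R}$. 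Finally $\mathfrak{s}\mapsto c_{\mathfrak{s}}$ is a bijection from $\mathcal{M}_n$ onto the set of all such colourings, so the transition matrix from $\{f_c\}$ to $\{m_{\mathfrak{s}\mathfrak{s}}\}$ is diagonal with unit entries; hence $\{m_{\mathfrak{s}\mathfrak{s}}\mid\mathfrak{s}\in\mathcal{M}_n\}$ is again an $\mathcal{R}$-basis of $\mathcal{T}_n$, as required.
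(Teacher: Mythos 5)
Your proposal is correct and follows essentially the same route as the paper, whose proof simply invokes [ER, Lemma 18 and Proposition 1] together with Lemma \ref{lemma6-16-lambd}: your inside-out peeling, where each error term is killed by the vanishing $U_{Bs_i}e_i=0$ across differently coloured blocks, is exactly the mechanism of Lemma \ref{lemma6-8-Ueij}(b)/Lemma \ref{lemma6-16-lambd} underlying ER's Lemma 18, and your identification of the $m_{\mathfrak{s}\mathfrak{s}}$ with unit multiples of the $r^{n}$ orthogonal idempotents $f_{c}$ of $\mathcal{T}_n$, plus the counting $|\mathcal{M}_n|=r^{n}=\operatorname{rank}\mathcal{T}_n$, is ER's Proposition 1. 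The only cyclotomic-specific ingredient, namely that $x_{\underline{\bm{\lambda}}}=1$ and $u_{\mathbf{a}}^{+}=1$ for the column shapes occurring in $\mathcal{M}_n$ so that $m_{\underline{\bm{\lambda}}}=U_{\underline{\bm{\lambda}}}$, is handled correctly.
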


Now we can state the main result of this section, which is in fact a generalization of [ER, Theorem 20].
\begin{theorem}\label{cellular-bases-6-18}
The cyclotomic Yokonuma-Hecke algebra $Y_{r,n}^{d}$ is a free $\mathcal{R}$-module with basis $$\mathcal{B}_{r,n}^{d}=\{m_{\mathfrak{s}\mathfrak{t}}\:|\:\mathfrak{s}, \mathfrak{t}\in \mathrm{Std}(\underline{\bm{\lambda}})~for~some~(r,d)\mathrm{-}partition~\underline{\bm{\lambda}}~of~n\}.$$
Moreover, $(Y_{r,n}^{d}, \mathcal{B}_{r,n}^{d})$ is a cellular basis of $Y_{r,n}^{d}$.
\end{theorem}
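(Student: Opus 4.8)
The plan is to verify the three cellularity axioms (C1)–(C3) from Definition \ref{cellular-bases} for the cell datum in which $\Lambda=\mathcal{P}_{r,n}^{d}$ is partially ordered by the dominance order $\unrhd$ of Definition \ref{Def-def}, each $\underline{\bm{\lambda}}\in\Lambda$ has index set $M(\underline{\bm{\lambda}})=\mathrm{Std}(\underline{\bm{\lambda}})$, the basis elements are $C_{\mathfrak{s}\mathfrak{t}}^{\underline{\bm{\lambda}}}=m_{\mathfrak{s}\mathfrak{t}}$, and the anti-automorphism $i$ is $\ast$. The properties of $m_{\mathfrak{s}\mathfrak{t}}$ established above already furnish most of the ingredients: axiom (C2) is immediate, since it was observed right after the definition of $m_{\mathfrak{s}\mathfrak{t}}$ that $m_{\underline{\bm{\lambda}}}^{\ast}=m_{\underline{\bm{\lambda}}}$ gives $m_{\mathfrak{s}\mathfrak{t}}^{\ast}=m_{\mathfrak{t}\mathfrak{s}}$. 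For (C3) I would combine Corollary \ref{corollary6-14}, which handles right multiplication by any $h\in Y_{r,n}$, with Proposition \ref{proposition6-15X}, which handles right multiplication by $X_{1}$; since $t_{1},\dots,t_{n}$ act on $m_{\mathfrak{s}\mathfrak{t}}$ by scalars (via Lemma \ref{6-5-lemma}) and $Y_{r,n}^{d}$ is generated by the $t_{j}$, the $g_{i}$, and the $X_{j}$, a reduction argument using the defining relations propagates the straightening rule to all of $Y_{r,n}^{d}$. The key point to check is that in each of these multiplication rules the coefficient $r_{\mathfrak{t}}^{\mathfrak{v}}(a)$ depends only on $\mathfrak{t}$ and not on $\mathfrak{s}$, which follows because in all the relevant lemmas the element $g_{d(\mathfrak{s})}^{\ast}$ sits untouched on the left throughout the computation.

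The heart of the argument, and the main obstacle, is establishing (C1): that $\mathcal{B}_{r,n}^{d}$ is in fact a free $\mathcal{R}$-basis of $Y_{r,n}^{d}$. I would prove this by first showing that $\mathcal{B}_{r,n}^{d}$ spans $Y_{r,n}^{d}$ and then comparing ranks. For spanning, Proposition \ref{propo6-13} reduces any $m_{\mathfrak{s}\mathfrak{t}}$ indexed by row standard tableaux of an $(r,d)$-\emph{composition} shape to a combination of $m_{\mathfrak{u}\mathfrak{v}}$ of $(r,d)$-\emph{partition} shape with standard $\mathfrak{u},\mathfrak{v}$; one then needs to check that the $m_{\mathfrak{s}\mathfrak{t}}$ over all compositions already span, which follows from Theorem \ref{basis-theorem-cyclotomic} together with Corollary \ref{corollary6-14} applied to express an arbitrary basis element $X^{\alpha}t^{\beta}g_{w}$ after left-multiplying a suitable $m_{\underline{\bm{\lambda}}}$. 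For the rank count I would invoke the known dimension of $Y_{r,n}^{d}$, namely $r^{n}d^{n}n!$ from Theorem \ref{basis-theorem-cyclotomic}, and match it against $\sum_{\underline{\bm{\lambda}}}|\mathrm{Std}(\underline{\bm{\lambda}})|^{2}$; the latter identity is a purely combinatorial statement about $(r,d)$-partitions whose standard tableaux biject with the relevant cosets, and it is here that the bookkeeping is most delicate.

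A cleaner route to linear independence, which I would prefer, is to note that a spanning set of the correct cardinality in a free module of finite rank over the integral domain $\mathcal{R}$ is automatically a basis, provided the cardinality of the spanning set does not exceed the rank. Thus it suffices to show $|\mathcal{B}_{r,n}^{d}|\le \operatorname{rank}_{\mathcal{R}} Y_{r,n}^{d}$ and that $\mathcal{B}_{r,n}^{d}$ spans; the reverse inequality then follows formally. To count $|\mathcal{B}_{r,n}^{d}|=\sum_{\underline{\bm{\lambda}}}|\mathrm{Std}(\underline{\bm{\lambda}})|^{2}$ I would decompose an $(r,d)$-partition $\underline{\bm{\lambda}}$ by the ordered sizes of its $r$ constituent $d$-partitions: choosing which $n_{1}+\cdots+n_{r}=n$ boxes go into each $r$-block contributes a multinomial $\binom{n}{n_{1},\dots,n_{r}}$, and the standard tableaux then factor through the components, so that $\sum_{\underline{\bm{\lambda}}}|\mathrm{Std}(\underline{\bm{\lambda}})|^{2}$ telescopes into $\big(\sum_{n'}\text{contribution of a single }d\text{-partition block}\big)$ patterns that recombine to $r^{n}d^{n}n!$ using the classical identity $\sum_{\lambda\vdash m}|\mathrm{Std}(\lambda)|^{2}=m!$ for the symmetric group and the factor $d^{m}$ accounting for the $d$-fold tensor structure. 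Once the span, the cardinality, and (C2)–(C3) are in hand, Definition \ref{cellular-bases} is satisfied and the theorem follows.
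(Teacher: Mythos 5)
Most of your architecture coincides with the paper's actual proof: spanning plus a cardinality count against the rank $(rd)^{n}n!$ from Theorem \ref{basis-theorem-cyclotomic}, with (C2) immediate from $m_{\mathfrak{s}\mathfrak{t}}^{\ast}=m_{\mathfrak{t}\mathfrak{s}}$ and (C3) extracted from Corollary \ref{corollary6-14} and Proposition \ref{proposition6-15X} (the paper defers the independence-of-$\mathfrak{s}$ bookkeeping to [DJM, Proposition 3.25]; your remark that $g_{d(\mathfrak{s})}^{\ast}$ sits untouched on the left is the correct reason this works). Your combinatorial verification of $\sum_{\underline{\bm{\lambda}}}|\mathrm{Std}(\underline{\bm{\lambda}})|^{2}=(rd)^{n}n!$ via multinomial coefficients and the identity $\sum_{\lambda\vdash m}|\mathrm{Std}(\lambda)|^{2}=m!$ is correct, and in fact more explicit than the paper, which dismisses this as ``easy to see''; likewise the principle that a spanning set whose cardinality equals the rank of a finitely generated free module over a commutative ring is automatically a basis is sound and is exactly the paper's concluding step.

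The genuine gap is in your spanning argument. You propose to ``express an arbitrary basis element $X^{\alpha}t^{\beta}g_{w}$ after left-multiplying a suitable $m_{\underline{\bm{\lambda}}}$,'' but left multiplication by $m_{\underline{\bm{\lambda}}}$ is not invertible: $m_{\underline{\bm{\lambda}}}$ contains the factor $U_{\underline{\bm{\lambda}}}=u_{\underline{\bm{\lambda}}}E_{A_{\underline{\bm{\lambda}}}}$, a (scalar multiple of a) nontrivial idempotent which annihilates a large part of the algebra, so knowing that $m_{\underline{\bm{\lambda}}}\,X^{\alpha}t^{\beta}g_{w}$ lies in the span of $\mathcal{B}_{r,n}^{d}$ gives no information about $X^{\alpha}t^{\beta}g_{w}$ itself. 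The missing ingredient is Proposition \ref{proposi6-17-mss}: the subalgebra $\mathcal{T}_{n}$ generated by $t_{1},\ldots,t_{n}$ has $\{m_{\mathfrak{s}\mathfrak{s}}\:|\:\mathfrak{s}\in\mathcal{M}_{n}\}$ as an $\mathcal{R}$-basis, so in particular $1=\sum_{\mathfrak{s}\in\mathcal{M}_{n}}c_{\mathfrak{s}}\,m_{\mathfrak{s}\mathfrak{s}}$ for suitable $c_{\mathfrak{s}}\in\mathcal{R}$. With this decomposition of unity, any $h\in Y_{r,n}^{d}$ satisfies $h=\sum_{\mathfrak{s}}c_{\mathfrak{s}}\,m_{\mathfrak{s}\mathfrak{s}}h$, and the straightening rules (Corollary \ref{corollary6-14} for factors in $Y_{r,n}$, Proposition \ref{proposition6-15X} for $X_{1}$, and the relation $X_{k+1}=g_{k}X_{k}g_{k}$ to reduce the remaining $X_{k}$) place each summand in the span of $\mathcal{B}_{r,n}^{d}$. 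This step cannot be bypassed in the Yokonuma setting: unlike the Ariki-Koike case of [DJM], where the identity is itself one of the Murphy elements $m_{\underline{\bm{\omega}}}$ for the shape whose only nonempty component is a single column, here no single $m_{\mathfrak{s}\mathfrak{s}}$ equals $1$ (each carries the idempotents $u_{\underline{\bm{\lambda}}}E_{A_{\underline{\bm{\lambda}}}}$), and the identity is recovered only as the sum over all of $\mathcal{M}_{n}$. Since your proposal never invokes Proposition \ref{proposi6-17-mss} or any substitute for it, the spanning claim --- and hence (C1) --- remains unproven as written.
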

\begin{proof}
By Proposition \ref{proposi6-17-mss} we have that 1 is an $\mathcal{R}$-linear combination of elements $m_{\mathfrak{s}\mathfrak{s}}$ for some standard $(r,d)$-tableaux $\mathfrak{s}.$ Hence, using Corollary \ref{corollary6-14} and Proposition \ref{proposition6-15X}, we get that $\mathcal{B}_{r,n}^{d}$ spans $Y_{r,n}^{d}$. By the basis Theorem \ref{basis-theorem-cyclotomic}, $Y_{r,n}^{d}$ is free of rank $(rd)^{n}n!.$ On the other hand, it is easy to see that the cardinality of $\mathcal{B}_{r,n}^{d}$ is $(rd)^{n}n!.$ Thus, we must have that $\mathcal{B}_{r,n}^{d}$ is an $\mathcal{R}$-basis of $Y_{r,n}^{d}$.

Finally, the multiplicative property that $\mathcal{B}_{r,n}^{d}$ must satisfy in order to be a cellular basis of $Y_{r,n}^{d}$ can be deduced by the same argument as in [DJM, Proposition 3.25]. We skip the details.
\end{proof}

\section{Jucys-Murphy elements for cyclotomic Yokonuma-Hecke algebras}

In this section, inspired by the results of [ER, Section 5], we shall show that the Jucys-Murphy elements $X_{i}$ and $t_{i}$ ($1\leq i\leq n$) for $Y_{r,n}^{d}$, with respect to the cellular basis of $Y_{r,n}^{d}$ obtained in Theorem \ref{cellular-bases-6-18}, are JM-elements in the abstract sense defined by Mathas (see [Ma2]). For the split semisimple $Y_{r,n}^{d}$, we then apply the general theory developed in [Ma2, Section 3] to define the idempotents $E_{\mathfrak{t}}$ of $Y_{r,n}^{d}$ and deduce some properties of them.

\subsection{Jucys-Murphy elements}
\begin{definition}\label{definition-7-1}
Let $\mathfrak{t}$ be an $(r,d)$-tableau of shape $\underline{\bm{\lambda}}$ and suppose that the $(r,d)$-node $\bm{\theta}$ of $\mathfrak{t}$ labelled by $((a, b), k, l)$ is filled in with the element $i$ $(1\leq i\leq n)$. Then we define the content of $i$ as the element $\text{c}_{\mathfrak{t}}(i) :=v_{l}q^{2(b-a)}$ and the $r$-position of $i$ as the element $\text{p}_{\mathfrak{t}}^{r}(i) :=k,$ respectively.
\end{definition}

\begin{definition}\label{definition-7-2-jucys-murphy}
Let $k$ be an integral domain. Given a $k$-algebra $A$ with a cellular basis $\mathcal{C}=\{C_{s, t}^{\lambda}\:|\:\lambda\in \Lambda, s, t\in M(\lambda) \},$ where each set $M(\lambda)$ is endowed with a poset structure with ordering $\rhd_{\lambda},$ we say that a set $\mathcal{L}=\{L_{1},\ldots,L_{n}\}\subseteq A$ is a family of JM-elements for $A$ with respect to the basis $\mathcal{C}$ if it satisfies the following conditions:

$(1)$ $\mathcal{L}$ is a set of commuting elements and $L_{i}^{*}=L_{i}$ for each $1\leq i\leq n$.

$(2)$ There exists a set of scalars $\{C_{t}(i)\:|\:t\in M(\lambda), 1\leq i\leq n\}$ such that for all $\lambda\in \Lambda$ and $s, t\in M(\lambda)$ we have that
\begin{equation}\label{defini7-2-identity}
C_{s, t}^{\lambda}L_{i}\equiv C_{t}(i)C_{s, t}^{\lambda}+\sum\limits_{\substack{v\in M(\lambda)\\v\rhd_{\lambda}t}}R_{vt}C_{s, v}^{\lambda}\quad \mathrm{mod}~A^{>\lambda}
\end{equation}
for some $R_{vt}\in k$ and $1\leq i\leq n$.
\end{definition}

The next proposition claims that the following set
$$\mathcal{L}_{Y_{r,n}^{d}}=\{L_{1},\ldots,L_{2n}\:|\:L_{k}=X_{k}, L_{n+k}=t_{k}~\mathrm{for}~1\leq k\leq n\}$$
is a family of JM-elements for $Y_{r,n}^{d}$ with respect to the cellular basis defined in Theorem \ref{cellular-bases-6-18}.

\begin{proposition}\label{proposition7-3-jm}
$(Y_{r,n}^{d}, \mathcal{B}_{r,n}^{d})$ is a cellular algebra with a family of JM-elements $\mathcal{L}_{Y_{r,n}^{d}}.$
\end{proposition}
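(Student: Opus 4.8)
The plan is to verify the two conditions of Definition \ref{definition-7-2-jucys-murphy} for the family $\mathcal{L}_{Y_{r,n}^{d}}$, using the scalars
$$C_{\mathfrak{t}}(i):=\mathrm{c}_{\mathfrak{t}}(i)\ (1\le i\le n),\qquad C_{\mathfrak{t}}(n+i):=\zeta_{\mathrm{p}_{\mathfrak{t}}^{r}(i)}\ (1\le i\le n),$$
where $\mathrm{c}_{\mathfrak{t}}$ and $\mathrm{p}_{\mathfrak{t}}^{r}$ are the content and $r$-position of Definition \ref{definition-7-1}. Condition $(1)$ is immediate: the elements $X_{1},\ldots,X_{n},t_{1},\ldots,t_{n}$ form a commuting family by \eqref{commutation-formulae}, and each of them is fixed by $\ast$ by its very definition on generators.

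Condition $(2)$ splits into the $t$-part and the $X$-part. For the $t$-elements I expect an exact eigenvalue identity, with no higher terms at all. Writing $m_{\mathfrak{s}\mathfrak{t}}=g_{d(\mathfrak{s})}^{\ast}m_{\underline{\bm{\lambda}}}g_{d(\mathfrak{t})}$ and pushing $t_{i}$ leftwards through $g_{d(\mathfrak{t})}$ by means of the relation $g_{w}t_{j}=t_{jw^{-1}}g_{w}$ replaces $t_{i}$ by $t_{i'}$, where $i'=i\,d(\mathfrak{t})^{-1}$ is the entry occupying in $\mathfrak{t}^{\underline{\bm{\lambda}}}$ the box that $i$ occupies in $\mathfrak{t}$; in particular $i'$ lies in the block of $A_{\underline{\bm{\lambda}}}$ recording the $r$-position $\mathrm{p}_{\mathfrak{t}}^{r}(i)$. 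Using the presentation $m_{\underline{\bm{\lambda}}}=u_{\underline{\bm{\lambda}}}E_{A_{\underline{\bm{\lambda}}}}x_{\underline{\bm{\lambda}}}u_{\mathbf{a}}^{+}$ from \eqref{mueaxu-lambda}, together with Lemma \ref{6-3-lemma} (which lets one replace each index $i'w^{-1}$ by $i'$ inside $E_{A_{\underline{\bm{\lambda}}}}$) and the eigenvalue relation \eqref{tiU-lambda}, I would deduce $m_{\underline{\bm{\lambda}}}t_{i'}=\zeta_{\mathrm{p}_{\mathfrak{t}}^{r}(i)}m_{\underline{\bm{\lambda}}}$ and hence $m_{\mathfrak{s}\mathfrak{t}}t_{i}=\zeta_{\mathrm{p}_{\mathfrak{t}}^{r}(i)}m_{\mathfrak{s}\mathfrak{t}}$, which is \eqref{defini7-2-identity} with vanishing off-diagonal part.

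For the $X$-elements I would follow [DJM, \S3] and [ER, \S5]. The base case is $X_{1}$, handled by Proposition \ref{proposition6-15X}: it writes $m_{\mathfrak{s}\mathfrak{t}}X_{1}=x_{1}+x_{2}$ with $x_{2}$ of strictly larger shape and $x_{1}$ a combination of $m_{\mathfrak{u}\mathfrak{v}}$ of shape $\underline{\bm{\lambda}}$. To extract the leading coefficient I would refine the bookkeeping in that proof: the factorization $u_{\mathbf{a}}^{+}g_{c}X_{1}=u_{\mathbf{a}}^{+}h_{1}+u_{\mathbf{b}}^{+}h_{2}$ contributes the parameter $v_{l}$ of the component containing $1$ (as in [DJM, Lemma 3.4]), while conjugating $X_{1}$ into position by $g_{c}$ contributes the power $q^{2(b-a)}$ exactly as in the type $A$ Murphy computation, together pinning the diagonal coefficient to $\mathrm{c}_{\mathfrak{t}}(1)$. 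For general $X_{i}$ I would move $X_{i}$ through $g_{d(\mathfrak{t})}$ using Lemma \ref{gxxg-xggx} so that it acts as $X_{i'}$ on $m_{\underline{\bm{\lambda}}}$; the resulting correction terms carry idempotents $e_{j}$, and those in which the two relevant indices lie in different blocks are killed by Lemmas \ref{lemma6-8-Ueij}$(b)$ and \ref{lemma6-16-lambd}, the rest being absorbed either into strictly larger shape or into terms higher in the dominance order. Feeding this through the recursion $X_{i+1}=g_{i}X_{i}g_{i}$ and Corollary \ref{corollary6-14} propagates the content computation to every $X_{i}$.

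The main obstacle will be obtaining the \emph{precise} shape of \eqref{defini7-2-identity}, in which the first index $\mathfrak{s}$ is held fixed, the diagonal coefficient depends only on $\mathfrak{t}$, and the remaining tableaux satisfy $\mathfrak{v}\rhd\mathfrak{t}$: the triangular estimates of Proposition \ref{proposition6-15X} and Corollary \ref{corollary6-14} are phrased with the opposite index varying, so I would invoke the anti-automorphism $\ast$ (under which $m_{\mathfrak{s}\mathfrak{t}}^{\ast}=m_{\mathfrak{t}\mathfrak{s}}$ while $X_{i}^{\ast}=X_{i}$ and $t_{i}^{\ast}=t_{i}$) to transpose the two indices and convert the one-sided triangularity into the form demanded by Definition \ref{definition-7-2-jucys-murphy}. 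Once both conditions are verified, Proposition \ref{proposition7-3-jm} follows at once from the definition of a family of JM-elements.
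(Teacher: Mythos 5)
Your treatment of condition (1) and of the $t$-part of condition (2) is correct and essentially matches the paper: pushing $t_{i}$ through $g_{d(\mathfrak{t})}$ and using Lemmas \ref{6-3-lemma} and \ref{6-5-lemma} does give the exact eigenvalue identity $m_{\mathfrak{s}\mathfrak{t}}t_{i}=\zeta_{\text{p}_{\mathfrak{t}}^{r}(i)}m_{\mathfrak{s}\mathfrak{t}}$ (the paper gets this by the analogue of [ER, Lemma 19]). The gap is in the $X$-part, exactly at the point you call ``the main obstacle'', and the device you offer there does not work. Applying $\ast$ to Corollary \ref{corollary6-14} or Proposition \ref{proposition6-15X} converts a statement about \emph{right} multiplication that is triangular in the \emph{first} index into a statement about \emph{left} multiplication that is triangular in the \emph{second} index; on the other hand, the identity \eqref{defini7-2-identity}, after applying $\ast$, demands left multiplication triangular in the \emph{first} index with the second index frozen. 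The transposition moves the goal and the available estimates simultaneously, so the mismatch is transported, not resolved. Similarly, ``refining the bookkeeping'' of Proposition \ref{proposition6-15X} is not a small upgrade: that proposition constrains only the first index and says nothing about the diagonal coefficient, whereas identifying the coefficient as $\text{c}_{\mathfrak{t}}(k)$ and forcing the off-diagonal tableaux to satisfy $\mathfrak{v}\rhd\mathfrak{t}$ \emph{is} the content of the proposition being proved; your recursion $X_{i+1}=g_{i}X_{i}g_{i}$ combined with Corollary \ref{corollary6-14} does not supply it either.

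The paper's proof is structured to avoid this problem. It computes with $\mathfrak{s}=\mathfrak{t}^{\underline{\bm{\lambda}}}$, which is dominance-maximal in $\mathrm{Std}(\underline{\bm{\lambda}})$, so the first index cannot move modulo $Y_{r,n}^{d,\rhd\underline{\bm{\lambda}}}$. For $\mathfrak{t}$ whose $r$-positions agree with those of $\mathfrak{t}^{\underline{\bm{\lambda}}}$, relation \eqref{giXj} factorizes $m_{\mathfrak{t}^{\underline{\bm{\lambda}}}\mathfrak{t}}X_{k}$ into a product of commuting one-component pieces with $X_{k}$ landing in a single piece, and the one-component case is settled by adapting [JM, Proposition 3.7]; this is where both the content $\text{c}_{\mathfrak{t}}(k)$ on the diagonal and the dominance $\mathfrak{s}\rhd\mathfrak{t}$ in the \emph{second} index come from. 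An arbitrary standard $\mathfrak{t}$ is then written as $\mathfrak{t}=\mathfrak{t}_{1}w$ with $w$ a distinguished coset representative of $\mathfrak{S}_{\beta}$; since every crossing in a reduced word of $w$ involves entries with different $r$-positions, Lemmas \ref{gxxg-xggx} and \ref{lemma6-16-lambd} give $m_{\mathfrak{t}^{\underline{\bm{\lambda}}}\mathfrak{t}}X_{k}=m_{\mathfrak{t}^{\underline{\bm{\lambda}}}\mathfrak{t}_{1}}X_{(k)w^{-1}}g_{w}$, and conjugating the special case by $g_{w}$ finishes the computation; a general first index is recovered by multiplying on the left by $g_{d(\mathfrak{s})}^{\ast}$, which fixes the second index and leaves the coefficients untouched. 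If you want to keep your outline, the observation that rescues the \emph{form} of the identity is not the $\ast$-trick but the maximality of $\mathfrak{t}^{\underline{\bm{\lambda}}}$: Corollary \ref{corollary6-14} applied to $m_{\mathfrak{t}^{\underline{\bm{\lambda}}}\mathfrak{t}}X_{k}$ already forces the first index to remain $\mathfrak{t}^{\underline{\bm{\lambda}}}$ modulo $Y_{r,n}^{d,\rhd\underline{\bm{\lambda}}}$; but you would still have to carry out the Murphy-type computation, along the lines of [JM, Proposition 3.7], to pin down the diagonal coefficient and the dominance condition in the second index.
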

\begin{proof}
We need to verify that the elements of $\mathcal{L}_{Y_{r,n}^{d}}$ satisfy the conditions in Definition \ref{definition-7-2-jucys-murphy}. By \eqref{commutation-formulae}, $\mathcal{L}_{Y_{r,n}^{d}}$ consists of a set of commutative elements.

We have proved that $\mathcal{B}_{r,n}^{d}$ is a cellular basis of $Y_{r,n}^{d}$ with respect to the dominance order $\unrhd$ on Std$(\underline{\bm{\lambda}})$ for all $\underline{\bm{\lambda}}\in \mathcal{P}_{r,n}^{d}.$ For the content function on Std$(\underline{\bm{\lambda}})$ for all $\underline{\bm{\lambda}}\in \mathcal{P}_{r,n}^{d},$ we use the following elements:
$$C_{\mathfrak{t}}(k) =\begin{cases}\text{c}_{\mathfrak{t}}(k) & \hbox {if } k=1,2,\ldots,n; \\\zeta_{\text{p}_{\mathfrak{t}}^{r}(k-n)}& \hbox {if } k=n+1,n+2,\ldots, 2n.\end{cases}$$

By an argument which is similar to [ER, Lemma 19], we see that the elements $L_{n+k}$ $(1\leq k\leq n)$ satisfy the condition \eqref{defini7-2-identity} in Definition \ref{definition-7-2-jucys-murphy}. Thus, we only need to check the cases $k=1,2,\ldots, n.$

Suppose that $\underline{\bm{\lambda}}=(\bm{\lambda}^{(1)},\ldots,\bm{\lambda}^{(r)})=((\lambda_{1}^{(1)},\ldots,\lambda_{d}^{(1)}),\ldots,(\lambda_{1}^{(r)},\ldots,\lambda_{d}^{(r)}))\in \mathcal{P}_{r,n}^{d}$ and that $\mathfrak{t}^{\underline{\bm{\lambda}}}=(\mathfrak{t}^{\bm{\lambda}^{(1)}},\ldots,\mathfrak{t}^{\bm{\lambda}^{(r)}})
=((\mathfrak{t}^{\lambda_{1}^{(1)}},\ldots,\mathfrak{t}^{\lambda_{d}^{(1)}}),\ldots,
(\mathfrak{t}^{\lambda_{1}^{(r)}},\ldots,\mathfrak{t}^{\lambda_{d}^{(r)}})).$

Let us first consider the case when $\mathfrak{t}=((\mathfrak{t}_{1}^{(1)},\ldots,\mathfrak{t}_{d}^{(1)}),\ldots,(\mathfrak{t}_{1}^{(r)},\ldots,\mathfrak{t}_{d}^{(r)}))$ is a standard $(r,d)$-tableau of shape $\underline{\bm{\lambda}}$ and satisfies $\text{p}_{\mathfrak{t}}^{r}(i)=\text{p}_{\mathfrak{t}^{\underline{\bm{\lambda}}}}^{r}(i)$ for all $1\leq i\leq n.$ Suppose that $k$ is in the $(l, j)$-th component $\mathfrak{t}_{j}^{(l)}$ of $\mathfrak{t}.$ By \eqref{giXj}, we have
\begin{equation}\label{identify7-1}
m_{\mathfrak{t}^{\underline{\bm{\lambda}}}\mathfrak{t}}X_{k}=m_{\mathfrak{t}^{\bm{\lambda}^{(1)}}\mathfrak{t}^{(1)}}\cdots m_{\mathfrak{t}^{\bm{\lambda}^{(l)}}\mathfrak{t}^{(l)}}X_{k}m_{\mathfrak{t}^{\bm{\lambda}^{(l+1)}}\mathfrak{t}^{(l+1)}}\cdots m_{\mathfrak{t}^{\bm{\lambda}^{(r)}}\mathfrak{t}^{(r)}},
\end{equation}
where $\mathfrak{t}^{\bm{\lambda}^{(i)}}=(\mathfrak{t}^{\lambda_{1}^{(i)}},\ldots,\mathfrak{t}^{\lambda_{d}^{(i)}})$ and $\mathfrak{t}^{(i)}=(\mathfrak{t}_{1}^{(i)},\ldots,\mathfrak{t}_{d}^{(i)})$ for each $1\leq i\leq r.$

We can adapt the proof of [JM, Proposition 3.7] to our case, and we get that $m_{\mathfrak{t}^{\bm{\lambda}^{(l)}}\mathfrak{t}^{(l)}}X_{k}$ is equal to
\begin{equation}\label{identify7-2}
\text{c}_{\mathfrak{t}}(k)m_{\mathfrak{t}^{\bm{\lambda}^{(l)}}\mathfrak{t}^{(l)}}+\sum\limits_{\substack{\mathfrak{d}\in \mathrm{Std}(\bm{\lambda}^{(l)})\\\mathfrak{d}\rhd \mathfrak{t}^{(l)}}}a_{\mathfrak{d}}m_{\mathfrak{t}^{\bm{\lambda}^{(l)}}\mathfrak{d}}+\sum\limits_{\substack{\mathfrak{u},\mathfrak{v} \in \mathrm{Std}(\bm{\mu}^{(l)})\\\bm{\mu}^{(l)}\rhd \bm{\lambda}^{(l)} }}r_{\mathfrak{u}\mathfrak{v}}m_{\mathfrak{u}\mathfrak{v}},
\end{equation}
for some $a_{\mathfrak{d}}, r_{\mathfrak{u}\mathfrak{v}}\in \mathcal{R}.$

Set $\mathfrak{a} :=(\mathfrak{t}^{\bm{\lambda}^{(1)}},\ldots, \mathfrak{u},\ldots, \mathfrak{t}^{\bm{\lambda}^{(r)}}),$ $\mathfrak{b} :=(\mathfrak{t}^{(1)},\ldots, \mathfrak{v},\ldots, \mathfrak{t}^{(r)})$ and $\mathfrak{s} :=(\mathfrak{t}^{(1)},\ldots, \mathfrak{d},\ldots, \mathfrak{t}^{(r)}).$ Then $\mathfrak{s}\in \text{Std}(\underline{\bm{\lambda}}),$ $\mathfrak{a}, \mathfrak{b}\in \text{Std}(\underline{\bm{\mu}}),$ where $\underline{\bm{\mu}}=(\bm{\lambda}^{(1)},\ldots,\bm{\mu}^{(l)},\ldots,\bm{\lambda}^{(r)}).$ Moreover, by the definition of the dominance order, we have $\mathfrak{s}\rhd\mathfrak{t}$ and $\underline{\bm{\mu}}\rhd \underline{\bm{\lambda}}.$ Therefore, we have $m_{\mathfrak{a}\mathfrak{b}}\in Y_{r,n}^{d,\rhd \underline{\bm{\lambda}}},$ where $m_{\mathfrak{a}\mathfrak{b}}=m_{\mathfrak{t}^{\bm{\lambda}^{(1)}}\mathfrak{t}^{(1)}}\cdots m_{\mathfrak{u}\mathfrak{v}}\cdots m_{\mathfrak{t}^{\bm{\lambda}^{(r)}}\mathfrak{t}^{(r)}}$ and $Y_{r,n}^{d,\rhd \underline{\bm{\lambda}}}$ is the $\mathcal{R}$-submodule of $Y_{r,n}^{d}$ generated by the elements $\{m_{\mathfrak{g}\mathfrak{h}}\:|\:\mathfrak{g}, \mathfrak{h}\in \mathrm{Std}(\underline{\bm{\mu}})\text{ with }\underline{\bm{\mu}}\rhd \underline{\bm{\lambda}}\}.$

Multiplying the expression \eqref{identify7-2} on the left with $m_{\mathfrak{t}^{\lambda^{(1)}}\mathfrak{t}^{(1)}}\cdots m_{\mathfrak{t}^{\lambda^{(l-1)}}\mathfrak{t}^{(l-1)}}$ and on the right with $m_{\mathfrak{t}^{\lambda^{(l+1)}}\mathfrak{t}^{(l+1)}}\cdots m_{\mathfrak{t}^{\lambda^{(r)}}\mathfrak{t}^{(r)}}$, by \eqref{identify7-1}, we then get
\begin{equation}\label{identify7-3}
m_{\mathfrak{t}^{\underline{\bm{\lambda}}}\mathfrak{t}}X_{k}\equiv \text{c}_{\mathfrak{t}}(k)m_{\mathfrak{t}^{\underline{\bm{\lambda}}}\mathfrak{t}}+\sum\limits_{\substack{\mathfrak{s}\in \mathrm{Std}(\underline{\bm{\lambda}})\\\mathfrak{s}\rhd \mathfrak{t}}}a_{\mathfrak{s}}m_{\mathfrak{t}^{\underline{\bm{\lambda}}}\mathfrak{s}}
\quad\mathrm{mod}~Y_{r,n}^{d,\rhd \underline{\bm{\lambda}}},
\end{equation}
which shows that this proposition is true in this case.

For $\mathfrak{t}$ an arbitrary standard $(r,d)$-tableau of shape $\underline{\bm{\lambda}}$, we can write $\mathfrak{t}=\mathfrak{t}_{1}w,$ where $\mathfrak{t}_{1}$ is a standard $(r,d)$-tableau of shape $\underline{\bm{\lambda}}$ such that $\text{p}_{\mathfrak{t}_{1}}^{r}(i)=\text{p}_{\mathfrak{t}^{\underline{\bm{\lambda}}}}^{r}(i)$ for all $1\leq i\leq n$ and $w$ is a distinguished right coset representative of $\mathfrak{S}_{\beta}$ in $\mathfrak{S}_{n},$ where $\beta=(\beta_1,\ldots,\beta_r)$ with each $\beta_i=\sum_{l=1}^{d}|\lambda_{l}^{(i)}|$ for $1\leq i\leq r.$

Let $w=s_{i_1}s_{i_2}\cdots s_{i_t}$ be a reduced expression of $w$. Then we have $i_j$ and $i_j+1$ have different $r$-position in $\mathfrak{t}_{1}s_{i_1}\cdots s_{i_{j-1}}$ for all $j\geq 1$ and that $\mathfrak{t}_{1}s_{i_1}\cdots s_{i_{j-1}}s_{i_{j}}$ is obtained from $\mathfrak{t}_{1}s_{i_1}\cdots s_{i_{j-1}}$ by interchanging $i_j$ and $i_j+1.$ By Lemmas \ref{gxxg-xggx} and \ref{lemma6-16-lambd}, we get that
\begin{equation}\label{identify7-4}
m_{\mathfrak{t}^{\underline{\bm{\lambda}}}\mathfrak{t}}X_{k}=m_{\mathfrak{t}^{\underline{\bm{\lambda}}}\mathfrak{t}_1}g_{w}X_{k}
=m_{\mathfrak{t}^{\underline{\bm{\lambda}}}\mathfrak{t}_1}X_{(k)w^{-1}}g_w.
\end{equation}

By the choice of $\mathfrak{t}_{1},$ we can apply \eqref{identify7-3} and get that
\begin{align}\label{identify7-5}
m_{\mathfrak{t}^{\underline{\bm{\lambda}}}\mathfrak{t}}X_{k}&=m_{\mathfrak{t}^{\underline{\bm{\lambda}}}\mathfrak{t}_1}X_{(k)w^{-1}}g_w\notag\\
&\equiv\bigg(\text{c}_{\mathfrak{t}_1}((k)w^{-1})m_{\mathfrak{t}^{\underline{\bm{\lambda}}}\mathfrak{t}_1}+\sum\limits_{\substack{\mathfrak{s}_1\in \mathrm{Std}(\underline{\bm{\lambda}})\\\mathfrak{s}_1\rhd \mathfrak{t}_1}}a_{\mathfrak{s}_1}m_{\mathfrak{t}^{\underline{\bm{\lambda}}}\mathfrak{s}_1}\bigg)g_w
\quad\mathrm{mod}~Y_{r,n}^{d,\rhd \underline{\bm{\lambda}}}.
\end{align}
Since all the $\mathfrak{s}_1$ appearing in the summation of \eqref{identify7-5} satisfy $\text{p}_{\mathfrak{s}_{1}}^{r}(i)=\text{p}_{\mathfrak{t}^{\underline{\bm{\lambda}}}}^{r}(i)$ for all $1\leq i\leq n$, we have $m_{\mathfrak{t}^{\underline{\bm{\lambda}}}\mathfrak{s}_1}g_w=m_{\mathfrak{t}^{\underline{\bm{\lambda}}}\mathfrak{s}}$ for some $\mathfrak{s}\in \mathrm{Std}(\underline{\bm{\lambda}})$ with $\mathfrak{s}\rhd \mathfrak{t}$, and also $m_{\mathfrak{t}^{\underline{\bm{\lambda}}}\mathfrak{t}_1}g_{w}=m_{\mathfrak{t}^{\underline{\bm{\lambda}}}\mathfrak{t}}.$ Therefore, we have
\begin{align}\label{identify7-6}
m_{\mathfrak{t}^{\underline{\bm{\lambda}}}\mathfrak{t}}X_{k}
\equiv\text{c}_{\mathfrak{t}}(k)m_{\mathfrak{t}^{\underline{\bm{\lambda}}}\mathfrak{t}}+\sum\limits_{\substack{\mathfrak{s}\in \mathrm{Std}(\underline{\bm{\lambda}})\\\mathfrak{s}\rhd \mathfrak{t}}}a_{\mathfrak{s}}m_{\mathfrak{t}^{\underline{\bm{\lambda}}}\mathfrak{s}}
\quad\mathrm{mod}~Y_{r,n}^{d,\rhd \underline{\bm{\lambda}}},
\end{align}
where $a_{\mathfrak{s}}=a_{\mathfrak{s}_1}\in \mathcal{R}.$ We have proved this proposition.
\end{proof}

\subsection{Idempotents of $Y_{r, n}^{d}$}
Let $\mathbb{K}$ be an algebraically closed field of characteristic $p\geq 0$ such that $p$ does not divide $r$, which contains some invertible elements $v_{1},\ldots, v_{d}.$ In this subsection, we shall work with a specialised cyclotomic Yokonuma-Hecke algebra $Y_{r, n}^{d, \mathbb{K}}$ defined over $\mathbb{K},$ that is, $v_i\in \mathbb{K}^{*}$ for $1\leq i\leq d$ and $q\in \mathbb{K}^{*}.$

From Proposition \ref{proposition7-3-jm}, we can now apply the general theory developed in [Ma2] to obtain a sufficient condition for the semi-simplicity criterion of $Y_{r, n}^{d, \mathbb{K}}$. By [Ma2, Corollary 2.9], we get that $Y_{r, n}^{d, \mathbb{K}}$ is split semisimple if
\begin{equation}\label{semisimple}
\prod_{k=1}^{n}(1+q^{2}+\cdots+q^{2(k-2)}+q^{2(k-1)})\prod_{1\leq i< j\leq d}\prod_{-n< l< n}(q^{2l}v_i-v_j)\neq 0.
\end{equation}

\begin{remark}\label{remark-semisimple}
In fact, \eqref{semisimple} is also a necessary condition on the semisimplicity of $Y_{r, n}^{d, \mathbb{K}}.$
\end{remark}

From now on, we always assume that the condition \eqref{semisimple} holds. In particular, we can apply all the results in [Ma2, Section 3].

We shall follow the arguments of [Ma2, Section 3] to construct a ``seminormal" basis of $Y_{r, n}^{d, \mathbb{K}}$. For $1\leq k\leq n,$ we define the following two sets:
\[\mathcal{C}(k) :=\{\text{c}_{\mathfrak{t}}(k)\:|\:\mathfrak{t}\in \text{Std}(\underline{\bm{\lambda}})\text{ for some }\underline{\bm{\lambda}}\in \mathcal{P}_{r,n}^{d}\},\]
and
\[\overline{\mathcal{C}(k)} :=\{\zeta_{\text{p}_{\mathfrak{t}}^{r}(k)}\:|\:\mathfrak{t}\in \text{Std}(\underline{\bm{\lambda}})\text{ for some }\underline{\bm{\lambda}}\in \mathcal{P}_{r,n}^{d}\}.\]

\begin{definition}\label{definition-idempotent}
Suppose that $\underline{\bm{\lambda}}\in \mathcal{P}_{r,n}^{d}$ and that $\mathfrak{s}, \mathfrak{t}\in \text{Std}(\underline{\bm{\lambda}}).$

(i) Let
\begin{equation}\label{idempotent-ET}
E_{\mathfrak{t}}=\prod_{k=1}^{n}\bigg(\prod_{\substack{c\in \mathcal{C}(k)\\c\neq \text{c}_{\mathfrak{t}}(k)}}\frac{X_{k}-c}{\text{c}_{\mathfrak{t}}(k)-c}\cdot \prod_{\substack{\bar{c}\in \overline{\mathcal{C}(k)}\\\bar{c}\neq \zeta_{\text{p}_{\mathfrak{t}}^{r}(k)}}}\frac{t_{k}-\bar{c}}{\zeta_{\text{p}_{\mathfrak{t}}^{r}(k)}-\bar{c}}
\bigg).
\end{equation}

(ii) Let $e_{\mathfrak{s}\mathfrak{t}}^{\underline{\bm{\lambda}}}=E_{\mathfrak{s}}\hspace{0.3mm}m_{\mathfrak{s}\mathfrak{t}}\hspace{0.3mm}E_{\mathfrak{t}}.$
\end{definition}

By Proposition \ref{proposition7-3-jm}, we can now apply the general theory developed in [Ma2, Section 3] to get the following results.

\begin{proposition}\label{idempotent-property}
$(\emph{i})$ The set $\{e_{\mathfrak{s}\mathfrak{t}}^{\underline{\bm{\lambda}}}\:|\:\mathfrak{s}, \mathfrak{t}\in \emph{Std}(\underline{\bm{\lambda}})\text{ for some }\underline{\bm{\lambda}}\in \mathcal{P}_{r,n}^{d}\}$ is a $\mathbb{K}$-basis of $Y_{r, n}^{d, \mathbb{K}}.$

$(\emph{ii})$ For $\underline{\bm{\lambda}}, \underline{\bm{\mu}}\in \mathcal{P}_{r,n}^{d}$ and $\mathfrak{s}, \mathfrak{t}\in \emph{Std}(\underline{\bm{\lambda}}),$ $\mathfrak{u}, \mathfrak{v}\in \emph{Std}(\underline{\bm{\mu}}),$ we have
\begin{equation}\label{result-1}
e_{\mathfrak{s}\mathfrak{t}}^{\underline{\bm{\lambda}}}\hspace{0.3mm}X_k=\emph{c}_{\mathfrak{t}}(k)e_{\mathfrak{s}\mathfrak{t}}^{\underline{\bm{\lambda}}},\hspace{1cm}
e_{\mathfrak{s}\mathfrak{t}}^{\underline{\bm{\lambda}}}\hspace{0.3mm}t_k=\zeta_{\emph{p}_{\mathfrak{t}}^{r}(k)}e_{\mathfrak{s}\mathfrak{t}}^{\underline{\bm{\lambda}}},\hspace{1cm} e_{\mathfrak{s}\mathfrak{t}}^{\underline{\bm{\lambda}}}\hspace{0.3mm}E_{\mathfrak{u}}=\delta_{\mathfrak{t}, \mathfrak{u}}e_{\mathfrak{s}\mathfrak{u}}^{\underline{\bm{\lambda}}},
\end{equation}
and moreover, there exists a scalar $0\neq \gamma_{\mathfrak{t}}\in \mathbb{K}$ such that
\begin{align}\label{result-2}
e_{\mathfrak{s}\mathfrak{t}}^{\underline{\bm{\lambda}}}\hspace{0.3mm}e_{\mathfrak{u}\mathfrak{v}}^{\underline{\bm{\mu}}}=
\begin{cases}
\gamma_{\mathfrak{t}}\hspace{0.3mm}e_{\mathfrak{s}\mathfrak{v}}^{\underline{\bm{\lambda}}} & \text{if } \underline{\bm{\lambda}}=\underline{\bm{\mu}}\text{ and }\mathfrak{t}=\mathfrak{u};
\\
\hspace{0.25cm}0 & \text{otherwise.}
\end{cases}
\end{align}
In particular, $\gamma_{\mathfrak{t}}$ depends only on $\mathfrak{t}$ and the set $\{e_{\mathfrak{s}\mathfrak{t}}^{\underline{\bm{\lambda}}}\:|\:\mathfrak{s}, \mathfrak{t}\in \emph{Std}(\underline{\bm{\lambda}})\text{ and }\underline{\bm{\lambda}}\in \mathcal{P}_{r,n}^{d}\}$ is a cellular basis of $Y_{r, n}^{d, \mathbb{K}}.$

$(\emph{iii})$ For $\underline{\bm{\lambda}}\in \mathcal{P}_{r,n}^{d}$ and $\mathfrak{t}\in \emph{Std}(\underline{\bm{\lambda}}),$ we have $E_{\mathfrak{t}}=\frac{1}{\gamma_{\mathfrak{t}}}\hspace{0.3mm}e_{\mathfrak{t}\mathfrak{t}}^{\underline{\bm{\lambda}}}.$ Moreover, these elements $\{E_{\mathfrak{t}}\:|\:\mathfrak{t}\in \emph{Std}(\underline{\bm{\lambda}})\text{ for some }\underline{\bm{\lambda}}\in \mathcal{P}_{r,n}^{d}\}\}$ give a complete set of pairwise orthogonal primitive idempotents for $Y_{r, n}^{d, \mathbb{K}}.$

$(\emph{iv})$ For $\underline{\bm{\lambda}}\in \mathcal{P}_{r,n}^{d}$ and $\mathfrak{t}\in \emph{Std}(\underline{\bm{\lambda}}),$ we have
\begin{equation}\label{result-3}
E_{\mathfrak{t}}\hspace{0.3mm}X_k=X_k\hspace{0.3mm}E_{\mathfrak{t}}=\emph{c}_{\mathfrak{t}}(k)E_{\mathfrak{t}},\hspace{1cm}
E_{\mathfrak{t}}\hspace{0.3mm}t_k=t_k\hspace{0.3mm}E_{\mathfrak{t}}=\zeta_{\emph{p}_{\mathfrak{t}}^{r}(k)}E_{\mathfrak{t}}.
\end{equation}

$(\emph{v})$ The Jucys-Murphy elements $X_1,\ldots,X_n,$ $t_1,\ldots,t_n$ generate a maximal commutative subalgebra of $Y_{r, n}^{d, \mathbb{K}}.$
\end{proposition}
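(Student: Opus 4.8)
The plan is to obtain all five assertions as specialisations of the abstract theory of JM-elements and seminormal forms developed in [Ma2, Section~3], applied over the field $\mathbb{K}$ under the standing hypothesis \eqref{semisimple}. By Proposition~\ref{proposition7-3-jm} the pair $(Y_{r,n}^{d,\mathbb{K}},\mathcal{B}_{r,n}^{d})$ is a cellular algebra carrying the family of JM-elements $\mathcal{L}_{Y_{r,n}^{d}}$ together with the content functions $C_{\mathfrak{t}}(k)$ recorded in its proof. The one hypothesis of [Ma2, Section~3] not yet established is the \emph{separation condition}: that the assignment $\mathfrak{t}\mapsto(C_{\mathfrak{t}}(1),\ldots,C_{\mathfrak{t}}(2n))$ is injective on $\coprod_{\underline{\bm{\lambda}}\in\mathcal{P}_{r,n}^{d}}\mathrm{Std}(\underline{\bm{\lambda}})$. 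Thus the real content of the proof is to verify this condition, after which parts (i)--(v) follow formally.

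To verify separation I would reconstruct an arbitrary standard $(r,d)$-tableau $\mathfrak{t}$ from its content sequence in two stages. First, since $p\nmid r$ the scalars $\zeta_{1},\ldots,\zeta_{r}$ are pairwise distinct, so the values $C_{\mathfrak{t}}(n+i)=\zeta_{\mathrm{p}_{\mathfrak{t}}^{r}(i)}$ recover the $r$-position of each entry $i$, hence the partition of $\{1,\ldots,n\}$ into the $r$ components of $\mathfrak{t}$. Second, within a fixed $r$-component the remaining data $C_{\mathfrak{t}}(i)=\mathrm{c}_{\mathfrak{t}}(i)=v_{l}q^{2(b-a)}$ is precisely the content function of a standard $d$-tableau for a cyclotomic Hecke algebra of type $A$ with parameter $q^{2}$ and charges $v_{1},\ldots,v_{d}$, and recovering the tableau from these contents is the classical separation statement in that setting. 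The key observation is that this classical separation holds exactly because the factors of \eqref{semisimple} are nonzero: the factors $1+q^{2}+\cdots+q^{2(k-1)}$ force the quantum integers $[k]_{q^{2}}$ to be invertible, so that distinct diagonals $b-a$ occurring among size-$n$ tableaux give distinct powers of $q^{2}$, while the factors $q^{2l}v_{i}-v_{j}$ over $-n<l<n$ and $i\neq j$ prevent nodes in different $d$-positions from sharing a content. Combining the two stages shows that the content sequence determines $\mathfrak{t}$, which is the separation condition.

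Granting separation, I would then transcribe the results of [Ma2, Section~3]. The elements $E_{\mathfrak{t}}$ of Definition~\ref{definition-idempotent}(i) are the interpolation idempotents built from the commuting, $\ast$-invariant, separating family $\mathcal{L}_{Y_{r,n}^{d}}$, and the elements $e_{\mathfrak{s}\mathfrak{t}}^{\underline{\bm{\lambda}}}=E_{\mathfrak{s}}m_{\mathfrak{s}\mathfrak{t}}E_{\mathfrak{t}}$ are the associated seminormal basis. With this dictionary: part (i) is the assertion that the seminormal basis is again a basis; part (ii) is the seminormal multiplication rule, the orthogonality $e_{\mathfrak{s}\mathfrak{t}}^{\underline{\bm{\lambda}}}E_{\mathfrak{u}}=\delta_{\mathfrak{t},\mathfrak{u}}e_{\mathfrak{s}\mathfrak{u}}^{\underline{\bm{\lambda}}}$, the nonvanishing of the scalars $\gamma_{\mathfrak{t}}$ (a consequence of separation), and the cellularity of $\{e_{\mathfrak{s}\mathfrak{t}}^{\underline{\bm{\lambda}}}\}$; part (iii) identifies $E_{\mathfrak{t}}=\gamma_{\mathfrak{t}}^{-1}e_{\mathfrak{t}\mathfrak{t}}^{\underline{\bm{\lambda}}}$ as a complete set of pairwise orthogonal primitive idempotents; part (iv) is the eigenvalue relation $L_{i}E_{\mathfrak{t}}=E_{\mathfrak{t}}L_{i}=C_{\mathfrak{t}}(i)E_{\mathfrak{t}}$, read directly off the product defining $E_{\mathfrak{t}}$; and part (v) is the general fact that a separating family of JM-elements generates a maximal commutative subalgebra.

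I expect the main obstacle to be the verification of the separation condition, and specifically its second stage: the reduction inside each $r$-component to the cyclotomic Hecke case, together with the bookkeeping that matches the diagonals and quantum integers arising from standard $(r,d)$-tableaux of size $n$ to exactly the nonvanishing factors appearing in \eqref{semisimple}. Once this combinatorial separation is in hand, parts (i)--(v) are a direct application of [Ma2, Section~3].
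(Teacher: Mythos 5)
Your proposal is correct in outline and takes essentially the same route as the paper: the paper's entire proof of this proposition is the sentence preceding it, namely that Proposition~\ref{proposition7-3-jm} together with the standing assumption \eqref{semisimple} allows one to ``apply the general theory developed in [Ma2, Section 3]''. You make the same appeal, and in addition you isolate the hypothesis that this appeal actually requires --- Mathas' separation condition --- which the paper never states, let alone verifies; your dictionary translating parts (i)--(v) into the statements of [Ma2, Section 3] is accurate.

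The one place where your added content is shakier than it should be is the second stage of the separation argument. The claim that nonvanishing of the factors $1+q^{2}+\cdots+q^{2(k-1)}$ for $k\leq n$ forces distinct diagonals $b-a$ among size-$n$ tableaux to give distinct powers of $q^{2}$ is not implied by \eqref{semisimple}: diagonals of size-$n$ tableaux can differ by as much as $2(n-1)$, and \eqref{semisimple} permits $q^{2}$ to be a primitive $m$-th root of unity with $n<m\leq 2(n-1)$. The argument that does work is the entry-by-entry reconstruction: if two standard $(r,d)$-tableaux have the same content sequences and agree on the entries $1,\ldots,k-1$, then the entry $k$ occupies two addable nodes of a common shape having $k-1\leq n-1$ boxes, and for such nodes the diagonal differences are at most $n$ within a component and at most $n-1$ across components --- exactly the ranges controlled by \eqref{semisimple}. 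Finally, note that \eqref{semisimple} as written does not exclude $q^{2}=1$, in which case all contents inside a component coincide, separation genuinely fails, and the proposition itself breaks down (already for $n=2$, $r=d=1$, both standard tableaux get $E_{\mathfrak{t}}=1$); this defect is inherited from the paper, whose appeal to [Ma2, Corollary 2.9] and Section 3 has the same gap, and it is repaired by additionally requiring $q-q^{-1}\neq 0$.
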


\section{Appendix. Fusion procedure for cyclotomic Yokonuma-Hecke algebras}

In this appendix, inspired by [PA], we prove tha the primitive idempotents of cyclotomic Yokonuma-Hecke algebras can be constructed by consecutive evaluations of a certain rational function.

Jucys [Juc] has claimed that the primitive idempotents of symmetric groups $\mathfrak{S}_n$ indexed by standard Young tableaux can be obtained by taking a certain limiting process on a rational function, which is now commonly referred to as the fusion procedure. The procedure has been further developed in the situation of Hecke algebras [Ch], see also [Na1-3]. Molev [Mo] has proposed an alternative approach of the fusion procedure for the symmetric group, which relies on the existence of a maximal commutative subalgebra generated by the Jucys-Murphy elements. Here the idempotents are obtained by consecutive evaluations of a certain rational function. The simplified version of the fusion procedure has been generalized to the Hecke algebras of type $A$ [IMO], to the Brauer algebras [IM, IMOg1], to the Birman-Murakami-Wenzl algebras [IMOg2], to the complex reflection groups of type $G(d,1,n)$ [OgPA1], to the Ariki-Koike algebras [OgPA2], to the wreath products of finite groups by the symmetric group [PA], to the degenerate cyclotomic Hecke algebras [ZL] and to the Yokonuma-Hecke algebras [C].

In this section, we continue to consider a split semisimple cyclotomic Yokonuma-Hecke algebra $Y_{r, n}^{d, \mathbb{K}}$ over $\mathbb{K};$ that is, we always assume that $v_i\in \mathbb{K}^{*}$ ($1\leq i\leq d$) and $q\in \mathbb{K}^{*}$ satisfy the condition \eqref{semisimple}.

\subsection{Inductive formulae of $E_{\mathfrak{t}}$} We first introduce two rational functions, and then present the inductive formulae of the primitive idempotents $E_{\mathfrak{t}}$ defined in \eqref{idempotent-ET}.

For $\lambda, \mu$ two partitions and a node $\theta=(x,y)\in [\lambda],$ we define the hook length $h_{\lambda}(\theta)$ and the generalized hook length $h_{\lambda}^{\mu}(\theta)$ of $\theta$ with respect to $(\lambda, \mu)$, respectively:
\begin{equation}\label{hook-length1}
h_{\lambda}(\theta) :=\lambda_{x}+\lambda'_{y}-x-y+1,
\end{equation}
and
\begin{equation}\label{hook-length2}
h_{\lambda}^{\mu}(\theta) :=\lambda_{x}+\mu'_{y}-x-y+1.
\end{equation}

Let $\underline{\bm{\lambda}}=((\lambda_{1}^{(1)},\ldots,\lambda_{d}^{(1)}),\ldots,(\lambda_{1}^{(r)},\ldots,\lambda_{d}^{(r)}))$ be an $(r,d)$-partition and $\bm{\theta}=(\theta, k,l)=((x,y),k,l)$ an $(r,d)$-node of $[\underline{\bm{\lambda}}].$ We define the hook length $h_{\underline{\bm{\lambda}}}(\bm{\theta})$ of $\bm{\theta}$ in $\underline{\bm{\lambda}}$ to be the hook length of the node $\theta$ in the partition of $\underline{\bm{\lambda}}$ with position $(k,l)$, that is,
\begin{equation}\label{hook-length3}
h_{\underline{\bm{\lambda}}}(\bm{\theta}) :=h_{\lambda^{(k)}_{l}}(\theta)=\lambda^{(k)}_{l,x}+\lambda^{(k)'}_{l,y}-x-y+1.
\end{equation}
Let $\mu$ be another partition. We define the generalized hook length $h_{\underline{\bm{\lambda}}}^{\mu}(\bm{\theta})$ of $\bm{\theta}$ with respect to $(\underline{\bm{\lambda}}, \mu)$ to be the generalized hook length of $\theta$ with respect to $(\lambda^{(k)}_{l}, \mu),$ that is,
\begin{equation}\label{hook-length4}
h_{\underline{\bm{\lambda}}}^{\mu}(\bm{\theta}) :=h_{\lambda^{(k)}_{l}}^{\mu}(\theta)=\lambda^{(k)}_{l,x}+\mu'_{y}-x-y+1.
\end{equation}

Let $S=\{\zeta_{1},\zeta_{2},\ldots,\zeta_{r}\}$ be the set of all $r$-th roots of unity. For an $(r,d)$-partition $\underline{\bm{\lambda}}=((\lambda_{1}^{(1)},\ldots,\lambda_{d}^{(1)}),\ldots,(\lambda_{1}^{(r)},\ldots,\lambda_{d}^{(r)}))$, we define
\begin{equation}\label{f-lambda-t}
\text{F}_{\underline{\bm{\lambda}}}^{T} :=\prod_{\bm{\theta}\in \underline{\bm{\lambda}}} \Big(\prod_{\substack{\xi\in S\\\xi\neq \zeta_{\text{p}^{(r)}(\bm{\theta})}}}(\zeta_{\text{p}^{(r)}(\bm{\theta})}-\xi)\Big),
\end{equation}
and
\begin{equation}\label{f-lambda}
\text{F}_{\underline{\bm{\lambda}}} :=\prod_{\bm{\theta}\in \underline{\bm{\lambda}}} \bigg(\frac{[h_{\underline{\bm{\lambda}}}(\bm{\theta})]_{q}}{q^{\text{cc}(\bm{\theta})}}
\prod_{\substack{1\leq k\leq d\\k\neq \text{p}^{(d)}(\bm{\theta})}}\frac{v_{\text{p}^{(d)}(\bm{\theta})}
q^{h_{\underline{\bm{\lambda}}}^{\lambda^{(\text{p}^{(r)}(\bm{\theta}))}_{k}}(\bm{\theta})}-v_{k}
q^{-h_{\underline{\bm{\lambda}}}^{\lambda^{(\text{p}^{(r)}(\bm{\theta}))}_{k}}(\bm{\theta})}}{q^{-\text{cc}(\bm{\theta})}}\bigg),
\end{equation}
where $[a]_{q}=q^{a-1}+q^{a-3}+\cdots+q^{-a+1}$ for $a\in \mathbb{Z}_{\geq 0}.$

The following lemma can be easily proved.
\begin{lemma}\label{equality-Flambda}
When $r=1$ and $d=m,$ $\emph{F}_{\underline{\bm{\lambda}}}$ is equal to $\emph{F}_{\bm{\lambda}}^{-1}$ defined in $[\emph{OgPA}2, \emph{Sect.} 2.2(12)].$
\end{lemma}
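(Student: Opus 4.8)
The plan is to reduce the definition \eqref{f-lambda} of $\text{F}_{\underline{\bm{\lambda}}}$ under the specialization $r=1,$ $d=m$ to a product over the nodes of an $m$-partition, and then to read off that this product is, factor by factor, the reciprocal of the corresponding product in [OgPA2, Sect. 2.2(12)]. First I would record the simplifications forced by $r=1$: there is then a unique $r$-th root of unity $\zeta_1=1$ and a unique $r$-position, so $\text{p}^{(r)}(\bm{\theta})=1$ for every $(r,d)$-node $\bm{\theta},$ and a $(1,d)$-partition is simply an $m$-partition $\bm{\lambda}=(\lambda_1,\ldots,\lambda_m).$ Every superscript $(\text{p}^{(r)}(\bm{\theta}))$ may thus be dropped, $\lambda^{(1)}_k=\lambda_k,$ and the hook lengths \eqref{hook-length3}, \eqref{hook-length4} reduce to the ordinary hook length $h_{\bm{\lambda}}(\bm{\theta})$ and generalized hook length $h_{\bm{\lambda}}^{\lambda_k}(\bm{\theta})$ inside $\bm{\lambda}.$ With these reductions \eqref{f-lambda} collapses to
\[
\text{F}_{\underline{\bm{\lambda}}}=\prod_{\bm{\theta}\in\bm{\lambda}}\bigg(\frac{[h_{\bm{\lambda}}(\bm{\theta})]_q}{q^{\text{cc}(\bm{\theta})}}\prod_{\substack{1\le k\le m\\k\ne \text{p}^{(d)}(\bm{\theta})}}\frac{v_{\text{p}^{(d)}(\bm{\theta})}q^{h_{\bm{\lambda}}^{\lambda_k}(\bm{\theta})}-v_kq^{-h_{\bm{\lambda}}^{\lambda_k}(\bm{\theta})}}{q^{-\text{cc}(\bm{\theta})}}\bigg).
\]

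Next I would write out the definition of $\text{F}_{\bm{\lambda}}$ from [OgPA2, Sect. 2.2(12)] in the same notation, translating their hook-length and charged-content conventions into the ones fixed in \eqref{hook-length1}--\eqref{hook-length4} and in $[a]_q=q^{a-1}+q^{a-3}+\cdots+q^{-a+1}.$ Since $\text{F}_{\bm{\lambda}}$ is again indexed by the nodes $\bm{\theta}$ of $\bm{\lambda}$ and, within each node, by the components $k\ne \text{p}^{(d)}(\bm{\theta}),$ the remaining task is a term-by-term comparison. Concretely I would verify two identities: that the single-component factor $[h_{\bm{\lambda}}(\bm{\theta})]_q/q^{\text{cc}(\bm{\theta})}$ appearing here is the inverse of the corresponding single-component factor of $\text{F}_{\bm{\lambda}},$ and that each cross-component factor $\big(v_{\text{p}^{(d)}(\bm{\theta})}q^{h_{\bm{\lambda}}^{\lambda_k}(\bm{\theta})}-v_kq^{-h_{\bm{\lambda}}^{\lambda_k}(\bm{\theta})}\big)q^{\text{cc}(\bm{\theta})}$ likewise inverts its counterpart. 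Because the two products run over the same outer and inner index sets, termwise inversion gives $\text{F}_{\underline{\bm{\lambda}}}=\text{F}_{\bm{\lambda}}^{-1}.$

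The substantive point, and the only place where care is needed, is the bookkeeping of conventions between the two papers: I must confirm that the charged content $\text{cc}(\bm{\theta}),$ the normalization of $[a]_q,$ and the orientation of the generalized hook length $h_{\bm{\lambda}}^{\lambda_k}(\bm{\theta})$ agree with those of [OgPA2] up to exactly the expected inversion, so that no spurious sign or power of $q$ survives. I expect this to be the main obstacle, though a minor one; once the notational dictionary is pinned down the two products are manifestly reciprocal, and the lemma follows by a short direct computation rather than any new idea.
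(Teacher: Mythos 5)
Your proposal is correct and matches the paper's treatment: the paper offers no argument at all for this lemma (it is stated as something that "can be easily proved"), and the intended verification is exactly what you describe — specialize $r=1$ so that the unique $r$-position collapses \eqref{f-lambda} to a product over the nodes of an $m$-partition, then compare factor by factor with [OgPA2, Sect.\ 2.2(12)] to see the two products are termwise reciprocal. Your closing caveat about pinning down the content, hook-length, and $[a]_q$ conventions of [OgPA2] is the only real content of the check, and handling it as a notational dictionary is the right (and standard) way to finish.
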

\begin{remark}\label{remark1}
When $r=1$ and $d=l,$ the cyclotomic Yokonuma-Hecke algebra $\mathrm{Y}_{r,n}^{d}$ is exactly the cyclotomic Hecke algebra $H_{n}^{l},$ whose Schur element has been explicitly calculated in [ChJa, Theorem 3.2(2)] (see also [GIM] and [Ma]). By comparing \eqref{f-lambda} with the formula in [ChJa, Theorem 3.2(2)], it is easy to see that $\text{F}_{\underline{\bm{\lambda}}}$ in \eqref{f-lambda} is proportional to the Schur element described in [ChJa, Theorem 3.2(2)].
\end{remark}

Let $\underline{\bm{\lambda}}=((\lambda_{1}^{(1)},\ldots,\lambda_{d}^{(1)}),\ldots,(\lambda_{1}^{(r)},\ldots,\lambda_{d}^{(r)}))\in \mathcal{P}_{r,n}^{d}.$ For an $(r,d)$-node $\bm{\theta}=(\theta, k, l)\in [\underline{\bm{\lambda}}],$ we denote by $[\underline{\bm{\lambda}}\setminus\bm{\theta}]$ the set of $(r,d)$-nodes after removing $\theta$ from $[\lambda_{l}^{(k)}]$ and $[\underline{\bm{\lambda}}\cup\bm{\theta}]$ the set of $(r,d)$-nodes after adding $\theta$ to $[\lambda_{l}^{(k)}].$ We then call $\bm{\theta}$ removable from $\underline{\bm{\lambda}}$ if $\underline{\bm{\lambda}}\setminus\bm{\theta}$ is an $(r,d)$-partition and addable to $\underline{\bm{\lambda}}$ if $\underline{\bm{\lambda}}\cup\bm{\theta}$ is an $(r,d)$-partition, respectively.

For each $\underline{\bm{\lambda}}\in \mathcal{P}_{r,n}^{d}$ and $\mathfrak{t}\in \text{Std}(\underline{\bm{\lambda}}),$ we denote by $\text{c}(\mathfrak{t}|i)$ and $\text{p}^{(r)}(\mathfrak{t}|i)$ the quantum content and the $r$-position of the $(r,d)$-node with the integer $i$ in it, respectively. For simplicity, we set
\begin{equation}\label{ci-pi-123}
\text{c}_i :=\text{c}(\mathfrak{t}|i) \quad\text{ and } \quad\text{p}_i :=\text{p}^{(r)}(\mathfrak{t}|i) \quad\text{ for }i=1,\ldots,n.
\end{equation}
We then define
\begin{equation}\label{f-TT}
\text{F}_{\mathfrak{t}}^{T}(\underline{v}) :=\prod_{\substack{\xi\in S\\\xi\neq \zeta_{\text{p}_{n}}}}\frac{1}{\underline{v}-\xi},
\end{equation}
and
\begin{equation}\label{f-T}
\text{F}_{\mathfrak{t}}(u) :=\frac{u-\text{c}_n}{(u-v_1)\cdots (u-v_d)}\prod_{i=1}^{n-1}\frac{(u-\text{c}_i)^{2}}{(u-\text{c}_i)^{2}-(q-q^{-1})^{2}u\text{c}_i\delta_{\text{p}_i, \text{p}_n}},
\end{equation}
where $\delta_{\text{p}_i, \text{p}_n}$ is the Kronecker delta.

we denote by $\bm{\theta}$ the $(r,d)$-node of $\mathfrak{t}$ containing the number $n.$ Since $\mathfrak{t}$ is standard, $\bm{\theta}$ is removable from $\underline{\bm{\lambda}}.$ Let $\mathfrak{u}$ be the standard $(r,d)$-tableau obtained from $\mathfrak{t}$ by removing $\bm{\theta}$ and let $\underline{\bm{\mu}}$ be the shape of $\mathfrak{u}$.

Recall that for any fixed $r$-th root of unity $\xi$, we have
\begin{equation}
\label{young-module-5-19}\prod_{\xi\neq \beta\in S}(\xi-\beta)=r\xi^{-1}.
\end{equation} Then $\text{F}_{\mathfrak{t}}^{T}(\underline{v})$ is non-singular at $\underline{v}=\zeta_{\text{p}_{n}},$ and moreover, from \eqref{f-lambda-t} we have
\begin{equation}\label{f-T-relation}
\text{F}_{\mathfrak{t}}^{T}(\underline{v})\Big|_{\underline{v}=\zeta_{\text{p}_{n}}}=\frac{\zeta_{\text{p}_{n}}}{r}=
(\text{F}_{\underline{\bm{\lambda}}}^{T})^{-1}\text{F}_{\underline{\bm{\mu}}}^{T}.
\end{equation}

The following proposition can be proved in exactly the same way as in [OgPA1, Proposition 3.4 and OgPA2, Proposition 3].
\begin{proposition}
The rational function $\emph{F}_{\mathfrak{t}}(u)$ is non-singular at $u=\emph{c}_n$, and moreover, we have
\begin{equation}\label{f-T-u}
\emph{F}_{\mathfrak{t}}(u)\Big|_{u=\emph{c}_n}=\emph{F}_{\underline{\bm{\lambda}}}^{-1}\emph{F}_{\underline{\bm{\mu}}}.
\end{equation}
\end{proposition}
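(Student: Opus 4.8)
The plan is to reduce the statement to the Ariki--Koike case $r=1$, which is exactly [OgPA2, Proposition 3], by exploiting the fact that the rational function $\text{F}_{\mathfrak{t}}(u)$ of \eqref{f-T} only ``sees'' the boxes of $\mathfrak{t}$ lying in the same $r$-position as the box $\bm{\theta}$ containing $n$. First I would rewrite the product in \eqref{f-T}: for every $i$ with $\text{p}_i\neq\text{p}_n$ the Kronecker delta vanishes, so the corresponding factor is identically $1$, while for $\text{p}_i=\text{p}_n$ the denominator factorises as
\[
(u-\text{c}_i)^2-(q-q^{-1})^2u\,\text{c}_i=(u-q^{2}\text{c}_i)(u-q^{-2}\text{c}_i).
\]
Hence
\[
\text{F}_{\mathfrak{t}}(u)=\frac{u-\text{c}_n}{(u-v_1)\cdots(u-v_d)}\prod_{\substack{1\le i\le n-1\\ \text{p}_i=\text{p}_n}}\frac{(u-\text{c}_i)^2}{(u-q^{2}\text{c}_i)(u-q^{-2}\text{c}_i)},
\]
which is precisely the Ariki--Koike fusion function of [OgPA2] attached to the single $d$-tableau $\mathfrak{t}^{(\text{p}_n)}$ obtained by keeping only the boxes of $\mathfrak{t}$ of $r$-position $\text{p}_n$ and relabelling their entries in an order-preserving way to $1,2,\ldots,m$, with $m=|\bm{\lambda}^{(\text{p}_n)}|$. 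This identification is legitimate because the product runs over exactly the boxes of $\bm{\mu}^{(\text{p}_n)}=\bm{\lambda}^{(\text{p}_n)}\setminus\bm{\theta}$, whose contents are unaffected by the relabelling, and because the leading factor already carries all $d$ charges $v_1,\ldots,v_d$, exactly as in the $r=1$ function.

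Next I would treat the right-hand side. Since the quantities entering \eqref{f-lambda} for a given node are computed entirely inside a single $r$-position (the hook length \eqref{hook-length3} and the generalized hook lengths \eqref{hook-length4} for a node $\bm{\theta}$ only involve partitions of $r$-position $\text{p}^{(r)}(\bm{\theta})$), the normalisation constant $\text{F}_{\underline{\bm{\lambda}}}$ factorises as a product over $r$-positions, $\text{F}_{\underline{\bm{\lambda}}}=\prod_{k=1}^{r}\text{F}_{\bm{\lambda}^{(k)}}$, where $\text{F}_{\bm{\lambda}^{(k)}}$ denotes the $r=1$ constant of \eqref{f-lambda} for the $d$-partition $\bm{\lambda}^{(k)}$. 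Because $\bm{\theta}$ is removable from $\underline{\bm{\lambda}}$ and lies in $r$-position $\text{p}_n$, the $(r,d)$-partition $\underline{\bm{\mu}}$ agrees with $\underline{\bm{\lambda}}$ in every $r$-position except $\text{p}_n$, so all factors cancel in the quotient except the $\text{p}_n$-one:
\[
\text{F}_{\underline{\bm{\lambda}}}^{-1}\text{F}_{\underline{\bm{\mu}}}=\bigl(\text{F}_{\bm{\lambda}^{(\text{p}_n)}}\bigr)^{-1}\text{F}_{\bm{\mu}^{(\text{p}_n)}}.
\]

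With these two reductions in hand, both assertions follow from the $r=1$ statement. The non-singularity of $\text{F}_{\mathfrak{t}}(u)$ at $u=\text{c}_n$ is now literally the non-singularity of the Ariki--Koike fusion function, which holds because $\bm{\theta}$ is a removable box of the standard tableau $\mathfrak{t}^{(\text{p}_n)}$: the single zero $u-\text{c}_n$ cancels the at most simple pole produced by the charge denominator and the neighbouring-box denominators, exactly as in [OgPA1, Proposition 3.4] and [OgPA2, Proposition 3], while the semisimplicity hypothesis \eqref{semisimple} rules out spurious coincidences of contents and charges. Evaluating and invoking [OgPA2, Proposition 3] together with Lemma \ref{equality-Flambda} -- which identifies our $\text{F}_{\bm{\nu}}$ with the inverse of the function of [OgPA2] -- turns that evaluation formula into $\bigl(\text{F}_{\bm{\lambda}^{(\text{p}_n)}}\bigr)^{-1}\text{F}_{\bm{\mu}^{(\text{p}_n)}}$, which by the previous paragraph equals $\text{F}_{\underline{\bm{\lambda}}}^{-1}\text{F}_{\underline{\bm{\mu}}}$, as required.

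I expect the main obstacle to be purely the bookkeeping of the reduction rather than any new analytic input: one must check cleanly that the product over $i$ collapses onto the $\text{p}_n$-block, that the contents appearing there coincide with those of $\bm{\mu}^{(\text{p}_n)}$, and that the normalisation conventions match through Lemma \ref{equality-Flambda} (in particular the inversion, so that the direction of the quotient $\text{F}_{\underline{\bm{\lambda}}}^{-1}\text{F}_{\underline{\bm{\mu}}}$ comes out correctly). Once this block decomposition is set up, the analytic heart of the argument -- the proof that the rational function is regular at $u=\text{c}_n$ and the explicit computation of its value -- is imported verbatim from the Ariki--Koike references.
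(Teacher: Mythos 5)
Your proposal is correct, but it takes a genuinely different route from the paper's. The paper disposes of this proposition in a single sentence, asserting that it ``can be proved in exactly the same way as in [OgPA1, Proposition 3.4 and OgPA2, Proposition 3]''; that is, it asks the reader to rerun the hook-length/induction argument of those references inside the cyclotomic Yokonuma--Hecke setting. You instead reduce the statement to the $r=1$ statement itself and invoke [OgPA2, Proposition 3] as a black box: the Kronecker delta $\delta_{\text{p}_i,\text{p}_n}$ in \eqref{f-T} turns every factor coming from a box whose $r$-position differs from $\text{p}_n$ into $1$, the surviving denominators factor as $(u-\text{c}_i)^{2}-(q-q^{-1})^{2}u\text{c}_i=(u-q^{2}\text{c}_i)(u-q^{-2}\text{c}_i)$, and the truncated product is literally the Ariki--Koike fusion function of the standard $d$-tableau obtained by restricting $\mathfrak{t}$ to the $r$-position $\text{p}_n$ (contents and standardness are unchanged by the order-preserving relabelling, and $n$ becomes the largest entry). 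On the other side, since the hook lengths \eqref{hook-length3} and generalized hook lengths \eqref{hook-length4} entering \eqref{f-lambda} never mix $r$-positions, $\text{F}_{\underline{\bm{\lambda}}}$ factors over $r$-blocks, so the quotient $\text{F}_{\underline{\bm{\lambda}}}^{-1}\text{F}_{\underline{\bm{\mu}}}$ collapses to the $\text{p}_n$-block, and the direction of the quotient is fixed by the inversion in Lemma \ref{equality-Flambda}. Both arguments ultimately rest on the same references, but they buy different things: your reduction isolates the one structural feature of the Yokonuma case (the delta-function truncation of the product) that makes the $r>1$ statement a formal corollary of the $r=1$ one, and it removes any need to re-verify the analytic core of [OgPA1, OgPA2] in the new setting; the paper's proof-by-analogy avoids setting up the block decomposition but leaves that re-verification implicitly to the reader.
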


Denote by $\mathcal{E}_{+}(\underline{\bm{\mu}})$ the set of $(r,d)$-nodes addable to $\underline{\bm{\mu}}.$ By \eqref{idempotent-ET}, We can rewrite $E_{\mathfrak{t}}$ inductively as follows:
\begin{equation}\label{inductive-formula}
E_{\mathfrak{t}}=E_{\mathfrak{u}}\prod_{\substack{\bm{\theta'}\in \mathcal{E}_{+}(\underline{\bm{\mu}})\\\text{c}(\bm{\theta'})\neq\text{c}(\bm{\theta})}}
\frac{X_{n}-\text{c}(\bm{\theta'})}{\text{c}(\bm{\theta})-\text{c}(\bm{\theta'})}\prod_{\substack{\bm{\theta'}\in \mathcal{E}_{+}(\underline{\bm{\mu}})\\\text{p}^{(r)}(\bm{\theta'})\neq\text{p}^{(r)}(\bm{\theta})}}
\frac{t_{n}-\zeta_{\text{p}^{(r)}(\bm{\theta'})}}{\zeta_{\text{p}^{(r)}(\bm{\theta})}-\zeta_{\text{p}^{(r)}(\bm{\theta'})}}
\end{equation}
with $E_{\mathfrak{t}_{0}}=1$ for the unique standard $(r,d)$-tableau $\mathfrak{t}_{0}$ of size $0.$

Assume that $\{\mathfrak{t}_1,\ldots,\mathfrak{t}_k\}$ is the set of pairwise different standard $(r,d)$-tableaux obtained from $\mathfrak{u}$ by adding an $(r,d)$-node containing the number $n.$ Notice that $\mathfrak{t}\in \{\mathfrak{t}_1,\ldots,\mathfrak{t}_k\}.$ Moreover, by branching properties, we have
\begin{equation}\label{sum-formula}
E_{\mathfrak{u}}=\sum_{i=1}^{k}E_{\mathfrak{t}_{i}}.
\end{equation}
We consider the following rational function in $u$ and $\underline{v}$:
\begin{equation}\label{rational-function}
\frac{u-\text{c}_n}{u-X_{n}}\frac{\underline{v}-\zeta_{\text{p}_n}}{\underline{v}-t_{n}}E_{\mathfrak{u}}.
\end{equation}
The formulae \eqref{result-3} imply that \eqref{rational-function} is non-singular at $u=\text{c}_n$ and $\underline{v}=\zeta_{\text{p}_n}$. Moreover, if we replace $E_{\mathfrak{u}}$ with the right-hand side of \eqref{sum-formula}, we can easily get
\begin{equation}\label{sum-function}
\frac{u-\text{c}_n}{u-X_{n}}\frac{\underline{v}-\zeta_{\text{p}_n}}{\underline{v}-t_{n}}E_{\mathfrak{u}}\Big|_{\underline{v}=\zeta_{\text{p}_{n}}}\Big|_{u=\text{c}_n}=E_{\mathfrak{t}}.
\end{equation}

\subsection{Fusion formulae for $E_{\mathfrak{t}}$}
We first define a rational function in variables $a,b$ with values in $\mathrm{Y}_{r, n}^{d}$ as follows:
\begin{equation}\label{Baxter-element}
g_{i}(a,b) :=g_{i}+(q-q^{-1})\frac{be_{i}}{a-b}\quad\mbox{for}~i=1,\ldots,n-1.
\end{equation}
The following lemma is proved in [C, Lemma 2.1].
\begin{lemma}\label{Bax-elements}
The rational functions $g_{i}(a,b)$ satisfy the following relations$:$
\begin{equation}\label{Baxter-element1}
g_{i}(a,b)g_{i+1}(a,c)g_{i}(b,c)=g_{i+1}(b,c)g_{i}(a,c)g_{i+1}(a,b)\quad\mbox{for}~i=1,\ldots,n-1,
\end{equation}
\begin{equation}\label{Baxter-element2}
\hspace*{20pt}g_{i}(a,b)g_{i}(b,a)=1-(q-q^{-1})^{2}\frac{abe_{i}}{(a-b)^{2}}\qquad\mbox{for}~i=1,\ldots,n-1.
\end{equation}
\end{lemma}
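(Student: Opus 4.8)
The plan is to verify both identities by direct computation inside $Y_{r,n}^{d}$, expanding the products according to the definition \eqref{Baxter-element} and reducing the resulting monomials in $g_{i},g_{i+1},e_{i},e_{i+1}$ to a common form by means of the defining relations \eqref{rel-def-Y1} and \eqref{relations}, together with $g_{i}^{2}=1+(q-q^{-1})e_{i}g_{i}$ and $e_{i}g_{i}=g_{i}e_{i}$. Throughout I write $\alpha:=q-q^{-1}$. I would dispose of \eqref{Baxter-element2} first, as it is short: expanding $g_{i}(a,b)g_{i}(b,a)$ and substituting $g_{i}^{2}=1+\alpha e_{i}g_{i}$, $e_{i}^{2}=e_{i}$, $e_{i}g_{i}=g_{i}e_{i}$, all terms proportional to $e_{i}g_{i}$ collect with total coefficient $\alpha\bigl(1+\tfrac{a}{b-a}+\tfrac{b}{a-b}\bigr)=0$, leaving only the single $e_{i}$-term with coefficient $-\alpha^{2}\tfrac{ab}{(a-b)^{2}}$, which is exactly the right-hand side of \eqref{Baxter-element2}.

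For the Yang--Baxter identity I would set $A:=\tfrac{b}{a-b}$, $B:=\tfrac{c}{a-c}$, $C:=\tfrac{c}{b-c}$, so that the left-hand side is $(g_{i}+\alpha A e_{i})(g_{i+1}+\alpha B e_{i+1})(g_{i}+\alpha C e_{i})$ and the right-hand side is $(g_{i+1}+\alpha C e_{i+1})(g_{i}+\alpha B e_{i})(g_{i+1}+\alpha A e_{i+1})$. I would expand both and sort the eight monomials on each side by the number of factors $e$. The structural relations needed are: the braid relation $g_{i}g_{i+1}g_{i}=g_{i+1}g_{i}g_{i+1}$; the transport rules $e_{i}g_{i+1}=g_{i+1}e_{i,i+2}$, $e_{i+1}g_{i}=g_{i}e_{i,i+2}$, and their companions $g_{i}e_{i+1}=e_{i,i+2}g_{i}$, $g_{i+1}e_{i}=e_{i,i+2}g_{i+1}$, all coming from the last line of \eqref{relations}; the commutativity $e_{i}e_{i+1}=e_{i+1}e_{i}$; and the coincidences $e_{i}e_{i+1}=e_{i}e_{i,i+2}=e_{i+1}e_{i,i+2}=:E$, where $E=E_{\{i,i+1,i+2\}}$ commutes with both $g_{i}$ and $g_{i+1}$ by Lemma \ref{5-1-lemmaaa}.

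The matching then proceeds as follows. The $e$-free monomials agree by the braid relation. Among the single-$e$ monomials, two pairs match directly after transport, namely $g_{i}g_{i+1}e_{i}=e_{i+1}g_{i}g_{i+1}$ and $e_{i}g_{i+1}g_{i}=g_{i+1}g_{i}e_{i+1}$; the remaining ``sandwich'' terms $g_{i}e_{i+1}g_{i}$ and $g_{i+1}e_{i}g_{i+1}$ are reduced through $g^{2}=1+\alpha e g$, each yielding a common term $\alpha B\,e_{i,i+2}$ (which cancel) plus the extra contributions $\alpha^{2}B\,Eg_{i}$ and $\alpha^{2}B\,Eg_{i+1}$, respectively. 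Collecting these with the genuine two-$e$ monomials (which all reduce to multiples of $Eg_{i}$ and $Eg_{i+1}$), and noting that the three-$e$ monomials both equal $E$, reduces the entire identity to the single scalar relation $AB+BC+B=AC$, equivalently $AB+BC-AC=-B$; a direct computation of rational functions confirms $AB+BC-AC=-\tfrac{c}{a-c}=-B$, so the identity holds.

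The main obstacle I anticipate is precisely this mixing of degrees in the Yang--Baxter verification: the single-$e$ and double-$e$ monomials do \emph{not} match separately, because the sandwich terms $g_{i}e_{i+1}g_{i}$ and $g_{i+1}e_{i}g_{i+1}$ feed the extra $Eg_{i}$ and $Eg_{i+1}$ contributions into the two-$e$ bracket via the quadratic relation. One must therefore be careful to gather \emph{all} contributions to $Eg_{i}$ and to $Eg_{i+1}$ before invoking the scalar identity $AB+BC+B=AC$; once the bookkeeping is organized this way, everything else is routine transport of the idempotents $e_{i},e_{i+1},e_{i,i+2}$ past the braid generators.
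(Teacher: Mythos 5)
Your proposal is correct, and it is worth noting at the outset that the paper itself contains no proof of this lemma: it simply states ``The following lemma is proved in [C, Lemma 2.1]'' and defers to the author's earlier paper on the fusion procedure for $Y_{r,n}(q)$. Your self-contained verification is therefore a genuine substitute rather than a variant of the paper's argument, and I checked its substance. For \eqref{Baxter-element2}, the coefficient of $e_ig_i$ is indeed $\alpha\bigl(1+\tfrac{a}{b-a}+\tfrac{b}{a-b}\bigr)=0$, leaving exactly $1-\alpha^{2}\tfrac{ab}{(a-b)^{2}}e_i$. For \eqref{Baxter-element1}, your transport rules are the correct specializations of the last line of \eqref{relations}, the identification $e_ie_{i+1}=e_ie_{i,i+2}=e_{i+1}e_{i,i+2}=E_{\{i,i+1,i+2\}}$ holds (a short character-sum computation), and $E$ commutes with $g_i,g_{i+1}$ by Lemma \ref{5-1-lemmaaa}. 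The two non-sandwich single-$e$ pairs match as you say; the sandwich terms give $\alpha B\,e_{i,i+2}+\alpha^{2}B\,Eg_i$ on the left and $\alpha B\,e_{i,i+2}+\alpha^{2}B\,Eg_{i+1}$ on the right; the double-$e$ monomials reduce to $(BC+AB)Eg_i+AC\,Eg_{i+1}$ on the left and $AC\,Eg_i+(AB+BC)Eg_{i+1}$ on the right; and the triple-$e$ terms both equal $\alpha^{3}ABC\,E$. So both the $Eg_i$- and $Eg_{i+1}$-coefficients reduce to the single identity $AB+BC+B=AC$, which holds since both sides equal $\tfrac{bc}{(a-b)(b-c)}$. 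Your emphasis on the degree-mixing caused by $g^{2}=1+\alpha eg$ is exactly the right point of care. One further remark that strengthens your approach: since both identities involve only $g_i,g_{i+1}$ and the $e$'s, which satisfy identical relations in $Y_{r,n}(q)$, $\widehat{Y}_{r,n}$ and $Y_{r,n}^{d}$, your computation proves the lemma simultaneously in all three algebras, which is precisely why the paper's citation of a result proved for $Y_{r,n}(q)$ is legitimate in the cyclotomic setting.
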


Recall that $S=\{\zeta_1,\ldots,\zeta_r\}$ is the set of all $r$-th roots of unity. We set
\begin{equation}\label{gamma-function}
\Gamma(\underline{v_1},\ldots,\underline{v_n}) :=\prod_{i=1}^{n}\Big(\frac{\Pi_{\xi\in S}(\underline{v_i}-\xi)}{\underline{v_i}-t_i}\Big).
\end{equation}

Let $\phi_{1}(u) :=\frac{(u-v_1)\cdots (u-v_d)}{u-X_1}.$ For $k=2,\ldots,n$, we set
\begin{align}\label{phi-function}
\phi_k(u_1,\ldots,u_{k-1},u)& :=g_{k-1}(u,u_{k-1})\phi_{k-1}(u_1,\ldots,u_{k-2},u)g_{k-1}^{-1}\notag\\
&=g_{k-1}(u,u_{k-1})g_{k-2}(u,u_{k-2})\cdots g_{1}(u,u_{1})\phi_{1}(u)\cdot g_{1}^{-1}\cdots g_{k-1}^{-1}.
\end{align}

We now define the following element:
\begin{equation}\label{e-upn}
E_{\mathfrak{u}, \text{p}_{n}} :=\frac{\underline{v}-\zeta_{\text{p}_n}}{\underline{v}-t_{n}}E_{\mathfrak{u}}\Big|_{\underline{v}=\zeta_{\text{p}_{n}}}.
\end{equation}
By definition, $E_{\mathfrak{u}, \text{p}_{n}}$ is an idempotent which is equal to the sum of the idempotents $E_{\mathfrak{s}}$, where $\mathfrak{s}$ runs through the set of standard $(r,d)$-tableaux obtained from $\mathfrak{u}$ by adding an $(r,d)$-node $\bm{\theta}$ containing the integer $n$ and satisfying $\text{p}^{(r)}(\bm{\theta})=\text{p}_{n}.$

\begin{lemma}
Assume that $n\geq 1.$ We have
\begin{align}\label{F-PhiEu}
\emph{F}_{\mathfrak{t}}(u)\phi_{n}(\emph{c}_1,\ldots,\emph{c}_{n-1},u)E_{\mathfrak{u}, \emph{p}_{n}}=\frac{u-\emph{c}_n}{u-X_{n}}E_{\mathfrak{u}, \emph{p}_{n}}.
\end{align}
\end{lemma}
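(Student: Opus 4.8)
The plan is to prove the identity by induction on $n$, following the inductive scheme behind the preceding proposition (and behind [OgPA1, Proposition 3.4] and [OgPA2, Proposition 3]), while carrying along the extra data recorded by the $r$-positions. For the base case $n=1$ the statement is immediate: here $\phi_{1}(u)=\frac{(u-v_{1})\cdots(u-v_{d})}{u-X_{1}}$, while the product over $i=1,\ldots,n-1$ in \eqref{f-T} is empty, so $\text{F}_{\mathfrak{t}}(u)=\frac{u-\text{c}_{1}}{(u-v_{1})\cdots(u-v_{d})}$; the polynomial factors cancel and both sides reduce to $\frac{u-\text{c}_{1}}{u-X_{1}}E_{\mathfrak{u},\text{p}_{1}}$.

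For the inductive step I would peel off the outer pair of factors using the recursion \eqref{phi-function}, namely $\phi_{n}(\text{c}_{1},\ldots,\text{c}_{n-1},u)=g_{n-1}(u,\text{c}_{n-1})\,\phi_{n-1}(\text{c}_{1},\ldots,\text{c}_{n-2},u)\,g_{n-1}^{-1}$. Let $\mathfrak{v}$ denote the standard $(r,d)$-tableau of size $n-2$ obtained from $\mathfrak{u}$ by deleting the box containing $n-1$, so that $E_{\mathfrak{v}}\in Y_{r,n-2}^{d}$ commutes with $g_{n-1}^{\pm1}$; by the branching identity \eqref{sum-formula} we also have $E_{\mathfrak{v}}E_{\mathfrak{u}}=E_{\mathfrak{u}}$ and $E_{\mathfrak{u}}E_{\mathfrak{u},\text{p}_{n}}=E_{\mathfrak{u},\text{p}_{n}}$. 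Inserting these idempotents lets me invoke the induction hypothesis on the inner factor $\phi_{n-1}(\text{c}_{1},\ldots,\text{c}_{n-2},u)$, replacing it (against the relevant idempotent) by $\frac{u-\text{c}_{n-1}}{u-X_{n-1}}$. The decisive structural simplification, which I would establish first, is that
\begin{equation*}
e_{n-1}E_{\mathfrak{u},\text{p}_{n}}=\delta_{\text{p}_{n-1},\text{p}_{n}}\,E_{\mathfrak{u},\text{p}_{n}},
\end{equation*}
since $t_{n-1}E_{\mathfrak{u},\text{p}_{n}}=\zeta_{\text{p}_{n-1}}E_{\mathfrak{u},\text{p}_{n}}$ and $t_{n}E_{\mathfrak{u},\text{p}_{n}}=\zeta_{\text{p}_{n}}E_{\mathfrak{u},\text{p}_{n}}$ by \eqref{result-3}, so that $\frac{1}{r}\sum_{s}t_{n-1}^{s}t_{n}^{-s}$ collapses to a Kronecker delta. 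This is exactly the factor that the delta in $\text{F}_{\mathfrak{t}}(u)$ is built to track; in particular $\text{F}_{\mathfrak{t}}(u)$ only involves the boxes $i<n$ with $\text{p}_{i}=\text{p}_{n}$, the remaining factors being $1$.

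It then remains to convert $g_{n-1}(u,\text{c}_{n-1})\,\frac{u-\text{c}_{n-1}}{u-X_{n-1}}\,g_{n-1}^{-1}$ into $\frac{u-\text{c}_{n}}{u-X_{n}}$ modulo $E_{\mathfrak{u},\text{p}_{n}}$, using $X_{n}=g_{n-1}X_{n-1}g_{n-1}$ from \eqref{JM-elements}, the inverse formula \eqref{inverse}, the unitarity relation \eqref{Baxter-element2} of Lemma \ref{Bax-elements}, and the eigenvalue relations \eqref{result-1} and \eqref{result-3} of Proposition \ref{idempotent-property} (which let me turn $X_{n-1}$, $t_{n-1}$, $t_{n}$ into the scalars $\text{c}_{n-1}$, $\zeta_{\text{p}_{n-1}}$, $\zeta_{\text{p}_{n}}$ against the idempotent). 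I would split the computation into the two cases governed by the displayed collapse. When $\text{p}_{n-1}=\text{p}_{n}$ the $e_{n-1}$-term in \eqref{Baxter-element2} survives, and the conjugation produces precisely the cyclotomic-Hecke-type correction $\frac{(u-\text{c}_{n-1})^{2}}{(u-\text{c}_{n-1})^{2}-(q-q^{-1})^{2}u\,\text{c}_{n-1}}$, which is the extra factor contributed by box $n-1$ to $\text{F}_{\mathfrak{t}}(u)$; when $\text{p}_{n-1}\neq\text{p}_{n}$ the $e_{n-1}$-term dies, $g_{n-1}(u,\text{c}_{n-1})$ reduces to $g_{n-1}$ acting as an honest conjugation, and box $n-1$ contributes no factor to $\text{F}_{\mathfrak{t}}(u)$, so the two sides again agree.

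The main obstacle I anticipate is the faithful bookkeeping of the rational prefactors combined with the non-commutation of $g_{n-1}$ with the part of $E_{\mathfrak{u},\text{p}_{n}}$ lying outside $Y_{r,n-2}^{d}$. Because $\text{F}_{\mathfrak{u}}(u)$ references the position $\text{p}_{n-1}$ whereas $\text{F}_{\mathfrak{t}}(u)$ references $\text{p}_{n}$, the ratio $\text{F}_{\mathfrak{t}}(u)/\text{F}_{\mathfrak{u}}(u)$ is not a single clean factor in the mixed case $\text{p}_{n-1}\neq\text{p}_{n}$; the point to verify carefully is that the vanishing of the $e_{n-1}$-term in that case permits commuting box $n$ past box $n-1$ (so that box $n-1$ genuinely drops out of $\text{F}_{\mathfrak{t}}$), matching the closed form of \eqref{f-T}. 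Once this reconciliation is checked, everything else is the routine algebra of transporting the scalars $\text{c}_{i}$ and $\zeta_{\text{p}_{i}}$ through the idempotent, exactly as in the Ariki--Koike computation of [OgPA2].
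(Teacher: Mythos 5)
Your base case is right, your structural identity $e_{n-1}E_{\mathfrak{u},\text{p}_{n}}=\delta_{\text{p}_{n-1},\text{p}_{n}}E_{\mathfrak{u},\text{p}_{n}}$ is right (the paper uses its generalization $e_{k,n}E_{\mathfrak{u},\text{p}_{n}}=\delta_{\text{p}_{k},\text{p}_{n}}E_{\mathfrak{u},\text{p}_{n}}$ throughout), and in the case $\text{p}_{n-1}=\text{p}_{n}$ your one-step peel is literally the paper's argument specialized to $l=n-1$. The genuine gap is the inductive step in the mixed case $\text{p}_{n-1}\neq\text{p}_{n}$. The induction hypothesis for the pair $(\mathfrak{u},\mathfrak{v})$ is a statement about $\phi_{n-1}(\text{c}_{1},\ldots,\text{c}_{n-2},u)\,E_{\mathfrak{v},\text{p}_{n-1}}$; the idempotent that must stand on the right is $E_{\mathfrak{v},\text{p}_{n-1}}$ (which involves the spectral projection of $t_{n-1}$), not the bare $E_{\mathfrak{v}}$ that you insert. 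And that idempotent cannot be brought next to $g_{n-1}^{-1}E_{\mathfrak{u},\text{p}_{n}}$: since $t_{n-1}g_{n-1}^{-1}E_{\mathfrak{u},\text{p}_{n}}=g_{n-1}^{-1}t_{n}E_{\mathfrak{u},\text{p}_{n}}=\zeta_{\text{p}_{n}}g_{n-1}^{-1}E_{\mathfrak{u},\text{p}_{n}}$, one gets, exactly as in the paper's computation \eqref{EW-PhiEu}, that $E_{\mathfrak{v},\text{p}}\,g_{n-1}^{-1}E_{\mathfrak{u},\text{p}_{n}}=\delta_{\text{p},\text{p}_{n}}\,g_{n-1}^{-1}E_{\mathfrak{u},\text{p}_{n}}$. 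So when $\text{p}_{n-1}\neq\text{p}_{n}$ the insertion of $E_{\mathfrak{v},\text{p}_{n-1}}$ annihilates everything and the hypothesis yields only $0=0$. If you push ahead anyway and replace $\phi_{n-1}$ by $\text{F}_{\mathfrak{u}}(u)^{-1}\frac{u-\text{c}_{n-1}}{u-X_{n-1}}$, the outcome differs from the desired right-hand side by the factor
\begin{equation*}
\prod_{i=1}^{n-2}\frac{(u-\text{c}_{i})^{2}-(q-q^{-1})^{2}u\,\text{c}_{i}\,\delta_{\text{p}_{i},\text{p}_{n-1}}}{(u-\text{c}_{i})^{2}-(q-q^{-1})^{2}u\,\text{c}_{i}\,\delta_{\text{p}_{i},\text{p}_{n}}},
\end{equation*}
which is not $1$ as soon as some box $i\leq n-2$ has $\delta_{\text{p}_{i},\text{p}_{n-1}}\neq\delta_{\text{p}_{i},\text{p}_{n}}$. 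This is precisely the mismatch you flagged at the end, but it is not prefactor bookkeeping that careful verification can absorb: the inductive input itself is inapplicable in that case.

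This is why the paper does not peel one factor at a time. It expands $\phi_{n}$ completely, pushes each $e_{k}$-correction to the right where it becomes $e_{k,n}$ and dies against $E_{\mathfrak{u},\text{p}_{n}}$ unless $\text{p}_{k}=\text{p}_{n}$, and then splits into two cases: if no $k<n$ has $\text{p}_{k}=\text{p}_{n}$, the whole of $\phi_{n}$ acts as an honest conjugation and a telescoping computation finishes the proof with no induction at all; otherwise the induction hypothesis is invoked at the largest $l<n$ with $\text{p}_{l}=\text{p}_{n}$, where the insertion $E_{\mathfrak{w},\text{p}_{l}}g_{l}^{-1}\cdots g_{n-1}^{-1}E_{\mathfrak{u},\text{p}_{n}}=g_{l}^{-1}\cdots g_{n-1}^{-1}E_{\mathfrak{u},\text{p}_{n}}$ is legitimate precisely because $\text{p}_{l}=\text{p}_{n}$. (Your scheme can in fact be repaired while keeping the one-step peel: in the mixed case apply the induction hypothesis not to $\mathfrak{u}$ but to $\mathfrak{v}$ extended by a box containing $n-1$ at $r$-position $\text{p}_{n}$ — a tableau that is not a subtableau of $\mathfrak{t}$ — with arbitrary admissible content $\text{c}'$; then $E_{\mathfrak{v},\text{p}_{n}}$ does survive against $g_{n-1}^{-1}E_{\mathfrak{u},\text{p}_{n}}$, the combination $\text{F}_{\mathfrak{t}}(u)\text{F}_{\mathfrak{t}'}(u)^{-1}(u-\text{c}')$ collapses to $u-\text{c}_{n}$, and the vanishing of the $e_{n-1}$-term gives $g_{n-1}\frac{1}{u-X_{n-1}}g_{n-1}^{-1}E_{\mathfrak{u},\text{p}_{n}}=\frac{1}{u-X_{n}}E_{\mathfrak{u},\text{p}_{n}}$. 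But that is a different induction from the one you wrote, and identifying the correct tableau to induct on is exactly the missing idea.)
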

\begin{proof}
We shall prove the lemma by induction on $n.$

When $n=1,$ we can write the left-hand side of \eqref{F-PhiEu} as follows: $$\frac{u-\text{c}_1}{(u-v_1)\cdots (u-v_d)}\phi_{1}(u)E_{\mathfrak{u}, \text{p}_{n}}=\frac{u-\text{c}_1}{u-X_1}E_{\mathfrak{u}, \text{p}_{n}}.$$

Assume that $n> 1$. Note that the left-hand side of \eqref{F-PhiEu} is equal to
\[\text{F}_{\mathfrak{t}}(u)\Big(g_{n-1}+(q-q^{-1})\frac{\text{c}_{n-1}e_{n-1}}{u-{\text{c}_{n-1}}}\Big)\cdots \Big(g_{1}+(q-q^{-1})\frac{\text{c}_{1}e_{1}}{u-{\text{c}_{1}}}\Big)\phi_{1}(u)\cdot g_{1}^{-1}\cdots g_{n-1}^{-1}E_{\mathfrak{u}, \text{p}_{n}}.\]
For $k=1,\ldots,n-1$, by \eqref{relations}, we have
\begin{align*}
e_{k}&\Big(g_{k-1}+(q-q^{-1})\frac{\text{c}_{k-1}e_{k-1}}{u-{\text{c}_{k-1}}}\Big)\cdots
\Big(g_{1}+(q-q^{-1})\frac{\text{c}_{1}e_{1}}{u-{\text{c}_{1}}}\Big)\\
&=\Big(g_{k-1}+(q-q^{-1})\frac{\text{c}_{k-1}e_{k-1}}{u-{\text{c}_{k-1}}}\Big)\cdots
\Big(g_{1}+(q-q^{-1})\frac{\text{c}_{1}e_{1}}{u-{\text{c}_{1}}}\Big)\cdot e_{1,k+1},
\end{align*}
and $e_{1,k+1}\cdot g_{1}^{-1}\cdots g_{n-1}^{-1}=g_{1}^{-1}\cdots g_{n-1}^{-1}\cdot e_{k,n}.$

Note that $e_{k,n} E_{\mathfrak{u}, \text{p}_{n}}=0$ if $\text{p}_{k}\neq\text{p}_{n}$. Thus, we can write the left-hand side of \eqref{F-PhiEu} as follows:
\begin{align}\label{Fg-PhiEu}
\text{F}_{\mathfrak{t}}(u)&\Big(g_{n-1}+(q-q^{-1})\frac{\delta_{\text{p}_{n-1}, \text{p}_{n}}\text{c}_{n-1}e_{n-1}}{u-{\text{c}_{n-1}}}\Big)\cdots\notag\\ &\times\Big(g_{1}+(q-q^{-1})\frac{\delta_{\text{p}_{1}, \text{p}_{n}}\text{c}_{1}e_{1}}{u-{\text{c}_{1}}}\Big)\phi_{1}(u)\cdot g_{1}^{-1}\cdots g_{n-1}^{-1}E_{\mathfrak{u}, \text{p}_{n}}.
\end{align}

Suppose first that $\text{p}_i\neq \text{p}_n$ for $i=1,\ldots,n-1.$ In this situation, due to \eqref{Fg-PhiEu}, we have the left-hand side of \eqref{F-PhiEu} is equal to
\begin{align}\label{Fg-PhiEuphi}
\text{F}_{\mathfrak{t}}(u)g_{n-1}\cdots &g_{1}\phi_{1}(u)\cdot g_{1}^{-1}\cdots g_{n-1}^{-1}E_{\mathfrak{u}, \text{p}_{n}}\notag\\
&=(u-\text{c}_{n})g_{n-1}\cdots g_{1}(u-X_1)^{-1}g_{1}^{-1}\cdots g_{n-1}^{-1}E_{\mathfrak{u}, \text{p}_{n}}.
\end{align}
Note that
\begin{align*}
&g_{1}(u-X_1)g_{1}^{-1}(u-X_2)^{-1}g_{2}^{-1}\cdots g_{n-1}^{-1}E_{\mathfrak{u}, \text{p}_{n}}\\
=&\big(u-g_1X_1(g_1-(q-q^{-1})e_1)\big)(u-X_2)^{-1}g_{2}^{-1}\cdots g_{n-1}^{-1}E_{\mathfrak{u}, \text{p}_{n}}\\
=&(u-X_2)(u-X_2)^{-1}g_{2}^{-1}\cdots g_{n-1}^{-1}E_{\mathfrak{u}, \text{p}_{n}}+(q-q^{-1})g_1X_1(u-X_2)^{-1}e_1g_{2}^{-1}\cdots g_{n-1}^{-1}E_{\mathfrak{u}, \text{p}_{n}}\\
=&g_{2}^{-1}\cdots g_{n-1}^{-1}E_{\mathfrak{u}, \text{p}_{n}}+(q-q^{-1})g_1X_1(u-X_2)^{-1}g_{2}^{-1}\cdots g_{n-1}^{-1}e_{1,n}E_{\mathfrak{u}, \text{p}_{n}}\\
=&g_{2}^{-1}\cdots g_{n-1}^{-1}E_{\mathfrak{u}, \text{p}_{n}}.
\end{align*}
Therefore, we have $$g_{1}(u-X_1)^{-1}g_{1}^{-1}\cdots g_{n-1}^{-1}E_{\mathfrak{u}, \text{p}_{n}}=(u-X_2)^{-1}g_{2}^{-1}\cdots g_{n-1}^{-1}E_{\mathfrak{u}, \text{p}_{n}}.$$
By repeating the process above on the right-hand side of \eqref{Fg-PhiEuphi}, we finally get that the left-hand side of \eqref{F-PhiEu} is equal to
$$(u-\text{c}_{n})(u-X_n)^{-1}E_{\mathfrak{u}, \text{p}_{n}},$$
which is exactly the right-hand side of \eqref{F-PhiEu}.

Next suppsoe that there exists some $l\in \{1,\ldots,n-1\}$ such that $\text{p}_{l}=\text{p}_{n}.$ We shall fix the unique $l$ such that $\text{p}_{l}=\text{p}_{n}$ and $\text{p}_{i}\neq \text{p}_{n}$ for $i=l+1,\ldots,n-1.$

Assume that $\mathfrak{v}$ is the standard $(r,d)$-tableau obtained from $\mathfrak{u}$ by removing the $(r,d)$-nodes containing the integers $l+1,\ldots,n-1$ and $\mathfrak{w}$ is the standard $(r,d)$-tableau obtained from $\mathfrak{v}$ by removing the $(r,d)$-node containing the integer $l.$ We then define
\begin{equation*}
E_{\mathfrak{w}, \text{p}_{l}} :=\frac{\underline{v}-\zeta_{\text{p}_l}}{\underline{v}-t_{l}}E_{\mathfrak{w}}\Big|_{\underline{v}=\zeta_{\text{p}_{l}}}.
\end{equation*}

Since $E_{\mathfrak{w}}$ can be expressed in terms of $X_1,\ldots,X_{l-1}$ and $t_1,\ldots,t_{l-1}$, we see that $E_{\mathfrak{w}}$ commutes with $g_{l}^{-1}g_{l+1}^{-1}\cdots g_{n-1}^{-1}$. Note that $E_{\mathfrak{w}}E_{\mathfrak{u}}=E_{\mathfrak{u}}=E_{\mathfrak{u}}^{2},$ $E_{\mathfrak{u}, \text{p}_{n}}^{2}=E_{\mathfrak{u}, \text{p}_{n}}$, $\text{p}_{l}=\text{p}_{n}$ and $t_lg_{l}^{-1}\cdots g_{n-1}^{-1}$$=g_{l}^{-1}\cdots g_{n-1}^{-1}t_n.$ Thus, we get
\begin{align}\label{EW-PhiEu}
E_{\mathfrak{w}, \text{p}_{l}}g_{l}^{-1}g_{l+1}^{-1}\cdots g_{n-1}^{-1}E_{\mathfrak{u}, \text{p}_{n}}&=g_{l}^{-1}\cdots g_{n-1}^{-1}\frac{\underline{v}-\zeta_{\text{p}_l}}{\underline{v}-t_{n}}E_{\mathfrak{w}}E_{\mathfrak{u}, \text{p}_{n}}\Big|_{\underline{v}=\zeta_{\text{p}_{l}}}\notag\\
&=g_{l}^{-1}\cdots g_{n-1}^{-1}E_{\mathfrak{u}, \text{p}_{n}}.
\end{align}
By \eqref{EW-PhiEu}, we can rewrite \eqref{Fg-PhiEu} as follows:
\begin{align*}
\text{F}_{\mathfrak{t}}(u)g_{n-1}\cdots g_{l+1}\Big(g_{l}+(q-q^{-1})\frac{\text{c}_{l}e_{l}}{u-{\text{c}_{l}}}\Big)\phi_{l}(\emph{c}_1,\ldots,\emph{c}_{l-1},u)E_{\mathfrak{w}, \text{p}_{l}}g_{l}^{-1}\cdots g_{n-1}^{-1}E_{\mathfrak{u}, \emph{p}_{n}}.
\end{align*}
By the induction hypothesis, we have
\begin{align*}
\phi_{l}(\text{c}_1,\ldots,\text{c}_{l-1},u)E_{\mathfrak{w}, \text{p}_{l}}=\text{F}_{\mathfrak{v}}(u)^{-1}\frac{u-\text{c}_l}{u-X_{l}}E_{\mathfrak{w}. \text{p}_{l}},
\end{align*}
Now we can use \eqref{EW-PhiEu} again to get that the left-hand side of \eqref{F-PhiEu} can be written as
\begin{align}\label{FF-PhiEu}
\text{F}_{\mathfrak{t}}(u)\text{F}_{\mathfrak{v}}(u)^{-1}g_{n-1}\cdots g_{l+1}\Big(g_{l}+(q-q^{-1})\frac{\text{c}_{l}e_{l}}{u-{\text{c}_{l}}}\Big)\frac{u-\text{c}_l}{u-X_{l}}g_{l}^{-1}\cdots g_{n-1}^{-1}E_{\mathfrak{u}, \emph{p}_{n}}.
\end{align}

Since $X_{n}$ commutes with $E_{\mathfrak{u}, \emph{p}_{n}},$ we can move $(u-X_n)^{-1}$ from the right-hand side of \eqref{F-PhiEu} to the left-hand side. By \eqref{inverse} and the fact that $e_kg_{k+1}\cdots g_{n-1}=g_{k+1}\cdots g_{n-1}e_{k,n}$ and $e_{k,n} E_{\mathfrak{u}, \text{p}_{n}}=0$ for $k=l+1,\ldots,n-1,$ it is easy to see that we can move $g_{n-1}\cdots g_{l+1}$ to the right-hand side. By \eqref{Baxter-element2}, $g_{l}(u, \text{c}_l)$ is invertible. Finally we get that \eqref{F-PhiEu} is equivalent to the following equality:
\begin{align}\label{Equi-equality}
\text{F}_{\mathfrak{t}}(u)&\text{F}_{\mathfrak{v}}(u)^{-1}(u-\text{c}_l)g_{l}^{-1}\cdots g_{n-1}^{-1}(u-X_n)E_{\mathfrak{u}, \text{p}_{n}}
=(u-\text{c}_n)(u-X_l)\notag\\
&\times\Big(g_{l}+(q-q^{-1})\frac{ue_{l}}{\text{c}_{l}-u}\Big)
\Big(1-(q-q^{-1})^{2}\frac{u\text{c}_le_{l}}{(u-\text{c}_{l})^{2}}\Big)^{-1}g_{l+1}\cdots g_{n-1}E_{\mathfrak{u}, \text{p}_{n}}.
\end{align}

Since $\text{p}_{l}=\text{p}_{n}$ and $\text{p}_{i}\neq \text{p}_{n}$ for $i=l+1,\ldots,n-1,$ we have, by the definition \eqref{f-T}, that
\begin{align*}
\text{F}_{\mathfrak{t}}(u)\text{F}_{\mathfrak{v}}(u)^{-1}=\frac{u-\text{c}_n}{u-\text{c}_l}\frac{(u-\text{c}_l)^{2}}{(u-\text{c}_l)^{2}-(q-q^{-1})^{2}u\text{c}_l}.
\end{align*}
Notice that $e_lg_{l+1}\cdots g_{n-1}=g_{l+1}\cdots g_{n-1}e_{l,n}.$ Since $\text{p}_{l}=\text{p}_{n},$ we have $e_{l,n}E_{\mathfrak{u}, \text{p}_{n}}=E_{\mathfrak{u}, \text{p}_{n}}.$ Therefore, to verify \eqref{Equi-equality}, it suffices to prove that
\begin{align}\label{Equi-equality1}
g_{l}^{-1}\cdots g_{n-1}^{-1}(u-X_n)E_{\mathfrak{u}, \text{p}_{n}}
=(u-X_l)\Big(g_{l}+(q-q^{-1})\frac{ue_{l}}{\text{c}_{l}-u}\Big)g_{l+1}\cdots g_{n-1}E_{\mathfrak{u}, \text{p}_{n}}.
\end{align}

By \eqref{JM-elements}, we get that $g_{l}^{-1}g_{l+1}^{-1}\cdots g_{n-1}^{-1}X_n=X_lg_{l}g_{l+1}\cdots g_{n-1}$, and we can write the left-hand side of \eqref{Equi-equality1} as
\begin{align}\label{Equi-equality2}
ug_{l}^{-1}\cdots g_{n-1}^{-1}E_{\mathfrak{u}, \text{p}_{n}}-X_lg_{l}\cdots g_{n-1}E_{\mathfrak{u}, \text{p}_{n}}.
\end{align}
By the fact that $e_{l,n} E_{\mathfrak{u}, \text{p}_{n}}=E_{\mathfrak{u}, \text{p}_{n}}$ and $e_{k,n} E_{\mathfrak{u}, \text{p}_{n}}=0$ for $k=l+1,\ldots,n-1,$ we can write \eqref{Equi-equality2} as follows:
\begin{align}\label{Equi-equality3}
&ug_{l}^{-1}g_{l+1}\cdots g_{n-1}E_{\mathfrak{u}, \text{p}_{n}}-X_lg_{l}\cdots g_{n-1}E_{\mathfrak{u}, \text{p}_{n}}\notag\\
=&u\big(g_l-(q-q^{-1})e_l\big)g_{l+1}\cdots g_{n-1}E_{\mathfrak{u}, \text{p}_{n}}-X_lg_{l}\cdots g_{n-1}E_{\mathfrak{u}, \text{p}_{n}}\notag\\
=&(u-X_l)g_{l}\cdots g_{n-1}E_{\mathfrak{u}, \text{p}_{n}}-(q-q^{-1})ug_{l+1}\cdots g_{n-1}E_{\mathfrak{u}, \text{p}_{n}}.
\end{align}

By definition, we have $X_lE_{\mathfrak{u}, \text{p}_{n}}=\text{c}_lE_{\mathfrak{u}, \text{p}_{n}}.$ Moreover, $X_l$ commutes with $g_{l+1}\cdots g_{n-1}$ by \eqref{giXj}. Therefore, we can write the right-hand side of \eqref{Equi-equality1} as
\begin{align}\label{Equi-equality4}
&(u-X_l)g_{l}\cdots g_{n-1}E_{\mathfrak{u}, \text{p}_{n}}+(q-q^{-1})(u-X_l)\frac{ue_{l}}{\text{c}_{l}-u}g_{l+1}\cdots g_{n-1}E_{\mathfrak{u}, \text{p}_{n}}\notag\\
=&(u-X_l)g_{l}\cdots g_{n-1}E_{\mathfrak{u}, \text{p}_{n}}+(q-q^{-1})(u-X_l)\frac{u}{\text{c}_{l}-u}g_{l+1}\cdots g_{n-1}E_{\mathfrak{u}, \text{p}_{n}}\notag\\
=&(u-X_l)g_{l}\cdots g_{n-1}E_{\mathfrak{u}, \text{p}_{n}}+(q-q^{-1})\cdot\frac{u}{\text{c}_{l}-u}g_{l+1}\cdots g_{n-1}(u-X_l)E_{\mathfrak{u}, \text{p}_{n}}\notag\\
=&(u-X_l)g_{l}\cdots g_{n-1}E_{\mathfrak{u}, \text{p}_{n}}+(q-q^{-1})\cdot\frac{u}{\text{c}_{l}-u}g_{l+1}\cdots g_{n-1}(u-\text{c}_l)E_{\mathfrak{u}, \text{p}_{n}}\notag\\
=&(u-X_l)g_{l}\cdots g_{n-1}E_{\mathfrak{u}, \text{p}_{n}}-(q-q^{-1})ug_{l+1}\cdots g_{n-1}E_{\mathfrak{u}, \text{p}_{n}}.
\end{align}

Comparing \eqref{Equi-equality4} with \eqref{Equi-equality3}, we get that \eqref{Equi-equality1} holds.
\end{proof}

Recall the function $\phi_k(u_1,\ldots,u_{k-1},u)$ defined in \eqref{phi-function}. For each $k=1,\ldots,n,$ we define
\begin{equation}\label{tilde-phi}
\widetilde{\phi}_{k}(u_1,\ldots,u_{k-1},u,\underline{v}) :=\phi_k(u_1,\ldots,u_{k-1},u)\cdot\Big(\frac{\Pi_{\xi\in S}(\underline{v}-\xi)}{\underline{v}-t_k}\Big),
\end{equation}
and the following rational function:
\begin{align}\label{Phi-function}
\Phi(u_1,\ldots,u_n,\underline{v_1},\ldots,\underline{v_n}) :=&\phi_n(u_1,\ldots,u_{n})\phi_{n-1}(u_1,\ldots,u_{n-1})\notag\\
&\cdots \phi_1(u_1)\Gamma(\underline{v_1},\ldots,\underline{v_n}).
\end{align}

\begin{lemma}
Assume that $n\geq 1.$ We have
\begin{align}\label{FF-Phi}
\emph{F}_{\mathfrak{t}}^{T}(\underline{v})\emph{F}_{\mathfrak{t}}(u)\widetilde{\phi}_{n}(\emph{c}_1,\ldots,\emph{c}_{n-1},u,\underline{v})
E_{\mathfrak{u}}\Big|_{\underline{v}=\zeta_{\emph{p}_{n}}}=\frac{u-\emph{c}_n}{u-X_{n}}
\frac{\underline{v}-\zeta_{\emph{p}_n}}{\underline{v}-t_{n}}E_{\mathfrak{u}}\Big|_{\underline{v}=\zeta_{\emph{p}_{n}}}.
\end{align}
\end{lemma}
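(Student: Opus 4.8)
The plan is to reduce \eqref{FF-Phi} to the already-established identity \eqref{F-PhiEu} by disentangling the auxiliary variable $\underline{v}$ from the variable $u$. First I would substitute the definition \eqref{tilde-phi} of $\widetilde{\phi}_{n}$ into the left-hand side of \eqref{FF-Phi}, so that it reads
$$\text{F}_{\mathfrak{t}}^{T}(\underline{v})\,\text{F}_{\mathfrak{t}}(u)\,\phi_{n}(\text{c}_1,\ldots,\text{c}_{n-1},u)\cdot\frac{\Pi_{\xi\in S}(\underline{v}-\xi)}{\underline{v}-t_{n}}\,E_{\mathfrak{u}}\,\Big|_{\underline{v}=\zeta_{\text{p}_{n}}}.$$
Here $\text{F}_{\mathfrak{t}}^{T}(\underline{v})$ is a scalar rational function in $\underline{v}$ and hence central, while $\text{F}_{\mathfrak{t}}(u)$ and $\phi_{n}(\text{c}_1,\ldots,\text{c}_{n-1},u)$ carry no dependence on $\underline{v}$. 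I would therefore move the central scalar $\text{F}_{\mathfrak{t}}^{T}(\underline{v})$ to the right, past the factor $\text{F}_{\mathfrak{t}}(u)\,\phi_{n}$, so that it combines with $\frac{\Pi_{\xi\in S}(\underline{v}-\xi)}{\underline{v}-t_{n}}$; note I never need to commute $\phi_{n}$ through $t_{n}$.

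The one genuinely computational point is the elementary cancellation
$$\text{F}_{\mathfrak{t}}^{T}(\underline{v})\,\Pi_{\xi\in S}(\underline{v}-\xi)=\underline{v}-\zeta_{\text{p}_{n}},$$
which is immediate from \eqref{f-TT}: writing $\Pi_{\xi\in S}(\underline{v}-\xi)=(\underline{v}-\zeta_{\text{p}_{n}})\prod_{\xi\neq\zeta_{\text{p}_{n}}}(\underline{v}-\xi)$, the trailing product is exactly the inverse of $\text{F}_{\mathfrak{t}}^{T}(\underline{v})=\prod_{\xi\neq\zeta_{\text{p}_{n}}}(\underline{v}-\xi)^{-1}$. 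Feeding this in, the left-hand side collapses to
$$\text{F}_{\mathfrak{t}}(u)\,\phi_{n}(\text{c}_1,\ldots,\text{c}_{n-1},u)\cdot\frac{\underline{v}-\zeta_{\text{p}_{n}}}{\underline{v}-t_{n}}\,E_{\mathfrak{u}}\,\Big|_{\underline{v}=\zeta_{\text{p}_{n}}}.$$

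Next I would recall that $\frac{\underline{v}-\zeta_{\text{p}_{n}}}{\underline{v}-t_{n}}E_{\mathfrak{u}}$ is non-singular at $\underline{v}=\zeta_{\text{p}_{n}}$ — as observed in the discussion of \eqref{rational-function}, using \eqref{result-3} together with the fact that $E_{\mathfrak{u}}$, being attached to a tableau of size $n-1$, commutes with $t_{n}$ — so that its specialization is by definition the idempotent $E_{\mathfrak{u},\text{p}_{n}}$ of \eqref{e-upn}. Since the prefactor $\text{F}_{\mathfrak{t}}(u)\,\phi_{n}(\text{c}_1,\ldots,\text{c}_{n-1},u)$ does not depend on $\underline{v}$, the evaluation $\big|_{\underline{v}=\zeta_{\text{p}_{n}}}$ passes through it, and the expression becomes $\text{F}_{\mathfrak{t}}(u)\,\phi_{n}(\text{c}_1,\ldots,\text{c}_{n-1},u)\,E_{\mathfrak{u},\text{p}_{n}}$. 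At this point I would invoke \eqref{F-PhiEu} verbatim to rewrite it as $\frac{u-\text{c}_{n}}{u-X_{n}}E_{\mathfrak{u},\text{p}_{n}}$, and finally re-expand $E_{\mathfrak{u},\text{p}_{n}}=\frac{\underline{v}-\zeta_{\text{p}_{n}}}{\underline{v}-t_{n}}E_{\mathfrak{u}}\big|_{\underline{v}=\zeta_{\text{p}_{n}}}$ to recover precisely the right-hand side of \eqref{FF-Phi}.

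I expect no real obstacle here: the statement is a repackaging of \eqref{F-PhiEu} plus the bookkeeping of the $\underline{v}$-variable. The only steps demanding care are the legitimacy of passing the $u$-side factor $\text{F}_{\mathfrak{t}}(u)\,\phi_{n}$ through the specialization $\underline{v}=\zeta_{\text{p}_{n}}$ — valid precisely because that factor is $\underline{v}$-free and the remaining $\underline{v}$-dependent operator is regular at $\zeta_{\text{p}_{n}}$ — and the purely formal cancellation identity for $\text{F}_{\mathfrak{t}}^{T}(\underline{v})$; both are routine.
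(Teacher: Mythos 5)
Your proposal is correct and follows essentially the same route as the paper: the cancellation $\text{F}_{\mathfrak{t}}^{T}(\underline{v})\,\Pi_{\xi\in S}(\underline{v}-\xi)=\underline{v}-\zeta_{\text{p}_{n}}$ you isolate is exactly the paper's identity \eqref{Ftt-equality}, after which both arguments reduce \eqref{FF-Phi} to \eqref{F-PhiEu} via the definitions \eqref{tilde-phi} and \eqref{e-upn}. Your write-up merely makes explicit the bookkeeping (centrality of the scalar factor, passing the $\underline{v}$-free prefactor through the specialization) that the paper leaves implicit in calling the lemma a ``direct consequence'' of \eqref{F-PhiEu}.
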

\begin{proof}
By \eqref{f-TT}, we have
\begin{align}\label{Ftt-equality}
\emph{F}_{\mathfrak{t}}^{T}(\underline{v})\cdot\Big(\frac{\Pi_{\xi\in S}(\underline{v}-\xi)}{\underline{v}-t_n}\Big)=\frac{\underline{v}-\zeta_{\text{p}_n}}{\underline{v}-t_{n}}.
\end{align}
By \eqref{e-upn}, \eqref{tilde-phi} and \eqref{Ftt-equality}, we see that \eqref{FF-Phi} is a direct consequence of \eqref{F-PhiEu}.
\end{proof}

Now we can state the main result of this paper.

\begin{theorem}\label{main-theorem}
The idempotent $E_{\mathfrak{t}}$ of $\mathrm{Y}_{r,n}^{d}$ corresponding to the standard $(r,d)$-tableau $\mathfrak{t}$ can be derived by the following consecutive evaluations$:$
\begin{equation}\label{idempotents}
E_{\mathfrak{t}}=\frac{1}{\emph{F}_{\underline{\bm{\lambda}}}^{T}\emph{F}_{\underline{\bm{\lambda}}}}
\Phi(u_1,\ldots,u_{n},\underline{v_1},\ldots,\underline{v_n})
\Big|_{\underline{v_1}=\zeta_{\emph{p}_{1}}}\cdots\Big|_{\underline{v_n}=\zeta_{\emph{p}_{n}}}\Big|_{u_{1}=\emph{c}_1}\cdots\Big|_{u_{n}=\emph{c}_{n}}.
\end{equation}
\end{theorem}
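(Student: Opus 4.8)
The plan is to argue by induction on $n$, peeling off the last pair of variables $(u_n,\underline{v_n})$ and reducing to the same statement for the standard $(r,d)$-tableau $\mathfrak{u}$ of size $n-1$ obtained from $\mathfrak{t}$ by deleting the $(r,d)$-node $\bm{\theta}$ containing $n$; let $\underline{\bm{\mu}}$ be its shape. The base case $n=0$ is immediate, since $E_{\mathfrak{t}_0}=1$ and all the products defining $\text{F}_{\underline{\bm{\lambda}}}^{T}$, $\text{F}_{\underline{\bm{\lambda}}}$ and $\Phi$ are empty.

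For the inductive step I would first split the rational function using its definition \eqref{Phi-function} together with the factorization $\Gamma(\underline{v_1},\ldots,\underline{v_n})=\Gamma(\underline{v_1},\ldots,\underline{v_{n-1}})\cdot\frac{\Pi_{\xi\in S}(\underline{v_n}-\xi)}{\underline{v_n}-t_n}$ coming from \eqref{gamma-function}, writing $\Phi=\phi_n(u_1,\ldots,u_n)\,\Phi'\,\frac{\Pi_{\xi\in S}(\underline{v_n}-\xi)}{\underline{v_n}-t_n}$, where $\Phi'$ denotes the analogue of $\Phi$ built from $\phi_1,\ldots,\phi_{n-1}$ and $\Gamma(\underline{v_1},\ldots,\underline{v_{n-1}})$. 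Since $\text{c}_i$ and $\text{p}_i$ coincide for $\mathfrak{t}$ and $\mathfrak{u}$ when $i<n$, I would then carry out the $2(n-1)$ evaluations $u_i=\text{c}_i$ and $\underline{v_i}=\zeta_{\text{p}_i}$ ($1\le i\le n-1$). By the induction hypothesis $\Phi'$ specializes to the scalar multiple $\text{F}_{\underline{\bm{\mu}}}^{T}\text{F}_{\underline{\bm{\mu}}}E_{\mathfrak{u}}$, while $\phi_n$ becomes $\phi_n(\text{c}_1,\ldots,\text{c}_{n-1},u_n)$. Because $E_{\mathfrak{u}}$ is a polynomial in $X_1,\ldots,X_{n-1},t_1,\ldots,t_{n-1}$, it commutes with $\frac{\Pi_{\xi\in S}(\underline{v_n}-\xi)}{\underline{v_n}-t_n}$, so recalling the definition \eqref{tilde-phi} of $\widetilde{\phi}_n$ the specialization becomes $\text{F}_{\underline{\bm{\mu}}}^{T}\text{F}_{\underline{\bm{\mu}}}\,\widetilde{\phi}_n(\text{c}_1,\ldots,\text{c}_{n-1},u_n,\underline{v_n})E_{\mathfrak{u}}$.

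It then remains to divide by $\text{F}_{\underline{\bm{\lambda}}}^{T}\text{F}_{\underline{\bm{\lambda}}}$ and perform the last two evaluations $\underline{v_n}=\zeta_{\text{p}_n}$ and $u_n=\text{c}_n$, in that order, with $(u_n,\underline{v_n})$ playing the role of $(u,\underline{v})$ in \eqref{FF-Phi}. The idea is to feed the ratio of normalization constants back into the functional factors: by \eqref{f-T-relation} and \eqref{f-T-u} one has $\text{F}_{\underline{\bm{\mu}}}^{T}\text{F}_{\underline{\bm{\mu}}}/(\text{F}_{\underline{\bm{\lambda}}}^{T}\text{F}_{\underline{\bm{\lambda}}})=\text{F}_{\mathfrak{t}}^{T}(\underline{v})\,\text{F}_{\mathfrak{t}}(u)$ evaluated at $\underline{v}=\zeta_{\text{p}_n}$, $u=\text{c}_n$. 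Once I know that $\widetilde{\phi}_n E_{\mathfrak{u}}$ is itself non-singular at this point, this lets me rewrite the normalized specialization as $\big[\text{F}_{\mathfrak{t}}^{T}(\underline{v})\text{F}_{\mathfrak{t}}(u)\widetilde{\phi}_n E_{\mathfrak{u}}\big]\big|_{\underline{v}=\zeta_{\text{p}_n}}\big|_{u=\text{c}_n}$, whereupon Lemma \eqref{FF-Phi} identifies the bracket after the $\underline{v}$-evaluation with $\frac{u-\text{c}_n}{u-X_n}\frac{\underline{v}-\zeta_{\text{p}_n}}{\underline{v}-t_n}E_{\mathfrak{u}}\big|_{\underline{v}=\zeta_{\text{p}_n}}$, whose value at $u=\text{c}_n$ is exactly $E_{\mathfrak{t}}$ by \eqref{sum-function}. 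This closes the induction.

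The step I expect to be the main obstacle is precisely the singularity bookkeeping in the last paragraph. A priori both the factor $\frac{\Pi_{\xi\in S}(\underline{v_n}-\xi)}{\underline{v_n}-t_n}$ and the transfer factor $\phi_n$ (through $\phi_1(u)=\frac{(u-v_1)\cdots(u-v_d)}{u-X_1}$) contribute poles, and one may replace the constant $\text{F}_{\underline{\bm{\mu}}}^{T}\text{F}_{\underline{\bm{\mu}}}/(\text{F}_{\underline{\bm{\lambda}}}^{T}\text{F}_{\underline{\bm{\lambda}}})$ by the values of $\text{F}_{\mathfrak{t}}^{T}(\underline{v})\text{F}_{\mathfrak{t}}(u)$ only if the two consecutive evaluations commute with forming the product. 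I would resolve this by evaluating $\underline{v_n}=\zeta_{\text{p}_n}$ first: using \eqref{young-module-5-19} and \eqref{e-upn} the expression $\widetilde{\phi}_n E_{\mathfrak{u}}$ then becomes a nonzero multiple of $\phi_n(\text{c}_1,\ldots,\text{c}_{n-1},u_n)E_{\mathfrak{u},\text{p}_n}$, and Lemma \eqref{F-PhiEu} together with the preceding Proposition (which gives $\text{F}_{\mathfrak{t}}(u)|_{u=\text{c}_n}=\text{F}_{\underline{\bm{\lambda}}}^{-1}\text{F}_{\underline{\bm{\mu}}}$, nonzero because the semisimplicity hypothesis \eqref{semisimple} forces $\text{F}_{\underline{\bm{\lambda}}},\text{F}_{\underline{\bm{\mu}}}\neq 0$) shows that this quantity is non-singular at $u_n=\text{c}_n$. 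With this non-vanishing in hand the factorization of the evaluation is legitimate and the computation above goes through.
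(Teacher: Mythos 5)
Your proposal is correct and takes essentially the same approach as the paper's own proof: induction on $n$, peeling off the factor in $(u_n,\underline{v_n})$ so that after the inner evaluations one is left with $\mathrm{F}_{\underline{\bm{\mu}}}^{T}\mathrm{F}_{\underline{\bm{\mu}}}\,\widetilde{\phi}_{n}(\mathrm{c}_1,\ldots,\mathrm{c}_{n-1},u_n,\underline{v_n})E_{\mathfrak{u}}$, and then concluding via \eqref{FF-Phi}, \eqref{f-T-relation}, \eqref{f-T-u} and \eqref{sum-function}; the only cosmetic difference is that the paper regroups $\Phi$ globally as $\widetilde{\phi}_{n}\cdots\widetilde{\phi}_{1}$ using $g_it_k=t_kg_i$ for $i<k-1$, while you commute the single factor $\frac{\Pi_{\xi\in S}(\underline{v_n}-\xi)}{\underline{v_n}-t_n}$ past $E_{\mathfrak{u}}$ at the end. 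Your final paragraph on the singularity bookkeeping simply makes explicit what the paper leaves implicit when citing those lemmas.
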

\begin{proof}
Since $g_i$ commutes with $t_k$ if $i< k-1$, we can rewrite $\Phi(u_1,\ldots,u_n,v_1,$$\ldots,v_n)$ as follows:
\begin{align}\label{Phitilde}
\Phi(&u_1,\ldots,u_n,\underline{v_1},\ldots,\underline{v_n})\notag\\
&=\widetilde{\phi}_{n}(u_1,\ldots,u_{n},\underline{v_n})\widetilde{\phi}_{n-1}(u_1,\ldots,u_{n-1},\underline{v_{n-1}})\cdots \widetilde{\phi}_{1}(u_1,\underline{v_1}).
\end{align}

We shall prove this theorem by induction on $n.$ For $n=0,$ the situation is trivial.

For $n> 0,$ by \eqref{Phitilde} and the induction hypothesis we can rewrite the right-hand side of \eqref{idempotents} as follows:
\begin{align}\label{Phitilde-4}
(\text{F}_{\underline{\bm{\lambda}}}^{T}\text{F}_{\underline{\bm{\lambda}}})^{-1}\text{F}_{\underline{\bm{\mu}}}^{T}\text{F}_{\underline{\bm{\mu}}}
\widetilde{\phi}_{n}(\text{c}_1,\ldots,\text{c}_{n-1},u_n,\underline{v_n})E_{\mathfrak{u}}\Big|_{\underline{v_n}=\zeta_{\text{p}_{n}}}\Big|_{u_n=\text{c}_{n}}.
\end{align}
By \eqref{FF-Phi} we can rewrite the expression \eqref{Phitilde-4} as follows:
\begin{align}\label{Phitilde-10}
(\text{F}_{\underline{\bm{\lambda}}}^{T}\text{F}_{\underline{\bm{\lambda}}})^{-1}\text{F}_{\underline{\bm{\mu}}}^{T}\text{F}_{\underline{\bm{\mu}}}
(\text{F}_{\mathfrak{t}}^{T}(\underline{v_n})\text{F}_{\mathfrak{t}}(u_n))^{-1}
\frac{u_n-\text{c}_n}{u_n-X_{n}}\frac{\underline{v_n}-\zeta_{\text{p}_n}}{\underline{v_n}-t_{n}}
E_{\mathfrak{u}}\Big|_{\underline{v_n}=\zeta_{\text{p}_{n}}}\Big|_{u_n=\text{c}_{n}}.
\end{align}

By \eqref{f-T-relation} and \eqref{f-T-u}, together with \eqref{sum-function} and \eqref{Phitilde-10}, we see that the right-hand side of \eqref{idempotents} is equal to $E_{\mathfrak{t}}.$
\end{proof}

Finally, let us consider an example.

\begin{example}
Assume that $r=d=2, n=4$ and $\underline{\bm{\lambda}}=(((2),(0)),((1),(1)))$ is a $(2,2)$-partition of $4$. We shall consider the following standard $(2,2)$-tableau of shape $\underline{\bm{\lambda}}$:
\[\mathfrak{t}=\left(\left(\hspace{-0.1cm}\begin{array}{l}\fbox{1}\fbox{3}\\[-0.05em] \end{array}
\, ,\, \varnothing\right)
\, ,\,\left(\hspace{-0.1cm}\begin{array}{l}\fbox{2}\\[-0.05em] \end{array}\,,\,\begin{array}{l}\fbox{4}\\[-0.05em] \end{array}\hspace{-0.1cm}\right)\right).\]

Theorem \ref{main-theorem} implies that the idempotent $E_{\mathfrak{t}}$ can be expressed as
\begin{align*}
E_{\mathfrak{t}}=&\frac{\zeta_{1}^{2}\zeta_{2}^{2}}{16(q+q^{-1})(v_1-v_2)(v_2q-v_1q^{-1})(v_1q-v_2q^{-1})^{2}}\\
&\times g_{3}(v_2,v_1q^{2})g_{2}(v_2,v_1)g_{1}(v_2,v_1)\phi_{1}(v_2)g_{1}^{-1}g_{2}^{-1}g_{3}^{-1}\\
&\times g_{2}(v_1q^{2}, v_1)g_{1}(v_1q^{2}, v_1)\phi_{1}(v_{1}q^{2})g_{1}^{-1}g_{2}^{-1}\times g_{1}(v_1,v_1)\phi_{1}(v_1)g_{1}^{-1}\\
&\times\phi_{1}(v_1)(\zeta_1+t_1)(\zeta_2+t_2)(\zeta_1+t_3)(\zeta_2+t_4).
\end{align*}
\end{example}

\vskip3mm Acknowledgements. The author is grateful to Professor G. Lusztig for pointing out the reference [Lu] to him.



\end{document}